\documentclass[12pt,letterpaper]{amsart}

%%%%%%%%% Packages
\usepackage[english]{babel}
\usepackage[autostyle]{csquotes}  % Required for biblatex with babel

% --- Math Packages ---
\usepackage{amsmath}      % Includes amstext, amsbsy, amsopn
\usepackage{amssymb}      % Math symbols (requires amsfonts)
\usepackage{amsthm}       % Theorem environments
%\usepackage{amscd}       % (Optional: commutative diagrams)

% --- Graphics and Colors ---
\usepackage{graphicx}     % Modern graphics handling
\usepackage{xcolor}       % Advanced color support
\usepackage{tikz}         % Vector graphics

% --- References and Hyperlinks ---
\usepackage{varioref}     % Smart references with \vref
\usepackage[hypertexnames=false]{hyperref}  % Loaded LAST except cleveref

% --- Formatting Utilities ---
\usepackage{float}        % [H] positioning
\usepackage{enumitem}     % Customized lists
\usepackage{comment}      % Conditional comments
\usepackage{microtype}    % Improves text justification

% --- Bibliography ---

% Hyperref configuration
\hypersetup{
    colorlinks=true,
    citecolor=green,
    linkcolor=blue,
    urlcolor=cyan,
    % Remove pagebackref from here
    pdftitle={Your Title},
    pdfauthor={Your Name}
}

\frenchspacing
\textwidth=16cm
\textheight=23cm
\parindent=16pt
\oddsidemargin=0cm
\evensidemargin=0cm
\topmargin=-1cm

% Subject classification (2020)
\makeatletter
\@namedef{subjclassname@2020}{\textup{2020} Mathematics Subject Classification}
\makeatother

%Teoremas

\newtheorem{theo}{Theorem}
\newtheorem{defi}{Definition}

\newtheorem{prop}{Proposition}
\newtheorem{coro}{Corollary}
\newtheorem{lemm}{Lemma}

\newtheorem{rema}{Remark}

\newtheorem*{theo*}{Theorem}
\newtheorem*{defi*}{Definition}
\newtheorem*{conv*}{Convention}
\newtheorem*{cons*}{Construction}
\newtheorem*{prop*}{Proposition}
\newtheorem*{coro*}{Corollary}
\newtheorem*{lemm*}{Lemma}
\newtheorem*{conj*}{Conjecture}
\newtheorem*{ques*}{Question}
\newtheorem*{prob*}{Problem}
\newtheorem*{exer*}{Exercise}
\newtheorem*{exem*}{Example}
\newtheorem*{rema*}{Remark}
\newtheorem*{claim*}{Claim}

% Equation numbering
\numberwithin{equation}{section}

% Alphabets

	%mathbb

  \def\II{\mathbb{I}}
  
 \def\NN{\mathbb{N}} 
  \def\RR{\mathbb{R}}
 \def\TT{\mathbb{T}} 
  
 \def\ZZ{\mathbb{Z}}

	%mathcal

  \def\cB{\mathcal{B}}  \def\cC{\mathcal{C}}  
  \def\cF{\mathcal{F}}  \def\cG{\mathcal{G}}  \def\cH{\mathcal{H}}
\def\cI{\mathcal{I}}  \def\cJ{\mathcal{J}}    
\def\cM{\mathcal{M}}      
  \def\cR{\mathcal{R}}  \def\cS{\mathcal{S}}  \def\cT{\mathcal{T}}
\def\cU{\mathcal{U}}  \def\cV{\mathcal{V}}

 \def\be{\mathbf{e}}

\def\bp{\mathbf{p}}  
  \def\bu{\mathbf{u}}
\def\bv{\mathbf{v}} \def\bw{\mathbf{w}} \def\bx{\mathbf{x}}

	%\mathfrak

	%mathbf
	
	 \def\bB{\mathbf{B}} 
  
 \def\bH{\mathbf{H}}

  \def\bR{\mathbf{R}}
 \def\bT{\mathbf{T}} 
\def\bV{\mathbf{V}}

 \def\be{\mathbf{e}}

\def\bp{\mathbf{p}}  
  \def\bu{\mathbf{u}}
\def\bv{\mathbf{v}} \def\bw{\mathbf{w}} \def\bx{\mathbf{x}}

%%Informacion del proyecto:

\title[ A complete conjugacy invariant]{A classification of pseudo-Anosov homeomorphisms I: The geometric type is a complete conjugacy invariant}
\author{Inti Cruz}
\address{Facultad de Contaduría y Administración, UNAM, Oaxaca, Mexico}
\email{incruzd@gmail.com}
\date{\today}

\subjclass[2020]{Primary 37E30; Secondary 37B05, 37B10, 37D20}
\keywords{pseudo-Anosov homeomorphisms, surface homeomorphisms,
	geometric Markov partitions, symbolic dynamics, topological dynamics,
	topological conjugacy}

\begin{document}

\begin{abstract}
	Every  pseudo-Anosov homeomorphism $f$ admits infinitely many Markov partitions. A \textit{geometric Markov partition} is a Markov partition $\mathcal{R}$ in which each rectangle is equipped with a vertical orientation. To each pair $(f, \mathcal{R})$, consisting of a pseudo-Anosov homeomorphism $f$ and a geometric Markov partition $\mathcal{R}$, there is a naturally associated combinatorial object called its \textit{geometric type} $\bT(f, \mathcal{R})$.
	
	We prove, using symbolic dynamics, that two  pseudo-Anosov homeomorphisms are topologically conjugate via an orientation-preserving homeomorphism if and only if they admit geometric Markov partitions with the same geometric type. This result lays the groundwork for the algorithmic classification we will develop in subsequent work.
\end{abstract}

\maketitle

\tableofcontents

\section{Introduction}

Let $S$ be a smooth, closed, and orientable surface, and let $\mathrm{Hom}_+(S)$ denote the group of orientation-preserving self-homeomorphisms of $S$, with the composition as the group operation. There are two classical approaches to classifying these maps: up to topological conjugacy, as proposed by S. Smale in \cite{smale1967differentiable}, and up to isotopy, as initiated by W. Thurston in \cite{thurston1988geometry}. Each of these perspectives constitutes a foundational chapter in the development of mathematics in the 20\textsuperscript{th} and 21\textsuperscript{st} centuries.

The hyperbolic theory of dynamical systems was revolutionized by the work of S. Smale on structurally stable diffeomorphisms and by V. Anosov’s study of geodesic flows on negatively curved manifolds \cite{anosov1969geodesic}.  
According to some authors (see \cite{hasselblatt2002handbook}), it was either R. Thom or V. Arnold who introduced the so-called Arnold cat map: a diffeomorphism $f_A\colon \TT^2 \to \TT^2$ defined as the quotient of the linear action of the matrix  
$$
A = \begin{pmatrix} 2 & 1 \\ 1 & 1 \end{pmatrix}
$$  
on $\RR^2$, under the equivalence relation induced by the standard lattice $\ZZ \times \ZZ$.  
The map $f_A$ possesses an infinite number of periodic orbits, is topologically transitive, structurally stable, and notably preserves a pair of transverse, non-singular measured foliations (see \cite{franks1970anosov}).

This is an example of a more general construction: a $2 \times 2$ integer matrix $A$ with $\det(A) = 1$ and no eigenvalues of modulus $1$ is called a \emph{hyperbolic matrix}. Such matrices preserve the lattice $\ZZ \times \ZZ$ and descend to the quotient, yielding an \emph{Anosov diffeomorphism} $f_A\colon \TT^2 \to \TT^2$ that, like the cat map, preserves a pair of transverse, non-singular measured foliations.

In the 1980s, W. Thurston studied the group $\mathrm{Hom}_+(S)$ up to isotopy in \cite{thurston1988geometry}, introducing the notion of pseudo-Anosov homeomorphisms (see~\ref{Defi: pseudo-Anosov}) as a natural generalization of Anosov diffeomorphisms on the torus . A pseudo-Anosov homeomorphism is an orientation-preserving homeomorphism of a closed surface that preserves a pair of transverse measured foliations, which are uniformly contracted and expanded by the actions of $f$ and $f^{-1}$, respectively. Unlike toral Anosov diffeomorphisms, the invariant foliations of a pseudo-Anosov map may exhibit $k$-prong singularities for $k \geq 3$. This concept was later expanded and presented in detail in the collaborative exposition by A. Fathi, F. Laudenbach, and V. Poénaru \cite{fathi2021thurston}, which also allows for $1$-prong singularities. We adopt this broader definition in Definition~\ref{Defi: pseudo-Anosov}, closely following the exposition by B. Farb and D. Margalit in \cite{farb2011primer}. We refer the reader to their book for a comprehensive treatment of the classical theory.

The classification theorem of M. Handel and W. Thurston ( \cite{thurston1988geometry},\cite{handel1985new} states that every orientation-preserving homeomorphism of a closed surface is, up to isotopy, either periodic (some power is isotopic to the identity), pseudo-Anosov, or reducible. In the reducible case, the surface can be decomposed along a finite collection of disjoint, non-null-homotopic simple closed curves into subsurfaces on which the restriction of the homeomorphism is isotopic to one of the first two types. Later, M. Bestvina and M. Handel \cite{bestvina1995train} provided an algorithmic proof of the Thurston–Handel classification theorem based on the theory of train tracks. 

In this paper, we are interested in the classification of pseudo-Anosov homeomorphisms, a class that emerged from Thurston's classification theory up to topological conjugacy, in the spirit of Smale's school of dynamical systems. The mapping class group  $\cM\cC\cG(S)$ of a surface $S$ is the group of isotopy classes of orientation-preserving homeomorphisms of $S$. It is well known (see \cite[Exposé 12]{fathi2021thurston}) that two isotopic pseudo-Anosov homeomorphisms are topologically conjugate via a homeomorphism that is itself isotopic to the identity.

Therefore, classifying the isotopy classes that contain a pseudo-Anosov representative yields a classification of pseudo-Anosov homeomorphisms up to topological conjugacy by homeomorphisms isotopic to the identity. To be precise, such a classification must associate to each isotopy class a combinatorial object that determines whether the class is pseudo-Anosov, whether two pseudo-Anosov classes are conjugate, and which combinatorial invariants can be realized by pseudo-Anosov isotopy classes. Using tools such as train tracks and cellular decompositions, Thurston's school laid important groundwork toward the classification of pseudo-Anosov homeomorphisms. However, these approaches have limited effectiveness in addressing the conjugacy problem—namely, determining when two isotopy classes are topologically conjugate. In this context, we highlight the work of L. Mosher, who provided partial answers to the conjugacy problem in \cite{mosher1983pseudo, mosher1986classification}. We also mention the algorithm of J. Los \cite{los1993pseudo}, as well as the algorithm developed by M. Bestvina and M. Handel \cite{bestvina1995train}, which, starting from a decomposition of an isotopy class into Dehn twists, determines whether it is isotopic to a pseudo-Anosov, periodic, or reducible homeomorphism.More recent developments include the computational implementation of such algorithms by M. Bell in the SageMath program \emph{Flipper} \cite{bell2015recognising}.

We adopt a completely different approach: we make use of the theory of geometric Markov partitions (\ref{Defi: Geometric Markov partition}) and their associated geometric types to construct a family of complete invariants of conjugacy for the class of pseudo-Anosov homeomorphisms. We then address the realization and equivalence problems for such invariants. Markov partitions are a classical tool in the study of hyperbolic dynamics, as they allow one to encode the behavior of orbits of the original system using a finite set of symbols. Important applications to the ergodic theory of Anosov diffeomorphisms were developed by R. Bowen through the use of Markov partitions and their associated incidence matrices \cite{BowenAnosovbook}. A detailed exposition on symbolic dynamics, relevant to our present research, is available in \cite{MarcusLindSynbolic} The notion of \emph{geometric Markov partition} was first introduced by C. Bonatti and R. Langevin in \cite{bonatti1998diffeomorphismes} in their study of non-trivial saddle-type hyperbolic sets of $C^1$ structurally stable diffeomorphisms of surfaces (see \cite{hasselblatt2002handbook} for background on hyperbolic dynamics), with the goal of classifying their invariant neighborhoods. This project was later completed for basic pieces through the subsequent work of F. Béguin \cite{beguin1999champs, beguin2002classification, beguin2004smale}, who introduced a combinatorial object associated to each geometric Markov partition: its geometric type.

In a nutshell, a geometric Markov partition for a pseudo-Anosov homeomorphism $f \colon S \to S$ is a Markov partition $\mathcal{R} = \{R_i\}_{i=1}^{n}$ of $f$, where each rectangle is endowed with an orientation on its stable and unstable foliations. The combined orientation of these foliations is required to be coherent with the orientation of the surface $S$. The positive orientation of the unstable foliation is referred to as the vertical direction of the rectangle, and, correspondingly, the stable foliation defines the horizontal direction. The coefficient $a_{ij}$ of the incidence matrix associated to the pair $(f, \mathcal{R})$ counts how many horizontal sub-rectangles of $R_i\in \cR$ are mapped to the rectangle $R_j\in \cR$ under the action of $f$. The geometric type of $(f, \mathcal{R})$, denoted by $\mathcal{T}(f, \mathcal{R})$, encodes not only the number of sub-rectangles of $R_i$ mapped to $R_j$ by $f$, but also the ordering and the changes in orientation.

In this paper, we assume the existence of Markov partitions for any pseudo-Anosov homeomorphism as stated in \cite{fathi2021thurston} to prove the following theorem.

\begin{theo}\label{Theo: Total invariant}
	A pair of pseudo-Anosov homeomorphisms admits geometric Markov partitions with the same geometric type if and only if they are topologically conjugate via an orientation-preserving homeomorphism.
\end{theo}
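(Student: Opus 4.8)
The plan is to treat the two implications separately; the forward one is routine transport of structure, while the converse is the symbolic-dynamical core of the statement. For the forward direction, suppose $h\colon S_1\to S_2$ is an orientation-preserving homeomorphism with $h\circ f_1=f_2\circ h$, and let $\cR=\{R_i\}$ be a geometric Markov partition of $f_1$. I would first check that $h(\cR):=\{h(R_i)\}$ is a Markov partition of $f_2$: the Markov inclusions for $f_2$ are the $h$-images of those for $f_1$, and $h$ carries the stable and unstable foliations of $f_1$ onto those of $f_2$. Pushing forward the vertical orientations of the $R_i$ by $h$ makes $h(\cR)$ geometric, and since $h$ preserves the orientation of the surface, the combined orientations remain coherent with the ambient one. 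As $h$ intertwines $f_1$ with $f_2$, it sends the horizontal sub-rectangles of $R_i$, with their linear order and their vertical orientations, to those of $h(R_i)$; hence the incidence matrix, the orderings, and the orientation signs all match, i.e.\ $\bT(f_1,\cR)=\bT(f_2,h(\cR))$.

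For the converse, assume $\cR_1=\{R^1_i\}_{i=1}^n$ and $\cR_2=\{R^2_i\}_{i=1}^n$ are geometric Markov partitions of $f_1$ and $f_2$ with $\bT(f_1,\cR_1)=\bT(f_2,\cR_2)=:T$. In particular they share an incidence matrix $A$, hence a common subshift of finite type $(\Sigma_A,\sigma)$, and the Markov coding gives, for $k=1,2$, a continuous surjection $\pi_k\colon\Sigma_A\to S_k$ with $\pi_k\circ\sigma=f_k\circ\pi_k$, which is injective on the residual set of sequences whose orbit never visits the boundary $\partial\cR_k$. The aim is to realise the conjugacy as $h:=\pi_2\circ\pi_1^{-1}$, and the key point, which I expect to be the main obstacle, is the following claim: \emph{the equivalence relation $x\sim_k y\Longleftrightarrow\pi_k(x)=\pi_k(y)$ on $\Sigma_A$ depends only on $T$}, so that $\sim_1\,=\,\sim_2$. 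I would prove this by identifying $\sim_k$ with a relation $\sim_T$ read off combinatorially from the geometric type. The failure of injectivity of $\pi_k$ is caused exactly by the geometry of $\cR_k$ along the stable and unstable boundaries of its rectangles and at the prongs of the singular points, and the geometric type was designed to record precisely this data: the entries $a_{ij}$, the position of each horizontal sub-rectangle of $R^k_i$ inside $R^k_i$, and the sign indicating whether $f_k$ preserves or reverses its vertical orientation. Two sequences should represent the same point of a stable leaf on $\partial\cR_k$ iff their forward itineraries are merged, step by step, by the sub-rectangle orderings prescribed by $T$, and dually along unstable leaves; the identifications around a $p$-prong singular point are the finite cyclic closures of these two merging rules, again determined by $T$. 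Carrying out this matching — showing that a point's transverse position inside each rectangle is pinned down, modulo the Markov identifications, by its itinerary together with the data of $T$, and treating the singular points where several rectangles are cyclically glued — is the technical heart of the argument.

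Granting the claim, $h=\pi_2\circ\pi_1^{-1}\colon S_1\to S_2$ is a well-defined bijection: it is defined since $\ker\pi_1\subseteq\ker\pi_2$, surjective since $\pi_2$ is, and injective since $\ker\pi_2\subseteq\ker\pi_1$. Because $\pi_1$ is a quotient map onto the compact Hausdorff space $S_1$ and $\pi_2$ is constant on the fibres of $\pi_1$, the map $h$ is continuous, hence a homeomorphism by compactness. The relation $\pi_k\circ\sigma=f_k\circ\pi_k$ yields $h\circ f_1=f_2\circ h$ on the dense set $\pi_1(\Sigma_A)=S_1$, hence everywhere. Finally, $h$ is orientation-preserving: on the interior of each rectangle it carries the oriented unstable and stable foliations of $(f_1,\cR_1)$ onto those of $(f_2,\cR_2)$, vertical to vertical and horizontal to horizontal, and since both geometric structures are coherent with the ambient orientations, $h$ preserves orientation on the dense open union of the rectangle interiors, and therefore on all of $S_1$. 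The remaining work beyond the claim is the standard symbolic-dynamics packaging; the orientation bookkeeping and the singular-point analysis inside the claim are where the difficulty concentrates.
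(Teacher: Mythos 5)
Your overall strategy coincides with the paper's: the forward direction is the transport-of-structure argument via the induced partition $h(\cR)$, and the converse is realized as $h=\pi_2\circ\pi_1^{-1}$ after showing that the fiber relation of each coding map is a relation $\sim_T$ determined purely by the geometric type. The key claim you isolate is exactly the paper's Proposition on $\sim_T$, and the mechanism you describe (merging of itineraries along stable/unstable boundaries according to the sub-rectangle orderings and signs, plus cyclic closure at singular points) is the one the paper implements over two sections via the $s$- and $u$-generating functions and the sector codes.

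There is, however, one concrete step in your converse that would fail as written: you pass directly from a common incidence matrix $A$ to a coding map $\pi_k\colon\Sigma_A\to S_k$, but when $A$ is not binary the set $\bigcap_{z=-n}^{n} f_k^{-z}(\overset{o}{R^k_{w_z}})$ can have several connected components (one for each of the $a_{w_z w_{z+1}}$ horizontal sub-rectangles realizing a given transition), so the nested intersection defining $\pi_k(\bw)$ is a finite set rather than a point and $\pi_k$ is not single-valued. The paper deals with this by first replacing each partition by its \emph{binary refinement} (the partition by horizontal sub-rectangles), and the nontrivial content of that reduction is that the two refined partitions again share a common geometric type $\cB(T)$ computable from $T$ alone — without this, the two refined subshifts need not be identified. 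You should insert this reduction before defining $\pi_k$. A second, more minor omission: your final assertion that $h$ carries the oriented foliations of each $R^1_i$ onto those of $R^2_i$ ``vertical to vertical'' needs justification; the paper obtains it by iterating the partition until each rectangle contains at least two horizontal and two vertical sub-rectangles and using that $h$ preserves their indexing, which pins down the orientations. Neither issue changes the architecture of the proof, but both are steps your argument currently skips rather than merely defers.
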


In subsequent articles, we aim to address the following problems:
\begin{enumerate}
	\item Construct a canonical class of geometric types.
	\item Characterize those geometric types realized by geometric Markov partitions of pseudo-Anosov homeomorphisms.
	\item Provide an algorithm to determine when two geometric types correspond to conjugate pseudo-Anosov homeomorphisms.
\end{enumerate}

\section{Preliminaries} \label{Sec: Preliminares}.

The following definition includes the cases of Toral Anosov diffeomorphisms and (generalized) pseudo-Anosov homeomorphisms, which may exhibit singularities (spines) in their foliations. Novel generalizations of such maps were introduced by P. Boyland, A. De Carvalho, and T. Hall \cite{boyland2025unimodal}, \cite{Hall2004unimodal}, but these are not considered here.

\begin{defi}\label{Defi: pseudo-Anosov}
An orientation-preserving homeomorphism $f: S \to S$ is a \emph{generalized pseudo-Anosov} (abbr. \textbf{p-A} map) if there exist two transverse, $f$-invariant measured foliations $(\mathcal{F}^s, \mu^s)$ (stable) and $(\mathcal{F}^u, \mu^u)$ (unstable) that share the same singular set and singularity types, and there exists a \emph{stretch factor} $\lambda > 1$ such that:
$$
f_*(\mu^u) = \lambda \mu^u \quad \text{and} \quad f_*(\mu^s) = \lambda^{-1} \mu^s.
$$
The \emph{singular set} of $f$ is given by $\mathrm{Sing}(f) := \mathrm{Sing}(\mathcal{F}^s) = \mathrm{Sing}(\mathcal{F}^u)$.\\
 When $S = \mathbb{T}^2$ and $f$ is Anosov, we additionally require that $\mathrm{Sing}(f)$ is a finite set of $f$-periodic orbits.
\end{defi}
Let us recall that a pair of surface homeomorphisms $f \colon S \to S$ and $g \colon S' \to S'$ are said to be topologically conjugate (or abbr. conjugate) if there exists a homeomorphism $h \colon S \to S'$ such that 
\[
g = h \circ f \circ h^{-1}.
\]
We denote this relation by $f \sim_{\mathrm{Top}} g$.

\subsection{Geometric Markov partitions}
An open and connected set $r$ is trivially bi-foliated by $\mathcal{F}^s$ and $\mathcal{F}^u$ if the following holds:
\begin{itemize}
	\item For every $x \in r$, let $I_x$ be the unique connected component of the intersection $F^s_x \cap r$, where $F^s_x \in \mathcal{F}^s$ is the stable leaf passing through $x$, and let $J_x$ be the unique connected component of the intersection $F^u_x \cap r$, where $F^u_x \in \mathcal{F}^u$ is the unstable leaf passing through $x$.
\end{itemize}

A rectangle is the closure of any open and connected set $r$ which is triviality bi-foliated by $\mathcal{F}^s$ and $\mathcal{F}^u$.

\begin{defi}\label{Defi: Rectangle}
	A compact subset $R$ of $S$ is a parametrized rectangle adapted to $f$ if there exists a continuous function $\rho:\II^2 \rightarrow S$ whose image is $R$  and satisfying the following conditions:
	\begin{itemize}
		\item $\rho:\overset{o}{\II^2} \to S$ is an orientation preserving homeomorphism onto its image that we call \emph{interior of the rectangle } and is denoted by $\overset{o}{R}:=\rho(\overset{o}{\II^2})$.
		\item For every $t\in [0,1]$, $I_t:=\rho([0,1]\times \{t\})$ is contained in a unique leaf of $\mathcal{F}^s$, and $\rho\vert_{[0,1]\times \{t\}}$ is a homeomorphism onto its image. 
		The \emph{horizontal foliation} of $R$ is the decomposition of $R$ given by the \emph{stable intervals}: $\cI(R):=\{I_t\}_{t\in[0,1]}$.
		\item For every $t\in [0,1]$, $J_t:=\rho(\{t\} \times [0,1])$ is contained in a unique leaf of $\mathcal{F}^u$, and $\rho\vert_{\{t\}\times [0,1]}$ is a homeomorphism onto its image. 
		The \emph{vertical foliation}  is the decomposition of $R$ given by the \emph{vertical intervals}: $\cJ(R):=\{J_t\}_{t\in[0,1]}$
	\end{itemize}
	A function like $\rho$ is called a \emph{parametrization} of $R$.
\end{defi}
It is not difficult to see that every rectangle $R = \overline{r}$ admits a parametrization. To this end,  using the transverse measures of the foliations one can construct a homeomorphism  $\phi$ from $r$ to the interior of an affine rectangle $H \subset \mathbb{R}^2$. Then, one extends the inverse map $\phi^{-1}$ continuously to $H$, and, if necessary, composes $\phi^{-1}$ with an orientation-reversing homeomorphism and a linear map of the form $(x,y) \to (\alpha x , \beta y)$ from  $H$ to the unitary square, thus obtaining a parametrization of $R$. This motivates the following lemma.

\begin{lemm}\label{Lemm: every rectangle is a parametrized rectangle}
Every rectangle is a parametrized rectangle.
\end{lemm}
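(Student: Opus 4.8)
The plan is to make precise the informal construction sketched in the paragraph preceding the lemma, and to verify that the map it produces satisfies all three bullet points of Definition~\ref{Defi: Rectangle}. So let $R=\overline r$ be a rectangle, where $r$ is open, connected, and trivially bi-foliated by $\mathcal F^s$ and $\mathcal F^u$. First I would use the transverse invariant measures $\mu^s$ and $\mu^u$ to build a chart on $r$: fix a base point $x_0\in r$, and for $x\in r$ define $\phi(x)=(\mu^u(I_{x_0,x}),\mu^s(J_{x_0,x}))$, where $I_{x_0,x}$ is the stable arc in $r$ joining the leaf $F^u_{x_0}$ to $x$ (measured with $\mu^u$, which is the transverse measure for $\mathcal F^s$) and $J_{x_0,x}$ is the unstable arc joining $F^s_{x_0}$ to $x$. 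Here one has to invoke trivial bi-foliation to guarantee that these arcs exist, are unique, and that $\phi$ is well defined; the holonomy along leaves inside $r$ being trivial is exactly what makes the two coordinates independent. One checks that $\phi$ is a homeomorphism from $r$ onto the interior of an affine (open) rectangle $H=(0,A)\times(0,B)\subset\RR^2$, sending stable arcs to horizontal segments and unstable arcs to vertical segments, and that $\phi$ can be chosen orientation preserving (post-compose with $(x,y)\mapsto(-x,y)$ if not).

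Next I would extend $\phi^{-1}\colon H\to r$ to the closures. Since $R=\overline r$ is compact and the transverse measures are finite on compact transversals, $\phi^{-1}$ is uniformly continuous on $H$ (with respect to an appropriate metric), hence extends continuously to a map $\overline H\to R$; surjectivity onto $R$ follows because the image is compact, contains $r$, and is contained in $R$. Finally, I would rescale: let $L\colon \overline H\to\II^2$ be the affine map $(x,y)\mapsto(x/A,y/B)$, and set $\rho=\phi^{-1}\circ L^{-1}\colon\II^2\to R$. Then $\rho$ is continuous with image $R$; its restriction to $\overset{o}{\II^2}$ is the composition of the homeomorphism $L^{-1}|_{\overset{o}{\II^2}}$ with $\phi^{-1}|_H$, hence an orientation-preserving homeomorphism onto $r=\overset{o}{R}$; for each $t$, $\rho([0,1]\times\{t\})$ is the closure of a horizontal segment's image, which lies in a single stable leaf (the leaf through the corresponding point) and is parametrized homeomorphically because $\mu^u$ restricts to a nonatomic measure of full support on that stable arc; symmetrically for the vertical intervals. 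This verifies the three conditions, so $\rho$ is a parametrization and $R$ is a parametrized rectangle.

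The main obstacle is the boundary analysis: the interior map $\phi^{-1}$ is a homeomorphism, but its continuous extension to $\overline H$ need not be injective on $\partial H$ — a whole edge of $\partial H$ can collapse, for instance if a prong singularity of the foliation sits on the boundary of $R$, or two edges may be glued. This is why the definition only demands that $\rho$ be a homeomorphism on the open square and merely continuous on $\II^2$, and why the leaves $I_t,J_t$ are defined for the parametrization rather than intrinsically. I would therefore be careful to claim injectivity only on $\overset{o}{\II^2}$, and to check the ``homeomorphism onto its image'' clauses for $\rho|_{[0,1]\times\{t\}}$ and $\rho|_{\{t\}\times[0,1]}$ directly from the fact that arc length in the transverse measure gives a homeomorphism from the parameter interval onto the corresponding stable or unstable arc in $R$, independently of what happens on the rest of $\partial\II^2$. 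A secondary technical point is orienting things coherently: one must pin down the orientation of $\RR^2$, of $S$ near $r$, and the sign conventions for $\mu^s,\mu^u$ so that ``orientation preserving'' in the first bullet is literally true, adjusting by a reflection exactly once if needed.
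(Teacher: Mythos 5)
Your proposal is correct and follows essentially the same route the paper takes: the paper itself only offers the informal sketch in the paragraph preceding the lemma (build $\phi$ on $r$ from the transverse measures, extend $\phi^{-1}$ continuously, then compose with a reflection and a rescaling onto $\II^2$), and your write-up is a faithful, more careful elaboration of exactly that construction, including the correct caution that injectivity is only claimed on the open square. No gaps to report.
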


The parametrization of $R$ is not unique bu any parametrization must still send horizontal and vertical intervals of $\mathbb{I}^2$ to stable and unstable intervals contained in a single leaf of $\mathcal{F}^{s,u}$. This leads to the notion of \emph{equivalent parametrizations}.

\begin{defi}\label{Defi: equivalent parametrizations}
	Let $\rho_1$ and $\rho_2$ be two parametrizations of the rectangle $R$, and let
	\begin{equation}\label{Equa: Cambio cordenadas}
		\rho_2^{-1} \circ \rho_1 := (\varphi_s, \varphi_u) : (0,1)\times(0,1) \rightarrow (0,1)\times(0,1).
	\end{equation}
	
	The parametrizations are said to be equivalent if $\varphi_s$ and $\varphi_u : (0,1) \to (0,1)$ are increasing homeomorphisms.
\end{defi}

The vertical and horizontal foliations of the unit square $\mathbb{I}^2$ can each be jointly oriented in four possible ways. Among the four combinations of orientations, only two induce an orientation on $\mathbb{I}^2$ that agrees with the standard orientation of $\mathbb{R}^2$. Since such compatibility depends solely on the choice of a vertical orientation, we use this observation to define the notion of a \emph{geometric rectangle}.

\begin{defi}\label{Defi: Geometric rectangle}
	Let $\rho: \mathbb{I}^2 \to R$ be a (class of) parametrization of a rectangle $R$. We say that $R$ is \emph{geometrized} if we choose an orientation on the vertical leaves of $\mathbb{I}^2$ and then induce an orientation on the unstable foliation of $R$ via the map $\rho$. 
	
	Next, we endow the horizontal leaves of $\mathbb{I}^2$ with the unique orientation that, together with the chosen vertical direction, induces the standard orientation of $\mathbb{R}^2$. This orientation is then transferred to the stable foliation of $R$ via $\rho$.
\end{defi}

\begin{defi}\label{Defi: Vertical/horizontal subrec}
	Let $R$ be a rectangle. A rectangle $H \subset R$ is a \emph{horizontal sub-rectangle} of $R$ if, for all $x \in \overset{o}{H}$, the leaf of the horizontal foliation of $H$, denoted $\mathcal{I}(H)$, passing through $x$ coincides with the leaf of the horizontal foliation of $R$, denoted $\mathcal{I}(R)$, passing through $x$.\\
	Similarly, a rectangle $V \subset R$ is a \emph{vertical sub-rectangle} of $R$ if, for all $x \in \overset{o}{V}$, the leaf of the vertical foliation of $V$, denoted $\mathcal{J}(V)$, passing through $x$ coincides with the leaf of the vertical foliation of $R$, denoted $\mathcal{J}(R)$, passing through $x$.
\end{defi}

Now we can pose a formal definition of our main objects.

\begin{defi}[Geometric Markov partition]\label{Defi: Geometric Markov partition}
	Let $f: S \rightarrow S$ be a pseudo-Anosov homeomorphism. A \emph{Markov partition} for $f$ is a finite collection of rectangles $\mathcal{R} = \{ R_i \}_{i=1}^n$ satisfying the following properties:
	\begin{itemize}
		\item The surface is covered by the rectangles: $S = \bigcup_{i=1}^n R_i$.
		
		\item The rectangles have pairwise disjoint interiors, i.e., for all $i \neq j$, 
	$$
\overset{\circ}{R_i} \cap \overset{\circ}{R_j} = \emptyset.
	$$
		\item For every $i, j \in \{1, \ldots, n\}$, the closure of each non-empty connected component of $\overset{\circ}{R_i} \cap f^{-1}(\overset{\circ}{R_j})$ is a horizontal subrectangle of $R_i$.
		
		\item For every $i, j \in \{1, \ldots, n\}$, the closure of each non-empty connected component of $f(\overset{\circ}{R_i}) \cap \overset{\circ}{R_j}$ is a vertical subrectangle of $R_j$.
	\end{itemize}
	
	If, in addition, every rectangle in $\mathcal{R}$ is geometrized, we call $\mathcal{R}$ a \emph{geometric Markov partition}. In this case, we write $(f, \mathcal{R})$ to indicate that $\mathcal{R}$ is a geometric Markov partition for $f$. 	Finally, the families of horizontal and vertical subrectangles appearing in the last items are the \emph{horizontal} and \emph{vertical subrectangles} of the Markov partition $(f, \mathcal{R})$, respectively.
\end{defi}

Let us to introduce some notation and definitions that we must to appeal in the future.
\begin{defi}\label{Defi: L-R sides}
	Let $R$ be a geometric rectangle adapted to $f$, and let $\rho:\II^2\to R$ be any parametrization of $R$. Consider the following subsets of $R$:
	\begin{itemize}
		\item The \emph{left and right} sides: $\partial^u_{-1}R := \rho(\{0\} \times [0,1])$ and $\partial^u_{1}R := \rho(\{1\} \times [0,1])$, respectively. Each of these is called an $s$-\emph{boundary component} of $R$.
		
		\item The \emph{lower and upper} sides: $\partial^s_{-1}R := \rho([0,1] \times \{0\})$ and $\partial^s_{+1}R := \rho([0,1] \times \{1\})$, respectively. Each of these is called a $u$-\emph{boundary component} of $R$.
		
		\item The \emph{horizontal or stable boundary} $\partial^s R := \partial^s_{-1}R \cup \partial^s_{+1}R$, and the \emph{vertical or unstable boundary} $\partial^u R := \partial^u_{-1}R \cup \partial^u_{+1}R$.
		
		\item The \emph{boundary} of $R$: $\partial R := \partial^s R \cup \partial^u R$.
		
		\item The \emph{corners} of $R$: for $s, t \in \{0,1\}$, the corner labeled $C_{s,t} := \rho(s,t)$.
		
		\item For all $x \in \overset{o}{R}$, let $I_x \in \mathcal{I}(R)$ be the unique leaf of the horizontal foliation of $R$ passing through $x$, and let $J_x \in \mathcal{J}(R)$ be the unique leaf of the vertical foliation of $R$ passing through $x$.
	\end{itemize}
\end{defi}

We have similar definitions for a geometric Markov partition.

\begin{defi}\label{Defi: boundary points}
	Let $\mathcal{R} = \{R_i\}_{i=1}^n$ be a Markov partition for $f$. We define the following distinguished sets:
	\begin{enumerate}
		\item The \emph{horizontal or stable boundary} of $\mathcal{R}$ is the union of the $s$-boundaries components of the rectangles in $\mathcal{R}$:
		$$
		\partial^s \mathcal{R} := \bigcup_{i=1}^n \partial^s R_i.
		$$
		\item The \emph{vertical or unstable boundary} of $\mathcal{R}$ is the union of the $u$-boundaries of the rectangles in $\mathcal{R}$:
		$$
		\partial^u \mathcal{R} := \bigcup_{i=1}^n \partial^u R_i.
		$$
		\item The \emph{boundary} of $\mathcal{R}$ is the union of the vertical and horizontal  boundaries of the partition:
		$$
		\partial \mathcal{R} := \partial^s \mathcal{R} \cap \partial^u \mathcal{R}.
		$$
		\item The \emph{interior} of $\mathcal{R}$ is the union of the interiors of all the rectangles in $\mathcal{R}$:
		$$
	\overset{o}{\cR} := \bigcup_{i=1}^n \overset{o}{R_i}
		$$
	\end{enumerate}
	
	Let $p$ be a periodic point of $f$. Then:
	\begin{enumerate}
		\item $p$ is an $s$-\emph{boundary periodic point} of $\mathcal{R}$ if $p \in \partial^s \mathcal{R}$; a $u$-\emph{boundary periodic point} if $p \in \partial^u \mathcal{R}$; and a \emph{boundary periodic point} if $p \in \partial \mathcal{R}$. The sets of such periodic points are denoted  $\textbf{Per}^{s,u,b}(f, \mathcal{R})$, respectively.
		\item $p$ is an \emph{interior periodic point} if $p \in  \overset{o}{\cR}$. This set of inteiror periodic points is denoted $\textbf{Per}^I(f, \mathcal{R})$.
		\item $p$ is a \emph{corner periodic point} if there exists $i \in \{1, \ldots, n\}$ such that $p$ is a corner point of $R_i$. This set is denoted by $\textbf{Per}^C(f, \mathcal{R})$.
	\end{enumerate}
\end{defi}

\subsubsection{The Arnold' Cat map}

\subsubsection{ The Plykin attractor}

\subsection{Abstract geometric types}
We now introduce a class of abstract combinatorial objects called \emph{abstract geometric types}. Although their formal definition might seem little intuitive at first, these objects arise naturally when studying how rectangles in a geometric Markov partition evolve under iteration by the associated \textbf{p-A} homeomorphism. The terms we use in the definition are chosen carefully to reflect this geometric background.

\begin{defi}\label{Defi: Abstract geometric types}
    An \emph{abstract geometric type} is formally defined as an ordered quadruple:
    \begin{equation}\label{Equ: Defi geom type}
        T = \Big(n,\ \{(h_i,v_i)\}_{i=1}^n,\ \rho_T:\mathcal{H}(T)\to \mathcal{V}(T),\ \epsilon_T: \mathcal{H}(T) \to \{-1,1\}\Big),
    \end{equation}
    satisfying the following axioms:
    
    \begin{enumerate}[leftmargin=*,align=left]
        \item The parameters $n, h_i, v_i \in \mathbb{N}_+$ are strictly positive integers called:
        \begin{itemize}
            \item $n$ - the \emph{number of base rectangles} of $T$;
            \item $h_i$ - the \emph{number of horizontal subrectangles} of the $i$-th base rectangle;
            \item $v_i$ - the \emph{number of vertical subrectangles} of the $i$-th base rectangle.
        \end{itemize}
        
        \item The numbers of horizontal and vertical subrectangles satisfy the combinatorial balance condition:
        \begin{equation}\label{Equ: alpha(T}
        \sum_{i=1}^{n} h_i = \sum_{i=1}^n v_i =: \alpha(T) \in \mathbb{N}_+,
        \end{equation} 
        and we call $\alpha(T)$ the \emph{number of sub-rectangles} of $T$;
        
        \item The \emph{horizontal} and \emph{vertical labels} of $T$ are given by the disjoint sets:
        \begin{align}\label{Defi: Labels of T}
            \mathcal{H}(T) &:= \{(i,j) : 1 \leq i \leq n \text{ and } 1 \leq j \leq h_i\}, \\
            \mathcal{V}(T) &:= \{(k,l) : 1 \leq k \leq n \text{ and } 1 \leq l \leq v_k\},
        \end{align}
        which constitute the \emph{labeling system} of $T$.
        
        \item The structure includes:
        \begin{itemize}\label{Defi: Estructura de T}
            \item A bijection $\rho_T: \mathcal{H}(T) \rightarrow \mathcal{V}(T)$, called the \emph{permutation part} of $T$;
            \item An arbitrary function $\epsilon_T: \mathcal{H}(T) \rightarrow \{-1,1\}$, called the \emph{orientation part} of $T$.
        \end{itemize}
    \end{enumerate}
    The set of all abstract geometric types is denoted by $\mathcal{GT}$.
\end{defi}

Let $T \in \mathcal{GT}$ be an abstract geometric type. If no other geometric types are under discussion, we use the notation:
$$
T = (n, \{(h_i,v_i)\}, \rho, \epsilon).
$$

\subsection{The geometric type of a geometric Markov partition}

Let $f: S \to S$ be a \textbf{p-A} homeomorphism with stable and unstable foliations $(\mathcal{F}^s, \mu^s)$ and $(\mathcal{F}^u, \mu^u)$, respectively, and let $\mathcal{R} = { R_i }_{i=1}^n$ be a geometric Markov partition for $f$. We shall to make explicit a natural way to associate a unique geometric type to the pair \((f, \mathcal{R})\), which we now make explicit. 
Our first step is to label the vertical and horizontal subrectangles of $(f, \mathcal{R})$, as introduced at the end of Definition \ref{Defi: Geometric Markov partition}, according to the vertical and horizontal orientations of the rectangles in $\mathcal{R}$ in which they are contained.

\begin{defi}\label{Defi: Label and geometrization of subrectangles}
	Let $f: S \to S$ be a \textbf{p-A} homeomorphism, and let $\mathcal{R} = \{R_i\}_{i=1}^n$ be a geometric Markov partition of $f$.
	
	\begin{itemize}
		\item Let $h_i \geq 1$ be the number of horizontal subrectangles of $(f,\mathcal{R})$ contained in $R_i$. We label them from \emph{bottom to top} as $\{H^i_j\}_{j=1}^{h_i}$, according to the vertical direction of $R_i$.
		
		\item Let $v_k \geq 1$ be the number of vertical subrectangles contained in $R_k$. We label them from \emph{left to right} as $\{V^k_l\}_{l=1}^{v_k}$, according to the horizontal direction of $R_k$.
	\end{itemize}
	
	We assign to each horizontal subrectangle $H^i_j \subset R_i$ and each vertical subrectangle $V^k_l \subset R_k$ the same horizontal and vertical directions as those of the rectangles $R_i$ and $R_k$ in which they are contained.
\end{defi}

\begin{defi}\label{Defi: Labels of (f,R) }
The set of horizontal labels of $(f, \mathcal{R})$ is the formal set
$$
\mathcal{H}(f, \mathcal{R}) = \{ (i, j) : 1 \leq i \leq n \text{ and } 1 \leq j \leq h_i \},
$$
and the corresponding set of vertical labels is
$$
\mathcal{V}(f, \mathcal{R}) = \{ (k, \ell) : 1 \leq k \leq n \text{ and } 1 \leq \ell \leq v_k \}.
$$
\end{defi}

\begin{defi}\label{Defi: Permutation oritation (f,R) }
	Define the bijection $\rho : \mathcal{H}(f, \mathcal{R}) \to \mathcal{V}(f, \mathcal{R})$ by
\begin{equation}\label{Equa: permitation (f,R)}
		\rho(i, j) = (k, \ell) \quad \text{if and only if} \quad f(H^i_j) = V^k_\ell,
\end{equation}
 and orientation change function $\epsilon : \mathcal{H}(f, \mathcal{R}) \to \{-1, 1\}$ as
\begin{equation}\label{Equa: Orientation (f,R)}
	\epsilon(i, j) = 
\begin{cases}
	1 & \text{if the vertical directions of } f(H^i_j) \text{ and } V^k_\ell \text{ coincide,} \\
	-1 & \text{otherwise,}
\end{cases}
\end{equation}
	whenever $\rho(i, j) = (k, \ell)$.
\end{defi}

Now we can put all this information in a single definition.

\begin{defi}\label{Defi: geometric type of a Markov partition}
	Let $\mathcal{R} = \{R_i\}_{i=1}^n$ be a geometric Markov partition for $f$. The \emph{geometric type} of the pair $(f, \mathcal{R})$ is defined as
	$$
	\mathcal{T}(f, \mathcal{R}) := \left(n, \{(h_i, v_i)\}_{i=1}^n, \rho, \epsilon\right),
	$$
	where:
	\begin{itemize}
		\item $n$ is the number of rectangles in the family $\mathcal{R}$;
		\item $h_i$ and $v_i$ are the numbers of horizontal and vertical subrectangles of $(f, \mathcal{R})$ contained in $R_i$;
		\item $\rho : \mathcal{H}(f, \mathcal{R}) \to \mathcal{V}(f, \mathcal{R})$ is the bijection defined in Equation~\ref{Equa: permitation (f,R)};
		\item $\epsilon : \mathcal{H}(f, \mathcal{R}) \to \{-1, 1\}$ is the orientation function defined in Equation~\ref{Equa: Orientation (f,R)}.
	\end{itemize}
\end{defi}

\begin{defi}\label{Defi: The pseudo-anosov Class}
	An abstract geometric type $T \in \cG\cT$ is in the pseudo-Anosov class if there exists a pseudo-Anosov homeomorphism $f:S \to S$ with a geometric Markov partition $\cR$ such that the geometric type of the pair is the geometric type $T$:
	$$
	T = \cT(f, \cR).
	$$
	In this case, we say the pair $(f, \cR)$ realizes or is a realization of the geometric type $T$. The pseudo-Anosov class is denoted by $\cG\cT(\textbf{p-A})$.
\end{defi}

\section{The symbolic dynamics of a geometric type}

Let $\cR=\{R_i\}_{i=1}^0$ be a geometric Markov partition of  the \textbf{p-A} homeomorphisms $f:S\to S$. In \cite[Exposition 10]{fathi2021thurston} the incidence matrix of the pair $(f,\cR)$ is introduce as  is matrix whose coefficient  $a_{ij}$ is $1$ if $f(\overset{o}{R_i}) \cap \overset{o}{R_k} \neq \emptyset$ and $0$ otherwise, without taking into account the number of intersections, and such information is essential for the develop of our results.

\begin{defi}\label{Defi: Incidence matrix geo markov partition}
	Let $f$ be a \textbf{p-A} homeomorphism and let $\mathcal{R}$ be a geometric Markov partition of $f$. Let  $T := \cT(f,\cR)$ be the geometric type of the pair, where: $T = \left( n, \{h_i,v_i,\rho,\epsilon \} \right)$. The incidence matrix of the pair $(f, \mathcal{R})$ is the $n \times n$ integer matrix $A(f,\mathcal{R})$ whose coefficients are defined as follows:
	$$
	a_{ik} = \#\{j \in \{1,\cdots,h_i\} : \rho = (k,l)\}.
	$$
	
	i.e., it is equal to the number of horizontal sub-rectangles of $R_i$ that $f$ sends to vertical sub-rectangles of $R_k$.
\end{defi}

\begin{rema}\label{Rema: Notation incidece matriz Tipo}
	Let $T = \left( n, \{h_i, v_i, \rho, \epsilon \} \right)$ be an abstract geometric type. We can define its incidence matrix $A(T)$ as:
	$$
	a_{ik} = \#\{j \in \{1,\cdots,h_i\} : \rho = (k,l)\}.
	$$
	
	Since the coefficients in the incidence matrix $A(f,\cR)$ only depend on the geometric type $T$ of the pair $(f,\cR)$, the following notations are considered equivalent:
	$$
	A(f,\cR) = A(\cT(f,\cR)) = A(T).
	$$
\end{rema}

If $A$ is a square matrix and $n \in \mathbb{N}$, the coefficients of the $n$-th power of $A$ are denoted as $A^n = (a_{i,j}^{(n)})$.

\begin{defi}\label{Defi: mixign y binary.}
	Let $A = (a_{ij})$ be an $n \times n$ matrix with integer coefficients. Then $A$:
	\begin{itemize}
		\item is \emph{non-negative} and denoted $A \geq 0$ if for all $1 \leq i,j \leq n$, $a_{i,j} \in \mathbb{N}$.
		\item is \emph{positive definite} and denoted $A > 0$ if for all $1 \leq i,j \leq n$, $a_{i,j} \in \mathbb{N}_+$ is a positive integer.
		\item is \emph{binary} if all its coefficients are either $0$ or $1$.
	\end{itemize}
	
	Finally, if $A$ is a non-negative matrix, we say that $A$ is \emph{mixing} if there exists $N \in \mathbb{N}_+$ such that $A^N$ is positive definite.
\end{defi}

Let $I_n = \{1,\cdots,n\}$ be a finite set called the \emph{alphabet}. A bi-infinite word is a function from $\mathbb{Z} \to I_n$, and we denote its elements as:

\begin{equation}
	\bw = ( \cdots, w_{-2}, w_{-1}, \underline{w_0}, w_{1}, w_{2}, \dots)
\end{equation}

where the underline in $w_0$ indicates the position $0$ inside the word.

 Let $\Sigma$ be the set of bi-infinite words endowed with the topology induced by the metric: 
 $$d_{\Sigma}(\bw, \bv) = \sum_{z \in \mathbb{Z}} \frac{\delta(w_z, v_z)}{2^{|z|}},
 $$
  where $\delta(w_z, v_z) = 0$ if $w_z = v_z$ and $\delta(w_z, v_z) = 1$ otherwise. This compact set is called the total shift space in $n$-symbols. 
  
  Let $\sigma:\Sigma \to \Sigma$ be the \emph{shift} transformation defined as follows: If $\bw = ( \cdots, w_{-2}, w_{-1}, \underline{w_0}, w_{1}, w_{2}, \dots) \in \Sigma$, then:
\begin{equation}
	\sigma(\bw) = ( \cdots, w_{-1}, w_{0}, \underline{w_1}, w_{2}, w_{3}, \dots).
\end{equation}

If $A$ is an $n \times n$ matrix, mixing and binary, the set 
$$
\Sigma_A := \{\underline{w} = (w_z)_{z \in \mathbb{Z}} \in \Sigma : \forall z \in \mathbb{Z}, (a_{w_z,w_{z+1}}) = 1 \}
$$
is a compact and $\sigma$-invariant set, i.e. $\sigma(\Sigma_A) = \Sigma_A$ (\cite[Chapter 1]{KitchensSymDym}), and the \emph{sub-shift of finite type} associated with $A$ is the dynamical system $(\Sigma_A, \sigma_A)$, where $\sigma_A := \sigma|_{\Sigma_A}$.

According to \cite[Lemma 10.21]{fathi2021thurston}, if $f$ is a pseudo Anosov homeomorphism and $\cR$ is a geometric Markov partition, the incidence matrix $A(f,\cR)$  is mixing.  In the following definition we use the notation develop in Remark \ref{Rema: Notation incidece matriz Tipo}.

\begin{defi}\label{Defi: Symbolically presentable and induced shift}
	
	A geometric type $T$ in the pseudo-Anosov class $\subset \mathcal{G}\mathcal{T}(\textbf{pA})$ whose incidence matrix $A(T)$ is binary is called \emph{symbolically presentable}. The set of \emph{symbolically presentable} geometric types is denoted by $\mathcal{G}\mathcal{T}(\textbf{pA})^{sp}$.
	If $T \in \mathcal{G}\mathcal{T}(\textbf{pA})^{sp}$ is a symbolically presentable geometric type, the \emph{sub-shift of finite type} induced by $T$ is the one determined by its incidence matrix $A(T)$; that is, the symbolic dynamical system $(\Sigma_{A(T)}, \sigma_{A(T)})$.
\end{defi}

It is clear that not every geometric type in the pseudo-Anosov class has a binary incidence matrix. However, in the next subsection, we will prove that every pseudo-Anosov homeomorphism admits a Markov partition whose incidence matrix is binary. Moreover, it is not trivially obvious that if two pairs, consisting of a homeomorphism and a Markov partition, have the same geometric type, i.e., $\cT(f,\cR)=\cT(g,\cG)$, then they each admit Markov partitions with the same geometric type and binary incidence matrix.

\subsection{The Binary refinement.}

In this subsection we shall to prove the following proposition. 
\begin{prop}\label{Prop: Refinamiento binario}
	Let $T \in \cG\cT(\textbf{p-A})$ be a geometric type in the pseudo-Anosov class. Let $f: S \to S$ and $g: S' \to S'$ be pseudo-Anosov homeomorphisms, and let $\cR_f$ and $\cR_g$ be geometric Markov partitions of the respective homeomorphisms, such that the geometric types of the pairs coincides with $T$:
	$$
	T := \cT(f, \cR_f) = \cT(g, \cR_g).
	$$
	Then there exist geometric Markov partitions $\bB(\cR_f)$ and $\bB(\cR_g)$ such that:
	\begin{itemize}
		\item The geometric types of the respective pairs are the same: 
		$$
		\cB(T) = \cT(f, \bB(\cR_f)) = \cT(g, \bB(\cR_g)).
		$$
		\item The incidence matrix of the partitions , 
		$$
		A(\cB(T)) = A(f, \bB(\cR_f)) = A(g, \bB(\cR_g)),$$ 
		is binary.
	\end{itemize}
	The symbolically presentable geometric type $\cB(T) \in \cG\cT(\textbf{p-A})^{sp}$ is the \emph{binary refinement} of $T$, and the geometric Markov partitions $\bB(\cR_f)$ and $\bB(\cR_g)$ are the \emph{horizontal refinements} of $\cR_f$ and $\cR_g$, respectively.
\end{prop}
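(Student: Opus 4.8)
The plan is to refine each rectangle $R_i$ of $\cR$ by cutting it along finitely many horizontal intervals so that every resulting smaller rectangle maps \emph{injectively} into a single rectangle of $\cR$ under $f$ — i.e. so that the incidence matrix becomes binary. The crucial point is that this refinement is dictated entirely by the combinatorics of the geometric type $T$ and nothing else, so performing it on $(f,\cR_f)$ and on $(g,\cR_g)$ yields pairs with one and the same geometric type $\cB(T)$, independent of the ambient surface or homeomorphism.

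First I would make the cutting explicit. Recall that $R_i$ contains $h_i$ horizontal subrectangles $H^i_1,\dots,H^i_{h_i}$ of $(f,\cR)$, stacked from bottom to top, and that $f(H^i_j)=V^{k}_{\ell}$ is a single vertical subrectangle of $R_k=R_{\rho(i,j)_1}$. Thus if I subdivide $R_i$ precisely into the $h_i$ horizontal subrectangles $H^i_j$ (closing up along the stable boundaries that separate consecutive $H^i_j$, and noting these stable arcs lie in $\partial^s\cR$ by the Markov property, so the new partition is still Markov), then $f$ maps each new rectangle $H^i_j$ homeomorphically onto the vertical subrectangle $V^k_\ell\subset R_k$, hence onto exactly one rectangle of the \emph{old} partition. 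Relabel the new rectangles: the base rectangles of $\bB(\cR)$ are indexed by the horizontal labels $(i,j)\in\mathcal{H}(T)$, giving $\alpha(T)$ base rectangles. Here is the key bookkeeping step: for the new partition, a horizontal subrectangle of $H^i_j$ arises from intersecting $\overset{o}{H^i_j}$ with $f^{-1}(\overset{o}{H^k_{j'}})$ for the various $k,j'$; equivalently, since $f(H^i_j)=V^k_\ell$ cuts across $R_k$ vertically, it meets the horizontal subrectangle $H^k_{j'}$ of $R_k$ in exactly one piece, for each $j'=1,\dots,h_k$. So each new base rectangle $H^i_j$ has exactly $h_k$ new horizontal subrectangles where $k=\rho(i,j)_1$, and each maps to a single new base rectangle $H^k_{j'}$ — the incidence matrix of $\bB(\cR)$ is therefore binary by construction.

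Then I would assemble the new geometric type $\cB(T)$ purely formally from $T$: the number of base rectangles is $\alpha(T)$; the number of horizontal subrectangles of the rectangle indexed $(i,j)$ is $h_{\rho(i,j)_1}$; the number of vertical subrectangles and the new permutation/orientation data are read off by composing the obvious inclusions — a vertical subrectangle of $H^k_\ell$ (as a new base rectangle) corresponds to the image under $f$ of a new horizontal subrectangle, and the vertical ordering and orientation change are inherited from those of $T$ via Definition~\ref{Defi: Label and geometrization of subrectangles}, using that the cuts respect the vertical orientations of the $R_i$. Writing these formulas out shows $\cB(T)$ depends only on $T$, whence $\cT(f,\bB(\cR_f))=\cB(T)=\cT(g,\bB(\cR_g))$ and their common incidence matrix $A(\cB(T))$ is binary. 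Finally $\cB(T)$ lies in $\cG\cT(\textbf{p-A})$ because it is realized by the pseudo-Anosov $f$, and being binary it is symbolically presentable.

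The main obstacle I anticipate is \emph{not} the topology of the cut — that is routine from the Markov property — but the combinatorial bookkeeping: one must verify carefully that the horizontal/vertical orderings and the orientation function $\epsilon$ of $\cB(T)$ are well-defined functions of $T$ alone, with no hidden dependence on the geometry (e.g. on how the vertical subrectangles of $R_k$ sit inside $f(H^i_j)$). The delicate case is when $f(H^i_j)=V^k_\ell$ has $\epsilon(i,j)=-1$, since then the induced vertical order on the sub-pieces of $H^i_j$ is the \emph{reverse} of the horizontal order of the $H^k_{j'}$; keeping these reversals consistent across both realizations $(f,\cR_f)$ and $(g,\cR_g)$ is where the argument needs care, and this is exactly where the hypothesis $\cT(f,\cR_f)=\cT(g,\cR_g)$ — rather than mere equality of incidence matrices — does the work.
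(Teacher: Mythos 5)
Your proposal follows essentially the same route as the paper: subdivide each $R_i$ into its horizontal subrectangles $H^i_j$, check that the resulting family is again a Markov partition, observe that $f(H^i_j)=V^k_\ell$ meets each horizontal subrectangle of $R_k$ in at most one component (so the incidence matrix is binary), and read the refined type --- including the order reversal when $\epsilon(i,j)=-1$ --- purely off of $T$. One small correction: the stable arcs separating consecutive $H^i_j$ lie in $\overset{o}{R_i}$, not in $\partial^s\cR$; what makes the refined family Markov is that their $f$-images land in $\partial^s R_k$, so the \emph{new} stable boundary is $f$-invariant, which is exactly the boundary criterion of Lemma~\ref{Prop: Markov criterion boundary}.
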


We begin by taking an arbitrary pair $(f, \cR)$ whose geometric type is $T$. We then proceed to construct the binary refinement $\bB(\cR)$ and compute its geometric type $\bB(T)$ using the information provided by $T$ and its implications for the dynamics for the rectangles in $\cR$. From our construction, it follows that the incidence matrix of the resulting  Markov partition is binary.  It shall be clear that, given another pair $(g, \cG)$ with the same geometric type $T$, we can apply the same construction to obtain a geometric Markov partition with the same refined geometric type.

The following lemma provides a useful criterion to determine whether a family of rectangles forms a Markov partition for $f$ by ensuring the $f$- and $f^{-1}$-invariance of its boundary. In the proof, we use the notation introduced in Definition~\ref{Defi: L-R sides}.

\begin{lemm}\label{Prop: Markov criterion boundary}
	Let $f: S \rightarrow S$ be a \textbf{p-A} homeomorphism, and let $\mathcal{R} = \{R_i\}_{i=1}^n$ be a family of rectangles whose union is $S$ and whose interiors are disjoint. Then, $\mathcal{R}$ is a Markov partition for $f$ if and only if the following conditions hold:
	\begin{itemize}
		\item The stable boundary of $\mathcal{R}$, $\partial^s\mathcal{R} := \cup_{i=1}^n \partial^s R_i$, is $f$-invariant.
		\item The unstable boundary of $\mathcal{R}$, $\partial^u\mathcal{R} := \cup_{i=1}^n \partial^u R_i$, is $f^{-1}$-invariant.
	\end{itemize}
\end{lemm}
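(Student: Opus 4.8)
The plan is to prove the two implications separately, using the characterization of a Markov partition in Definition \ref{Defi: Geometric Markov partition} in terms of horizontal and vertical subrectangles.

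For the forward direction, suppose $\mathcal{R}$ is a Markov partition. I would first show that $\partial^s\mathcal{R}$ is $f$-invariant, i.e. $f(\partial^s\mathcal{R}) = \partial^s\mathcal{R}$; since $f$ is a homeomorphism of the compact surface $S$ it suffices to show $f(\partial^s\mathcal{R}) \subseteq \partial^s\mathcal{R}$ and $f^{-1}(\partial^s\mathcal{R}) \subseteq \partial^s\mathcal{R}$, or equivalently just one inclusion together with surjectivity onto a set of the same ``size''; more cleanly, I would prove both inclusions $f(\partial^s\mathcal{R})\subseteq\partial^s\mathcal{R}$ and $f^{-1}(\partial^s\mathcal{R})\subseteq\partial^s\mathcal{R}$ directly. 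Take a point $x \in \partial^s R_i$. Since the $R_j$ cover $S$, the point $f(x)$ lies in some $R_j$; I want to argue $f(x) \in \partial^s R_j$. The key geometric fact is that $f$ maps stable leaves to stable leaves and contracts the stable measure, so $f$ takes the horizontal foliation of $R_i$ into stable leaves; and the Markov condition says the connected components of $\overset{o}{R_i}\cap f^{-1}(\overset{o}{R_j})$ have closures that are \emph{horizontal} subrectangles of $R_i$ — meaning they stretch all the way across $R_i$ in the stable (horizontal) direction, so their $u$-boundary (top/bottom) lies in $\partial^s R_i$. Dually, the images $f(\overset{o}{R_i})\cap\overset{o}{R_j}$ are \emph{vertical} subrectangles of $R_j$, which stretch all the way across $R_j$ vertically, so their $s$-boundary (left/right sides) lies in $\partial^u R_j$ and their top/bottom lies in $\partial^s R_j$. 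Chasing a point of $\partial^s R_i$ through this picture: it lies on the $u$-boundary of some closed horizontal subrectangle $H$ of $R_i$ with $f(H)$ a vertical subrectangle $V$ of some $R_j$, and $f$ carries the $u$-boundary of $H$ (part of $\partial^s R_i$) onto the $u$-boundary of $V$, which is contained in $\partial^s R_j \subseteq \partial^s\mathcal{R}$. This gives $f(\partial^s\mathcal{R})\subseteq\partial^s\mathcal{R}$. An entirely symmetric argument with the roles of horizontal/vertical and $f/f^{-1}$ interchanged gives $f^{-1}(\partial^u\mathcal{R})\subseteq\partial^u\mathcal{R}$; combined with a compactness/injectivity argument (or by running the same reasoning for $f^{-1}$ using the fourth bullet of Definition \ref{Defi: Geometric Markov partition}) these upgrade to genuine invariance.

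For the converse, assume $\partial^s\mathcal{R}$ is $f$-invariant and $\partial^u\mathcal{R}$ is $f^{-1}$-invariant, and let $C$ be a connected component of $\overset{o}{R_i}\cap f^{-1}(\overset{o}{R_j})$; I must show $\overline{C}$ is a horizontal subrectangle of $R_i$ (and dually for the vertical condition). Since $f$ preserves the two foliations, $\overline{C}$ is automatically a rectangle (it is bi-foliated by $\mathcal{F}^s,\mathcal{F}^u$, being an intersection of two such); what needs proof is that it is ``horizontal'' in $R_i$, i.e. that each horizontal leaf of $\overline{C}$ is a full horizontal leaf of $R_i$ — equivalently, that the vertical sides ($s$-boundary) of $\overline{C}$ lie in $\partial^u R_i$. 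The point is that the boundary of $C$ inside $\overset{o}{R_i}$ comes from $f^{-1}(\partial R_j)$; the stable part $f^{-1}(\partial^s R_j)$ can only contribute to the $u$-boundary (top/bottom) of $\overline C$ because $f^{-1}$ contracts unstable and expands stable, so $f^{-1}$ of a stable boundary arc is a stable arc, which must be transverse to the vertical foliation of $R_i$, hence lies along a horizontal leaf; and $f^{-1}(\partial^u R_j) \subseteq \partial^u\mathcal{R}$ by hypothesis, so any vertical side of $\overline C$ that is not already on $\partial^u R_i$ would force $\overline C$ to be cut by a piece of $\partial^u R_i$ in its interior — contradicting that $C$ is a single connected component, or contradicting disjointness of interiors. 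Making this dichotomy precise is where I expect to spend the most care. The dual statement, that components of $f(\overset{o}{R_i})\cap\overset{o}{R_j}$ are vertical subrectangles of $R_j$, follows by applying the same argument to $f^{-1}$ with the hypotheses swapped.

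The main obstacle is the converse direction, specifically the bookkeeping that turns ``$f^{-1}(\partial^s R_j)$ is transverse to the vertical foliation and $f^{-1}(\partial^u R_j)$ stays in $\partial^u\mathcal{R}$'' into the clean statement ``$\overline C$ stretches fully across $R_i$ horizontally.'' One has to rule out the possibility that a component $C$ is ``clipped'' vertically by some unstable boundary arc passing through $\overset{o}{R_i}$; this is exactly where $f^{-1}$-invariance of $\partial^u\mathcal{R}$ is used — such an arc would be the $f^{-1}$-image of an unstable boundary arc, hence part of $\partial^u\mathcal{R}$, hence (being in $\overset{o}{R_i}$) impossible since $\partial^u\mathcal{R}\cap\overset{o}{R_i}=\emptyset$. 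I would isolate this as the crux and phrase it as a short separate claim about the structure of $\partial(\overline C)$ before assembling the proof.
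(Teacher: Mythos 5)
Your proposal is correct and follows essentially the same route as the paper: the forward direction identifies each stable boundary component of $R_i$ with a stable boundary component of a horizontal subrectangle $H$ and pushes it forward to the stable boundary of the rectangle containing $f(H)$, while the converse tracks the horizontal leaf through a component $C$ of $\overset{o}{R_i}\cap f^{-1}(\overset{o}{R_j})$ and uses the $f^{-1}$-invariance of $\partial^u\mathcal{R}$ to rule out an endpoint in $\overset{o}{R_i}$. The ``crux'' you isolate at the end is exactly the contradiction the paper's proof runs ($z=f^{-1}(z')$ with $z'\in\partial^u R_j$ would force $z\in\partial^u\mathcal{R}\cap\overset{o}{R_i}=\emptyset$), so no gap.
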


	\begin{proof}
		If $\mathcal{R}$ is a Markov partition of $f$ and $I$ is an $s$-boundary component of $R_i$, then $I$ is also an $s$-boundary component of a horizontal sub-rectangle $H \subset R_i$ in $(f, \mathcal{R})$. Since $f(H) = V \subset R_k$, where $V$ is a vertical sub-rectangle of $R_k \in \mathcal{R}$, it follows that $f(I)$ must be contained in the $s$-boundary of $R_k$. This proves that $\partial^s \mathcal{R}$ is $f$-invariant. The $f^{-1}$-invariance of $\partial^u \mathcal{R}$ is similarly proved.

		Now, assume that $\partial^s \mathcal{R}$ is $f$-invariant and $\partial^u \mathcal{R}$ is $f^{-1}$-invariant. Let $C$ be a nonempty connected component of $f^{-1}(\overset{o}{R_k}) \cap \overset{o}{R_i}$. We claim that $C$ is the interior of a horizontal sub-rectangle of $R_i$. 
		Take $x \in C$ and let $\overset{o}{I_x}$ be the interior of the horizontal segment of $R_i$ passing through $x$, and let $\overset{o}{I_x'}$ be the connected component of $\mathcal{F}^s \cap C$ containing $x$. Clearly, $\overset{o}{I_x'} \subset \overset{o}{I_x}$, but if $I_x \neq I_x'$, at least one endpoint $z$ of $I_x'$ lies in $\overset{o}{I_x}$ and therefore,  $z$ is in the interior of $R_i$. 
		Moreover, $z$ must be equal to $f^{-1}(z')$ for some $z' \in \partial^u R_k$ (or we could extend $I_x'$ a little bit more inside $\overset{o}{R_i}$), and since $\partial^u \mathcal{R}$ is $f^{-1}$-invariant, $z \in \partial^u \mathcal{R}$, which is a contradiction as $\overset{o}{R_i} \cap \partial^u \mathcal{R} = \emptyset$. Therefore, $z \in \partial^u \mathcal{R} \cap \overset{o}{R_i} = \emptyset$. Similarly, we can show that $f(C)$ is the interior of a vertical sub-rectangle of $R_k$.
		
	\end{proof}

\begin{lemm}\label{Lemm: Binary partition}
	Let $T \in \cG\cT(\mathbf{p\text{-}A})$ be a geometric type in the pseudo-Anosov class. Then, for any pair $(f, \cR)$ realizing $T$, the family of horizontal sub-rectangles of $(f, \cR)$,
	$$
	\bB(f,\cR) := \{H^i_j\}_{(i,j) \in \cH(T)},
	$$
is a Markov partition of $f$ called \emph{Horizontal refinement} of $\cR$.
\end{lemm}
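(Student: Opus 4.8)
The plan is to reduce the statement to the boundary criterion of Lemma~\ref{Prop: Markov criterion boundary}. I would first dispatch its elementary hypotheses for $\bB(f,\cR)=\{H^i_j\}_{(i,j)\in\cH(T)}$: it is a finite family of rectangles, and it covers $S$, because $f^{-1}(\overset{o}{\cR})$ is open and dense in $S$, so for each $i$ the set $\overset{o}{R_i}\cap f^{-1}(\overset{o}{\cR})$ is open, dense in $\overset{o}{R_i}$, and has finitely many connected components, whence the union of their closures is $R_i$ and $\bigcup_{(i,j)}H^i_j=\bigcup_i R_i=S$. Its members have pairwise disjoint interiors: two subrectangles inside the same $R_i$ are distinct connected components of $\overset{o}{R_i}\cap f^{-1}(\overset{o}{\cR})$, and two sitting in different $R_i$ have interiors contained in the disjoint sets $\overset{o}{R_i}$.

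The core step is to identify the stable and unstable boundaries of $\bB(f,\cR)$. Fixing a parametrization $\rho_i:\II^2\to R_i$ and using that $H^i_j$ is a horizontal subrectangle of $R_i$, write $H^i_j=\rho_i(\II\times[a,b])$ as a full-width horizontal band, so that $\partial^u H^i_j=\rho_i(\{0,1\}\times[a,b])\subseteq\partial^u R_i$ and $\partial^s H^i_j=\rho_i(\II\times\{a\})\cup\rho_i(\II\times\{b\})$. Writing $H^i_j=\overline{C}$ with $C$ a connected component of $\overset{o}{R_i}\cap f^{-1}(\overset{o}{R_k})$, I claim each of the two stable arcs of $\partial^s H^i_j$ lies in $\partial^s\cR\cup f^{-1}(\partial^s\cR)$: if it is $\rho_i(\II\times\{0\})$ or $\rho_i(\II\times\{1\})$ it is part of $\partial^s R_i$; otherwise each of its interior points lies in $\overset{o}{R_i}$ and is an endpoint of a vertical segment of $C$, hence lies in $f^{-1}(\partial R_k)$, and since the arc is a stable arc contained in the closed set $f^{-1}(\partial R_k)$ while $f^{-1}(\partial^u R_k)$ consists of unstable arcs transverse to $\mathcal{F}^s$, the arc must in fact lie in $f^{-1}(\partial^s R_k)\subseteq f^{-1}(\partial^s\cR)$. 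Unioning over $(i,j)$ gives $\partial^u\bB(f,\cR)\subseteq\partial^u\cR$ and $\partial^s\bB(f,\cR)\subseteq\partial^s\cR\cup f^{-1}(\partial^s\cR)$. Conversely, since the bands $H^i_j$ tile each $R_i$ from bottom to top, every point of $\partial^u R_i$ lies on a vertical side of some band and every point of $\partial^s R_i$ lies on the bottom side of the lowest band or the top side of the highest band, so $\partial^u\cR\subseteq\partial^u\bB(f,\cR)$ and $\partial^s\cR\subseteq\partial^s\bB(f,\cR)$; in particular $\partial^u\bB(f,\cR)=\partial^u\cR$.

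To conclude I would feed in that $\cR$ is a Markov partition: by Lemma~\ref{Prop: Markov criterion boundary}, $f(\partial^s\cR)\subseteq\partial^s\cR$ and $f^{-1}(\partial^u\cR)\subseteq\partial^u\cR$. Then
\[
f\bigl(\partial^s\bB(f,\cR)\bigr)\subseteq f(\partial^s\cR)\cup f\bigl(f^{-1}(\partial^s\cR)\bigr)=f(\partial^s\cR)\cup\partial^s\cR\subseteq\partial^s\cR\subseteq\partial^s\bB(f,\cR),
\]
so $\partial^s\bB(f,\cR)$ is $f$-invariant, and $f^{-1}\bigl(\partial^u\bB(f,\cR)\bigr)=f^{-1}(\partial^u\cR)\subseteq\partial^u\cR=\partial^u\bB(f,\cR)$, so $\partial^u\bB(f,\cR)$ is $f^{-1}$-invariant. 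Lemma~\ref{Prop: Markov criterion boundary} then yields that $\bB(f,\cR)$ is a Markov partition for $f$; endowing each $H^i_j$ with the orientations of $R_i$ as in Definition~\ref{Defi: Label and geometrization of subrectangles} makes it a geometric one. The step I expect to be the genuine obstacle is the claim that the new stable sides of the $H^i_j$ fall into $f^{-1}(\partial^s\cR)$ rather than $f^{-1}(\partial^u\cR)$: this is exactly where one must use that a horizontal subrectangle is bounded above and below by \emph{stable} arcs together with the transversality of $\mathcal{F}^s$ and $\mathcal{F}^u$; everything else is routine once the subrectangles are written in coordinates.
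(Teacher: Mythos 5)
Your proof is correct and follows essentially the same route as the paper: verify the covering and disjoint-interior conditions, identify $\partial^u\bB(f,\cR)=\partial^u\cR$, and invoke the boundary-invariance criterion of Lemma~\ref{Prop: Markov criterion boundary}. The only difference is cosmetic: where you locate the new stable sides inside $f^{-1}(\partial^s\cR)$ via transversality, the paper gets $f$-invariance of $\partial^s\bB(f,\cR)$ more directly from $f(H^i_j)=V^k_l$ and $\partial^s V^k_l\subset\partial^s R_k$.
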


\begin{proof}
	We can assume that $f: S \to S$, $\cR = \{R_i\}_{i=1}^n$ 
	$$
	\cT(f,\cR)= T := \bigl(n, \{(h_i, v_i)\}_{i=1}^n,\rho, \epsilon\bigr)
	$$
	
	Clearly, every element in the family $	\bB(f,\cR) $ is rectangle and its union  $\bigcup \{H^i_j : (i,j) \in \cH\} = S$, since it equals the union of the rectangles in $\cR$.\\
	
	Two distinct horizontal sub-rectangles of the same $R_i \in \cR$ have disjoint interiors, and this also holds between horizontal sub-rectangles belonging to different rectangles in $\cR$. Therefore, the elements in $H(f,\cR)$ are rectangles with disjoint interiors.
	
	The unstable boundary of $\cB(f,\cR)$, denoted by $\partial^u H(f,\cR) := \bigcup_{(i,j)\in \cH} \partial^u H^i_j$, coincides with the unstable boundary of the Markov partition $\cR$, and hence is $f^{-1}$-invariant. The stable boundary of $H(f,\cR)$ is the union of the stable boundaries of the horizontal sub-rectangles, i.e., $\partial^s H(f,\cR) = \bigcup_{(i,j)\in \cH(T)} \partial^s H^i_j$. We can suppose that $f(H^i_j) = V^k_l$, where $V^k_l$ is a vertical sub-rectangle of $R_k \in \cR$. Since $\partial^s V^k_l \subset \partial^s R_k$, it follows that $f(\partial^s H^i_j) \subset \partial^s R_k$. Therefore, the stable boundary of $H(f,\cR)$ is $f$-invariant.
	
	Since the family $H(f,\cR)$ satisfies the conditions of Proposition \ref{Prop: Markov criterion boundary}, it follows that $H(f,\cR)$ is a Markov partition for $f$.
\end{proof}

Now we are ready to give a geometrization to our horizontal refinement, that is, to endow the family of rectangles with an order and a vertical orientation on each of them. To gain some intuition about the following definition see Figure \ref{Fig: Binary refinament}.

\begin{figure}[h]
	\centering
	\includegraphics[width=0.6\textwidth]{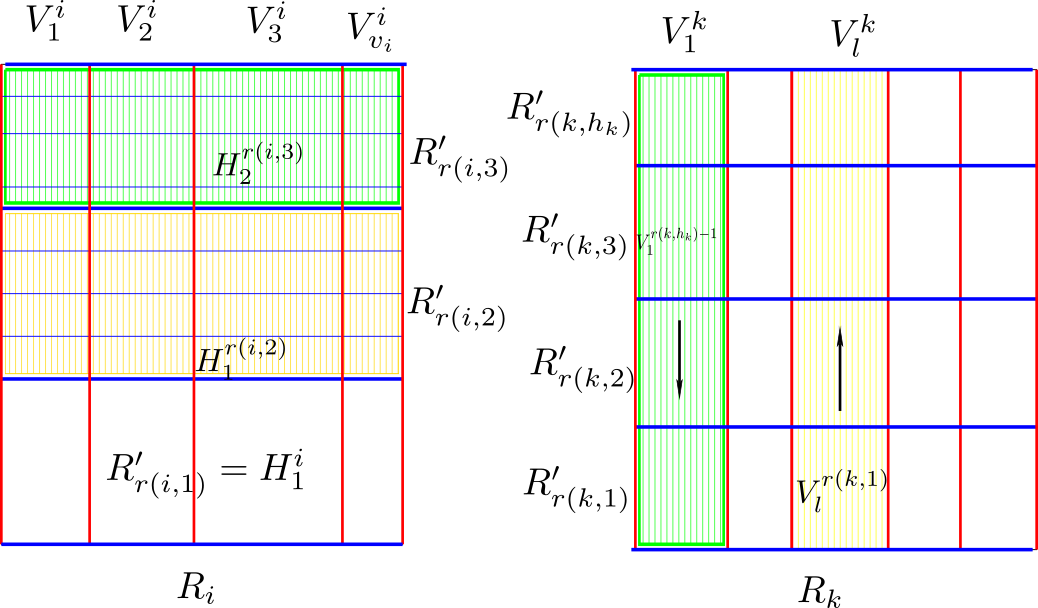}
	\caption{Binary refinement}
	\label{Fig: Binary refinament}
\end{figure}

\begin{defi}\label{Defi: Geomtrization refinamiento binario }
	Let $T \in \cG\cT(\textbf{p-A})$ be a geometric type in the pseudo-Anosov class:
	$$
	T := (n, \{(h_i, v_i)\}_{i=1}^n, \rho, \epsilon).
	$$
	
	Let $(f, \mathcal{R})$ be a pair realizing $T$. The binary refinement  of $(f,\mathcal{R})$ is the geometric Markov partition  of the \emph{p-A} homeomorphism $f$ whose rectangles are those in the horizontal refinement $\mathbf{B}(f, \mathcal{R})$ endowed with the following orientations and order:
	
	\begin{itemize}
		\item Assign to each rectangle $H^i_j$ the same vertical orientation as $R_i$.
		
		\item Endow the set of horizontal labels:
		$$
		\mathcal{H}(T) := \{(i, j) : 1 \leq i \leq n, \quad 1 \leq j \leq h_i \}
		$$
		with the \emph{lexicographic order} via the function
		$$
		r: \mathcal{H}(T) \to \{1, \ldots, \alpha(T)\},
		$$
		where
		$$
		\alpha(T) := \sum_{i=1}^n h_i,
		$$
		and
		$$
		r(i_0, j_0) = \sum_{i \leq i_0} h_i + j_0.
		$$
		
		\item The rectangles in $\bB(f, \mathcal{R})$ are indexed by the correspondence
		$$
		H_{r(i,j)} := H^i_j.
		$$
		\end{itemize}
	The binary refinement of $(f,\mathcal{R})$ is denoted by $(f,\bB(\mathcal{R}))$.
\end{defi}

In order to prove \ref{Prop: Refinamiento binario} we must stablish a few lemmas.

\begin{lemm}\label{Lemm: Incidece matrix refi bin is bin} 
 The incidence matrix of  the binary refinement of $(f, \bB(\cR))$ is binary.
\end{lemm}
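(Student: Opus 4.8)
The plan is to unwind the definition of the incidence matrix of $(f,\bB(\cR))$ in terms of the dynamics of the horizontal subrectangles $\{H^i_j\}_{(i,j)\in\cH(T)}$, and to show that between any two of them $f$ creates at most one new horizontal subrectangle. Recall that the rectangles of $\bB(\cR)$ are exactly the horizontal subrectangles $H^i_j$ of the original Markov partition $\cR$, reindexed as $H_{r(i,j)}$ via the lexicographic order $r$. By Definition~\ref{Defi: Incidence matrix geo markov partition}, the coefficient of $A(f,\bB(\cR))$ indexed by $(r(i,j),r(k,l))$ counts the number of horizontal subrectangles of $H^i_j$ (with respect to the refined partition $\bB(\cR)$) that $f$ maps onto a vertical subrectangle of $H^k_l$. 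So the whole statement reduces to proving: for each pair $(i,j)\in\cH(T)$ and $(k,l)\in\cH(T)$, the set $\overset{o}{H^i_j}\cap f^{-1}(\overset{o}{H^k_l})$ is either empty or connected (equivalently, the closure of each nonempty connected component is a horizontal subrectangle of $H^i_j$, and there is at most one such component).

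The key observation is that, by Equation~\ref{Equa: permitation (f,R)}, $f$ maps each $H^i_j$ homeomorphically onto a single vertical subrectangle $V^k_\ell=f(H^i_j)$ of a rectangle $R_k\in\cR$, and this map takes the horizontal foliation of $H^i_j$ to the horizontal foliation of $V^k_\ell$, which is the restriction of the horizontal foliation of $R_k$. Now fix a target rectangle $H^{k}_{l}$ of the refinement, sitting inside $R_{k}\in\cR$. A point $x\in\overset{o}{H^i_j}$ satisfies $f(x)\in\overset{o}{H^{k}_{l}}$ exactly when $f(x)\in \overset{o}{V^{\rho(i,j)}}\cap\overset{o}{H^k_l}$; this intersection is nonempty only if $\rho(i,j)=(k,\ell)$ for some $\ell$, i.e.\ only if $f(H^i_j)=V^k_\ell$ is a vertical subrectangle of the same $R_k$ that contains $H^k_l$. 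When that happens, $V^k_\ell$ is a \emph{full-height} vertical subrectangle of $R_k$ (it runs from the bottom $u$-boundary of $R_k$ to the top), so $V^k_\ell\cap H^k_l$ is a vertical subrectangle of $H^k_l$ spanning its full height, and it is connected because $H^k_l$ is a horizontal band of $R_k$ and $V^k_\ell$ is a vertical band. Pulling back by the homeomorphism $f|_{H^i_j}$, the set $\overset{o}{H^i_j}\cap f^{-1}(\overset{o}{H^k_l})$ is connected (indeed it is the interior of a horizontal subrectangle of $H^i_j$, namely $f^{-1}$ of that vertical band). Hence each coefficient of $A(f,\bB(\cR))$ is $0$ or $1$, which is the assertion.

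Concretely I would organize the proof as follows. First, invoke Lemma~\ref{Lemm: Binary partition} and Definition~\ref{Defi: Geomtrization refinamiento binario } to recall that $\bB(\cR)=\{H_{r(i,j)}\}$ is a geometric Markov partition, so its incidence matrix is well-defined and its entries are nonnegative integers. Second, compute the entry indexed by $(r(i,j),r(k,l))$ by Definition~\ref{Defi: Incidence matrix geo markov partition}: it equals the number of connected components of $\overset{o}{H^i_j}\cap f^{-1}(\overset{o}{H^k_l})$. Third, use $f(H^i_j)=V^{\rho(i,j)}$ and the fact that $f|_{H^i_j}$ is a homeomorphism onto $V^{\rho(i,j)}$ respecting the bi-foliation to rewrite this as the number of connected components of $V^{\rho(i,j)}\cap \overset{o}{H^k_l}$. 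Fourth, argue this is at most $1$: it is $0$ unless $\rho(i,j)=(k,\ell)$ for some $\ell$, and in that case $V^k_\ell$ is a vertical subrectangle of $R_k$ of full height while $H^k_l$ is a horizontal subrectangle of $R_k$, so their intersection — a ``vertical box'' inside a ``horizontal box'' of the same rectangle $R_k$ — is a single rectangle (its interior is connected). Conclude that every entry of $A(f,\bB(\cR))$ lies in $\{0,1\}$.

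I expect the only genuine subtlety to be the fourth step: carefully justifying that $V^k_\ell\cap H^k_l$ has connected interior. This is really a statement about the product structure — a vertical subrectangle and a horizontal subrectangle of the \emph{same} rectangle $R_k$ intersect in at most one subrectangle — and it can be made rigorous by passing through a parametrization $\rho_k:\II^2\to R_k$ of $R_k$: in the square $\II^2$, $H^k_l$ pulls back to a horizontal strip $[0,1]\times[c,d]$ and $V^k_\ell$ pulls back to a vertical strip $[a,b]\times[0,1]$ (full height, since $f(H^i_j)$ is a vertical subrectangle of $R_k$ in the Markov sense), whose intersection $[a,b]\times[c,d]$ is obviously a single (possibly degenerate, but here nonempty with nonempty interior) rectangle. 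Everything else is a direct bookkeeping of the definitions, and no estimates are involved.
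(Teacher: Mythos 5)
Your proposal is correct and follows essentially the same route as the paper's proof: both reduce the count to the number of connected components of $f(\overset{\circ}{H^i_j}) \cap \overset{\circ}{H^{k}_{j'}} = \overset{\circ}{V^{k'}_l} \cap \overset{\circ}{H^{k}_{j'}}$, which is empty when $k \neq k'$ and a single component when $k = k'$ because it is the intersection of a vertical and a horizontal subrectangle of the same rectangle. Your extra step justifying connectedness via a parametrization of $R_k$ is a welcome elaboration of a point the paper merely asserts.
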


\begin{proof}
By definition of the incidence matrix $A(f, \bB(\cR))$, if $(i,j), (k,j') \in \cH(T)$, its coefficient $a_{r(i,j)r(k,j')}$ is equal to the number of connected components in the intersection of $f(\overset{\circ}{H^i_j})$ with the rectangle $\overset{\circ}{H^{k}_{j'}}$. But $f(H^i_j) = V^{k'}_l$ for some $(k',l) \in \cV(T)$, and then
$$
f(\overset{\circ}{H^i_j}) \cap \overset{\circ}{H^{k}_{j'}} = \overset{\circ}{V^{k'}_l} \cap \overset{\circ}{H^{k}_{j'}}.
$$
In this manner, if $k = k'$, the intersection has only one connected component, as it is the intersection between a vertical and a horizontal subrectangle of the same rectangle. If $k \neq k'$, the intersection is empty, since the interiors of distinct rectangles in $\cR$ are disjoint.
Therefore, the coefficient $a_{r(i,j)r(k,j')}$ must be equal to $0$ or $1$ as was claimed.
\end{proof}

\begin{lemm}\label{Prop: Unique horizontal type}
	Let $T$ be a geometric type in the pseudo-Anosov class, and let $(f, \mathcal{R}_f)$ and $(g, \mathcal{R}_g)$ be two pairs that realize $T$. Then:
	\begin{itemize}
		\item The geometric type $\mathcal{T}(f, \mathcal{B}(\mathcal{R}_f))$ is uniquely determined by $T$ and can be computed through an algorithm that only uses information contained in $T$.
		\item The binary refinements $(f, \mathcal{B}(\mathcal{R}_f))$ and $(g, \mathcal{B}(\mathcal{R}_g))$ have the same geometric type.
	\end{itemize}
	
	We call the binary refinement of $T$, and denote it by $\mathcal{B}(T)$, the geometric type of any binary refinement of a pair realizing $T$, i.e.,
	$$
	\mathcal{B}(T) = \mathcal{T}(f, \mathcal{B}(\mathcal{R}_f)) = \mathcal{T}(g, \mathcal{B}(\mathcal{R}_g)).
	$$
\end{lemm}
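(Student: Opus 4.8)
The plan is to write down, using only the data $T=(n,\{(h_i,v_i)\}_{i=1}^n,\rho,\epsilon)$, an explicit geometric type $\cB(T)$, and then to verify that $\cT(f,\cB(\cR_f))=\cB(T)$ for every pair $(f,\cR_f)$ realizing $T$. Since the recipe for $\cB(T)$ will never refer to $(f,\cR_f)$ beyond its geometric type, the second bullet then follows at once. I would split the verification into two steps.

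First I would determine the combinatorial skeleton of $\cB(T)$. By Definition~\ref{Defi: Geomtrization refinamiento binario }, the base rectangles of $\cB(\cR_f)$ are the horizontal subrectangles $H^i_j$ of $(f,\cR_f)$, relabelled $H_{r(i,j)}$ along the lexicographic enumeration $r\colon\cH(T)\to\{1,\dots,\alpha(T)\}$; so $\cB(T)$ has $\alpha(T)=\sum_i h_i$ base rectangles. Since $\cB(\cR_f)$ is a Markov partition with binary incidence matrix (Lemmas~\ref{Lemm: Binary partition} and~\ref{Lemm: Incidece matrix refi bin is bin}), the number of horizontal, resp.\ vertical, subrectangles of $H_{r(i,j)}$ is the $r(i,j)$-th row sum, resp.\ column sum, of $A(\cB(T))$, and that matrix is read off from $T$: writing $\rho(i,j)=(k,l)$ we have $f(H^i_j)=V^k_l$, a full-height vertical strip of $R_k$ that meets each of the $h_k$ horizontal subrectangles of $R_k$ in one component and meets the interior of no other rectangle of $\cR_f$. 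Hence $A(\cB(T))$ has, in row $r(i,j)$, a $1$ in each of the columns $r(k,1),\dots,r(k,h_k)$ and $0$ elsewhere. Taking row sums gives: $H_{r(i,j)}$ has $h_k$ horizontal subrectangles, where $k$ is the first coordinate of $\rho(i,j)$; taking column sums, together with the fact that $\rho$ is a bijection $\cH(T)\to\cV(T)$, gives: $H_{r(i,j)}$ has $v_i$ vertical subrectangles.

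Next I would compute the permutation and orientation parts of $\cB(T)$. Fix $(i,j)$ with $\rho(i,j)=(k,l)$. Reading bottom to top in the vertical orientation of $R_i$, the $j'$-th horizontal subrectangle of $H_{r(i,j)}$ is carried by $f$ onto the strip $V^k_l\cap H^k_m$, where the strips $V^k_l\cap H^k_1,\dots,V^k_l\cap H^k_{h_k}$ are stacked bottom to top in $R_k$ and $f$ preserves or reverses that stacking according to whether $\epsilon(i,j)=1$ or $-1$; thus $m=j'$ when $\epsilon(i,j)=1$ and $m=h_k+1-j'$ when $\epsilon(i,j)=-1$. The image $V^k_l\cap H^k_m$ is the $l$-th vertical subrectangle of $H_{r(k,m)}$, because cutting $R_k$ by the horizontal strip $H^k_m$ leaves the left-to-right order of $V^k_1,\dots,V^k_{v_k}$ intact. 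So the permutation part of $\cB(T)$ sends $(r(i,j),j')$ to $(r(k,m),l)$, and its orientation part sends $(r(i,j),j')$ to $\epsilon(i,j)$: the subrectangle in question inherits the vertical orientation of $R_i$, its image inherits that of $R_k$, and by Definition~\ref{Defi: Permutation oritation (f,R) } the sign $\epsilon(i,j)$ is precisely the comparison of those two orientations under $f$. All of these quantities are functions of $T$ alone, which is the desired algorithm.

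The hard part is not the bookkeeping but the geometric facts it rests on. I expect the main obstacle to be proving, with the parametrization conventions handled consistently, that the single sign $\epsilon(i,j)$ really governs whether $f$ matches the horizontal subrectangles of $H^i_j$, listed bottom to top, with the strips $V^k_l\cap H^k_1,\dots,V^k_l\cap H^k_{h_k}$ in increasing or in decreasing order (and, correspondingly, that the left-to-right order of the $V^k_l$ survives intersection with $H^k_m$). These must be deduced from Definition~\ref{Defi: Vertical/horizontal subrec}, the Markov property, and the fact that $f$ carries stable leaves to stable leaves and unstable leaves to unstable leaves while acting on the chosen orientations with the recorded signs. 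Once they are in place, the incidence-matrix count of the first step and the passage from the first bullet to the second are immediate.
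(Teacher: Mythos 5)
Your proposal is correct and follows essentially the same route as the paper's proof: it reads off $n'=\alpha(T)$, $v'_{r(i,j)}=v_i$, $h'_{r(i,j)}=h_k$ (with $\rho(i,j)=(k,l)$), and then computes the permutation and orientation parts via the same case split on $\epsilon(i,j)=\pm1$, arriving at the same formulas $m=j'$ or $m=h_k+1-j'$ and $\epsilon'(r(i,j),j')=\epsilon(i,j)$ that appear in Equations~\ref{Equa: horizontal type for preseving orientation } and~\ref{Equa: horizontal type for change orientation }. The only cosmetic difference is that you obtain the subrectangle counts from row and column sums of the binary incidence matrix rather than by counting intersection components directly, which is an equivalent bookkeeping device.
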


\begin{proof}
Let to fix a notation for the geometric type of $(f,\bB(\cR)$:
$$
\cT(f, \bB(\cR))=\{n',\{(h_i',v_i')\}_{i'=1}^{n'}, \rho',\epsilon'\}.
$$
We need to determine all the parameters using the information provided by $T$.

Clearly the number of elements in $\cH(T)$ is equal to $n'$ then:
\begin{equation}\label{Equ: number rectangles in horizontal ref}
	n'=\sum_{i=1}^{n}h_i=\sum_{i=1}^n v_i=\alpha(T).
\end{equation}

Let $(i,j) \in \cH(T)$. For the rest of the proof, we shall suppose 
$$
(\rho, \epsilon)(i,j) = ((k,l), \epsilon(i,j)).
$$

A vertical subrectangle of $H_{r(i,j)}$ is equal to the closure of a connected component of the form
$$
f(\overset{\circ}{H^{i'}_{j'}}) \cap \overset{\circ}{H^i_j} = \overset{\circ}{V^i_l} \cap \overset{\circ}{H^i_j}.
$$

Therefore, $H_{r(i,j)}$ must contain at most $v_i$ vertical subrectangles. But for every rectangle $V^k_l$, there exists a unique $H^{i'}_{j'}$ such that $f(H^{i'}_{j'}) = V^k_l$. Therefore, the number of vertical subrectangles of $H_{r(i,j)}$ is equal to $v_i$, i.e.,
\begin{equation}\label{Equ: number vertical sub in the horizontal ref}
	v'_{r(i,j)}=v_{i}
\end{equation}

Moreover, these vertical sub-rectangles are ordered from left to right in a coherent way with respect to the horizontal orientation of $R_i$ as:

\begin{equation}\label{Equ: Vertical order of horizontal ype}
	\{ V^{r(i,j)}_{l} \}_{l=1}^{v_i}.
\end{equation}

Since $\rho(i,j)=(k.l)$, the number of horizontal sub-rectangles of $H_{r(i,j)}$, denoted as $h'_{r(i,j)}$, is equal to $h_{k}$ because  $f(H_{r(i, j)})=f(H^{i}_{j})=V^{k}_{l}$  intersects exactly $h_{k}$ distinct horizontal sub-rectangles of  $R_k$ and no other horizontal sub-rectangles of $(f,\bB(\cR))$. 

\begin{equation}\label{Equ: number horizontal sub in the horizontal ref}
	h'_{r(i,j)}=h_k \text{ if } \rho(i,j)=(k,l)
\end{equation}
These horizontal sub-rectangles are ordered in increasing order according to the vertical orientation of $H_{r(i,j)}$, which is inherited from $R_i$, in the following manner:

\begin{equation}\label{Equa: Horizontal rec of the horizontal ref}
	\{H^{r(i,j)}_{j'}\}_{j'=1}^{h_k} 
\end{equation}

We proceed to determine $\rho'$ and $\epsilon'$.  To compute $\rho'$, we need to consider the change of vertical orientation in $f(H^k_j)$ given by the sing of $\epsilon(i,j)$. We are going to  split the computations in the two cases.

Assume $\epsilon_T(i,j)=1$ and take  $j_0\in \{1,\cdots, h_{k}\}$ as the label of the horizontal sub-rectangle  $H^{r(i,j)}_{j_0}$ of $H_{r(i,j)}$ in $j_0$ position. Since $f(H_{r(i,j)})=V^k_l$ and it preserves the vertical orientation, the horizontal sub-rectangle of $R_k$ that intersects $f(H_{r(i,j)})=V^k_l$ at position $j_0$ with respect to the vertical orientation of $R_k$ have label $r(k,j_0)$. Also, $f(H^{r(i,j)}_{j_0})$ corresponds to the vertical sub-rectangle of $R_k$ that is at position $l$, so $f(H^{r(i,j)}_{j_0})$ is the vertical sub-rectangle $V^{r(k,j_0)}_l$ of $H_{r(k,j_0)}$.
 We can express this construction in terms of the geometric type using the formula:

\begin{equation}\label{Equa: horizontal type for preseving orientation }
	(\rho',\epsilon')(r(i,j),j_0):=(\rho'(r(i,j),j_0),\epsilon'(r(i,j),j_0)=(r(k,j_0),l,\epsilon_T(i,j)).
\end{equation}

Now lets to assume  $\epsilon_T(i,j)=-1$, this means that $f$ changes the vertical orientation of $H^i_j$ with respect to $V^k_l=f(H^i_j)$. 
This implies that the horizontal sub-rectangle of $R_k$ containing the image of $H^{r(i,j)}_{j_0}$ is located at position $j_0$, but with the inverse vertical orientation of $R_k$, which corresponds to position $(h_k-(j_0-1))$, with respect to the positive  orientation in $R_k$. Therefore, the horizontal sub-rectangle of $R_k$ that contains to $f(H^{r(i,j)}_{j_0})$ is:

$$
H^k_{h_k-(j_0-1)}=H_{r(k,h_k-(j_0-1))}
$$

The vertical sub-rectangle of $H_{r(k,h_k -(j_0-1)))}$ that contains $f(H^{r(i,j)}_{j_0})$ is a subset of $V^k_l$, so it is located at position $l$ with respect to the horizontal orientation of $R_k$. We can conclude that:

$$
f(H^{r(i,j)}_{j_0})=V^{r(k,h_k-(j_0-1))}_l.
$$

The vertical direction of the sub-rectangle $H^{r(i,j)}_{j_0}$ is preserved by the action of $f$ if and only if the vertical direction of $H^i_j$ is preserved up the action of $f$. Therefore we have following formula.

\begin{equation}\label{Equa: horizontal type for change orientation }
\epsilon'(r(i,j),j_0)=\epsilon(i,j)
\end{equation}

The equations \ref{Equa: horizontal type for preseving orientation } and  \ref{Equa: horizontal type for change orientation } are determined by $T$, and their computation is algorithmic.

The second point in our proposition follows directly from our construction and the procedure we explained to compute the geometric type of the binary refinement. This ends our proof

\end{proof}

Proposition \ref{Prop: Refinamiento binario} follows form Lemmas  \ref{Lemm: Binary partition}, \ref{Lemm: Incidece matrix refi bin is bin} and \ref{Prop: Unique horizontal type}.

\subsection{The projection and the sector codes}

Let $f:S \to S$ be a pseudo-Anosov homeomorphism and let $\cR$ be a geometric Markov partition of $f$ such that the incidence matrix $A(f,\cR)$ is binary. If the rectangles in the Markov partition are embedded in the surface, in \cite[Exposé 10]{fathi2021thurston}, the author introduces a function that semi-conjugates the sub-shift of finite type $(\Sigma_{A(f,\cR)}, \sigma_{A(f,\cR)})$ with the pseudo-Anosov homeomorphism $f$. This function is similar to the one we introduced in Equation \ref{Equa: proyection (r,R) }, and then proceed to  prove Proposition \ref{Prop:proyecion semiconjugacion}.

The disadvantage of the definition in \cite[Exposé 10]{fathi2021thurston} is that, in order for the function to be well-defined (i.e., to actually be a function), it is necessary to assume that the rectangles are embedded. Clearly, this is not the case in our definition.

\begin{defi}\label{Defi: proyection (f,R)}
	Let $f:S \to S$ be a pseudo-Anosov homeomorphism and $\cR$ a geometric Markov partition of $f$ such that the incidence matrix $A(f,\cR)$ is binary. Then the projection $\pi_{(f,\cR)}:\Sigma_{A(f,\cR)} \to S$ is the map that assigns to each $\bw = (w_z)_{z\in \ZZ} \in \Sigma_{A(f,\cR)}$ the set:
	\begin{equation}\label{Equa: proyection (r,R) }
		\pi_{(f,\cR)}(\bw) = \bigcap_{n\in \NN} \overline{\bigcap_{z=-n}^{n} f^{-z}(\overset{\circ}{R_{w_z}})}.
	\end{equation}
\end{defi}

The proof of Proposition \ref{Prop:proyecion semiconjugacion} is essentially the same as that given in \cite[Lemma 10.16]{fathi2021thurston}, but we must give a more careful characterization of the points in the fiber $\pi_{(f,\mathcal{R})}^{-1}(x)$ as projections of \emph{sector codes}. We dedicate the rest of this section to introducing such codes and using them to prove the finite-to-one property of our projection and refer the reader to the respective reference to full fill the classic details.

\begin{prop}\label{Prop:proyecion semiconjugacion}
	Let $(f,\cR)$ be a pair such that the incidence matrix $A(f,\cR)$ is binary. Then the projection $\pi_{(f,\cR)}:\Sigma_{A(f,\cR)} \rightarrow S$ is a continuous, surjective, and finite-to-one map. Moreover, $\pi_{(f,\cR)}$ semi-conjugates $f$ with $\sigma_{A(f,\cR)}$, that is,
	$$
	f \circ \pi_{(f,\cR)} = \pi_{(f,\cR)} \circ \sigma_{A(f,\cR)}.
	$$
\end{prop}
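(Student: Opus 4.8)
The plan is to adapt the classical symbolic-coding argument, e.g.\ \cite[Lemma~10.16]{fathi2021thurston}, the one extra subtlety being that our rectangles are only parametrized and not embedded, so that both the fact that $\pi_{(f,\cR)}$ is a well-defined (single-valued) map and, above all, the finite-to-one property, must be handled with the \emph{sector code} formalism. For an admissible word $\bw\in\Sigma_{A(f,\cR)}$ set $R_n(\bw):=\overline{\bigcap_{z=-n}^{n}f^{-z}(\overset{\circ}{R_{w_z}})}$. Admissibility, the Markov property of $\cR$ (Definition~\ref{Defi: Geometric Markov partition}), and the binariness of $A(f,\cR)$ show inductively that each $R_n(\bw)$ is a nonempty connected rectangle with $R_{n+1}(\bw)\subseteq R_n(\bw)$; moreover the factors with $0\le z\le n$ force $R_n(\bw)$ to be thin in the unstable direction (as $f^{-1}$ contracts $\cF^u$), while those with $-n\le z\le 0$ force it to be thin in the stable direction (as $f$ contracts $\cF^s$), so $\operatorname{diam}R_n(\bw)\to 0$ at a rate controlled by the stretch factor $\lambda>1$. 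Hence $\bigcap_n R_n(\bw)$ is a single point, which we take as $\pi_{(f,\cR)}(\bw)$, and $\pi_{(f,\cR)}$ is continuous: if $\bw$ and $\bv$ agree on $[-N,N]$ then $\pi_{(f,\cR)}(\bw),\pi_{(f,\cR)}(\bv)\in R_N(\bw)$, of vanishing diameter.

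The semi-conjugacy is a formal computation: since $f$ is a homeomorphism it commutes with closures and intersections, so
$$
f\big(\pi_{(f,\cR)}(\bw)\big)=\bigcap_{n\in\NN}\overline{\bigcap_{z=-n}^{n}f^{1-z}(\overset{\circ}{R_{w_z}})}=\bigcap_{m\in\NN}\overline{\bigcap_{z=-m}^{m}f^{-z}(\overset{\circ}{R_{w_{z+1}}})}=\pi_{(f,\cR)}\big(\sigma_{A(f,\cR)}(\bw)\big),
$$
the middle equality being the substitution $z\mapsto z+1$ together with the cofinality of the two nested families. For surjectivity, note that $B:=\partial^s\cR\cup\partial^u\cR$ is a finite union of compact leaf segments of $\cF^s$ and $\cF^u$, so $\bigcup_{z\in\ZZ}f^z(B)$ is a countable union of leaf segments and hence meager in $S$ by Baire's theorem. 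For $x\in S\setminus\bigcup_z f^z(B)$, each $f^z(x)$ lies in a unique $\overset{\circ}{R_{w_z}}$, the resulting word $\bw$ is admissible (the nonempty intersection $f(\overset{\circ}{R_{w_z}})\cap\overset{\circ}{R_{w_{z+1}}}\ni f^{z+1}(x)$ forces $a_{w_zw_{z+1}}=1$ since $A(f,\cR)$ is binary), and $x\in R_n(\bw)$ for all $n$, so $\pi_{(f,\cR)}(\bw)=x$. Thus the image is dense; being the continuous image of the compact space $\Sigma_{A(f,\cR)}$ it is closed, hence all of $S$.

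The hard part is finite-to-oneness, and this is exactly where the non-embeddedness bites and the sector codes are needed. The a priori finiteness rests on two observations. First, by Lemma~\ref{Prop: Markov criterion boundary} the set $\partial^s\cR$ is $f$-invariant and $\partial^u\cR$ is $f^{-1}$-invariant; hence, for a fixed $x$, the times $z$ with $f^z(x)\in\partial^s\cR$ form an upward-closed set and those with $f^z(x)\in\partial^u\cR$ a downward-closed one, so all the ambiguity in an itinerary of $x$ is confined to the local combinatorics of $\cR$ along the stable and unstable leaves through the orbit of $x$. Second, in a small enough neighborhood of any point $y$ the set $\bigcup_i\partial R_i$ is contained in $F^s_y\cup F^u_y$; therefore the germs at $y$ of the complement are precisely the \emph{local sectors} cut out by the stable and unstable leaves of $y$ — four at a regular point, $2k$ at a $k$-prong singularity — and, being connected and disjoint from $\bigcup_i\partial R_i$, each such sector germ lies in a single $\overset{\circ}{R_i}$. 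A preimage $\bw\in\pi_{(f,\cR)}^{-1}(x)$ is then recovered from the (finitely many) sectors at $x$ toward which the nested rectangles $R_n(\bw)$ shrink: $f^z$ carries these to sectors at $f^z(x)$, and $w_z$ is the index of the rectangle containing them. Consequently $\pi_{(f,\cR)}^{-1}(x)$ injects into a finite set of local combinatorial data at $x$, uniformly bounded in terms of the number of rectangles and the maximal prong number. Turning the phrase ``the sector picked out by $R_n(\bw)$'' into a rigorous device — defining the sector codes, checking that each admissible code produces a word projecting to $x$, and that distinct preimages carry distinct codes — is precisely the content developed in the rest of the section, after which the remaining classical details are as in \cite[Exposé~10]{fathi2021thurston}.
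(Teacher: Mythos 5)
Your proposal is correct and follows essentially the same route as the paper: the well-definedness, continuity, surjectivity, and semi-conjugacy are handled by the classical nested-rectangle/itinerary argument deferred to \cite[Exposé 10]{fathi2021thurston} (which you usefully spell out in more detail than the paper does), and the finite-to-one property is reduced to the sector-code formalism that the paper develops in the remainder of the section (regular neighborhoods, sectors, the lemmas that each sector lies in a unique rectangle and that every code in a fiber is a sector code). No gaps; the only caveat is that, exactly as in the paper, the finite-to-one part of your argument is a program whose rigorous execution is the content of the subsequent lemmas rather than a self-contained proof.
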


\subsubsection{The sectors of a point}We must formalize the notion of a sector of a point $x \in S$ as the germ of a sequence converging to $x$ in a very specific manner. We begin by introducing the notion of a regular neighborhood  (Figure \ref{Fig: Regular neighborhood}) , its existence is stated in following theorem, which corresponds to \cite[Lemme 8.1.4]{bonatti1998diffeomorphismes}.

\begin{figure}[h]
	\centering
	\includegraphics[width=0.2\textwidth]{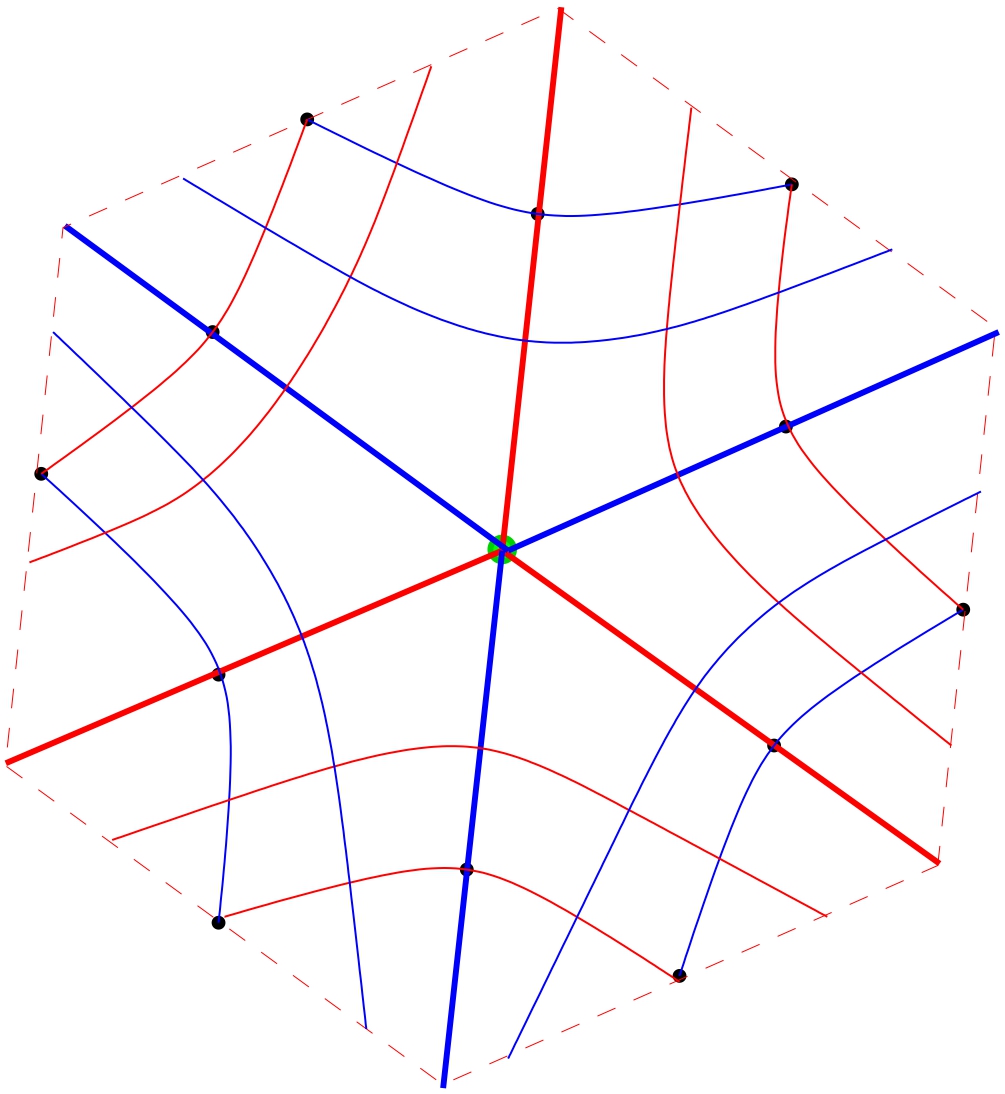}
	\caption{Regular neighborhood of a $3$-prong}
	\label{Fig: Regular neighborhood}
\end{figure}

\begin{theo}\label{Theo: Regular neighborhood}
Let $f: S \rightarrow S$ be a generalized pseudo-Anosov homeomorphism, and let $p \in S$. Assume that the stable leaf passing through $p$ has $k \geq 1$ separatrices. Then there exists $\epsilon_0 > 0$ such that for all $0 < \epsilon < \epsilon_0$, there is a neighborhood $D$ of $p$ with the following properties:
	\begin{itemize}
		\item The boundary of $D$ consists of $k$ segments of unstable leaves alternating with $k$ segments of stable leaves.
		\item For every (stable or unstable) separatrix $\delta$ of $p$, the connected component of $\delta \cap D$ containing $p$ has (stable or unstable) measure equal to $\epsilon$.
	\end{itemize}
	
	We say that $D$ is a \emph{regular neighborhood} of $p$ with side length $\epsilon$. When necessary, we denote it by $D(p,\epsilon)$.
\end{theo}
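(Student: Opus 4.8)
The plan is to forget $f$ almost entirely: the regular neighborhood depends only on the local structure of the two transverse measured foliations $(\mathcal{F}^s,\mu^s)$ and $(\mathcal{F}^u,\mu^u)$ near $p$, and it will be produced by gluing together $2k$ small Euclidean squares, one in each sector at $p$. So the proof reduces to (i) recalling the normal form of a measured foliation near a $k$-prong, (ii) fixing how small $\epsilon_0$ must be, and (iii) an explicit construction together with a direct verification of the two listed properties.

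\emph{Normal form and the choice of $\epsilon_0$.} By the structure theory of measured foliations (the local model of a $k$-prong; see e.g.\ \cite{fathi2021thurston}), there is a neighbourhood $U$ of $p$ and a chart onto a flat cone of total angle $k\pi$ with apex at $p$ in which $\mathcal{F}^s$ is the horizontal foliation, $\mathcal{F}^u$ the vertical one, and the transverse measures $\mu^s,\mu^u$ are Euclidean lengths, measured vertically and horizontally respectively; the degenerate cases are the regular point ($k=2$, total angle $2\pi$) and the spine ($k=1$, total angle $\pi$). In this model $p$ has exactly $k$ stable and $k$ unstable separatrices which cyclically alternate, and these $2k$ separatrices cut $U$ into $2k$ sectors, each isometric to a flat quarter-plane $Q=\{(x,y):x\ge 0,\ y\ge 0\}$ carrying the stable separatrix along $\{y=0\}$ and the unstable one along $\{x=0\}$, so that every sector has angle $k\pi/(2k)=\pi/2$. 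Let $\epsilon_0>0$ be small enough that the flat $\epsilon_0$-ball around $p$ lies in $U$, contains no singularity other than $p$, and is embedded (an injectivity radius for the cone structure at $p$); then for every $0<\epsilon<\epsilon_0$ the closed square $[0,\epsilon]\times[0,\epsilon]\subset Q$ is well defined and embedded in each sector, and any two of these squares meet only along the separatrix edge they share, or only at $p$.

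\emph{Construction and verification.} For $0<\epsilon<\epsilon_0$ let $D=D(p,\epsilon)$ be the union of these $2k$ squares; it is a closed neighbourhood of $p$. In a single sector the part of the square's boundary not contained in a separatrix is $\{x=\epsilon,\ 0\le y\le\epsilon\}\cup\{y=\epsilon,\ 0\le x\le\epsilon\}$: one unstable segment and one stable segment meeting at the corner $(\epsilon,\epsilon)$. Across the free end of an unstable separatrix $u_i$ the two adjacent squares contribute their horizontal outer edges $\{y=\epsilon\}$ on the two sides of $u_i$; since $u_i$ is a regular arc away from $p$, the stable leaf through those points crosses $u_i$ and the two edges fuse into a single stable segment of $\partial D$, which $u_i$ meets at its midpoint. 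Symmetrically, across the free end of a stable separatrix $s_i$ the two adjacent vertical outer edges fuse into a single unstable segment of $\partial D$, meeting $s_i$ at its midpoint. Traversing $\partial D$ these $k$ stable and $k$ unstable segments occur alternately, separated by the $2k$ corners $(\epsilon,\epsilon)$; this is the first asserted property. For the second, the component of $\delta\cap D$ containing $p$ is exactly the separatrix edge shared by the two squares adjacent to $\delta$, a leaf segment whose transverse measure ($\mu^u$ if $\delta\subset\mathcal{F}^s$, $\mu^s$ if $\delta\subset\mathcal{F}^u$) equals its Euclidean length $\epsilon$ by the normal form.

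\emph{Main obstacle.} The one non-formal input is the normal form of the first step: the existence of the flat chart in which the transverse measures become Euclidean lengths, together with the exact count and cyclic arrangement of the separatrices and the fact that every sector has angle $\pi/2$ (including getting the $k=1$ and $k=2$ pictures right). Granting this, one must still be slightly careful in the second step to choose $\epsilon_0$ so that $D$ is genuinely embedded and the boundary pieces are honest leaf segments rather than merely locally leaf-like; after that, the third step is a routine check.
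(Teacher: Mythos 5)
Your construction is correct. Note, however, that the paper does not prove this statement at all: it is quoted verbatim from \cite[Lemme 8.1.4]{bonatti1998diffeomorphismes}, so there is no internal argument to compare against. Your proof is the standard one and supplies what the paper outsources: the flat-cone normal form of the pair $(\mathcal{F}^s,\mu^s)$, $(\mathcal{F}^u,\mu^u)$ near a $k$-prong (cone angle $k\pi$, the $2k$ alternating separatrices cutting it into $2k$ quarter-plane sectors), the choice of $D(p,\epsilon)$ as the union of the $2k$ squares $[0,\epsilon]^2$, and the fusion of adjacent outer edges across each separatrix into a single transverse leaf segment. The verification of both bullet points, including the degenerate cases $k=1$ and $k=2$, goes through as you describe. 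Two small points worth tightening if this were written out in full: the squares have diameter $\epsilon\sqrt{2}$, so $\epsilon_0$ should be taken with that factor so that each square lies in the embedded chart; and the claim that the component of $\delta\cap D$ containing $p$ is exactly the shared edge uses that the continuation of $\delta$ past the parameter $\epsilon$ leaves $D$ immediately, which holds because the other squares meet $\delta$ only at $p$ --- a one-line remark you should include. The only genuinely non-elementary input is the normal form itself, which you correctly identify and attribute to the structure theory of measured foliations in \cite{fathi2021thurston}; granting it, the argument is complete.
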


Let $D(p,\epsilon)$ be a regular neighborhood of $p$ with side length $0<\epsilon\leq\epsilon_0$. We assume that $p$ has $k$ separatrices. Using the orientation of $S$, we label the stable and unstable separatrices of $p$ cyclically in the counterclockwise direction as $\{\delta_i^s\}_{i=1}^k(\epsilon)$ and $\{\delta_i^u\}_{i=1}^k(\epsilon)$, respectively.

We define $\delta_i^s(\epsilon)$ as the connected component of $\delta_i^s \cap D(p,\epsilon)$ containing $p$, and $\delta_i^u(\epsilon)$ as the connected component of $\delta_i^u \cap D(p,\epsilon)$ containing $p$. We assume that $\delta_i^u(\epsilon)$ is located between $\delta_i^s(\epsilon)$ and $\delta_{i+1}^s(\epsilon)$, where $i$ is taken modulo $k$.

The connected components of $\operatorname{Int}(D(p,\epsilon_0)) \setminus \left(\bigcup_{i=1}^k \delta_i^s(\epsilon_0) \cup \delta_i^u(\epsilon_0)\right)$ are labeled with a cyclic order and denoted as $\{E(\epsilon_0)_j(p)\}_{j=1}^{2k}$, where the boundary of $E(\epsilon_0)_1(p)$ consists of $\delta^s_1(\epsilon_0)$ and $\delta^u_1(\epsilon_0)$. These conventions lead to the following definitions.

\begin{defi}\label{Defi:converge in a sector }
	Let $\{x_n\}$ be a sequence converging to $p$. We say that $\{x_n\}$ \emph{converges to $p$ in the sector} $j$ if and only if there exists $N \in \mathbb{N}$ such that for every $n > N$, $x_n \in E(\epsilon_0)_j(p)$. The set of sequences converging to $p$ in the sector $j$ is denoted by $E(p)_j$.
\end{defi}

The set of sequences that converge to $p$ in a sector is $\bigcup_{j=1}^{2k} E(p)_j$. We are going to define an equivalence relation on this set.

\begin{defi}\label{Defi: Sector equiv}
	Let $\{x_n\}$ and $\{y_n\}$ be sequences that converge to $p$ in a sector. We say they are in the same sector of $p$, and write $\{x_n\} \sim_q \{y_n\}$, if and only if $\{x_n\}$ and $\{y_n\}$ belong to the same set $E(p)_j$.
\end{defi}

\begin{rema}\label{Rema: caracterisation sim-p}
	The previous definition is equivalent to the existence of $j \in \{1, \ldots, 2k\}$ and $N \in \mathbb{N}$ such that for all $n \geq N$, $x_n, y_n \in E(\epsilon_0)_j(p)$. We use this characterization to prove the following lemma.
\end{rema}

\begin{lemm}\label{Lemm: Equiv relation}
	In the set of sequences that converge to $p$ in a sector, the relation $\sim_q$ is an equivalence relation. Moreover, each equivalence class coincides with a set $E(p)_j$ for some $j \in \{1, \ldots, 2k\}$.
\end{lemm}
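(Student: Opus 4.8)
The plan is to reduce the whole statement to one structural observation: the sets $E(p)_1,\dots,E(p)_{2k}$ form a partition of the set of all sequences that converge to $p$ in a sector. Once that is in place, $\sim_q$ is, by Definition~\ref{Defi: Sector equiv}, literally the relation ``lie in the same block of this partition,'' which is automatically an equivalence relation whose classes are exactly the blocks.

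First I would note that the universe under discussion, namely the set of all sequences converging to $p$ in some sector, is by construction the union $\bigcup_{j=1}^{2k} E(p)_j$, so the blocks already cover it. The real content is disjointness: $E(p)_j \cap E(p)_{j'} = \emptyset$ whenever $j \neq j'$. This follows from the fact that $E(\epsilon_0)_j(p)$ and $E(\epsilon_0)_{j'}(p)$ are distinct connected components of the open set $\operatorname{Int}(D(p,\epsilon_0)) \setminus \bigl(\bigcup_{i=1}^k \delta_i^s(\epsilon_0) \cup \delta_i^u(\epsilon_0)\bigr)$, hence are disjoint subsets of $S$. Indeed, if some $\{x_n\}$ belonged to both $E(p)_j$ and $E(p)_{j'}$, there would be $N_1,N_2$ with $x_n \in E(\epsilon_0)_j(p)$ for $n>N_1$ and $x_n \in E(\epsilon_0)_{j'}(p)$ for $n>N_2$; for $n>\max(N_1,N_2)$ this forces $x_n$ into the empty intersection, a contradiction. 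Therefore every sequence converging to $p$ in a sector does so in exactly one sector, which is also precisely why the characterization of Remark~\ref{Rema: caracterisation sim-p} is equivalent to Definition~\ref{Defi: Sector equiv}.

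With the partition established I would check the three axioms directly. Reflexivity: any $\{x_n\}$ in the universe lies in the unique block $E(p)_j$ containing it, so $\{x_n\}\sim_q\{x_n\}$. Symmetry is immediate from the symmetric phrasing of Definition~\ref{Defi: Sector equiv}. Transitivity: if $\{x_n\}\sim_q\{y_n\}$ and $\{y_n\}\sim_q\{z_n\}$, then $\{x_n\},\{y_n\}$ share a block $E(p)_j$ and $\{y_n\},\{z_n\}$ share a block $E(p)_{j'}$; since $\{y_n\}\in E(p)_j\cap E(p)_{j'}$ and distinct blocks are disjoint, $j=j'$, so $\{x_n\}$ and $\{z_n\}$ both lie in $E(p)_j$, i.e.\ $\{x_n\}\sim_q\{z_n\}$. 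Finally the equivalence class of $\{x_n\}$ equals $\{\,\{y_n\}:\{y_n\}\in E(p)_j\,\}=E(p)_j$, where $j$ is the unique sector with $\{x_n\}\in E(p)_j$, which gives the second assertion.

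I do not expect a genuine obstacle here; the only point demanding care is spelling out that ``converges to $p$ in a sector'' pins down a \emph{unique} sector, and that is exactly the disjointness of the connected components $E(\epsilon_0)_j(p)$ furnished by Theorem~\ref{Theo: Regular neighborhood}.
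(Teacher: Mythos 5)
Your proposal is correct and follows essentially the same route as the paper: the decisive point in both is that the regions $E(\epsilon_0)_j(p)$ are pairwise disjoint, so a sequence (in particular the middle sequence in the transitivity check) can lie eventually in only one of them, forcing $j=j'$. Your partition-language packaging is just a slight reorganization of the paper's direct verification of transitivity.
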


\begin{proof}
	The nontrivial property is transitivity. Suppose $\{x_n\} \sim_q \{y_n\}$ and $\{y_n\} \sim_q \{z_n\}$. Then there exist $j, j' \in \{1, \ldots, 2k\}$ and $N_1, N_2 \in \mathbb{N}$ such that $x_n, y_n \in E(\epsilon_0)_j(p)$ for $n > N_1$ and $y_n, z_n \in E(\epsilon_0)_{j'}(p)$ for $n > N_2$. Let $N := \max\{N_1, N_2\}$. Then $y_n \in E(\epsilon_0)_j(p) \cap E(\epsilon_0)_{j'}(p)$ for $n > N$, which implies $j = j'$. Hence, $x_n, z_n \in E(\epsilon_0)_j(p)$ for $n > N$, so $\{x_n\} \sim_q \{z_n\}$.
\end{proof}

\begin{defi}\label{Defi: Sector}
	The equivalence class of sequences converging to $p$ in sector $j$ is called the \emph{sector} $e(p)_j$ of $p$.
\end{defi}

This notion is important in view of the next proposition, which establishes that generalized pseudo-Anosov homeomorphisms have a well-defined action on the sectors of a point.

\begin{lemm}\label{Lemm: image secto is a sector}
	Let $f$ be a generalized pseudo-Anosov homeomorphism and $p$ any point in the underlying surface with $k$ different separatrices. If $\{x_n\} \in e(p)_j$, then there exists a unique $i \in \{1,\ldots, 2k\}$ such that $\{f(x_n)\} \in e(f(p))_i$. In other words, the image of the sector $e(p)_j$ is the sector $e(f(p))_i$.
\end{lemm}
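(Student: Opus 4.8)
The plan is to reduce the statement to the way $f$ acts on the local germ of the foliations at $p$. Since $f$ is a homeomorphism preserving the measured foliations $(\mathcal{F}^{s},\mu^{s})$ and $(\mathcal{F}^{u},\mu^{u})$ (Definition~\ref{Defi: pseudo-Anosov}), it sends the stable (resp.\ unstable) leaf through $p$ onto the stable (resp.\ unstable) leaf through $f(p)$, and hence restricts to a bijection from the stable separatrices of $p$ onto those of $f(p)$, and likewise on unstable separatrices; in particular $f(p)$ again has $k$ separatrices, so the regular neighborhoods and sectors of $f(p)$ are defined as well. Because $f$ preserves the orientation of $S$, it also preserves the counterclockwise cyclic order of the germs of arcs issuing from $p$, so the above bijection is compatible with the cyclic labelings $\{\delta^{s}_{i}\}$ and $\{\delta^{u}_{i}\}$ fixed at $p$ and at $f(p)$ before Definition~\ref{Defi:converge in a sector }, and therefore induces a bijection $\sigma\colon\{1,\dots,2k\}\to\{1,\dots,2k\}$ between the sector labels at $p$ and at $f(p)$. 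This $\sigma$ is the candidate for the index map $j\mapsto i$.

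The technical heart is the claim that, for all sufficiently small $\epsilon$, $f$ carries the sector region $E(\epsilon)_{j}(p)$ (the $j$-th connected component of $\operatorname{Int}(D(p,\epsilon))\setminus\bigcup_{i}(\delta^{s}_{i}(\epsilon)\cup\delta^{u}_{i}(\epsilon))$) into the single region $E(\epsilon_{0})_{\sigma(j)}(f(p))$. To prove it I would first fix $\epsilon$ small enough that both $\lambda\epsilon\le\epsilon_{0}$ and, using continuity of $f$ at $p$ together with the fact that the regular neighborhoods $D(p,\epsilon)$ shrink to $p$ as $\epsilon\to0$ (Theorem~\ref{Theo: Regular neighborhood}), $f(D(p,\epsilon))\subseteq\operatorname{Int}(D(f(p),\epsilon_{0}))$. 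Since $f$ multiplies the stable transverse measure by $\lambda^{-1}$ and the unstable one by $\lambda$ and maps separatrices of $p$ onto separatrices of $f(p)$, the image of each radial segment $\delta^{s}_{i}(\epsilon)$ (resp.\ $\delta^{u}_{i}(\epsilon)$) is a sub-arc of a separatrix of $f(p)$ issuing from $f(p)$, of measure $\lambda^{-1}\epsilon$ (resp.\ $\lambda\epsilon\le\epsilon_{0}$), hence contained in the corresponding radial segment of $D(f(p),\epsilon_{0})$; applying the same remark to $f^{-1}$, together with the structure of a regular neighborhood, shows that $E(\epsilon)_{j}(p)$ is disjoint from $f^{-1}\bigl(\bigcup_{i}(\delta^{s}_{i}(\epsilon_{0})\cup\delta^{u}_{i}(\epsilon_{0}))\bigr)$. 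Thus $f(E(\epsilon)_{j}(p))$ is a connected subset of $\operatorname{Int}(D(f(p),\epsilon_{0}))$ avoiding all the radial separatrix segments of $f(p)$, so it lies in exactly one component $E(\epsilon_{0})_{i}(f(p))$, and the cyclic-order compatibility from the first paragraph identifies $i=\sigma(j)$.

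Granting the core claim, the lemma follows quickly. First, the property ``$\{x_{n}\}$ converges to $p$ in sector $j$'' does not depend on the radius used to phrase it: for $\epsilon\le\epsilon_{0}$ one has $E(\epsilon)_{j}(p)\subseteq E(\epsilon_{0})_{j}(p)$, while any sequence converging to $p$ eventually enters $D(p,\epsilon)$, so an eventual membership in $E(\epsilon_{0})_{j}(p)$ upgrades to an eventual membership in $E(\epsilon)_{j}(p)$. Now let $\{x_{n}\}\in e(p)_{j}$ and choose $\epsilon$ as in the core claim; then there is $N$ with $x_{n}\in E(\epsilon)_{j}(p)$ for all $n>N$, hence $f(x_{n})\in E(\epsilon_{0})_{\sigma(j)}(f(p))$ for all $n>N$, which by Definition~\ref{Defi:converge in a sector } means that $\{f(x_{n})\}$ converges to $f(p)$ in sector $\sigma(j)$. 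The index is unique because the regions $E(\epsilon_{0})_{i}(f(p))$, $1\le i\le 2k$, are pairwise disjoint, and the conclusion does not depend on the chosen representative of $e(p)_{j}=E(p)_{j}$; hence $f$ sends the sector $e(p)_{j}$ to the sector $e(f(p))_{\sigma(j)}$, which is the assertion with $i=\sigma(j)$. The main obstacle is the core claim: one must combine the measure-scaling by $\lambda^{\pm1}$ with continuity of $f$, the defining properties of regular neighborhoods, and a connectedness (``inside versus outside'') argument to promote the mere matching of boundary data into the genuine containment $f(E(\epsilon)_{j}(p))\subseteq E(\epsilon_{0})_{\sigma(j)}(f(p))$; once this is secured, the remainder is routine bookkeeping with the cyclic labels and the scale-independence noted above.
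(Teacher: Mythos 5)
Your proposal is correct and follows essentially the same route as the paper: choose $\epsilon$ small enough (by continuity) that $f(D(p,\epsilon))\subset D(f(p),\epsilon_0)$, observe that $f$ carries the two separatrix germs bounding $E(\epsilon)_j(p)$ onto contiguous separatrix germs of $f(p)$, so that $f(E(\epsilon)_j(p))$ lands in a single component $E(\epsilon_0)_i(f(p))$, and then push the tail of the sequence through. Your write-up is somewhat more explicit than the paper's (the scale-independence of the definition of convergence in a sector, and the connectedness step), but the underlying argument is the same.
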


\begin{proof}
	Note that $p$ is a $k$-prong singularity, a regular point, or a spine if and only if $f(p)$ is a $k$-prong singularity, a regular point, or a spine. Therefore, $p$ and $f(p)$ have the same number of sectors. Let $0<\epsilon<\epsilon_0$ be such that $f(D(p,\epsilon)) \subset D(f(p),\epsilon_0)$. Such $\epsilon$ exists because $f$ is continuous.
	
	Let $\delta^s(p)_j$ and $\delta^u(p)_j$ be the separatrices of $p$ that bound the set $E(\epsilon)_j(p) \subset S$. Then $f(\delta^s(p)_j)$ and $f(\delta^u(p)_j)$ are contained in two contiguous separatrices of $f(p)$, which determine a unique set $E(\epsilon_0)_{i}(f(p))$ for some $i\in \{1,\ldots,2k\}$.
	
	Let $N \in \mathbb{N}$ such that, for all $n > N$, $x_n \in E(\epsilon)_j(p)$. Then, for all $n > N$, $f(x_n) \in E(\epsilon_0)_i(f(p))$, and thus the sequence $\{f(x_n)\}$ belongs to the sector $e(f(p))_i$.
\end{proof}

\begin{lemm}\label{Lemm: sector contined unique rectangle}
	Let $f: S \rightarrow S$ be a generalized pseudo-Anosov homeomorphism with a Markov partition $\mathcal{R}$. Let $x \in S$ and let $e$ be a sector of $x$. Then, there exists a unique rectangle in the Markov partition that contains the sector $e$.
\end{lemm}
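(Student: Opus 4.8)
The plan is to use the previously developed machinery of sectors together with the defining properties of a Markov partition to pin down a unique rectangle. Fix $x \in S$, let $k$ be the number of separatrices of $x$ (so $x$ has $2k$ sectors), and let $e = e(x)_j$ be one of them. First I would take a regular neighborhood $D(x,\epsilon)$ with $0 < \epsilon \le \epsilon_0$ small, so that the germ of the sector $e$ is represented by a sequence $\{x_n\}$ with $x_n \in E(\epsilon)_j(x)$ for all $n$. The key observation is that $E(\epsilon)_j(x)$ is bounded by the two contiguous separatrices $\delta^s_i(\epsilon)$ and $\delta^u_i(\epsilon)$ of $x$, hence it is a ``local quadrant'' that is itself trivially bi-foliated; intuitively the sector sits in a small one-sided stable$\times$unstable corner at $x$.

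The existence half proceeds as follows. Since $\mathcal{R}$ covers $S$ and consists of finitely many rectangles, some rectangle $R_i$ contains infinitely many of the $x_n$; passing to a subsequence (which does not change the sector class, by Lemma~\ref{Lemm: Equiv relation}) we may assume $x_n \in R_i$ for all $n$, and hence $x \in R_i$ by compactness. I would then argue that $x_n$ can be taken in $\overset{o}{R_i}$ for large $n$: if $x$ lies in the interior this is clear, and if $x$ lies on $\partial R_i$ one uses that the sector $e$ determined by a fixed pair of contiguous separatrices either points ``into'' $R_i$ or ``away'' from it — the stable and unstable boundary segments of $R_i$ through $x$ are pieces of the leaves $F^s_x$, $F^u_x$, and the sector $e$ lies on a definite side of each, so either $E(\epsilon)_j(x) \cap \overset{o}{R_i} $ contains a neighborhood germ of $x$ inside the sector, or it is disjoint from $\overset{o}{R_i}$ near $x$. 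In the first case we are done; the point is that at least one rectangle realizes the first case, because the $2k$ sectors of $x$ are exactly tiled near $x$ by the interiors of the rectangles of $\mathcal{R}$ meeting at $x$ — this tiling statement is where the Markov property (interiors disjoint, union is $S$, and the boundary pieces are stable/unstable intervals) does the real work.

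For uniqueness, suppose $R_i$ and $R_{i'}$ both contain the sector $e = e(x)_j$. Then one can choose representing sequences $\{x_n\} \subset \overset{o}{R_i}$ and $\{x'_n\} \subset \overset{o}{R_{i'}}$, both converging to $x$ inside $E(\epsilon)_j(x)$. I would show that for small enough $\epsilon$ the connected set $E(\epsilon)_j(x)$ minus $x$ lies entirely in a single $\overset{o}{R_m}$: indeed $E(\epsilon)_j(x)\setminus\{x\}$ is connected and disjoint from $\partial\mathcal{R}$ near $x$ — the only boundary leaves of $\mathcal{R}$ through $x$ are along $\delta^s_i$ and $\delta^u_i$, which form the boundary of the sector, not its interior — so being connected and contained in $\bigcup \overset{o}{R_m} \cup \partial\mathcal{R}$, it must lie in one $\overset{o}{R_m}$. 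Then both $x_n$ and $x'_n$ eventually lie in that same $\overset{o}{R_m}$, forcing $R_i = R_m = R_{i'}$.

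The main obstacle will be the boundary bookkeeping in the existence step: making rigorous the claim that near $x$ the $2k$ sectors are precisely covered by the rectangle interiors and their shared boundary leaves, and hence that each sector is captured by exactly one rectangle. This requires knowing that the stable and unstable boundaries of the rectangles of $\mathcal{R}$ passing through $x$ are finite unions of arcs of $F^s_x \cup F^u_x$, so that locally the picture at $x$ is combinatorially the same as a neighborhood of $0$ in the model $k$-prong, cut along a finite collection of the model separatrices — and the cut pieces are exactly the rectangle interiors. Once this local normal form is established, both existence and uniqueness are immediate.
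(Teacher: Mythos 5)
Your strategy is sound and genuinely different from the paper's. The paper proves the lemma by \emph{constructing} a rectangle inside the sector: it takes the stable separatrix $I$ and unstable separatrix $J$ bounding $E$, finds a horizontal subrectangle $H\subset R$ with $I$ in its stable boundary on the correct side and a vertical subrectangle $V\subset R'$ with $J$ in its unstable boundary on the correct side, observes that $\overset{o}{H}\cap\overset{o}{V}$ is a nonempty rectangle inside $E$, and concludes $R=R'$ from disjointness of interiors. You instead run a connectedness argument: for small $\epsilon$ the open sector $E(\epsilon)_j(x)$ is connected and disjoint from $\partial\mathcal{R}$, and since $S$ is the union of the pairwise disjoint open sets $\overset{o}{R_i}$ together with $\partial\mathcal{R}$, the sector must lie in exactly one $\overset{o}{R_m}$. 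Once you have that, existence and uniqueness both fall out at once, so your opening pigeonhole step (choosing $R_i$ containing infinitely many $x_n$) and the elaborate ``tiling / local normal form'' you flag as the main obstacle are both unnecessary detours: you never need to know that the cut pieces near $x$ are \emph{exactly} the rectangle interiors, only that the open sector misses $\partial\mathcal{R}$. Your route is arguably cleaner than the paper's, which leaves the existence of the subrectangles $H$ and $V$ with the required boundary behaviour somewhat implicit.

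The one step you should tighten is precisely the disjointness claim $E(\epsilon)_j(x)\cap\partial\mathcal{R}=\emptyset$. Saying ``the only boundary leaves of $\mathcal{R}$ through $x$ are along $\delta^s_i$ and $\delta^u_i$'' is not yet an argument: leaves of $\mathcal{F}^{s,u}$ are typically dense, so a boundary arc lying in the leaf through $x$ can re-enter any neighbourhood of $x$ far from the local separatrices, and boundary arcs not passing through $x$ at all could a priori cross the sector. Both issues are handled by finiteness and compactness: $\partial\mathcal{R}$ is a finite union of compact sides; each side not containing $x$ is at positive distance from $x$; and for a side containing $x$, the sub-arc within arclength $\delta$ of $x$ lies in the local separatrices while the complementary compact piece avoids a neighbourhood of $x$. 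Taking $\epsilon$ below the minimum of these finitely many distances gives $D(x,\epsilon)\cap\partial\mathcal{R}\subset\{x\}\cup\bigcup_i\bigl(\delta^s_i(\epsilon)\cup\delta^u_i(\epsilon)\bigr)$, which is what your argument needs. With that inserted, your proof is complete.
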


\begin{proof}
	Let $\{x_n\}$ be a sequence that converges to $x$ within the sector $e$. Consider a canonical neighborhood $U$ of size $\epsilon > 0$ around $x$, and let $E$ be the unique connected component of $U$ minus the local stable and unstable manifolds of $x$ that contains the sequence $\{x_n\}$.
	
	By choosing $\epsilon$ small enough, we can assume that the local stable separatrix $I$ of $x$ that bounds $E$ is contained in at most two rectangles of the Markov partition, and similarly, the local unstable separatrix $J$ of $x$ is contained in at most two rectangles. By choosing the correct side of the local separatrices, we can find rectangles $R$ and $R'$ in the Markov partition, a horizontal subrectangle $H \subset R$ containing $I$ in its upper or lower boundary, and a vertical subrectangle $V \subset R'$ whose upper or left boundary contains $J$. These rectangles can be chosen small enough such that the intersection of their interiors is a rectangle contained within $E$, denoted as $\mathring{Q} := \mathring{H} \cap \mathring{V} \subset E$. This implies that $R = R'$, since the intersection of interiors of distinct rectangles in the Markov partition is empty.
	
	Moreover, by considering a subsequence of $\{x_n\}$, we do not change its equivalence class. Therefore, $\{x_n\} \subset \mathring{Q} \subset R$. This completes the proof.
\end{proof}

\subsubsection{Sector codes} For all $x\in S$, we shall to  construct an element of $\Sigma_{A(f,\cR)}$ that projects to $x$. The \emph{sector codes} we define below will do the job. It was shown in Lemma \ref{Lemm: sector contined unique rectangle} that each sector is contained in a unique rectangle of the Markov partition, and Lemma \ref{Lemm: image secto is a sector} shows that the image of a sector is a sector. This allows for the following definition.

\begin{defi}\label{Defi: Sector codes}
	Let $f$ be pseudo-Anosov homeomorphism and le $\cR=\{R_i\}_{i=1}^n$ be a geometric Markov partition of $f$ such that the incidence matrix $A(f,\cR)$ is binary. Let $x \in S$ be a point with sectors $\{e_1(x),\cdots, e_{2k}(x)\}$ (where $k$ is the number of stable or unstable separatrices in $x$).	The \emph{sector code} of $e_j(x)$ is the sequence:
	\begin{equation}
\be_j(x)=(e(x,j)_z)_{z\in \ZZ} \in \Sigma,
	\end{equation}
given by the rule: $e(x,j)_z := i$, where $i \in \{1,\dots,n\}$ is the index of the unique rectangle in $\cR$ such that the sector $f^z(e_j(x))$ is contained in the rectangle $R_i$.
\end{defi}

 The space $\Sigma$ of bi-infinite sequences is larger than $\Sigma_{f,\cR}$. We need to show that every sector code is, in fact, an \emph{admissible code}, i.e., that $\be_j(x) \in \Sigma_{A(f,\cR)}$.

\begin{lemm}\label{Lemm: sector code is admisible}
	For every $x \in S$, every sector code $\be := \be_j(x)$ is an element of $\Sigma_{A(f,\cR)}$.
\end{lemm}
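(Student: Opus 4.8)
The plan is to verify directly that a sector code $\be = \be_j(x) = (e(x,j)_z)_{z\in\ZZ}$ satisfies the admissibility condition defining $\Sigma_{A(f,\cR)}$: namely, that for every $z\in\ZZ$ we have $a_{e(x,j)_z,\,e(x,j)_{z+1}} = 1$, where $A(f,\cR) = (a_{ik})$ is the (binary) incidence matrix. By Definition~\ref{Defi: Incidence matrix geo markov partition} together with Remark~\ref{Rema: Notation incidece matriz Tipo}, the coefficient $a_{ik}$ equals $1$ precisely when $f(\overset{o}{R_i})\cap\overset{o}{R_k}\neq\emptyset$ (the binary hypothesis guarantees this count is $0$ or $1$), so it suffices to show that if the sector $f^z(e_j(x))$ lies in $R_i$ and $f^{z+1}(e_j(x))$ lies in $R_k$, then $f(\overset{o}{R_i})\cap\overset{o}{R_k}\neq\emptyset$.

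First I would reduce to the case $z=0$ by replacing $x$ with $f^z(x)$ and the sector $e_j(x)$ with the sector $f^z(e_j(x))$, which is legitimate by Lemma~\ref{Lemm: image secto is a sector} (the image of a sector is a sector, so the shifted code is again a sector code, indexed by the same rule). So set $i := e(x,j)_0$ and $k := e(x,j)_1$; by Definition~\ref{Defi: Sector codes} and Lemma~\ref{Lemm: sector contined unique rectangle}, $e_j(x)\subset R_i$ is the unique such rectangle, and $f(e_j(x))\subset R_k$ is the unique such rectangle. Now pick a sequence $\{x_n\}$ representing the sector $e_j(x)$. By definition of the sector codes and of containment of a sector in a rectangle (tracing back through the proof of Lemma~\ref{Lemm: sector contined unique rectangle}), one may choose the representative so that, after passing to a subsequence, $x_n$ lies in the open rectangle $\overset{o}{Q}\subset\overset{o}{R_i}$ constructed there; in particular $x_n\in\overset{o}{R_i}$ for all large $n$. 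Applying $f$ and again invoking Lemma~\ref{Lemm: sector contined unique rectangle} to the sector $f(e_j(x))$, after a further subsequence we also have $f(x_n)\in\overset{o}{R_k}$ for all large $n$. Then for any such $n$, the point $f(x_n)$ witnesses $f(\overset{o}{R_i})\cap\overset{o}{R_k}\ni f(x_n)\neq\emptyset$, hence $a_{ik}=1$, which is what we needed.

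The only delicate point is the claim that a representative sequence of the sector can be chosen lying in the \emph{interiors} of the relevant rectangles, rather than possibly on their boundaries. This is exactly the content built into the proof of Lemma~\ref{Lemm: sector contined unique rectangle}: the sector $e$ is realized by a sequence inside the open component $E$ of a regular neighborhood minus the local separatrices of $x$, and that open component meets $R_i$ in the open subrectangle $\overset{o}{Q}$; since convergence in a sector is stable under passing to subsequences (Lemma~\ref{Lemm: Equiv relation}), we lose nothing by replacing $\{x_n\}$ by its intersection with $\overset{o}{Q}$. The compatibility of this choice with the one forced at the next time step is handled by taking a common refinement of the two subsequences, so there is no genuine obstruction — the argument is a bookkeeping assembly of the three preceding lemmas. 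I expect no substantive difficulty beyond carefully quoting those statements.
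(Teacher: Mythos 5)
Your proposal is correct and follows essentially the same route as the paper: reduce to checking $f(\overset{o}{R_{e_z}})\cap\overset{o}{R_{e_{z+1}}}\neq\emptyset$ and witness this with a representative sequence of the sector that eventually lies in the interiors of the two rectangles. The only (cosmetic) difference is how interiority is justified: you extract it from the open rectangle $\overset{o}{Q}$ built in the proof of Lemma~\ref{Lemm: sector contined unique rectangle}, whereas the paper argues directly that the sequence cannot stay on $\partial R_{e_z}$ because that boundary component would then be a local separatrix lying strictly between the two separatrices bounding the sector.
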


\begin{proof}
	Let $A = (a_{ij})$ be the incidence matrix. The code $\be = (e_z)$ is in $\Sigma_A$ if and only if for all $z \in \ZZ$, $a_{e_z e_{z+1}} = 1$. By definition, this happens if and only if $f(\overset{o}{R_{e_z}}) \cap \overset{o}{R_{e_{z+1}}} \neq \emptyset$.
	
	Let $\{x_n\}_{n \in \NN}$ be a sequence converging to $f^z(x)$ and contained in the sector $f^z(e)$. By Lemma \ref{Lemm: sector contined unique rectangle}, the sector $f^z(e)$ is contained in a unique rectangle $R_{e_z}$, and we can assume $\{x_n\} \subset R_{e_z}$. Moreover, there exists $N \in \NN$ such that $x_n \in \overset{o}{R_{e_z}}$ for all $n \geq N$. Recall that the sector $f^z(e)$ is bounded by two consecutive local stable and unstable separatrices of $f^z(x)$: $F^s(f^z(x))$ and $F^u(f^z(x))$. If for every $n \in \NN$, $x_n$ is contained in the boundary of $R_{e_z}$, then this boundary component is a local separatrix of $x$ between $F^s(f^z(x))$ and $F^u(f^z(x))$, which is not possible.
	
	Since the image of a sector is a sector, the sequence $\{f(x_n)\}$ converges to $f^{z+1}(x)$ and is contained in the sector $f^{z+1}(e)$. The argument in the last paragraph also applies to this sequence, and $f(x_n) \in \overset{o}{R_{e_{z+1}}}$ for $n$ large enough. This proves that $f(\overset{o}{R_{e_z}}) \cap \overset{o}{R_{e_{z+1}}} \neq \emptyset$.
\end{proof}

The sector codes of a point $x$ are not only admissible; as the following lemma shows, they are, in fact, the only codes in $\Sigma_{A(f,\cR)}$ that project to $x$.  

\begin{lemm}\label{Lemm: every code is sector code }
	If $\bw = (w_z) \in \Sigma_{A(f,\cR)}$ projects under $\pi_{(f,\cR)}$ to $x$, then $\bw$ is equal to a sector code of $x$.
\end{lemm}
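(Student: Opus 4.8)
The plan is to produce, from the code $\bw$, a sector $e$ of $x$ whose sector code is exactly $\bw$; this sector is the one ``seen'' by the nested rectangles that define the projection.

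First I would set, for each $n\in\NN$,
$$V_n:=\bigcap_{z=-n}^{n} f^{-z}\bigl(\overset{o}{R_{w_z}}\bigr),$$
and record the standard properties of this sequence. Since $\bw\in\Sigma_{A(f,\cR)}$, since $A(f,\cR)$ is binary, and by the Markov property of $\cR$, each $V_n$ is a non-empty, connected, open sub-rectangle of $\overset{o}{R_{w_0}}$, and $V_0\supseteq V_1\supseteq\cdots$ (these are the classical facts recalled in \cite[Exposé 10]{fathi2021thurston}). Because $f$ contracts $\mathcal F^{s}$ and $f^{-1}$ contracts $\mathcal F^{u}$, one has $\operatorname{diam}(V_n)\to 0$; and by hypothesis $\pi_{(f,\cR)}(\bw)=x$, i.e. $\bigcap_{n}\overline{V_n}=\{x\}$ (cf. Proposition \ref{Prop:proyecion semiconjugacion}). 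Finally, $V_n\subseteq f^{-z}(\overset{o}{R_{w_z}})$ for every $|z|\le n$, hence $f^z(V_n)\subseteq\overset{o}{R_{w_z}}$ for such $z$.

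Next I would pin down the sector. Fix a regular neighbourhood $D=D(x,\epsilon_0)$ with its $2k$ sectors $E(\epsilon_0)_1(x),\dots,E(\epsilon_0)_{2k}(x)$. For $n$ large, $\overline{V_n}\subseteq\operatorname{Int}(D)$ (as $\operatorname{diam}\overline{V_n}\to 0$ and $x\in\overline{V_n}$), and since $V_n$ is open and non-empty it cannot be contained in the finitely many local separatrices of $x$; hence $S_n:=\{\,j:V_n\cap E(\epsilon_0)_j(x)\neq\emptyset\,\}\neq\emptyset$. As $(V_n)$ is non-increasing, so is $(S_n)$, and a non-increasing sequence of non-empty finite sets has non-empty intersection; I fix $j\in\bigcap_n S_n$. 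For each large $n$ pick $x_n\in V_n\cap E(\epsilon_0)_j(x)$. Then $x_n\to x$, and by Definition \ref{Defi:converge in a sector } the sequence $\{x_n\}$ converges to $x$ in the sector $j$; let $e:=e_j(x)$ be that sector.

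Finally I would match the two codes entry by entry. Fix $z\in\ZZ$. Applying Lemma \ref{Lemm: image secto is a sector} $|z|$ times (to $f$ when $z>0$, to $f^{-1}$ when $z<0$), the sequence $\{f^z(x_n)\}$ converges to $f^z(x)$ and represents the sector $f^z(e)$ of $f^z(x)$. On the other hand $x_n\in V_n\subseteq f^{-z}(\overset{o}{R_{w_z}})$ for $n\ge|z|$, so $f^z(x_n)\in\overset{o}{R_{w_z}}$ for all large $n$. By Lemma \ref{Lemm: sector contined unique rectangle} there is a unique rectangle $R'\in\cR$ containing $f^z(e)$, and — inspecting the construction in that lemma's proof, where the witnessing sub-rectangle $\overset{o}{Q}=\overset{o}{H}\cap\overset{o}{V}$ lies in $\overset{o}{R'}$ — a representative of a sector contained in $R'$ lies eventually in $\overset{o}{R'}$; since $\{f^z(x_n)\}$ lies eventually in $\overset{o}{R_{w_z}}$ and distinct rectangles of $\cR$ have disjoint interiors, this forces $R'=R_{w_z}$. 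Hence $e(x,j)_z=w_z$ by Definition \ref{Defi: Sector codes}. As $z\in\ZZ$ was arbitrary, $\be_j(x)=\bw$, which is what we want.

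The one genuinely delicate step is the selection of the sector: a priori $V_n$ need not sit inside a single sector of $x$ — for instance, if the whole $f$-orbit of $x$ stays in the interior of the partition then each $V_n$ is a neighbourhood of $x$ and meets every sector — so one cannot simply declare that $V_n$ lies in one sector. The remedy is the elementary observation on decreasing non-empty finite sets used above, which singles out one sector-index valid simultaneously for all $n$; everything else is bookkeeping with the sector lemmas \ref{Lemm: image secto is a sector} and \ref{Lemm: sector contined unique rectangle} together with the standard description of the rectangles $V_n$.
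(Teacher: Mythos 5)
Your proof is correct and follows essentially the same route as the paper's: both build the nested rectangles $\bigcap_{z=-n}^{n}f^{-z}(\overset{o}{R_{w_z}})$, extract a single sector of $x$ compatible with all of them, and then read off $e(x,j)_z=w_z$ from the inclusion of the $z$-th iterate in $\overset{o}{R_{w_z}}$. The only minor difference is how the common sector is pinned down: the paper asserts that each nested rectangle fully contains some sector of $x$ and uses monotonicity, whereas you use the weaker fact that it meets some sector region together with the decreasing-non-empty-finite-sets observation --- your version of this step is, if anything, justified more carefully than the paper's.
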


\begin{proof}
	For each $n \in \NN$, we take the rectangle $F_n = \cap_{j=-n}^n f^{-j}(\overset{o}{R_{w_j}})$, which is non-empty because $\bw \in \Sigma_{A(f,\cR)}$. The following properties hold:
	
	\begin{itemize}
		\item[i)] $\pi_{A(f,\cR)}(\bw) = x \in \overline{F_n}$ for every $n \in \ZZ$.
		
		\item[ii)] For all $n \in \NN$, $F_{n+1} \subset F_n$.
		
		\item[iii)] For every $n \in \NN$, there exists at least one sector $e$ of $x$ contained in $F_n$. If this were not the case, there would exist $\epsilon > 0$ such that the regular neighborhood of size $\epsilon$ around $x$, given by Theorem \ref{Theo: Regular neighborhood} is disjoint from $F_n$, but $x \in \overline{F_n}$ as was state in item i) and this is a contradiction.
		
		\item[iv)] If the sector $e \subset F_n$, then for every $m \in \ZZ$ such that $\vert m \vert \leq n$:
		$$
		f^m(e) \subset f^m(F_n) = \cap_{j=m-n}^{m+n} f^{-j}(\overset{o}{R_{w_{m-j}}}) \subset \overset{o}{R_m},
		$$
		which implies that $e_m = w_m$ for all $m \in \{-n, \dots, n\}$.
	\end{itemize}
	
	By item ii), if a sector $e$ is not in $F_n$, then $e$ is not in $F_{n+1}$. Together with the fact that for all $n$ there is always a sector in $F_n$ (item iii)), we deduce that there is at least one sector $e$ of $x$ that is contained in $F_n$ for all $n$. Then, we apply point iv) to deduce $e_z = w_z$ for all $z$.
\end{proof}

Let $x$ be a point with $k$ stable and $k$ unstable separatrices. Then $x$ has at most $2k$ sector codes projecting to $x$ and we have next corollary.

\begin{coro}\label{Coro: Caracterisation fibers}
	For all $x \in S$, if $x$ has $k$ separatrices, then $\pi_f^{-1}(x) = \{\underline{e_j(x)}\}_{j=1}^{2k}$. In particular, $\pi_f$ is finite-to-one.
\end{coro}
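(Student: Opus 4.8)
The plan is to read off the corollary directly from Lemmas \ref{Lemm: sector code is admisible} and \ref{Lemm: every code is sector code } together with Proposition \ref{Prop:proyecion semiconjugacion}. Since the stable and unstable foliations of $f$ share the same singular set and the same singularity types, a point $x$ with $k$ stable separatrices has exactly $k$ unstable separatrices, hence exactly $2k$ sectors $e_1(x),\dots,e_{2k}(x)$ in the sense of Definition \ref{Defi: Sector}, and therefore at most $2k$ sector codes $\be_1(x),\dots,\be_{2k}(x)$. By Lemma \ref{Lemm: sector code is admisible} each $\be_j(x)$ belongs to $\Sigma_{A(f,\cR)}$, so it is legitimate to ask for its image under $\pi_{(f,\cR)}$; the content of the corollary is that the fiber over $x$ is exactly this finite set.

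First I would establish the inclusion $\{\be_j(x)\}_{j=1}^{2k}\subseteq \pi_{(f,\cR)}^{-1}(x)$. Fix $j$ and pick a sequence $\{x_n\}$ representing the sector $e_j(x)$. By Definition \ref{Defi: Sector codes} (which uses Lemmas \ref{Lemm: image secto is a sector} and \ref{Lemm: sector contined unique rectangle}), for each $z\in\ZZ$ the sector $f^z(e_j(x))$ is contained in $R_{e(x,j)_z}$; running the argument already used in the proof of Lemma \ref{Lemm: sector code is admisible}—the sequence $\{f^z(x_n)\}$ cannot remain inside a boundary component of $R_{e(x,j)_z}$ lying strictly between the two separatrices bounding the sector—one gets that $\{f^z(x_n)\}$ eventually lies in $\overset{o}{R_{e(x,j)_z}}$. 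Consequently, for each $N\in\NN$ there is $m_N$ with $x_m\in\bigcap_{z=-N}^{N} f^{-z}(\overset{o}{R_{e(x,j)_z}})$ for all $m\ge m_N$, so $x\in\overline{\bigcap_{z=-N}^{N} f^{-z}(\overset{o}{R_{e(x,j)_z}})}$ for every $N$, i.e. $x\in\pi_{(f,\cR)}(\be_j(x))$. Since $\pi_{(f,\cR)}$ is a genuine (single-valued) map by Proposition \ref{Prop:proyecion semiconjugacion}, this yields $\pi_{(f,\cR)}(\be_j(x))=x$, that is, $\be_j(x)\in\pi_{(f,\cR)}^{-1}(x)$.

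The reverse inclusion $\pi_{(f,\cR)}^{-1}(x)\subseteq\{\be_j(x)\}_{j=1}^{2k}$ is exactly Lemma \ref{Lemm: every code is sector code }: any $\bw\in\Sigma_{A(f,\cR)}$ with $\pi_{(f,\cR)}(\bw)=x$ equals a sector code of $x$, and every sector code of $x$ is one of the $\be_j(x)$ with $1\le j\le 2k$. Combining the two inclusions gives the identity $\pi_{(f,\cR)}^{-1}(x)=\{\be_j(x)\}_{j=1}^{2k}$, whence $\#\pi_{(f,\cR)}^{-1}(x)\le 2k<\infty$, so $\pi_{(f,\cR)}$ is finite-to-one.

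I do not expect a genuine obstacle here: the only delicate point is the ``eventually in the interior of $R_{e(x,j)_z}$'' claim used for the forward inclusion, and that is precisely the argument carried out in the proof of Lemma \ref{Lemm: sector code is admisible}, so it only needs to be invoked rather than repeated. One should, however, phrase the conclusion as a set equality, since a priori two distinct sectors of $x$ could carry the same sector code; this affects neither the identity nor the finiteness statement.
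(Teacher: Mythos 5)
Your proposal is correct and follows essentially the same route as the paper, which treats the corollary as an immediate consequence of Lemma \ref{Lemm: sector code is admisible} and Lemma \ref{Lemm: every code is sector code }; you merely make explicit the forward inclusion (that each sector code actually projects to $x$), which the paper leaves implicit, and your caveat about phrasing the fiber as a set equality (distinct sectors may share a code) is well taken.
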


This ends the proof of Proposition \ref{Prop:proyecion semiconjugacion}.

\subsection{The quotient space is a surface}\label{subsec: quotien surface}

There is a natural equivalence relation in $\Sigma_{A(f,\cR)}$ defined via the projection $\pi_{(f,\cR)}$. Two codes $\bw$ and $\bv$ in $\Sigma_{A(f,\cR)}$ are  $f$-related  if and only if $\pi_{(f,\cR)}(\bw) = \pi_{(f,\cR)}(\bv)$, and the relation is denote by $\bw \sim_{(f,\cR)} \bv$.

The quotient space is denoted by $\Sigma_{(f,\cR)} = \Sigma_{A{(f,\cR)}}/\sim_{(f,\cR)}$ and $[\bw]_{(f,\cR)}$ is the equivalence class of $\bw$. If $\bw\sim_{(f,\cR)} \bv$, then:
$$
[\pi_{(f,\cR)}]([\bw]_{(f,\cR)}) = \pi_{(f,\cR)}(\bw) = \pi_{(f,\cR)}(\bv) = [\pi_{(f,\cR)}]([\bv]_{(f,\cR)}).
$$

Furthermore, since $\pi_{(f,\cR)}: \Sigma_{A(f,\cR)} \to S$ is a continuous function, $\Sigma_{A(f,\cR)}$ is compact, and $S$ is a Hausdorff topological space, the \emph{closed map lemma} implies that $\pi_{(f,\cR)}$ is a closed map. As $\pi_f$ is also surjective and finite to one, it follows that $\pi_{(f,\cR)}$ is a quotient map. Therefore, the projection $\pi_{(f,\cR)}$ induces a homeomorphism $[\pi_{(f,\cR)}]: \Sigma_{(f,\cR)} \rightarrow S$ in the quotient space.

The shift also behaves well under this quotient, since:
$$
[\sigma_{(f,\cR)}) ([\bw]_{(f,\cR)})]_{(f,\cR)} := [\sigma_{A(f,\cR)}(\bw)]_{(f,\cR)},
$$

If $\bw \sim_{(f,\cR)} \bv$, the semi-conjugacy  between of $f$ and $\sigma_{A(f,\cR)}$ through $\pi_{(f,\cR)}$ implies that:
$$
[\sigma_{A(f,\cR)}(\bw)] \sim_{(f,\cR)} [\sigma_{A(f,\cR)}(\bv)].
$$
Thus, the quotient map $[\sigma_{A(f,\cR)}]: \Sigma_{A(f,\cR)} \rightarrow \Sigma_{(f,\cR)}$ is well-defined and, in fact, a homeomorphism. Moreover,
\begin{eqnarray*}
	[\pi_{(f,\cR)}] \circ [\sigma_{A(f,\cR)}]([\bw]_{(f,\cR)})
	&=& [\pi_{(f,\cR)}]([\sigma_{A(f,\cR)}(\bw)]_{(f,\cR)}) \\
	&=& \pi_{(f,\cR)}(\sigma{A(f,\cR)}(\bw)) \\
	&=& f \circ \pi_{(f,\cR)}(\bw) \\
	&=& f \circ [\pi_{(f,\cR)}]([\bw]_{(f,\cR)}).
\end{eqnarray*}

Therefore, $[\pi_f]$ determines a topological conjugacy between $f$ and $[\sigma]_{A(f,\cR)}$.  
This implies that $\Sigma_{A(f,\cR)}$ is a surface homeomorphic to $S$, and $[\sigma]_{(f,\cR)}$ is topologically conjugate to a pseudo-Anosov homeomorphism. We summarize this discussion in the following proposition.

\begin{prop}\label{Prop: quotien by f}
	The quotient space $\Sigma_{(f,\cR)} := \Sigma_{A(f,\cR)}/\sim_{(f,\cR)}$ is homeomorphic to the surface $S$, and the quotient shift $[\sigma_{A(f,\cR)}]: \Sigma_{(f,\cR)} \rightarrow \Sigma_{(f,\cR)}$ is  topologically conjugate to the  pseudo-Anosov homeomorphism $f: S \rightarrow S.$ via the quotient projection:
	$$
	[\pi_{(f,\cR)}]: \Sigma_{(f,\cR)} \rightarrow S.
	$$
\end{prop}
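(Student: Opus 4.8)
The plan is to assemble Proposition~\ref{Prop: quotien by f} from the pieces already established, checking carefully that each quotient construction is well-defined. First I would invoke Proposition~\ref{Prop:proyecion semiconjugacion} to obtain that $\pi_{(f,\cR)}\colon \Sigma_{A(f,\cR)}\to S$ is continuous, surjective, finite-to-one, and satisfies the semi-conjugacy $f\circ\pi_{(f,\cR)}=\pi_{(f,\cR)}\circ\sigma_{A(f,\cR)}$. From this I would define $\bw\sim_{(f,\cR)}\bv$ iff $\pi_{(f,\cR)}(\bw)=\pi_{(f,\cR)}(\bv)$; this is visibly an equivalence relation, and $\pi_{(f,\cR)}$ descends to a well-defined injective map $[\pi_{(f,\cR)}]$ on the quotient $\Sigma_{(f,\cR)}$ which is still surjective onto $S$.

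Next I would argue that $[\pi_{(f,\cR)}]$ is a homeomorphism. Since $\Sigma_{A(f,\cR)}$ is compact (a closed subset of the compact shift space $\Sigma$) and $S$ is Hausdorff, the closed map lemma gives that $\pi_{(f,\cR)}$ is a closed, continuous surjection, hence a quotient map; by its very definition $\sim_{(f,\cR)}$ is exactly the fiber relation of this quotient map, so the induced map $[\pi_{(f,\cR)}]\colon \Sigma_{(f,\cR)}\to S$ is a continuous bijection from a compact space (the quotient of a compact space) to a Hausdorff space, and is therefore a homeomorphism. In particular $\Sigma_{(f,\cR)}$ is homeomorphic to the surface $S$.

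Then I would check that $\sigma_{A(f,\cR)}$ passes to the quotient: if $\bw\sim_{(f,\cR)}\bv$, then $\pi_{(f,\cR)}(\sigma_{A(f,\cR)}\bw)=f(\pi_{(f,\cR)}\bw)=f(\pi_{(f,\cR)}\bv)=\pi_{(f,\cR)}(\sigma_{A(f,\cR)}\bv)$ by the semi-conjugacy, so $\sigma_{A(f,\cR)}\bw\sim_{(f,\cR)}\sigma_{A(f,\cR)}\bv$; thus $[\sigma_{A(f,\cR)}]\colon\Sigma_{(f,\cR)}\to\Sigma_{(f,\cR)}$ is well-defined. The same computation applied to $\sigma_{A(f,\cR)}^{-1}$ (which exists since $\sigma_{A(f,\cR)}$ is a homeomorphism of $\Sigma_{A(f,\cR)}$) shows the quotient map is a bijection; continuity of both it and its inverse follows because $\Sigma_{(f,\cR)}$ is compact Hausdorff and the quotient projection is continuous and open on the quotient of an equivariant map. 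Finally, chasing the diagram exactly as displayed before the statement gives $[\pi_{(f,\cR)}]\circ[\sigma_{A(f,\cR)}]=f\circ[\pi_{(f,\cR)}]$, i.e.\ $[\pi_{(f,\cR)}]$ conjugates $[\sigma_{A(f,\cR)}]$ to $f$.

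The only real subtlety — and the step I would be most careful about — is verifying that the relation $\sim_{(f,\cR)}$ is genuinely the kernel pair of the quotient map $\pi_{(f,\cR)}$, so that the abstract universal property of quotient maps applies and $[\pi_{(f,\cR)}]$ is not merely a continuous bijection ``by coincidence'' but really the induced homeomorphism; once the closed map lemma is in hand this is automatic, so there is no deep obstacle here. Everything else is formal diagram-chasing from Proposition~\ref{Prop:proyecion semiconjugacion} and Corollary~\ref{Coro: Caracterisation fibers}.
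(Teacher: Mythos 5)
Your proposal is correct and follows essentially the same route as the paper: define $\sim_{(f,\cR)}$ as the fiber relation of $\pi_{(f,\cR)}$, use compactness of $\Sigma_{A(f,\cR)}$, the Hausdorff property of $S$, and the closed map lemma to conclude that $\pi_{(f,\cR)}$ is a quotient map inducing a homeomorphism $[\pi_{(f,\cR)}]$, then descend the shift via the semi-conjugacy and chase the diagram. No gaps; this matches the paper's argument in Subsection~\ref{subsec: quotien surface}.
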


If we have two pseudo-Anosov maps $f: S_f \rightarrow S_f$ and $g: S_g \rightarrow S_g$ with respective geometric  Markov partitions $\cR_f$ and $\cR_g$  such that the  geometric type of the pairs is the same, maybe n after a horizontal refinement, they share the same incidence matrix $A=A(f,\cR_f)=A(g,\cR_g)$ with entries in $\{0,1\}$ and are associated with the same sub shift of finite type $(\Sigma_A, \sigma_A)$. 

However, the projections $\pi_{(f,\cR_f)}$ and $\pi_{(g,\cR_g)}$ are not necessarily the same o we cant even compose them. In particular, while $\Sigma_{A(f,\cR_f)}/\sim_{(f,\cR_f)}$ is homeomorphic to $S_f$, and $\Sigma_{A(g,\cR_g)}/\sim_{(g,\cR_g)}$ is homeomorphic to $S_g$, we cannot conclude that $S_f$ is homeomorphic to $S_g$.To convince yourself observe that, given $x \in S_f$ and $y \in S_g$, we do not know whether $\pi_{(f,\cR_f)}^{-1}(x) \cap \pi_{(g,\cR_g)}^{-1}(y) \neq \emptyset$ implies that $\pi_{(f,\cR_f)}^{-1}(x) = \pi_{(g,\cR_g)}^{-1}(y)$.

It was shown in Lemma \ref{Lemm: every code is sector code } that every code in $\pi_{(f,\cR_f)}^{-1}(x)$ is a sector code of $x$. Therefore, if $\pi_{(f,\cR_f)}^{-1}(x) \cap \pi_{(g,\cR_g)}^{-1}(y) \neq \emptyset$, there is a common sector code for both $x$ and $y$, but this does not imply a unique (or continuous) correspondence between the sets of sectors of $x$ and $y$. For example, it is possible that $x$ has a different number of prongs than $y$. This ambiguity cannot be resolved by examining the incidence matrix alone, but it is addressed by incorporating the geometric type.

In the next section, we will construct an equivalence relation $\sim_T$ in $\Sigma_{A(f,\cR)}$ in terms of the geometric type $T$, in such a manner that, if the geometric types of $(f,\cR_f)$ and $(g,\cR_g)$ are both equal to $T$, then the quotient sub-shifts $[\sigma_{A(f,\cR_f)}]$ and $[\sigma_{A(g,\cR_g)}]$ are topologically conjugate. This will be enough to prove our main theorem \ref{Theo: Total invariant}, as it will follow that $f$ and $g$ must be topologically conjugate.

\section{The gemetric type induce a decomposition of the shift space.}

The objective of this section is give a constructive proof of the following proposition.

\begin{prop}\label{Prop: The relation determines projections}
	Let $T \in \cG(\textbf{p-A})^{sp}$ be a symbolically presentable geometric type, and let $A := A(T)$ be its incidence matrix. Consider the subshift of finite type $(\Sigma_A, \sigma_A)$ associated with $T$.
	
	Then there exists an equivalence relation $\sim_T$ on $\Sigma_A$, algorithmically determined by $T$, such that the following holds:
	
	If $(f, \mathcal{R})$ is a pair that realizes the geometric type $T$, and if $\pi_{(f, \mathcal{R})} : \Sigma_A \to S$ is the projection induced by $(f, \mathcal{R})$, then for any pair of codes $\bw, \bv \in \Sigma_A$, we have
	$$
	\bw \sim_T \bv \quad \text{if and only if} \quad \pi_{(f, \mathcal{R})}(\bw) = \pi_{(f, \mathcal{R})}(\bv),
	$$
	that is, their projections coincide.
\end{prop}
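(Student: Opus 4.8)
The plan is to build $\sim_T$ entirely out of the symbolic data in $T$ by axiomatizing, in combinatorial terms, the two sources of non-injectivity of $\pi_{(f,\cR)}$: points on the boundary of the Markov partition (whose codes differ in finitely many coordinates, following stable/unstable ``border transitions''), and periodic boundary points / singular points, where several sector codes converge from different sides. Concretely, I would first use the work already done in the excerpt: by Proposition~\ref{Prop: Refinamiento binario} we may assume $A(T)$ is binary, and by Corollary~\ref{Coro: Caracterisation fibers} every fiber $\pi_{(f,\cR)}^{-1}(x)$ consists exactly of the sector codes $\be_j(x)$, so describing $\sim_T$ amounts to describing, purely in terms of $T$, when two sector codes encode sectors of the same point.

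Here are the steps in order. \emph{Step 1 (horizontal/vertical border relations).} For each rectangle $R_i$ and each of its two $u$-boundary components, the geometric type $T$ records, via $\rho$ and $\epsilon$, which rectangle that boundary arc lands in under $f$ and on which side; iterating, a point on $\partial^s\cR$ gets two (or more) codes that agree for all sufficiently negative indices and are governed, for nonnegative indices, by a finite-state automaton read off from $T$ (which horizontal subrectangle a boundary segment belongs to determines its forward itinerary). I would make this precise as a relation $\sim_s$ on $\Sigma_A$: $\bw\sim_s\bv$ iff there is $N$ with $w_z=v_z$ for $z\le -N$ and, for $z\ge -N$, the pair $(w_z,v_z)$ follows an admissible ``$s$-boundary transition'' dictated by $T$; symmetrically define $\sim_u$ using $\rho^{-1},\epsilon$ on $u$-subrectangles and agreement for $z\ge N$. \emph{Step 2 (corner/sector relation).} At a point that is a corner of several rectangles, the cyclic arrangement of its sectors is recoverable from $T$: the permutation $\rho$ and signs $\epsilon$ tell us how $f$ permutes the ``sector transitions'' $(\text{local }s\text{-separatrix side},\ \text{local }u\text{-separatrix side})$, and periodicity of the corner (forced because $\partial\cR$ is $f$-invariant and the partition is finite) lets us detect, from the eventual periodicity of a code, the finite list of rectangles meeting around that point. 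Define $\sim_c$ to glue codes that are, after a bounded change, related by ``rotating one sector'' in this combinatorially reconstructed cyclic order. \emph{Step 3.} Let $\sim_T$ be the equivalence relation generated by $\sim_s,\sim_u,\sim_c$ (one checks it is already transitive with a uniform bound on the number of composition steps, since a fiber has at most $2k$ elements and $k$ is bounded by the partition combinatorics). \emph{Step 4 ($\Leftarrow$).} If $\pi_{(f,\cR)}(\bw)=\pi_{(f,\cR)}(\bv)=x$, then by Corollary~\ref{Coro: Caracterisation fibers} both are sector codes $\be_j(x),\be_{j'}(x)$; comparing the itineraries of the two sectors rectangle-by-rectangle and using Lemma~\ref{Lemm: sector contined unique rectangle} and Lemma~\ref{Lemm: image secto is a sector}, the indices where they differ are exactly boundary/corner transitions of the type encoded in Steps 1--2, so $\bw\sim_T\bv$. \emph{Step 5 ($\Rightarrow$).} Conversely, each generator of $\sim_T$ was designed to mirror an actual coincidence: an $s$-border transition in $T$ forces the corresponding nested intersections defining $\pi_{(f,\cR)}$ to share the same point (the two codes track the two sides of a stable boundary leaf through the rectangles, which pinch to one point as the vertical widths shrink under forward iteration), and similarly for $\sim_u$ and $\sim_c$; hence $\bw\sim_T\bv\Rightarrow\pi_{(f,\cR)}(\bw)=\pi_{(f,\cR)}(\bv)$.

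The key point making Step~5 work, and the reason the statement is independent of the realization $(f,\cR)$, is that the nested intersection in \eqref{Equa: proyection (r,R) } contracts: forward iterates shrink the unstable width and backward iterates the stable width, so a code determines its point as a genuine intersection of a decreasing sequence of rectangles, and two codes related by a border transition name the same limit. I would isolate this as a short lemma (``codes agreeing in the far past and following an $s$-border transition pattern project to the same point''), proved exactly as the well-definedness argument for $\pi$ in \cite[Expos\'e 10]{fathi2021thurston}.

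The main obstacle I anticipate is Step~2: reconstructing the \emph{cyclic} order of the sectors around a (periodic) boundary point from $T$ alone, including the possibility that the point is a $k$-prong singularity with $k\ge 3$ or a $1$-prong spine, where the number of sectors is not determined by the local rectangle count in any naive way but must be extracted from how $\rho$ and $\epsilon$ act on the orbit of the corner. Getting the bookkeeping of orientations right here — so that $\epsilon$ correctly predicts on which side of a separatrix the image sector lies — is where the argument is most delicate, and it is also what guarantees that $\sim_T$ produces the \emph{correct} identifications (not too coarse, not too fine) uniformly over all realizations. The rest of the argument is a careful but routine translation between the nested-rectangle description of $\pi_{(f,\cR)}$ and the automaton encoded by $T$.
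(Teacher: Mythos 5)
Your overall strategy --- decompose the failure of injectivity of $\pi_{(f,\cR)}$ into stable-boundary and unstable-boundary identifications, encode each as a combinatorial relation read off from $\rho$ and $\epsilon$, and let $\sim_T$ be the equivalence relation they generate --- is the same as the paper's. The paper likewise defines $\sim_s$ (Definition~\ref{Defi: s realtion in Sigma S no per}) and $\sim_u$ (Definition~\ref{Defi: u relation in Sigma U no per}) on the sets of $s$- and $u$-boundary leaf codes, declares totally interior codes equivalent only to themselves, and takes $\sim_T$ to be generated by finite alternating chains of $\sim_s$ and $\sim_u$.

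There are, however, two genuine gaps. First, your Step~2 --- the corner relation $\sim_c$, which requires reconstructing from $T$ alone the cyclic order of the sectors around a periodic boundary point or $k$-prong singularity --- is exactly the part you flag as the main obstacle, and you do not carry it out. The paper shows it is unnecessary: two cyclically adjacent sectors of \emph{any} point share either a local stable or a local unstable separatrix, hence their codes are already $\sim_s$- or $\sim_u$-related, and one travels around the whole fiber by an alternating chain such as $\be_1\sim_s\be_4\sim_u\be_3\sim_s\be_2\sim_u\be_1$ (see the proof of Lemma~\ref{Lemm: sector codes}); no global cyclic data needs to be extracted from $T$, and no third generator is needed. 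Second, your description of $\sim_s$ as ``the pair $(w_z,v_z)$ follows an admissible $s$-boundary transition'' leaves unspecified the sign conditions that make the relation correct rather than too coarse: the paper requires not only that $w_{k+1}$ and $v_{k+1}$ arise from \emph{adjacent} horizontal subrectangles $H^i_j$ and $H^i_{j+1}$ at the unique transition index $k$, but also that the subsequent positive parts are the $s$-boundary codes $\underline{I}^+(w_{k+1},\delta_w)$ and $\underline{I}^+(v_{k+1},\delta_v)$ with $\delta_w=\epsilon(i,j)$ and $\delta_v=-\epsilon(i,j+1)$ (item~(iv) of Definition~\ref{Defi: s realtion in Sigma S no per}). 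Without this constraint the two codes could track the lower boundary of $H^i_j$ and the upper boundary of $H^i_{j+1}$, which are different stable segments, and the implication $\bw\sim_T\bv\Rightarrow\pi_{(f,\cR)}(\bw)=\pi_{(f,\cR)}(\bv)$ would fail. You would also need a separate clause for periodic boundary codes, for which the transition index $k$ does not exist. These are not cosmetic omissions: they are precisely where the proof does its work, and your sketch defers them.
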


We start by decomposing $\Sigma_{A(T)}$ into three subsets: $\Sigma_{I(T)}$, $\Sigma_{S(T)}$, and $\Sigma_{U(T)}$, corresponding to the interior, the $s$-boundary, and the $u$-boundary codes of $T$, respectively. Using the information encoded in $T$ and some hand-crafted techniques, we introduce three relations, $\sim_I$, $\sim_S$, and $\sim_U$, defined on these subsets. These will later be extended to an equivalence relation $\sim_T$ on the entire space $\Sigma_A$.

Finally, we will prove that for any pair $(f, \mathcal{R})$ realizing $T$, we have
$$
\pi_{(f, \mathcal{R})}(\bw) = \pi_{(f, \mathcal{R})}(\bv) \quad \text{if and only if} \quad \bw \sim_T \bv.
$$
The subsets and the equivalence relation $\sim_T$ will be constructed step by step throughout this section. To proceed, we fix some notation that will be used throughout:

\begin{itemize}
	\item $T \in \cG(\textbf{p-A})^{sp}$ is a symbolically presentable geometric type, given by:
	$$
	T = \left(n, \{h_i, v_i, \rho, \epsilon\} \right).
	$$
	
	\item Its incidence matrix is denoted by $A := A(T)$.
	
	\item The pair $(f, \mathcal{R})$ have geometric type, $T$ and the invariant foliations of $f$ are $(\cF^s,\mu^s)$ and $(\cF^u,\mu^u)$.
	
	\item To avoid excessive notation, we write $\pi_f$ instead of $\pi_{(f, \mathcal{R})}$ when the choice of homeomorphism and partition is clear from context, especially while we develop a proof.
\end{itemize}

\subsection{Periodic  points and codes} 

The following lemma will be used repeatedly in the arguments that follow.

\begin{lemm}\label{Lemm: Boundary of Markov partition is periodic}
	Both the upper and lower boundaries of each rectangle in the Markov partition $\mathcal{R}=\{R_i\}_{i=1}^n$ lie on the stable leaf of some periodic point of $f$. Similarly, the left and right boundaries lie on the unstable leaf of some periodic point, of period at most $2n$
\end{lemm}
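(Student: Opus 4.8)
The plan is to exploit the Markov property together with the finiteness of the partition to force periodicity of the boundary leaves. First I would recall from Lemma~\ref{Prop: Markov criterion boundary} that $\partial^s\mathcal{R}$ is $f$-invariant and $\partial^u\mathcal{R}$ is $f^{-1}$-invariant. Fix a rectangle $R_i$ and consider its upper $u$-boundary component $\partial^s_{+1}R_i$; by $f$-invariance of $\partial^s\mathcal{R}$, its image $f(\partial^s_{+1}R_i)$ is an arc contained in $\partial^s\mathcal{R}$, hence contained in a finite union of stable boundary components. Since $f(\partial^s_{+1}R_i)$ is a connected stable arc mapped homeomorphically, and the $u$-boundary components of $\mathcal{R}$ together with the $s$-boundaries cut each stable leaf into at most finitely many pieces near a rectangle, I can arrange that $f$ permutes (up to passing to sub-arcs and endpoints) a finite collection of stable arcs determined by the partition. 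The key finiteness input is that there are exactly $2n$ $u$-boundary components $\partial^s_{\pm1}R_i$, $i=1,\dots,n$.

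The core argument I would give is to track the \emph{endpoints} of these boundary components, i.e.\ the corner points $C_{s,t}(R_i)$. There are at most $4n$ such corners (in fact fewer, since corners are shared). I claim that $f$ maps the set of corner-free components into itself in a controlled way: more precisely, the endpoints of $f(\partial^s_{+1}R_i)$ must again be corners of rectangles, because the image is a stable arc in $\partial^s\mathcal{R}$ whose endpoints, being on $\partial^u\mathcal{R}$ as well (they are corners, hence in $\partial\mathcal{R}=\partial^s\mathcal{R}\cap\partial^u\mathcal{R}$), stay in the $f^{-1}$-invariant set $\partial^u\mathcal{R}$ only if... — so the cleanest route is: the corner points of $\mathcal{R}$ form a finite set $\mathcal{C}$, and I would show $f(\mathcal{C})\subseteq \partial\mathcal{R}$ is again a finite set of points each lying on both a stable and an unstable boundary arc, hence (after checking they must actually be corners, using that a stable boundary arc ends only at a corner) $f(\mathcal{C})\subseteq\mathcal{C}$. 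Since $\mathcal{C}$ is finite and $f$ is injective, $f|_{\mathcal{C}}$ is a permutation of a set of cardinality at most $2n$ (counting corners as endpoints of the $2n$ vertical boundary segments, with each segment contributing two endpoints but endpoints being shared, giving the bound $2n$), so every corner is periodic with period at most $2n$. Then each $u$-boundary component $\partial^s_{\pm1}R_i$ has periodic endpoints, so some power of $f$ fixes that component setwise and fixes its endpoints; an endpoint $p$ of the arc is then a periodic point, and the arc lies on the stable leaf $F^s_p$. The same argument applied to $f^{-1}$ and the $s$-boundary components $\partial^u_{\pm1}R_i$ gives the statement about left/right boundaries lying on unstable leaves of periodic points.

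For the period bound I would be slightly more careful: the $4n$ corners $\{C_{s,t}(R_i)\}$ need not be distinct, but the relevant count is the number of distinct endpoints of the $2n$ vertical boundary arcs, which is at most $2n$ (each such arc $\partial^u_{\pm1}R_i$ has its two endpoints among the corners, but as I traverse the boundary of the partition the endpoints are identified in pairs). Thus $f$ restricted to this endpoint set is a permutation of at most $2n$ elements, giving period at most $2n$ for each corner, hence for each boundary arc up to the period of its endpoints; I would state the bound as "period at most $2n$" matching the lemma. A short argument is also needed that a point of $f(\mathcal{C})$ lying in $\partial^s\mathcal{R}\cap\partial^u\mathcal{R}$ is genuinely a corner and not an interior point of a boundary arc: this follows because an interior point of a $u$-boundary arc has a neighborhood in $\partial\mathcal{R}$ that is a single stable arc, whereas a point of $\partial^u\mathcal{R}$ that is not a corner lies in the interior of a $u$-boundary arc — so membership in both forces the point to be a corner.

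The main obstacle I anticipate is the bookkeeping around \emph{shared} corners and the precise combinatorics that yields the bound $2n$ rather than $4n$; relatedly, one must handle the possibility that a $u$-boundary component is mapped by $f$ onto a \emph{proper} sub-arc of several boundary components rather than onto a single one, so the clean statement "$f$ permutes the boundary arcs" is false, and the argument must instead be phrased purely in terms of the finite \emph{corner set} being permuted, with the arcs then following for free because their endpoints are periodic. Everything else — the invariance of the stable/unstable boundaries, the fact that a periodic point's separatrix contains the boundary arc emanating from it — is either already established in the excerpt (Lemma~\ref{Prop: Markov criterion boundary}) or immediate from the definitions of rectangle and sub-rectangle.
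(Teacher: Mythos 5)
Your central claim --- that $f$ maps the corner set $\mathcal{C}$ into itself, so that every corner is periodic and hence every boundary arc has periodic endpoints --- is false, and the argument collapses there. Lemma~\ref{Prop: Markov criterion boundary} gives $f$-invariance of $\partial^s\mathcal{R}$ but only $f^{-1}$-invariance of $\partial^u\mathcal{R}$; since $f$ expands unstable arcs, $f(\partial^u\mathcal{R})$ strictly contains $\partial^u\mathcal{R}$, and the image of a corner need not lie in $\partial^u\mathcal{R}$ at all. Concretely, if $C$ is an endpoint of the top side of $R_i$, then $f(C)$ is a corner of the vertical sub-rectangle $f(H^i_{h_i})=V^k_l\subset R_k$, whose unstable sides generically lie in the interior of $R_k$; so $f(C)$ is an interior point of a stable side of $R_k$, not a corner, and your observation that a point of $\partial^s\mathcal{R}\cap\partial^u\mathcal{R}$ must be a corner does not apply. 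The standard two-rectangle Markov partition of the Arnold cat map already rules out your conclusion: its corners, apart from the fixed point itself, are transverse homoclinic intersections of $W^s(0)$ with $W^u(0)$ and hence not periodic. Note that the lemma never claims the boundary arcs have periodic \emph{endpoints}, only that each arc lies on the stable (resp.\ unstable) \emph{leaf} of some periodic point, which may sit elsewhere on that leaf.

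The intended argument is a pigeonhole on the $2n$ stable boundary components rather than on the corners. For $x\in\partial^s\mathcal{R}$ the forward orbit $f^m(x)$ stays in $\partial^s\mathcal{R}$, so among $m=1,\dots,2n$ two iterates $f^{n_1}(x)$ and $f^{n_2}(x)$ land on the same boundary component, hence on the same stable leaf; the stable leaf through $x$ is therefore invariant under $f^{n_2-n_1}$ with $n_2-n_1\le 2n$, and since $f$ contracts $\mu^u$-length along stable leaves, this periodic leaf carries a periodic point of period at most $2n$. The symmetric argument with $f^{-1}$ handles the unstable boundary. Your framework becomes correct if you replace ``corners'' by ``boundary components'' throughout; as written, the step $f(\mathcal{C})\subseteq\mathcal{C}$ is a genuine gap, not a bookkeeping issue.
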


\begin{proof}
	Let $x$ be a point on the stable boundary of some rectangle $R_i \in \mathcal{R}$. Since the stable boundary of the Markov partition is $f$-invariant, for all $n \geq 0$, the point $f^n(x)$ remains on the stable boundary of some rectangle in the partition.
	
	As there are only $2n$ stable boundary components in the partition, there exist integers $n_1, n_2 \in \{1, \ldots, 2n\}$ with $n_1 < n_2$ such that $f^{n_1}(x)$ and $f^{n_2}(x)$ lie on the same stable boundary component. This implies that the stable leaf containing $x$ is periodic, with period less than $2n$, and hence corresponds to the stable leaf of some periodic point $p$ of period less than or equal to $2n$.
	
	A similar argument applies to the unstable (vertical) boundaries.
\end{proof}

The following lemma is classical in the literature of symbolic dynamical systems, so we state it without proof.

\begin{lemm}\label{Lemm: Periodic to periodic}
	Let $T \in \cG\cT(\textbf{p-A})^{sp}$, and let $(f, \mathcal{R})$ be a pair realizing $T$. If $\bw \in \textbf{Per}(\sigma_{A(T)})$ is a periodic code, then $\pi_{(f, \mathcal{R})}(\bw)$ is a periodic point of $f$. Moreover, if $p$ is a periodic point of $f$, then $\pi_{(f, \mathcal{R})}^{-1}(p) \subset \textbf{Per}(\sigma_{A(T)})$; that is, all codes projecting to $p$ are periodic.
\end{lemm}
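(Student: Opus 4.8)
The plan is to prove the two assertions separately, both by exploiting the semi-conjugacy $f\circ\pi_{(f,\cR)}=\pi_{(f,\cR)}\circ\sigma_{A(T)}$ from Proposition~\ref{Prop:proyecion semiconjugacion} together with the finite-to-one property and the characterization of fibers by sector codes (Corollary~\ref{Coro: Caracterisation fibers}).

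First I would show that a periodic code $\bw$ projects to a periodic point. Suppose $\sigma_{A(T)}^m(\bw)=\bw$ for some $m\geq 1$. Applying the semi-conjugacy $m$ times gives $f^m(\pi_{(f,\cR)}(\bw))=\pi_{(f,\cR)}(\sigma_{A(T)}^m(\bw))=\pi_{(f,\cR)}(\bw)$, so $x:=\pi_{(f,\cR)}(\bw)$ is a fixed point of $f^m$, hence periodic. This direction is essentially immediate and requires no hard work.

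The reverse direction is the substantive one: if $p$ is periodic for $f$, then every $\bw\in\pi_{(f,\cR)}^{-1}(p)$ is periodic for $\sigma_{A(T)}$. The key input is Corollary~\ref{Coro: Caracterisation fibers}, which says that $\pi_{(f,\cR)}^{-1}(p)=\{\be_j(p)\}_{j=1}^{2k}$ consists precisely of the sector codes of $p$, one for each of the $2k$ sectors. The idea is that $f$ acts on the (finite) set of sectors of all points in the orbit of $p$, and since that orbit is finite and each point has finitely many sectors, the action of $f$ on sectors along the orbit of $p$ is a permutation of a finite set; hence every sector of $p$ is periodic under this action. More precisely, say $f^r(p)=p$. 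By Lemma~\ref{Lemm: image secto is a sector}, $f$ maps sectors of $p$ to sectors of $f(p)$, and iterating, $f^r$ maps the $2k$ sectors of $p$ bijectively to the $2k$ sectors of $f^r(p)=p$; thus $f^r$ permutes the sectors of $p$, and some power $(f^r)^t$ fixes the sector $e_j(p)$. Now I would check that the sector code intertwines the shift with the sector dynamics: by the definition of sector codes (Definition~\ref{Defi: Sector codes}), $e(p,j)_z$ records which rectangle contains $f^z(e_j(p))$, so $\sigma_{A(T)}(\be_j(p))$ records which rectangle contains $f^{z+1}(e_j(p))=f^z(e_{j'}(f(p)))$ where $e_{j'}(f(p))=f(e_j(p))$; that is, $\sigma_{A(T)}(\be_j(p))=\be_{j'}(f(p))$ where $f$ sends sector $e_j(p)$ to sector $e_{j'}(f(p))$. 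Iterating, $\sigma_{A(T)}^{rt}(\be_j(p))=\be_j(p)$ once $(f^r)^t$ fixes $e_j(p)$, so $\bw=\be_j(p)$ is periodic. Combining with period bounds (the orbit of $p$ together with the $2k$ sectors at each point is a set of size at most, say, $2k\cdot(\text{period of }p)$) even gives an explicit period bound if desired.

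The main obstacle I anticipate is bookkeeping rather than conceptual: one must verify carefully that the sector code genuinely conjugates $\sigma_{A(T)}$ restricted to $\pi_{(f,\cR)}^{-1}(p)$ with the permutation action of $f$ on sectors along the orbit of $p$ — i.e., the identity $\sigma_{A(T)}(\be_j(x))=\be_{j'}(f(x))$ with $f(e_j(x))=e_{j'}(f(x))$ — which amounts to unwinding Definitions~\ref{Defi: Sector codes} and chasing indices through Lemma~\ref{Lemm: image secto is a sector}. Once this identity is in hand, periodicity of the code follows from finiteness of the sector set along a periodic orbit, and the lemma is proved. (Since the paper states this lemma is classical and omits the proof, I would keep the write-up brief, citing the above fiber characterization as the essential ingredient.)
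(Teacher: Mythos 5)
The paper states this lemma without proof (calling it classical), so there is nothing to compare against line by line; your argument is correct and all the ingredients you invoke (Proposition~\ref{Prop:proyecion semiconjugacion}, Lemma~\ref{Lemm: image secto is a sector}, Corollary~\ref{Coro: Caracterisation fibers}) appear earlier in the paper, so there is no circularity. Two remarks. First, your second half can be streamlined so as to avoid the sector machinery entirely: if $f^m(p)=p$, the semi-conjugacy gives $\sigma_{A(T)}^m\bigl(\pi_{(f,\cR)}^{-1}(p)\bigr)\subset\pi_{(f,\cR)}^{-1}(p)$, and since $\sigma_{A(T)}$ is invertible, $\sigma_{A(T)}^m$ restricts to an injective self-map of the finite set $\pi_{(f,\cR)}^{-1}(p)$, hence a permutation, hence every code in the fiber is periodic; this uses only finiteness of the fiber, not its description by sector codes. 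Second, if you do keep the sector argument, the one step that deserves an explicit word is the claim that $f^r$ acts \emph{bijectively} (not just as a self-map) on the $2k$ sectors of $p$: a mere self-map of a finite set only makes points eventually periodic, and you need genuine periodicity. Bijectivity does hold — $f^{-1}$ also sends sectors to sectors by the same argument as Lemma~\ref{Lemm: image secto is a sector}, giving an inverse on sectors — but as written you assert it without justification, and it is exactly the hinge on which the conclusion turns. Your verification of the intertwining identity $\sigma_{A(T)}(\be_j(x))=\be_{j'}(f(x))$ is correct and is the right bookkeeping to do.
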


The following lemma characterizes the periodic boundary points of $(f, \mathcal{R})$ in terms of their iterations under $f$. 

\begin{lemm} \label{Lemm: no periodic boundary points}
	Let $\bw \in \Sigma_{A(T)}$ be a code such that for every $k \in \mathbb{Z}$, $f^k(\pi_{(f,\cR)}(\bw)) \in \partial^s \mathcal{R}$. Then $\pi_{(f,\cR)}(\bw)$ is a periodic point of $f$. Similarly, if $f^k(\pi_{(f,\cR)}(\bw)) \in \partial^u \mathcal{R}$ for all integers $k$, then $\bw$ is a periodic code.
\end{lemm}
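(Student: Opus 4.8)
The plan is to prove the first assertion — that $x:=\pi_{(f,\cR)}(\bw)$ is a periodic point of $f$ — and to obtain the (stated) conclusion about the code $\bw$ in the second assertion from Lemma~\ref{Lemm: Periodic to periodic}, which guarantees that every code projecting to a periodic point is itself periodic. I would also reduce the unstable case to the stable one: running the whole argument for $f^{-1}$ (which is again a generalized pseudo-Anosov homeomorphism, whose stable foliation is $\mathcal{F}^{u}$, for which $\cR$ is again a Markov partition, and whose stable boundary is exactly $\partial^{u}\cR$) turns the hypothesis $f^{k}(x)\in\partial^{u}\cR$ for all $k$ into $(f^{-1})^{k}(x)\in\partial^{s}_{f^{-1}}\cR$ for all $k$. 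So I will only spell out the case $f^{k}(x)\in\partial^{s}\cR$ for all $k\in\mathbb{Z}$, and I will begin with the harmless situation $x\in\mathrm{Sing}(f)$: since $\mathrm{Sing}(f)$ is a finite $f$-invariant set, such an $x$ is automatically periodic, so from now on $x$ may be assumed regular.

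Next I would show that $x$ lies on a periodic stable leaf. Since $x\in\partial^{s}\cR$, it lies on some horizontal boundary component $\partial^{s}_{\eta}R_{i}$, and Lemma~\ref{Lemm: Boundary of Markov partition is periodic} tells us that this component — hence the stable leaf $\ell:=F^{s}_{x}$ through $x$ — is contained in the stable leaf of a periodic point; in particular $f^{m}(\ell)=\ell$ for some $m$ (one may take $m\le 2n$, and after passing to a further multiple one may also assume $f^{m}$ preserves the separatrix of $\ell$ on which $x$ sits, so that $f^{m}|_{\ell}$ is an honest self-homeomorphism of $\ell$).

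The core of the proof is then a contraction/expansion estimate along $\ell$. Equip $\ell$ with the arc-length metric $d_{\ell}$ given by the transverse measure $\mu^{u}$ of the unstable foliation; by Definition~\ref{Defi: pseudo-Anosov} the map $f$ contracts this length by the factor $\lambda^{-1}$, so $f^{m}|_{\ell}$ is a $\lambda^{-m}$-contraction of a connected $1$-manifold and therefore has a unique fixed point $q\in\ell$, which is a periodic point of $f$, while $f^{-m}|_{\ell}$ dilates $d_{\ell}$ by $\lambda^{m}$. Moreover $\partial^{s}\cR\cap\ell$ is $d_{\ell}$-bounded: each boundary component $\partial^{s}_{\eta'}R_{j}$ lies in a single leaf of $\mathcal{F}^{s}$, so it meets $\ell$ either in all of itself (a compact arc of finite length) or in at most one point, and $\partial^{s}\cR$ is a finite union of such components. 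Now I iterate backwards: the hypothesis gives $f^{-mk}(x)\in\partial^{s}\cR$ for every $k\ge 0$, and $f^{-mk}(x)\in\ell$ as well, so $d_{\ell}(f^{-mk}(x),q)$ stays bounded; but $d_{\ell}(f^{-mk}(x),q)=\lambda^{mk}\,d_{\ell}(x,q)$, which is possible only if $d_{\ell}(x,q)=0$, i.e.\ $x=q$. Hence $x$ is periodic, and the argument is complete.

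The real content here is just a mild expansivity fact — a point whose entire negative orbit stays inside the bounded, stably-contracted set $\partial^{s}\cR$ cannot drift away from the attracting periodic point of the relevant leaf, so it must coincide with it — and I expect no essential difficulty. The one point worth stressing is \emph{why} the two-sided orbit hypothesis is genuinely needed: $\partial^{s}\cR$ is only forward-invariant under $f$, so it is precisely the backward iterates $f^{-mk}(x)$, along which $f^{-1}$ is expanding, that pin $x$ down. The remaining subtleties are pure bookkeeping about leaves — selecting the power of $f$ that restricts to $\ell$ fixing the correct separatrix, and verifying that $d_{\ell}$ is a genuine metric on $\ell$ in which the finitely many compact horizontal boundary arcs form a bounded set — all of which follow immediately from the leaf and regular-neighborhood structure already recorded in the paper.
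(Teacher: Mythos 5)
Your proof is correct and follows essentially the same route as the paper's: both rest on Lemma~\ref{Lemm: Boundary of Markov partition is periodic} to place $x$ on a periodic stable leaf $\ell$, and then on the $\lambda^{m}$-expansion of $\mu^u$-length under $f^{-m}$ together with the boundedness of $\partial^s\cR\cap\ell$ to force $x$ to coincide with the periodic point of that leaf. The only cosmetic differences are that the paper argues by contradiction with the segment $[x,p]^s$ whereas you identify $x$ directly with the attracting fixed point of $f^{m}|_{\ell}$ (whose existence is better drawn from the periodic point already supplied by that lemma than from the contraction principle on a possibly non-complete leaf), and that you spell out the $f^{-1}$ reduction and the singular case that the paper leaves implicit.
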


\begin{proof}
	Suppose that $\bw$ is non-periodic. By Lemma~\ref{Lemm: Periodic to periodic}, $x := \pi_f(\bw)$ is non-periodic and lies on the stable boundary of $\mathcal{R}$, which is a compact set and each  connected  component have finite $\mu^u$-length.
	
	Let $[x, p]^s$ be the stable segment joining $x$ to a periodic point $p$ on its stable leaf, which exists by Lemma~\ref{Lemm: Boundary of Markov partition is periodic}. Then, for all $m \in \mathbb{N}$,
	$$
	\mu^u([f^{-m}(x), f^{-m}(p)]^s) = \mu^u(f^{-m}[x, p]^s) = \lambda^m \mu^u([x, p]^s),
	$$
	which diverges as $m \to \infty$ and can be contained in a stable interval of fixed length. Therefore, there exists $m \in \mathbb{N}$ such that $f^{-m}(x) \notin \partial^s \mathcal{R}$, contradicting the hypothesis. A similar argument applies to the unstable case.
\end{proof}

The proof of Corollary~\ref{Coro: interior periodic points unique code} follows from the fact that all sector codes of a point satisfying the hypothesis are equal.

\begin{coro}\label{Coro: interior periodic points unique code}
	Let $\bw \in \Sigma_{A(T)}$ be a periodic code. If $\pi_{(f,\cR)}(\bw) \in \overset{o}{\mathcal{R}}$, then for all $z \in \mathbb{Z}$, $f^z(\pi_{(f,\cR)}(\bw)) \in \overset{o}{\mathcal{R}}$; that is, it remains in the interior of the Markov partition. Moreover, $\pi_{(f,\cR)}^{-1}(\pi_{(f,\cR)}(\bw)) = \{\bw\}$, meaning the code is unique.
\end{coro}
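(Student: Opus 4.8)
The statement has two parts, and the plan is to prove them in the order given: first, that a periodic point of $f$ lying in $\overset{o}{\cR}$ keeps its whole orbit in $\overset{o}{\cR}$; second, using this, that such a point is hit by exactly one code. Throughout put $x:=\pi_f(\bw)$; by Lemma~\ref{Lemm: Periodic to periodic} the point $x$ is periodic, say $f^q(x)=x$ with $q\ge 1$.

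\emph{Step 1: the orbit stays in the interior.} First I would record that $S\setminus\overset{o}{\cR}=\partial^s\cR\cup\partial^u\cR$: a point lying in no $\overset{o}{R_i}$ still lies in some $R_j$ because the rectangles cover $S$, and since $R_j=\overset{o}{R_j}\cup\partial^sR_j\cup\partial^uR_j$ it must lie on $\partial^sR_j$ or $\partial^uR_j$. Then I would argue by contradiction: if $f^{z_0}(x)\notin\overset{o}{\cR}$ for some $z_0\in\mathbb Z$, then $f^{z_0}(x)\in\partial^s\cR$ or $f^{z_0}(x)\in\partial^u\cR$. In the first case, the forward invariance $f(\partial^s\cR)\subseteq\partial^s\cR$ (Lemma~\ref{Prop: Markov criterion boundary}) gives $f^{z_0+k}(x)\in\partial^s\cR$ for all $k\ge 0$; feeding in the periodicity $f^q(x)=x$ (for a target $m\in\mathbb Z$ pick $k\ge 0$ with $z_0+k\equiv m\pmod q$) upgrades this to $f^m(x)\in\partial^s\cR$ for \emph{every} $m\in\mathbb Z$, in particular $x\in\partial^s\cR$, contradicting $x\in\overset{o}{\cR}$. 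The case $f^{z_0}(x)\in\partial^u\cR$ is the mirror image, using $f^{-1}(\partial^u\cR)\subseteq\partial^u\cR$. Hence $f^z(x)\in\overset{o}{\cR}$ for all $z$.

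\emph{Step 2: uniqueness of the code.} By Step~1 there is, for each $z\in\mathbb Z$, a unique index $i_z$ with $f^z(x)\in\overset{o}{R_{i_z}}$ (uniqueness because the rectangle interiors are pairwise disjoint). Now fix any sector $e_j(x)$ of $x$ and any $z$. By Lemma~\ref{Lemm: image secto is a sector} the image $f^z(e_j(x))$ is a sector of $f^z(x)$, and any representative sequence of it converges to $f^z(x)\in\overset{o}{R_{i_z}}$, hence is eventually contained in the open set $\overset{o}{R_{i_z}}$. By Lemma~\ref{Lemm: sector contined unique rectangle} this sector lies in a unique rectangle of $\cR$, which is therefore $R_{i_z}$; thus the $z$-th symbol of the sector code $\be_j(x)$ equals $i_z$, \emph{independently of $j$}. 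In other words, all sector codes of $x$ coincide with the single code $(i_z)_{z\in\mathbb Z}$. Since $\bw$ is itself a sector code of $x$ by Lemma~\ref{Lemm: every code is sector code }, we get $\bw=(i_z)_{z\in\mathbb Z}$, and since $\pi_f^{-1}(x)$ is exactly the (finite) set of sector codes of $x$ by Corollary~\ref{Coro: Caracterisation fibers}, we conclude $\pi_f^{-1}(\pi_f(\bw))=\{\bw\}$.

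\emph{Main obstacle.} The only step carrying any real weight is Step~1, where the delicacy is purely organizational: one must keep track of which of $\partial^s\cR$, $\partial^u\cR$ is forward- and which is backward-invariant, so that periodicity can be used to transport the ``on the boundary'' property back to time $0$. Step~2 is then a routine unpacking of the sector-code machinery already in place; indeed the remark preceding the statement, that all sector codes of such a point agree, is precisely what Step~2 establishes.
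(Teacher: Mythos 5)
Your proposal is correct and follows essentially the same route as the paper's proof: Step~1 is the same contradiction argument combining periodicity of $x$ with the $f$-invariance of $\partial^s\mathcal{R}$ and the $f^{-1}$-invariance of $\partial^u\mathcal{R}$, and Step~2 is the same observation that all sector codes of a totally interior point coincide, so the fiber is a singleton. You merely spell out the bookkeeping (the congruence argument transporting the boundary condition back to time $0$, and the symbol-by-symbol identification of the sector codes) that the paper leaves implicit.
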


\begin{proof}
	Suppose that for some $z \in \mathbb{Z}$, $f^z(x)$ lies on the stable (or unstable) boundary of the Markov partition. Since $x = \pi_f(\bw))$ is periodic and the stable and unstable boundaries are $f$-invariant, the entire orbit of $x$ would lie in the boundary of $\mathcal{R}$. This contradicts the assumption that $x \in \overset{o}{\mathcal{R}}$.
		Therefore, $f^z(x) \in \overset{o}{\mathcal{R}}$ for all $z \in \mathbb{Z}$. In this case, all sector codes of $x$ are contained within the interior of the same rectangle, and hence all its sector codes are identical. This implies that the preimage $\pi_f^{-1}(x)$ consists of a single code.
\end{proof}

\subsection{Totally interior points} The fact that the entire orbit of a point $x$ remains in the interior of the partition is a key property, as it distinguishes those points lying outside the stable and unstable laminations of periodic boundary points. We now introduce a name for such points.

\begin{defi}\label{Defi: totally interior points}
	Let $(f, \mathcal{R})$ be a realization of $T$. The \emph{totally interior points} of $(f, \mathcal{R})$ are those points $x \in \bigcup \mathcal{R} = S$ such that for all $z \in \mathbb{Z}$, $f^z(x) \in \overset{o}{\mathcal{R}}$. This set is denoted by $\mathrm{Int}(f, \mathcal{R}) \subset S$.
\end{defi}

Now we can characterizes the points of the surface $S$ where the projection $\pi_f$ is invertible as the \emph{totally interior points} of $\cR$.

\begin{prop}\label{Prop: Characterization injectivity of pi_f}
	Let $x$ be any point in $S$. Then $\vert \pi_f^{-1}(x) \vert = 1$ if and only if $x$ is a totally interior point of $\mathcal{R}$.
\end{prop}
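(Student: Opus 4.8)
The plan is to reduce the statement, by a short chain of equivalences, to a purely local assertion about a single point and its sectors, and then to prove that local assertion using the trivial bi-foliation of the rectangles of $\cR$.

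First I would note that, by Corollary~\ref{Coro: Caracterisation fibers}, if $x$ has $k$ separatrices then $\pi_f^{-1}(x)=\{\underline{e_j(x)}\}_{j=1}^{2k}$ is precisely the collection of sector codes of $x$; hence $|\pi_f^{-1}(x)|=1$ if and only if all of these sector codes coincide. Next, since $f^{-1}$ is again a generalized pseudo-Anosov homeomorphism, Lemma~\ref{Lemm: image secto is a sector} applied to the powers $f^{z}$ and $f^{-z}$ shows that $e\mapsto f^{z}(e)$ is a bijection from the set of sectors of $x$ onto the set of sectors of $f^{z}(x)$; combining this with Definition~\ref{Defi: Sector codes}, all sector codes of $x$ agree if and only if, for every $z\in\ZZ$, \emph{all} the sectors of $f^{z}(x)$ are contained in one common rectangle of $\cR$. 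Since, by Definition~\ref{Defi: totally interior points}, $x\in\mathrm{Int}(f,\cR)$ means exactly that $f^{z}(x)\in\overset{o}{\cR}$ for all $z$, the proposition reduces to the following local equivalence, to be applied to each $y=f^{z}(x)$: \emph{all sectors of $y$ lie in a single rectangle of $\cR$ if and only if $y\in\overset{o}{\cR}$}.

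The easy direction is immediate from Lemma~\ref{Lemm: sector contined unique rectangle}: if $y\in\overset{o}{R_m}$ (with $m$ unique, the interiors of the rectangles being disjoint), then every sequence representing a sector of $y$ is eventually inside the open set $\overset{o}{R_m}$, so that sector is contained in $R_m$. For the converse I would argue by contradiction: suppose all sectors of $y$ lie in $R_m$ but $y\notin\overset{o}{R_m}$, so $y\in\partial R_m$. The key point is that the interior $r:=\overset{o}{R_m}$ cannot accumulate on a stable boundary arc $J$ of $R_m$ from both sides, for otherwise a short unstable arc crossing $J$ transversally near an interior point of $J$ would meet $r$ in at least two connected components, contradicting the defining property of a trivially bi-foliated set; the symmetric statement holds for unstable boundary arcs. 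Hence, if $y$ lies in the relative interior of a boundary arc of $R_m$, the sectors of $y$ on the side of that arc away from $r$ cannot be contained in $R_m$, a contradiction; and if $y$ is a corner of $R_m$ — a case that includes every singular point of $f$, since a $k$-prong with $k\neq 2$ admits no rectangle neighbourhood and therefore lies in no $\overset{o}{R_i}$ — a similar inspection of how the finitely many rectangles of $\cR$ fit together around $y$ shows that some sector of $y$ escapes $R_m$, again a contradiction.

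I expect the last paragraph to be the main obstacle: making the local combinatorics of $\partial\cR$ around boundary points and corners fully rigorous, and in particular turning the trivial bi-foliation of $r$ into a clean separation of the two sides of a boundary arc. Everything before it is routine given the results already established, and once the local equivalence is in place the proposition follows immediately; as a by-product one also recovers that every point of $\partial\cR$, and in particular every singular point of $f$, has at least two preimages under $\pi_f$.
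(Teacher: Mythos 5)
Your proposal is correct and its skeleton is the paper's: by Corollary~\ref{Coro: Caracterisation fibers} the fibre $\pi_f^{-1}(x)$ is the set of sector codes of $x$, so everything reduces to the local statement that all sectors of a point $y$ lie in one rectangle of $\cR$ exactly when $y\in\overset{o}{\cR}$, and the easy direction is handled identically. The difference is in the remaining local implication. The paper proves it directly: if every sector of $y$ is contained in the interior of the same rectangle $R_m$, then the sectors, together with the separatrices and $y$ itself, fill out an open neighbourhood of $y$ lying in $R_m$, so $y\in\overset{o}{R_m}$; with this observation no case analysis on the position of $y$ in $\partial R_m$ is needed, and the corner/singularity discussion you flag as the main obstacle never arises. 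Your contrapositive argument via the trivial bi-foliation (the interior of $R_m$ cannot accumulate on a stable boundary arc from both sides, since otherwise a short unstable arc crossing that boundary would meet $\overset{o}{R_m}$ in two connected components) is sound, and it is actually more explicit than the paper about the possibility of non-embedded rectangles; the price is the corner case, which your sketch leaves at the level of ``a similar inspection.'' If you keep your route, close that gap by noting that if all sectors of a corner point $y$ lay in $R_m$, then $\overset{o}{R_m}$ would accumulate from both sides on at least one of the two boundary arcs meeting at $y$, so your bi-foliation argument applies verbatim; alternatively, adopt the paper's neighbourhood observation and the case analysis disappears.
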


\begin{proof}
	Suppose $\vert \pi_f^{-1}(x) \vert = 1$. Then, for all $z \in \mathbb{Z}$, the sector codes of $f^z(x)$ are all equal. Therefore, for all $z \in \mathbb{Z}$, the sectors of $f^z(x)$ lie entirely within the interior of the same rectangle. Moreover, the union of all these sectors forms an open neighborhood of $f^z(x)$ contained in the interior of a rectangle. Hence, $f^z(x) \in \overset{o}{\mathcal{R}}$.
	
	Conversely, if $f^z(x) \in \overset{o}{\mathcal{R}}$ for all $z \in \mathbb{Z}$, then all sectors of $f^z(x)$ lie in the same rectangle. This implies that the sector codes of $x$ coincide. By Lemma \ref{Lemm: every code is sector code }, it follows that $\vert \pi_f^{-1}(x) \vert = 1$.
\end{proof}

\begin{lemm}\label{Lemm: Characterization unique codes}
	A point $x \in S$ is a totally interior point of $\mathcal{R}$ if and only if it does not lie on the stable or unstable leaf of any periodic boundary point of $\mathcal{R}$.
\end{lemm}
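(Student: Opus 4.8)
The plan is to prove the two implications by contraposition, leaning on the fibre dichotomy $|\pi_{f}^{-1}(x)|=1\iff x\in\mathrm{Int}(f,\mathcal R)$ of Proposition \ref{Prop: Characterization injectivity of pi_f}, on the periodicity of the partition boundary from Lemma \ref{Lemm: Boundary of Markov partition is periodic}, and on the $\lambda$-contraction of $\mu^{s}$ along stable leaves. Throughout, $F^{s}_{p}$ and $F^{u}_{p}$ denote the stable and unstable leaves through a point $p$, and I use that $\mathcal R$ is also a Markov partition for $f^{-1}$, with $\partial^{s}\mathcal R$ and $\partial^{u}\mathcal R$ interchanged (this follows from the boundary criterion of Lemma \ref{Prop: Markov criterion boundary}).

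First I would show that if $x$ is not totally interior then $x$ lies on the stable leaf of a periodic $s$-boundary point or on the unstable leaf of a periodic $u$-boundary point. By hypothesis $f^{z_{0}}(x)\in\partial^{s}\mathcal R\cup\partial^{u}\mathcal R$ for some $z_{0}\in\mathbb Z$; say $y:=f^{z_{0}}(x)\in\partial^{s}\mathcal R$, the other case being symmetric under $f\leftrightarrow f^{-1}$, $s\leftrightarrow u$. Since $\partial^{s}\mathcal R$ is a compact, forward $f$-invariant finite union of stable arcs, Lemma \ref{Lemm: Boundary of Markov partition is periodic} places the arc through $y$ on $F^{s}_{p}$ for some $f$-periodic $p$ of period $m\le 2n$; then $f^{km}(y)\to p$ with $f^{km}(y)\in\partial^{s}\mathcal R$ closed, so $p\in\partial^{s}\mathcal R$, i.e.\ $p\in\textbf{Per}^{s}(f,\mathcal R)$. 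Finally $x=f^{-z_{0}}(y)\in F^{s}_{f^{-z_{0}}(p)}$, and $f^{-z_{0}}(p)$, being a forward iterate of $p$ by periodicity, again lies in $\textbf{Per}^{s}(f,\mathcal R)$.

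For the converse, suppose $x\in F^{s}_{p}$ with $p\in\textbf{Per}^{s}(f,\mathcal R)$ of period $m$ (the case $x\in F^{u}_{p}$, $p\in\textbf{Per}^{u}(f,\mathcal R)$, follows verbatim by passing to $f^{-1}$). If $x=p$ then $x\in\partial\mathcal R$ and we are done; since distinct periodic points of $f$ lie on distinct stable leaves, this also covers the subcase where $x$ itself is periodic, so assume $x\ne p$. The stable segment $[x,p]^{s}$ has finite $\mu^{s}$-length $\ell$, and $\mu^{s}\big(f^{km}([x,p]^{s})\big)=\lambda^{-km}\ell\to 0$, so $f^{km}(x)\to p$ along $F^{s}_{p}$. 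The task is then to convert this convergence into membership of some iterate of $x$ in $\partial^{s}\mathcal R$. I would do so by producing two distinct sector codes of $x$: the homeomorphism $f^{km}$ carries the two local sides of $F^{s}_{x}$ at $x$ onto the two local sides of $F^{s}_{p}$ at $p$, and these --- because $\mathcal R$ covers $S$ with pairwise disjoint interiors and $p\in\partial^{s}\mathcal R$ --- are locally contained in rectangles of $\mathcal R$ carrying distinct labels; hence the associated sector codes $\be_{j}(x)$ disagree at all sufficiently large indices, so $|\pi_{f}^{-1}(x)|\ge 2$ by the description $\pi_{f}^{-1}(x)=\{\be_{j}(x)\}_{j}$ of Corollary \ref{Coro: Caracterisation fibers}, and $x$ is not totally interior by Proposition \ref{Prop: Characterization injectivity of pi_f}.

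The main obstacle is exactly the local step just invoked: pinning down how the finitely many rectangles of $\mathcal R$ fit around the periodic boundary point $p$, and in particular showing that the stable separatrix of $p$ through $x$ runs along a rectangle edge --- so that $\partial^{s}\mathcal R$ really separates the two sides of $F^{s}_{p}$ near $p$ into different rectangles --- rather than into the interior of a single rectangle. This uses disjointness of interiors, the covering property, and transversality of $\mathcal F^{s}$ and $\mathcal F^{u}$; once it is in place, the two implications combine to the stated equivalence, since every periodic boundary point is an $s$-boundary or a $u$-boundary periodic point, and periodic corners are both.
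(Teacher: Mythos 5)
Your first implication (not totally interior $\Rightarrow$ $x$ lies on a boundary leaf) is correct and is essentially the paper's argument: it rests on Lemma~\ref{Lemm: Boundary of Markov partition is periodic} together with the invariance of $\partial^{s}\mathcal R$ and $\partial^{u}\mathcal R$, and your extra observation that the periodic point $p$ itself belongs to the closed invariant set $\partial^{s}\mathcal R$ (so that it really is a \emph{boundary} periodic point) is a detail the paper leaves implicit. The divergence, and the gap, is in the other implication.

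For ``$x$ on the leaf of a periodic boundary point $\Rightarrow$ $x$ not totally interior'' the paper argues directly: $\partial^{s}_{\delta}R_i$ is a nondegenerate compact stable arc containing $p$, the stable boundary is $f$-invariant, and the $\lambda^{-1}$-contraction of $\mu^{s}$ forces $f^{z}(x)\in\partial^{s}\mathcal R$ for some $z$; since $\partial^{s}\mathcal R\cap\overset{o}{\mathcal{R}}=\emptyset$, this already contradicts Definition~\ref{Defi: totally interior points} --- no sector codes and no fibre counting are needed. You instead route the argument through Corollary~\ref{Coro: Caracterisation fibers} and Proposition~\ref{Prop: Characterization injectivity of pi_f}, which obliges you to exhibit two sector codes of $x$ that disagree at some index, i.e.\ to show that the two local sides of $F^{s}_{p}$ near $f^{km}(x)$ lie in rectangles carrying \emph{distinct labels}. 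That is strictly stronger than what is needed and is not even true index-by-index in general (the two rectangles abutting along a stable boundary arc may be the same $R_i$, as for a one-rectangle partition of the torus, in which case the codes agree at that position), and it is precisely the step you yourself flag as ``the main obstacle'' and do not close. So, as written, the proposal has a genuine unfilled gap; it disappears if you drop the sector-code detour and conclude $f^{km}(x)\in\partial^{s}\mathcal R$ directly, which negates total interiority by definition.

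One caveat that your write-up shares with the paper's terse proof: the contraction argument tacitly assumes that $f^{km}(x)$ approaches $p$ along a stable separatrix that is locally covered by $\partial^{s}\mathcal R$; when $p$ is a corner or a singular point this requires a word of justification (that the relevant germ at $p$ is contained in the stable boundary, using the covering property and disjointness of interiors). If you want to be more careful than the source, that --- rather than label-distinctness of the two sides --- is the point to address.
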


\begin{proof}
	Suppose that for all $z \in \mathbb{Z}$, $f^z(x) \in \overset{o}{\mathcal{R}}$, and that $x$ lies on the stable (or unstable) leaf of some periodic boundary point $p \in \partial^s R_i$ (respectively, $p \in \partial^u R_i$). The contraction (or expansion) along the stable (or unstable) leaf of $p$ implies the existence of some $z \in \mathbb{Z}$ such that $f^z(x) \in \partial^s R_i$ (respectively, $f^z(x) \in \partial^u R_i$), contradicting the assumption.
	
	Conversely, Lemma \ref{Lemm: Boundary of Markov partition is periodic} implies that the only stable or unstable leaves intersecting the boundary $\partial^{s,u} R_i$ are those of the $s$-boundary and $u$-boundary periodic points. Therefore, if $x$ is not on these laminations, neither are its iterates $f^z(x)$, and hence $f^z(x) \in \overset{o}{\mathcal{R}}$ for all $z \in \mathbb{Z}$.
\end{proof}

\subsubsection{Codes that project to the stable or unstable boundary lamination.}

We need to determine the subset of codes in $\Sigma_{A(T)}$ that project onto the stable and unstable leaves of the periodic boundary points. To achieve this, we introduce an abstract family of codes in $\Sigma_{A(T)}$ and then proceed to prove that these are precisely the codes that project to such laminations on the surface.

\begin{defi}\label{Defi: s,u-leafs}
	Let $\bw \in \Sigma_{A(T)}$. The set of \emph{stable leaf codes} of $\bw$ is defined as
	$$
	\underline{F}^s(\bw) := \{\bv \in \Sigma_{A(T)} : \exists Z \in \mathbb{Z} \text{ such that } v_z = w_z \text{ for all } z \geq Z\}.
	$$
	Similarly, the set of \emph{unstable leaf codes} of $\underline{w}$ is defined by
	$$
	\underline{F}^u(\bw) := \{\bv \in \Sigma_{A(T)} : \exists Z \in \mathbb{Z} \text{ such that } v_z = w_z \text{ for all } z \leq Z\}.
	$$
\end{defi}

Let us introduce some notation. For $\bw \in \Sigma_{A(T)}$, we define its positive part as
$$
\bw_+ := (w_n)_{n \in \mathbb{N}} \quad \text{(with } 0 \in \mathbb{N}),
$$
and its negative part as
$$
\bw_- := (w_{-n})_{n \in \mathbb{N}}.
$$
We denote by $\Sigma_{A(T)}^+$ the set of positive codes of $\Sigma_{A(T)}$, i.e., the collection of all positive parts of codes in $\Sigma_{A(T)}$. Similarly, $\Sigma_{A(T)}^-$ denotes the set of negative codes of $\Sigma_{A(T)}$.

Let $x \in S$ be a point. We denote by $F^s(x)$ the stable leaf  of $\mathcal{F}^s$ passing through $x$, and by $F^u(x)$ the unstable leaf of $\mathcal{F}^u(f)$ passing through $x$.

\begin{prop}\label{Prop: Projection foliations}
	Let $\bw \in \Sigma_{A(T)}$. Then,
	$$
	\pi_f(\underline{F}^s(\bw)) \subset F^s(\pi_f(\bw)).
	$$
	Furthermore, suppose $\pi_f(\bw) = x$.
	\begin{itemize}
		\item If $x$ is not a $u$-boundary point, then for every $y \in F^u(x)$, there exists a code $\bv \in \underline{F}^s(\bw)$ such that $\pi_f(\bv) = y$.
		
		\item If $x$ is a $u$-boundary point and $\bw_0 = w_0$, then for every $y$ in the stable separatrix of $x$ that enters the rectangle $R_{w_0}$, there exists a code $\bv \in \underline{F}^s(\bw)$ projecting to $y$, i.e., $\pi_f(\bv) = y$.
	\end{itemize}
	
	A similar statement holds for the unstable manifold of $\bw$ and its corresponding projection onto the unstable manifold of $\pi_f(\bw)$.
\end{prop}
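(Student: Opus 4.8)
The plan is to prove the two assertions in turn and then deduce the unstable versions by symmetry. The inclusion $\pi_f(\underline{F}^s(\bw))\subseteq F^s(\pi_f(\bw))$ is an expansion estimate resting on the Markov property; the second assertion — that $\pi_f$ maps $\underline{F}^s(\bw)$ onto the relevant piece of the stable leaf of $x:=\pi_f(\bw)$ — is the reverse (contraction) estimate together with the sector codes of the previous subsection. The statement for the unstable foliation then follows by applying everything to $f^{-1}$, which is again pseudo-Anosov with $\cR$ as a Markov partition and with $\cF^s\leftrightarrow\cF^u$, $\partial^s\cR\leftrightarrow\partial^u\cR$, and $\underline{F}^s\leftrightarrow\underline{F}^u$.

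For the inclusion, let $\bv\in\underline{F}^s(\bw)$, say $v_z=w_z$ for all $z\ge Z$, and set $y:=\pi_f(\bv)$. Applying the shift $Z$ times and using the semi-conjugacy of Proposition~\ref{Prop:proyecion semiconjugacion} together with the fact that $y\in F^s(x)$ if and only if $f^{Z}(y)\in F^s(f^{Z}(x))$, we may assume $v_z=w_z$ for all $z\ge 0$; then by the definition of $\pi_f$ both $f^n(x)$ and $f^n(y)$ lie in $R_{w_n}$ for every $n\ge 0$. Let $z_0$ be the unique point of $R_{w_0}$ lying on the stable segment through $x$ and on the unstable segment through $y$, so that $z_0\in F^s(x)\cap F^u(y)$. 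Since (Definition~\ref{Defi: Geometric Markov partition}) each component of $\mathring{R}_{w_n}\cap f^{-1}(\mathring{R}_{w_{n+1}})$ is a horizontal subrectangle of $R_{w_n}$, hence spans the stable direction, an induction on $n$ shows that for every $n\ge 0$ the point $z_0$ lies in the same horizontal subrectangle as $f^n(x)$; consequently $f^n(z_0)\in R_{w_n}$, and since a stable and an unstable segment meet in at most one point inside a rectangle, $f^n(z_0)=[f^n(x),f^n(y)]$ for all $n\ge 0$. If $y\ne z_0$, the unstable arc $[z_0,y]^u$ has positive transverse measure $\eta:=\mu^s([z_0,y]^u)$, and $f^n([z_0,y]^u)$ is an unstable arc inside $R_{w_n}$ joining $f^n(z_0)$ to $f^n(y)$ with $\mu^s$-measure $\lambda^{n}\eta$; since $\cR$ is finite there is $M$ with $\mu^s(J)\le M$ for every unstable arc $J$ contained in a rectangle of $\cR$, so $\lambda^{n}\eta\le M$ for all $n$, a contradiction. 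Hence $y=z_0\in F^s(x)$.

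For the second assertion, let $y$ lie on the stable leaf of $x$ — on the stable separatrix of $x$ entering $R_{w_0}$ in the $u$-boundary case. The stable segment $[x,y]^s$ is contracted by $f$, so $\mu^u(f^n([x,y]^s))=\lambda^{-n}\mu^u([x,y]^s)\to 0$ and $f^n(y)$ approaches $f^n(x)$ along the stable leaf. Using that $x$ is not a $u$-boundary point and the $f^{-1}$-invariance of $\partial^u\cR$ (Lemma~\ref{Prop: Markov criterion boundary}), one has $f^n(x)\notin\partial^u\cR$ for all $n\ge 0$; controlling with the same invariance the finitely many points where $[x,y]^s$ crosses $\partial^u\cR$, one shows that for all large $n$ the shrinking segment $[f^n(x),f^n(y)]^s$ stays inside the full stable segment of $R_{w_n}$ through $f^n(x)$, so that $f^n(y)\in R_{w_n}$ for all $n\ge N$. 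It remains to realize $y$ by a code with the prescribed forward tail: taking the sector $e$ of $y$ that converges from the side of $F^s(x)$ whose forward images lie in $R_{w_N},R_{w_{N+1}},\dots$, its sector code satisfies $\underline{e}\in\Sigma_{A(T)}$ by Lemma~\ref{Lemm: sector code is admisible}; by construction $\underline{e}$ agrees with $\bw$ from time $N$ onward, so $\underline{e}\in\underline{F}^s(\bw)$; and $\pi_f(\underline{e})=y$ by Lemma~\ref{Lemm: every code is sector code } and Corollary~\ref{Coro: Caracterisation fibers}. The $u$-boundary case is handled in the same way once one restricts to the separatrix of $x$ that enters $R_{w_0}$.

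The hard part is the second assertion, and within it the step that $f^n(y)\in R_{w_n}$ for all large $n$: a priori the contracting segment $[f^n(x),f^n(y)]^s$ can be cut by $\partial^u\cR$, and one must show — using the $f$- and $f^{-1}$-invariance of the stable and unstable boundaries of $\cR$ and the hypothesis on $x$ — that these crossings do not eventually carry $f^n(y)$ out of $R_{w_n}$; after that, exhibiting a genuine sector of $y$ whose forward code is the prescribed tail is exactly where the sector-code description of the fibers of $\pi_f$ is used. The inclusion, by contrast, is the routine contraction/expansion bookkeeping with the Markov structure.
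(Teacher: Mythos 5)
Your treatment of the inclusion $\pi_f(\underline{F}^s(\bw))\subset F^s(\pi_f(\bw))$ and your reduction of the unstable statements to the stable ones via $f^{-1}$ are correct and match the paper in substance; the paper organizes the inclusion by observing that the nested horizontal subrectangles $\overline{\bigcap_{z=0}^{n}f^{-z}(\overset{o}{R_{w_z}})}$ collapse to a single stable segment containing both projections, while you run the equivalent ``bracket point'' argument with an expansion estimate on $[z_0,y]^u$ — same content, different packaging. (You also silently correct the statement's typo $y\in F^u(x)$ to $y\in F^s(x)$, which is what the paper's proof actually treats.)

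The genuine gap is exactly at the step you flag as the hard part, and it cannot be closed the way you describe. The $f^{-1}$-invariance of $\partial^u\cR$ only says that for each crossing point $p\in[x,y]^s\cap\partial^u\cR$ the set $\{n\ge 0: f^n(p)\in\partial^u\cR\}$ is an initial segment of $\NN$; it does not make it finite. If some $p$ in the interior of $[x,y]^s$ has its \emph{entire} forward orbit in $\partial^u\cR$ — e.g.\ when $[x,y]^s$ contains a $u$-boundary periodic point $q$ in its interior, with $x$ on the stable separatrix of $q$ inside $R_{w_0}$ and $y$ just beyond $q$ — then $f^n(p)\in\partial^u\cR$ separates $f^n(x)$ from $f^n(y)$ for every $n$, $f^n(y)$ never enters $R_{w_n}$, and no sector code of $y$ can eventually agree with $\bw$. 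Quantitatively: the distance $\eta_n$ from $f^n(x)$ to $\partial^u R_{w_n}$ along its stable segment satisfies only $\eta_{n+1}\ge\lambda^{-1}\eta_n$, so it may decay at exactly the rate $\lambda^{-n}$ at which $\mu^u([f^n(x),f^n(y)]^s)$ shrinks — the contraction never wins. The hypothesis ``$x$ is not a $u$-boundary point'' does not exclude this configuration. Be aware that the paper's own proof has the same soft spot (the unjustified reduction ``there exists $k$ with $f^k(y)\in I$'' for a fixed interval $I$ around $x$, before choosing the sector of $y$ pointing stably toward $x$), so you have reproduced the published argument, gap included; but as written your ``one shows that these crossings do not eventually carry $f^n(y)$ out of $R_{w_n}$'' asserts something false in general, and either the conclusion must be restricted to the stable sub-arc through $x$ containing no point whose forward orbit stays permanently in $\partial^u\cR$, or a different argument is needed.
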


\begin{proof}
	
	Let $\bv \in \underline{F}^s(\bw)$ be a stable leaf code of $\bw$. By the definition of stable leaf codes, we have $w_z = v_z$ for all $z \geq k$ for some $k \in \mathbb{N}$. Consequently, $\pi_f(\sigma^k(\bw)), \pi_f(\sigma^k(\bv)) \in R_{w_k}$. Since the positive parts of $\bw$ and $\bv$ coincide from $k$ onwards, they determine the same horizontal sub-rectangles of $R_{w_k}$ where the codes $\sigma^k(\bw)$ and $\sigma^k(\bv)$ are projected. For $n \in \mathbb{N}$, let $H_n$ be the rectangle defined by
	$$
	H_n = \bigcap_{z=0}^n f^{-z}(R_{w_{z+k}}) = \bigcap_{z=0}^n f^{-z}(R_{v_{z+k}}).
	$$
	The intersection of all $H_n$ forms a stable segment of $R_{w_k}$. Moreover, each $H_n$ contains the rectangles
	$$
	\overset{o}{Q_n} = \bigcap_{z=-n}^{n} f^{-z}(\overset{o}{R_{w_{z+k}}}) = \bigcap_{z=-n}^{n} f^{-z}(\overset{o}{R_{v_{z+k}}}).
	$$
	Therefore, the projections $\pi_f(\bv)$ and $\pi_f(\bw)$ lie on the same stable leaf. 
	
	\begin{figure}[h]
		\centering
		\includegraphics[width=1\textwidth]{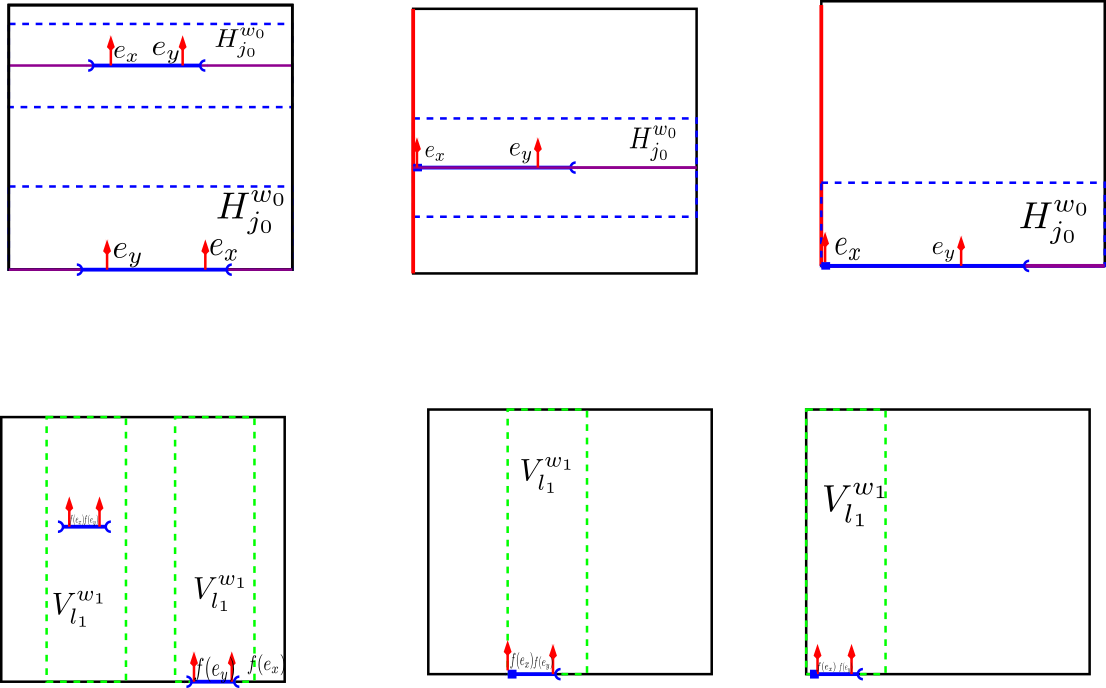}
		\caption{Projections of the stable leaf codes}
		\label{Fig: Proyection fol}
	\end{figure}
	
	For the other part of the argument, we refer to the illustration in Figure \ref{Fig: Proyection fol} for some visual intuition. Consider the case when $x = \pi_f(\bw)$ is not a $u$-boundary point. Recall that the incidence matrix $A$ is binary.
	
	Let $y \in F^s(x)$ with $y \neq x$. Suppose $y$ is not periodic (if it is, replace $y$ by $x$). Since $x$ is not a $u$-boundary point, there exists a small interval $I$ properly contained in the stable segment of $\mathcal{R}_{w_0}$ passing through $x$. This allows us to apply the following argument on both stable separatrices of $x$. By the definition of stable leaf, there exists $k \in \mathbb{N}$ such that $f^k(y) \in I$. If we prove there is $\bv \in \underline{F}^s(\bw)$ such that $\pi_f(\bv) = f^k(y)$, then $\sigma^{-k}(\bv) \in \underline{F}^s(\bw)$ is a code projecting to $y$, i.e., $\pi_f(\sigma^{-k}(\bv)) = y$. Thus, we can assume $y \in I$. This corresponds to the left side of Figure \ref{Fig: Proyection fol}.
	
	There are two possibilities for $f^z(y)$: either it lies on the stable boundary of $R_{w_0}$ or it does not. In either case, the code $\bw$ corresponds to a sector code $\be_x$ of $x$. The sector $e_x$ lies inside a unique horizontal sub-rectangle of $R_{w_0}$, denoted $H^{w_0}_{j_0}$. Therefore, the sector $f(\be_x)$ lies inside $f(H^{w_0}_{j_0}) = V^{w_1}_{l_1}$, implying $w_1 = (\be_x)_1$.
	
	Consider the sector $\be_y$ of $y$ such that, in the stable direction, it points towards $x$ and, in the unstable direction, it points the same way as $\be_x$. Thus, $\be_y$ lies inside $H^{w_0}_{j_0}$, so $f(\be_y)$ is contained in $V^{w_1}_{l_1}$ and $(\be_y)_1 = w_1$.
	
	In fact, $f^{n}(\be_y)$ and $f^{n}(\be_x)$ lie in the same rectangle $R_{w_n}$ for all $n \in \mathbb{N}$. We deduce that the positive part of $\be_y$ coincides with the positive part of $\bw$. Hence, $\be_y \in \underline{F}^s(\bw)$.
	
	%%%%%%%%
	
	In the case where $x \in \partial^u \mathcal{R}$, there is a slight variation. Suppose $\bw = \be_x$, where $\be_x$ is a sector of $x$. This sector code lies inside a unique horizontal sub-rectangle of $R_{w_0}$, $H^{w_0}_{j_0}$, such that $f(H^{w_0}_{j_0}) = V^{w_1}_{l_1}$. This horizontal sub-rectangle contains a unique stable interval $I$ with $x$ as one of its endpoints. Consider $y \in I$.
	
	Similarly to the previous case, define a sector $\be_y$ of $y$ contained in $H^{w_0}_{j_0}$, i.e., $\be_y = w_0$. This implies $f(\be_y)$ is contained in $V^{w_1}_{l_1}$. Then, $(\be_y)_1 = w_1$. Applying this inductively for all $n \in \mathbb{N}$, we get $(\be_y)_n = w_n$; hence, $\be_y \in \underline{F}^s(\bw)$ and projects to $y$.
\end{proof}

\subsection{Boundary codes and $s,u$-generating functions
	\texorpdfstring{ }{ (s,u-generating functions)}}

We proceed with the construction of the codes that project onto the boundary of the Markov partition, $\partial^{s,u}\mathcal{R}$. Assume that $\mathcal{R} = \{R_i\}_{i=1}^n$ is a geometric Markov partition of $f$ with geometric type $T$. For each $i \in \{1, \ldots, n\}$, we label the boundary components of $R_i$ as follows:

\begin{itemize}
	\item $\partial^s_{+1} R_i$ denotes the upper stable boundary of the rectangle $R_i$.
	\item $\partial^s_{-1} R_i$ denotes the lower stable boundary of $R_i$.
	\item $\partial^u_{-1} R_i$ denotes the left unstable boundary of $R_i$.
	\item $\partial^u_{+1} R_i$ denotes the right unstable boundary of $R_i$.
\end{itemize}

Using these labeling conventions we introduce the following definitions.

\begin{defi}\label{Defi: s,u boundary labels of T}
	Let $T = \{n, \{(h_i, v_i)\}_{i=1}^n, \Phi_T\}$ be an abstract geometric type. The $s$-\emph{boundary labels} of $T$ are defined as the formal set
	
\begin{equation}
	\mathcal{S}(T) := \{(i, \epsilon) : i \in \{1, \ldots, n\}, \, \epsilon \in \{1, -1\}\}.
\end{equation}
	Similarly, the $u$-\emph{boundary labels} of $T$ are defined as the formal set
\begin{equation}
\mathcal{U}(T) := \{(k, \epsilon) : k \in \{1, \ldots, n\}, \, \epsilon \in \{1, -1\}\}.
\end{equation}
	
\end{defi}

It is important to note that this definition depends only on the value of $n$ in the geometric type $T$, and therefore does not rely on any specific realization. We now define an inclusion from the set of $s$-boundary labels into the set of horizontal labels of the geometric type, via the function:
$$
\theta : \cS(T) \to \mathcal{H}(T),
$$
which is defined as:
\begin{equation}\label{Equa: theta T relabel}
	\theta(i,-1) = 1 \quad \text{and} \quad \theta(i,1) = h_i.
\end{equation}

Since the $s$-boundary $\partial^s_{-1}R_1 = \partial^s_{-1}H^i_1$, and $\partial^s_{+1}R_1 = \partial^s_{+1}H^i_{h_i}$, the function $\theta$ chooses the sub-rectangle of $R_i$ that contains the stable boundary we are analyzing.

Take $(i,\epsilon) \in \cS(T)$ and look at Figure \ref{Fig: theta T}. For $\delta \in \{-1,1\}$, where is $f(\partial^s_{\delta}R_i)$ located? We can use the geometric type to answer this question. Assume that $\delta = 1$ and $\rho(i,h_i) = (k,l)$:
\begin{itemize}
	\item Since $\partial^s_{+1}R_i = \partial^s_{+1}H^i_j$, then $f(\partial^s_{+1}R_i) \subset \partial^s R_k$.
	\item If $\epsilon(i,h_i) = 1$, then $f(\partial^s_{+1}R_i) \subset \partial^s_{+1}R_k$; and if $\epsilon(i,h_i) = -1$, then $f(\partial^s_{+1}R_i) \subset \partial^s_{-1}R_k$.
\end{itemize}

 	\begin{figure}[h]
	\centering
	\includegraphics[width=0.6\textwidth]{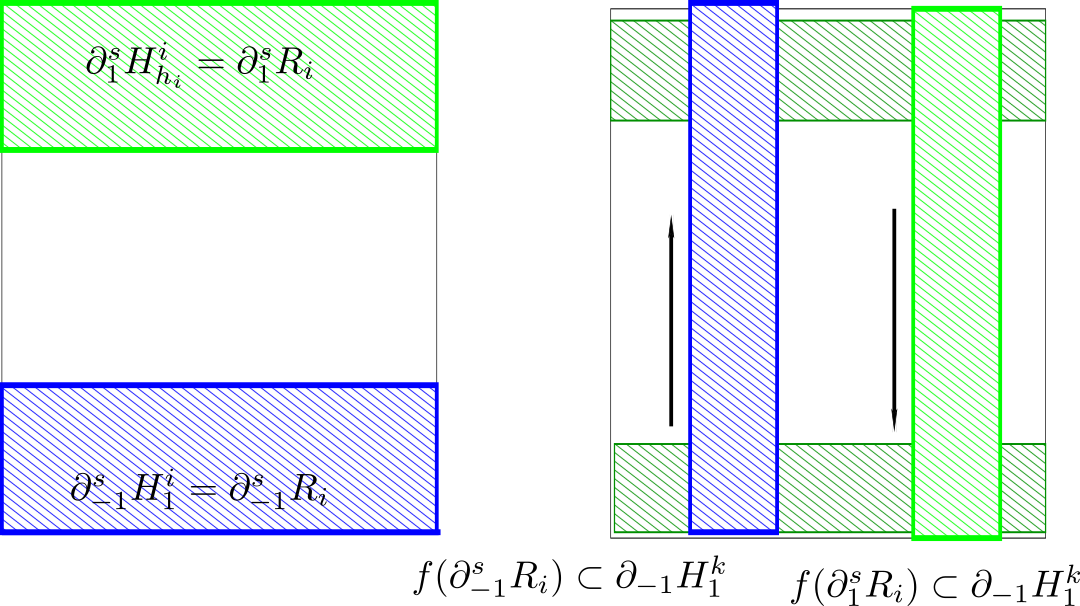}
	\caption{The $s$-generation function}
	\label{Fig: theta T}
\end{figure}

In the analogous  situation where, $\delta = -1$ and $\rho(i,1) = (k,l)$: 
\begin{itemize}
	\item Since $\partial^s_{-1}R_i = \partial^s_{-1}H^i_1$, then $f(\partial^s_{-1}R_i) \subset \partial^s R_k$.
	\item If $\epsilon(i,h_i) = 1$, then $f(\partial^s_{-1}R_i) \subset \partial^s_{-1}R_k$; and if $\epsilon(i,h_i) = -1$, then $f(\partial^s_{-1}R_i) \subset \partial^s_{+1}R_k$.
\end{itemize}

In this situation, if the original label is $(i_0,\delta_0)$, the index $j_1$ of the corresponding rectangle in $\cR$ is given by the formula 
$$
i_1 = \bp_1\circ\rho(i_0, \theta(i_0, \delta_0)),
$$

where $\bp_1: \cH(T) \to \{1, \ldots, n\}$ is the projection onto the first coordinate,  while the corresponding $s$-boundary component of $R_{i_1}$ is determined by 
 $$
 \delta_1 = \delta_0 \cdot \epsilon(i_0, \theta(i_0, \delta_0))
 $$

The $s$-generating function introduced below is a map $\cS(T) \to \cS(T)$ such that, given a label $(i_0,\delta_0)$, it returns $(i_1,\delta_1)$ if and only if $f(\partial^s_{\delta_0}R_{i_0}) \subset \partial^s_{\delta_1} R_{i_1}$. In this way, positive iterations of the $s$-generating function recover the orbit of the corresponding boundary component.

\begin{defi}\label{Defi: s-boundary generating funtion}
	The $s$-generating function of $T$ is the function
	$$
	\Gamma(T): \cS(T) \rightarrow \cS(T),
	$$
	such that to every $s$-boundary label $(i_0,\delta_0) \in \mathcal{S}(T)$ it associate:
	
	\begin{equation}\label{Equ: Gamma generation funtion}
		\Gamma(T)(i_0,\delta_0) = (\bp_1\circ \rho(i_0,\theta(i_0,\epsilon_0)), \,
		\delta_0 \cdot \epsilon(i_0,\theta(i_0,\epsilon_0)) ) :=(i_1,\delta_1).
	\end{equation}
	
\end{defi}
We shall to introduce a similar $u$-generating function that captures the negative orbit of each $u$-boundary component of the rectangles in the Markov partition. For this reason, we define
$$
\eta : \cU(T) \to \mathcal{V}(T),
$$
as the map given by:
\begin{equation}\label{Equa: eta relabel}
	\eta(i,-1) = 1 \quad \text{and} \quad \eta(i,1) = v_i.
\end{equation}

\begin{defi}\label{Defi: u-boundary generating funtion}
	The $u$-\emph{generating function} of $T$ is
	$$
	\Upsilon(T): \cU(T) \rightarrow \cU(T),
	$$
	such that for every $u$-boundary label $(k_0,\delta_0) \in \mathcal{S}(T)$, it assigns:
	\begin{equation}\label{Equ: Upsilon generation funtion}
		\Upsilon(T)(k_0,\delta_0) = \left( \bp_1 \circ \rho^{-1}(k_0,\eta(k_0,\delta_0)), \,
		\delta_0 \cdot \epsilon\left( \rho^{-1}( \eta(k_0,\delta_0) ) \right) \right) := (k_1,\delta_1).
	\end{equation}
\end{defi}

We will focus on state and prove some result about the $s$-generating function, the ideas for the $u$-generation function are totally symmetric. The orbit of  under $\Gamma(T)$ of $(i_0,\delta_0)\in \cS(T)$ is the set:
$$
\{(i_m,\delta_m)\}_{m\in \NN} := \{\Gamma(T)^m(i_0,\delta_0) : m \in \NN\}.
$$

Let $\bp_1: \cS(T),\cU(T) \rightarrow \{1,\dots,n\}$ be the projection onto the first component. This allows us to produce positive and negative codes associated with every $s$- or $u$-boundary label:

\begin{eqnarray}\label{Equa: $s$-boundary code +}
	\underline{I}^{+}(i_0,\delta_0) := \{ \bp_1 \circ \Gamma(T)^m(i_0,\delta_0) \}_{m \in \NN} = \{i_m\}_{m \in \NN}
\end{eqnarray}

\begin{eqnarray}\label{Equa: $u$-boundary code -}
	\underline{J}^{-}(k_0,\delta_0) := \{ \bp_1 \circ \Upsilon(T)^m(k_0,\delta_0) \}_{m \in \NN} = \{k_{-m}\}_{m \in \NN}
\end{eqnarray}

\begin{defi}\label{Defi: positive negative boundary codes}
	
	Let $(i_0,\delta_0) \in \cS(T)$, and let $\underline{I}^+(i_0,\delta_0) \in \Sigma^+$ be its $s$-\emph{boundary positive code}, as determined by Equation \ref{Equa: $s$-boundary code +}. The set of $s$-boundary positive codes of $T$ is denoted as:
	$$
	\underline{\cS}^{+}(T) = \{\underline{I}^{+}(i,\delta) : (i,\delta) \in \cS(T)\} \subset \Sigma^+.
	$$
	
	Let $(k_0,\delta_0) \in \cU(T)$, and let $\underline{J}^{-}(k_0,\delta_0) = \{k_{-m}\}_{m=0}^\infty$ be its $u$-\emph{boundary negative code}, as determined by Equation \ref{Equa: $u$-boundary code -}. The set of $u$-boundary negative codes of $T$ is denoted as:
	$$
	\underline{\cU}^{-}(T) = \{\underline{J}^{-}(k,\delta) : (k,\delta) \in \cU(T)\} \subset \Sigma^{-}.
	$$
	
\end{defi}

\begin{lemm}\label{Lemm: positive code admisible}
	Every $s$-boundary positive code $\underline{I}^{+}(i_0,\delta_0)$ belongs to $\Sigma_A^+$. Similarly, every $u$-boundary negative code $\underline{J}^{-}(k_0,\delta_0)$ belongs to $\Sigma_A^-$.
\end{lemm}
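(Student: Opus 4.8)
The plan is to reduce the lemma to a single one-step admissibility check. First I would recall that, since $T$ belongs to the pseudo-Anosov class, the incidence matrix $A := A(T)$ is mixing, hence irreducible; in particular every symbol in $\{1,\dots,n\}$ admits at least one $A$-predecessor and one $A$-successor. Consequently a standard compactness argument shows that a forward sequence $(w_m)_{m \in \NN}$ lies in $\Sigma_A^+$ as soon as $a_{w_m,w_{m+1}} = 1$ for every $m$, and dually a backward sequence $(w_{-m})_{m\in\NN}$ lies in $\Sigma_A^-$ as soon as $a_{w_{-(m+1)},w_{-m}} = 1$ for every $m$. So it suffices to verify these consecutive-transition conditions for the codes $\underline{I}^{+}(i_0,\delta_0) = \{i_m\}_{m\in\NN}$ and $\underline{J}^{-}(k_0,\delta_0) = \{k_{-m}\}_{m\in\NN}$.

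For the $s$-boundary code, fix $m$ and set $j_m := \theta(i_m,\delta_m)$. By Equation \ref{Equa: theta T relabel} one has $j_m \in \{1, h_{i_m}\} \subset \{1,\dots,h_{i_m}\}$, so $(i_m,j_m) \in \cH(T)$ is a legitimate horizontal label. By Definition \ref{Defi: s-boundary generating funtion}, the first coordinate of $\Gamma(T)(i_m,\delta_m)$ is $i_{m+1} = \bp_1 \circ \rho(i_m,j_m)$; that is, $\rho(i_m,j_m) = (i_{m+1}, l)$ for some $l \in \{1,\dots,v_{i_{m+1}}\}$. Hence $j_m$ witnesses a nonzero term in the count $a_{i_m,i_{m+1}} = \#\{ j \in \{1,\dots,h_{i_m}\} : \rho(i_m,j) \in \{i_{m+1}\}\times\{1,\dots,v_{i_{m+1}}\}\}$ defining the incidence matrix (Remark \ref{Rema: Notation incidece matriz Tipo}), so $a_{i_m,i_{m+1}} \ge 1$. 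Since $T$ is symbolically presentable, $A(T)$ is binary, forcing $a_{i_m,i_{m+1}} = 1$. Thus every transition of $\underline{I}^{+}(i_0,\delta_0)$ is $A$-admissible, and $\underline{I}^{+}(i_0,\delta_0) \in \Sigma_A^+$.

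The $u$-boundary case is formally dual, with $\rho$ replaced by $\rho^{-1}$, $\theta$ by $\eta$, and $\Gamma(T)$ by $\Upsilon(T)$: setting $l_m := \eta(k_{-m},\delta_{-m}) \in \{1,\dots,v_{k_{-m}}\}$, Definition \ref{Defi: u-boundary generating funtion} gives $k_{-(m+1)} = \bp_1 \circ \rho^{-1}(k_{-m}, l_m)$, i.e. $\rho(k_{-(m+1)}, j') = (k_{-m}, l_m)$ for some horizontal label index $j'$, whence $a_{k_{-(m+1)},k_{-m}} \ge 1$ and $= 1$ by binariness. Reading $\{k_{-m}\}_{m\in\NN}$ in order of increasing index, every consecutive pair is $A$-admissible, so $\underline{J}^{-}(k_0,\delta_0) \in \Sigma_A^-$.

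I do not expect a genuine obstacle here: the statement is essentially an unfolding of Definitions \ref{Defi: s-boundary generating funtion} and \ref{Defi: u-boundary generating funtion} against the definition of $A(T)$, together with the binariness granted by symbolic presentability. The only point requiring mild care is the bookkeeping that $\theta$ and $\eta$ land in valid horizontal and vertical index ranges and that the outputs of $\Gamma(T)$ and $\Upsilon(T)$ really do record a nonzero entry of $A$; the left- and right-extension step is routine for mixing subshifts and could be suppressed altogether if only one-sided admissibility is needed in the sequel.
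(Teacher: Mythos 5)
Your proof is correct and follows essentially the same route as the paper's: each application of the generating function produces, via $\theta$ (resp.\ $\eta$) and $\rho$ (resp.\ $\rho^{-1}$), a horizontal label witnessing a nonzero entry of $A(T)$, which binariness then forces to equal $1$, giving one-step admissibility by induction. The only differences are cosmetic: you argue directly from the abstract definition of $A(T)$ where the paper passes through a realization $(f,\cR)$ and the identity $f(H^{i_0}_{\theta(i_0,\delta_0)})=V^{i_1}_{l_1}$, and you make explicit the extension of the one-sided admissible sequence to a bi-infinite code of $\Sigma_{A(T)}$ (via irreducibility of $A$), a step the paper leaves implicit.
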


\begin{proof}
The incidence matrix $A(T)$  is a binary matrix and we shall use this hypothesis. In the construction of the positive code $\underline{I}^{+}(i_0,\delta_0)$, the entry $i_1$ corresponds to the index of the unique rectangle (as $A(T)$ is binary) in $\cR$ such that: 
$$
f(H^{i_0}_{\theta(i_0, \delta_0)}) = V^{i_1}_{l_1},
$$ 
where $\theta(i_0, \delta_0) = 1$ or $h_i$. In either case, we have:
$$
f^{-1}(\overset{\circ}{R_{i_1}}) \cap \overset{\circ}{R_{i_0}} = \overset{\circ}{H^{i_0}_{\theta(i_0, \delta_0)}} \neq \emptyset,
$$
and therefore $a_{i_0, i_1} = 1$. By induction, the same reasoning applies to the entire sequence, so that $a_{i_n, i_{n+1}} = 1$ for all $n \in \mathbb{N}$. This shows that $\underline{I}^{+}(i_0,\delta_0)$ is the positive part of an admissible code, completing the proof.

A similar argument applies to the $u$-boundary negative codes.
\end{proof}

Our first step in order to  prove that each $s$-boundary label uniquely determines a set of codes that projects to a single boundary component of the partition is the following Proposition.

\begin{prop}\label{Prop: s code Inyective}	
	The map $I: \cS(T) \rightarrow \Sigma_{A(T)}^+$ defined by $I(i,\delta) := \underline{I}^+(i,\delta)$ is injective. Similarly, the map $J: \cU(T) \rightarrow \Sigma_{A(T)}^-$ defined by $J(i,\delta) := \underline{J}^-(i,\delta)$ is also injective.
\end{prop}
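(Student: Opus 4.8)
The plan is to read the ``diagonal'' first entry off a positive code for free, and then to exclude the only surviving coincidence by passing to a realization and extracting a geometric contradiction.

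\emph{Step 1 (Reduction).} By construction $\underline{I}^{+}(i,\delta)=\{\bp_1\,\Gamma(T)^m(i,\delta)\}_{m\in\NN}$, so its $0$-th entry is $\bp_1(i,\delta)=i$; hence $I(i,\delta)=I(i',\delta')$ already forces $i=i'$, and it remains to prove $\underline{I}^{+}(i,+1)\neq\underline{I}^{+}(i,-1)$ for each $i$. Fix a pair $(f,\cR)$ realizing $T$ (it exists since $T\in\cG\cT(\textbf{p-A})^{sp}$), with rectangles $R_j=\rho_j(\II^2)$ and foliations $(\cF^s,\mu^s),(\cF^u,\mu^u)$. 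By the computation preceding Definition~\ref{Defi: s-boundary generating funtion}, for every $(i_0,\delta_0)\in\cS(T)$ and $m\in\NN$, writing $(i_m,\delta_m):=\Gamma(T)^m(i_0,\delta_0)$, one has $f^m(\partial^s_{\delta_0}R_{i_0})\subset\partial^s_{\delta_m}R_{i_m}\subset R_{i_m}$, hence $\partial^s_{\delta_0}R_{i_0}\subset\bigcap_{m\geq 0}f^{-m}(\overline{R_{i_m}})$.

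\emph{Step 2 (A chain of horizontal subrectangles).} Suppose, for contradiction, $\underline{I}^{+}(i,+1)=\underline{I}^{+}(i,-1)=\{i_m\}_{m\geq 0}$. Since $A(T)$ is binary and mixing, each finite intersection $\bigcap_{m=0}^{M}f^{-m}(\overline{R_{i_m}})$ is a single horizontal subrectangle of $R_i$, and since $f^{-1}$ contracts the vertical ($\mu^s$-)direction by $\lambda^{-1}$ its $\mu^s$-height is $O(\lambda^{-M})$; therefore $K:=\bigcap_{m\geq 0}f^{-m}(\overline{R_{i_m}})$ is a single stable interval of $R_i$. Applying Step~1 once with $\delta_0=+1$ and once with $\delta_0=-1$, both $\partial^s_{+1}R_i$ and $\partial^s_{-1}R_i$ are contained in $K$; as $K$ lies in a single leaf of $\cF^s$, the upper and lower stable boundaries of $R_i$ lie on one common leaf $F$ of $\cF^s$.

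\emph{Step 3 (Geometric contradiction).} I would then look at the left unstable side $J:=\partial^u_{-1}R_i=\rho_i(\{0\}\times[0,1])$: it is a nondegenerate unstable arc ($\mu^s(J)>0$) whose endpoints $\rho_i(0,0)$ and $\rho_i(0,1)$ both lie on $F$. This is impossible: by Lemma~\ref{Lemm: Boundary of Markov partition is periodic}, $F$ is the stable leaf of a periodic point of some period $N$, so $f^{-N}(F)=F$; since $f^{-N}$ contracts the $\mu^s$-length of unstable arcs by $\lambda^{-N}$, the arcs $f^{-kN}(J)$ are nondegenerate unstable arcs with both endpoints on $F$ and $\mu^s$-length tending to $0$, which — given the finitely many leaves in the orbit of $F$ — contradicts the local product structure of $(\cF^s,\cF^u)$ (equivalently: in the universal cover a leaf of the lifted stable foliation meets a leaf of the lifted unstable foliation in at most one point; equivalently still, $\cF^u$ has no closed leaves, which is precisely the obstruction in the sub-case $\partial^s_{+1}R_i=\partial^s_{-1}R_i$, where $R_i$ would be a cylinder whose boundary gluing is a homeomorphism of $[0,1]$ and hence has a periodic point). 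This proves $I$ is injective; injectivity of $J:\cU(T)\to\Sigma^{-}_{A(T)}$ follows by the mirror argument, replacing $f$ by $f^{-1}$, $\partial^s$ by $\partial^u$, $\Gamma(T)$ by $\Upsilon(T)$, $\bigcap_{m\geq 0}$ by $\bigcap_{m\leq 0}$, and $\mu^s$ by $\mu^u$.

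The reduction in Step~1 and the bookkeeping of Step~2 are routine once $\Gamma(T)$ is unwound, so the hard part will be Step~3: showing that a nondegenerate unstable arc cannot have both endpoints on a single (periodic) stable leaf, and, orthogonally, verifying that nothing here secretly relies on the rectangles of $\cR$ being embedded — only the nondegeneracy of the unstable sides and the periodicity of $F$ should be used. A careful write-up must also treat the case where the periodic point underlying $F$ is singular, so that ``its stable leaf'' means one of its separatrices.
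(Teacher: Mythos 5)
Your Step~1 coincides with the paper's first reduction, and Step~2 --- once repaired to work with $\overline{\bigcap_{m\le M} f^{-m}(\overset{\circ}{R_{i_m}})}$ rather than $\bigcap_{m\le M} f^{-m}(R_{i_m})$ (the rectangles are only immersed, so the closed intersection may pick up extra boundary arcs) --- does show that under the assumption $\underline{I}^{+}(i,+1)=\underline{I}^{+}(i,-1)$ both stable sides of $R_i$ lie in a single horizontal fibre $K$ of $R_i$. The genuine gap is Step~3: the statement you rely on there, that a nondegenerate unstable arc cannot have both endpoints on a single periodic stable leaf, is false. For the Arnold cat map with its standard Markov partition every stable boundary arc lies on the stable leaf of the fixed point, so the left unstable side of any rectangle is a nondegenerate unstable arc with both endpoints on that one leaf. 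The shrinking arcs $f^{-kN}(J)$ do not rescue the argument: an arbitrarily short unstable arc may perfectly well join two \emph{distinct} plaques of the same dense stable leaf inside one product chart, so no conflict with local product structure arises; and in the universal cover the two endpoints of a lift of $J$ lie on two possibly different lifts of $F$, so the ``at most one intersection point'' principle does not apply.

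What Step~2 actually yields is stronger than ``common leaf'': each of $\partial^s_{\pm1}R_i$ is a compact stable arc of full $\mu^s$-width contained in the single fibre $K$, which has that same width, so $\partial^s_{+1}R_i=\partial^s_{-1}R_i$ as sets. This is precisely the ``cylinder'' situation you dismiss in a parenthesis; it is in fact the only case, and there your sketched argument (the gluing homeomorphism of $[0,1]$ has a fixed point, producing a closed unstable curve, impossible since $\cF^u$ has no closed leaves) does close the proof. So your geometric route is salvageable, but only by promoting that parenthesis to the main argument and deleting Step~3 as written. The paper avoids all of this geometry: by Lemma~\ref{Lemm: positive h-i} there is a first index $M$ with $h_{i_M}>1$; for $m<M$ the two tracks $\Gamma(T)^m(i,\pm1)$ traverse the same rectangles with opposite signs $\delta_m=-\delta'_m$, so at $m=M$ they select the distinct horizontal subrectangles $H^{i_M}_{h_{i_M}}$ and $H^{i_M}_{1}$, and binarity of $A(T)$ (i.e.\ $a_{i_M k}\le 1$) forces these to be sent into distinct rectangles, whence $i_{M+1}\neq i'_{M+1}$. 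That combinatorial step is the essential use of the binary hypothesis, and it is absent from your proposal.
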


\begin{proof}
	If $i_0 \neq i_0'$, then the sequences $\underline{I}^+(i_0,\delta_0)$ and $\underline{I}^+(i_0',\delta_0')$ differ in their first term. In the remaining case, where $i_0 = i_0'$, we compare $\underline{I}^+(i_0,1) = \{i_m\}$ and $\underline{I}^+(i_0,-1) = \{i'_m\}$, corresponding to the sequences $\{\Gamma(T)^m(i_0,1)\}$ and $\{\Gamma(T)^m(i_0,-1)\}$, and aim to show that there exists $m \in \mathbb{N}$ such that $i_m \neq i'_m$. Our approach begins with the following technical lemma:

\begin{lemm}\label{Lemm: positive h-i}
	If $T$ is a symbolically presentable geometric type and $\underline{I}^+(i_0,1) = \{i_m\}_{m \in \mathbb{N}}$ is a $s$-boundary positive code,  then there exists $M \in \mathbb{N}$ such that  the rectangle $R_{i_M}$ has more than one horizontal subrectangle, i.e., $h_{i_M} > 1$.
\end{lemm}

\begin{proof}
	Since $T$ is in the pseudo-Anosov class, there exists a realization $(f,\mathcal{R})$ of $T$. The sequence $\{i_m\}$ takes only finitely many values (at most $n$), so there exist $m_1 < m_2$ such that $R_{i_{m_1}} = R_{i_{m_2}}$. If $h_{i_m} = 1$ for all $m_1 \leq m \leq m_2$, then $f^{m_2 - m_1}(R_{i_{m_1}})$ is contained in a vertical subrectangle of $R_{i_{m_1}}$.
	
	Due to vertical expansion, this would imply that $R_{i_{m_1}}$ eventually collapses to a stable interval, contradicting the definition of a rectangle in a Markov partition. Hence, there must exist $M \geq 0$ such that $h_{i_M} > 1$.
\end{proof}

In view of Lemma \ref{Lemm: positive h-i}, the value  
$$
M := \min \{ m \in \mathbb{N} : h_{i_m} > 1 \}
$$
exists. If there is some $0 \leq m \leq M$ such that $i_m \neq i'_m$, then the proof is complete. Otherwise, the following lemma addresses the remaining case.

\begin{lemm}\label{Lemm: diferen positive code}
	Suppose that for all $0 \leq m \leq M$, we have $i_m = i'_m$. Then $i_{M+1} \neq i'_{M+1}$.
\end{lemm}

\begin{proof}
Observe that $\delta_0 = -\delta'_0$ and for all $0 \leq m \leq M$, if $\Gamma(T)^m(i_0,+1) = (i_m, \epsilon_m)$, then
$$
\Gamma(T)^m(i_0, -1) = (i_m, -\epsilon_m),
$$

they have the same index $i_m = i'_m$ but opposite sign in the second component, i.e., $\delta_m = -\delta'_m$. \\
In effect, without loss of generality we can assume $\delta_0 = 1$ and $\delta'_0 = -1$ and by hypothesis $1 = h_{i_0} = h_{i'_0}$ therefore: 
$$
\theta(i_0, \delta_0) = (i_0, h_{i_0}) = (i'_0, 1) = \theta(i'_0, \delta'_0).
$$
By the equation \ref{Equ: Gamma generation funtion} of the $s$-generating function:
$$
\delta_1 = \delta_0 \cdot \epsilon(i_0, h_{i_0}) = -\delta'_0 \cdot \epsilon(i'_0, 1) = -\delta'_1,
$$
and the argument continues by induction. In particular,
$$
\Gamma(T)^M(i_0, 1) = (i_M, \delta_M) \quad \text{and} \quad \Gamma(T)^M(i'_0, -1) = (i_M, -\delta_M).
$$

The incidence matrix of $T$ has coefficients in $\{0,1\}$, and since $1 \neq h_{i_M}$, if $\rho_T(i_M, 1) = (k,l)$ then $\rho_T(i_M, h_{i_M}) = (k', l')$ with $k \neq k'$.

Consider the case when $\theta(i_M, \delta_M) = (i_M, 1)$ and $\theta(i_M, \delta'_M) = (i_M, h_{i_M})$. Applying the formula for $\Gamma(T)$, we get:
$$
\Gamma(T)^{M+1}(i_0, 1) = \Gamma(T)(i_M, 1) = (\bp_1\circ \rho(i_M, 1), \delta_M \cdot \epsilon(i_M, 1)) = (k, \delta_{M+1}),
$$
and
$$
\Gamma(T)^{M+1}(i_0, -1) = \Gamma(T)(i_M, -1) = (\bp_1 \circ \rho(i_M, h_{i_M}), -\delta'_M \cdot \epsilon(i_M, h_{i_M})) = (k', \delta'_{M+1}).
$$
Therefore, $i_{M+1} = k \neq k' = i'_{M+1}$.

The case $\theta(i_M, \delta_M) = (i_M, h_{i_M})$ and $\theta(i_M, \delta'_M) = (i_M, 1)$ is treated similarly. This proves the lemma.
\end{proof}

The proposition \ref{Prop: s code Inyective} follows directly from the previous lemma. The analogous result for negative codes associated with $u$-boundary labels is proven using a fully symmetric argument involving the $u$-generating function.

\end{proof}

If $\Gamma(T)(i,\delta) = (i_1, \delta_1)$, then applying the shift to this code produces another $s$-boundary positive code, explicitly given by
$$
\sigma(\underline{I}^+(i,\delta)) = \underline{I}^+(i_1, \delta_1),
$$
 Since there are exactly $2n$ different $s$-boundary positive codes, there exist natural numbers $k_1 < k_2$, with $k_1, k_2 \leq 2n$, such that
$$
\sigma^{k_1}(\underline{I}^+(i,\delta)) = \sigma^{k_2}(\underline{I}^+(i,\delta)),
$$
which shows that the code $\underline{I}^+(i,\delta)$ is pre-periodic. This leads to the following corollary:

\begin{coro}\label{Coro: preperiodic finite s,u boundary codes}
	There are exactly $2n$ distinct $s$-boundary positive codes and $2n$ distinct $u$-boundary negative codes. Moreover, every $s$-boundary positive code and every $u$-boundary negative code is pre-periodic under the action of the shift $\sigma$.
	
	In addition, for each $s$-boundary positive code $\underline{I}^+(i,\delta)\in \underline{\cS}^+(T)$, there exists some $k \leq 2n$ such that $\sigma^k(\underline{I}^+(i,\delta))$ is periodic. Similarly, for every $u$-boundary negative code $\underline{J}^-(i,\delta)\in \underline{\cU}^-(T)$, there exists some $k \leq 2n$ such that $\sigma^{-k}(\underline{J}^-(i,\delta))$ is periodic.
\end{coro}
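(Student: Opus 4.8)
The plan is to obtain everything from the injectivity statement in Proposition~\ref{Prop: s code Inyective} together with the intertwining identity $\sigma(\underline{I}^+(i,\delta))=\underline{I}^+(\Gamma(T)(i,\delta))$ recorded just above, by a pigeonhole argument on the finite set $\cS(T)$.

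First I would count the codes. The set $\cS(T)=\{(i,\delta):1\le i\le n,\ \delta\in\{-1,1\}\}$ has exactly $2n$ elements, and Proposition~\ref{Prop: s code Inyective} says that $I\colon\cS(T)\to\Sigma_{A(T)}^+$, $(i,\delta)\mapsto\underline{I}^+(i,\delta)$, is injective; hence $\underline{\cS}^+(T)=I(\cS(T))$ has exactly $2n$ elements. The same argument with the injective map $J\colon\cU(T)\to\Sigma_{A(T)}^-$ gives $|\underline{\cU}^-(T)|=2n$.

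Next I would use the intertwining identity. Since $\sigma(\underline{I}^+(i,\delta))=\underline{I}^+(\Gamma(T)(i,\delta))$, the shift maps $\underline{\cS}^+(T)$ into itself, and through the bijection $I$ the $\sigma$-action on $\underline{\cS}^+(T)$ is conjugate to the self-map $\Gamma(T)\colon\cS(T)\to\cS(T)$ of a set of cardinality $2n$. For any $(i,\delta)$ the $2n+1$ points $(i,\delta),\Gamma(T)(i,\delta),\dots,\Gamma(T)^{2n}(i,\delta)$ cannot all be distinct, so there are $0\le a<b\le 2n$ with $\Gamma(T)^a(i,\delta)=\Gamma(T)^b(i,\delta)$; then $\Gamma(T)^a(i,\delta)$ is $\Gamma(T)$-periodic and $a\le 2n-1$. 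Transporting through $I$ shows that $\underline{I}^+(i,\delta)$ is pre-periodic and that $\sigma^k(\underline{I}^+(i,\delta))$ is $\sigma$-periodic for $k=a\le 2n$. The $u$-boundary negative codes are handled symmetrically, using instead the identity $\sigma^{-1}(\underline{J}^-(k,\delta))=\underline{J}^-(\Upsilon(T)(k,\delta))$ so that the $\sigma^{-1}$-dynamics on $\underline{\cU}^-(T)$ is conjugate to $\Upsilon(T)$ on $\cU(T)$; the same estimate gives $k\le 2n$ with $\sigma^{-k}(\underline{J}^-(k,\delta))$ periodic.

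There is no real obstacle here: the substantive work is already contained in Proposition~\ref{Prop: s code Inyective} and in the two intertwining identities, and what remains is the elementary fact that a self-map of a finite set is eventually periodic with a pre-period bounded by the cardinality of the set. The only things to watch are pinning down the explicit constant from $|\cS(T)|=|\cU(T)|=2n$, and observing that on negative codes it is the inverse shift $\sigma^{-1}$, not $\sigma$, that intertwines with the $u$-generating function $\Upsilon(T)$.
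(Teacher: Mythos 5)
Your proposal is correct and follows essentially the same route as the paper: the paragraph preceding the corollary establishes exactly the intertwining identity $\sigma(\underline{I}^+(i,\delta))=\underline{I}^+(\Gamma(T)(i,\delta))$ and then applies pigeonhole on the $2n$-element set $\underline{\cS}^+(T)$, whose cardinality comes from the injectivity in Proposition~\ref{Prop: s code Inyective}. Your only addition is the explicit remark that for the $u$-boundary negative codes it is $\sigma^{-1}$ that intertwines with $\Upsilon(T)$, which the paper leaves implicit.
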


Now we are ready to define a family of admissible codes that project onto the stable and unstable leaves of periodic boundary points of $\mathcal{R}$.

\begin{defi}\label{Defi: s,u-boundary codes}
	The set of $s$-\emph{boundary codes} of $T$ is defined as
	\begin{equation}
		\underline{\cS}(T) := \{\bw \in \Sigma_{A(T)} : \bw_+ \in \underline{\cS}^{+}(T)\}.
	\end{equation}
	
	Similarly, the set of $u$-\emph{boundary codes} of $T$ is defined as
	\begin{equation}
		\underline{\cU}(T) := \{\bw \in \Sigma_{A(T)} :\bw_- \in \underline{\cU}^{-}(T)\}.
	\end{equation}
\end{defi}

\begin{prop}\label{Prop: positive codes are boundary}
	Let $T$ be a symbolically presentable geometric type, and let $(f,\cR)$ be a pair that realizes $T$. Suppose that $(i,\delta) \in \cS(T)$ is an $s$-boundary label of $T$, and let $\bw \in \underline{\cS}(T)$ be a code such that $\underline{I}^+(i,\delta) = \bw_+$. Then, $\pi_{(f,\cR)}(\bw) \in \partial^s_{\delta} R_i$.
	
	Similarly, if $(i,\delta) \in \cU(T)$ is a $u$-boundary label of $T$, and $\bw \in \underline{\cU}(T)$ is a code such that $\underline{J}^-(i,\delta) = \bw_-$, then $\pi_{(f,\cR)}(\bw) \in \partial^u_{\delta} R_i$.
\end{prop}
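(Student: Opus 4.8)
The idea is that the positive half $\bw_+$ of the code already determines the stable interval through $x:=\pi_{(f,\cR)}(\bw)$, and that for a boundary code this interval is forced to be $\partial^s_\delta R_i$ itself.

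First I would introduce, for the label $(i,\delta)$ with $\underline{I}^{+}(i,\delta)=\{i_m\}_{m\ge 0}$ and $(i_m,\delta_m):=\Gamma(T)^m(i,\delta)$, the nested horizontal sub-rectangles
$$
H^{(m)}:=\overline{\ \bigcap_{z=0}^m f^{-z}\bigl(\overset{o}{R_{i_z}}\bigr)\ }\subseteq R_i .
$$
Because the incidence matrix $A(T)$ is binary and the chain $\{i_z\}$ is admissible (Lemma~\ref{Lemm: positive code admisible}), each of these intersections is connected, so $H^{(m)}$ is a genuine horizontal sub-rectangle of $R_i$, and $H^{(0)}\supseteq H^{(1)}\supseteq\cdots$. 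Since $\bigcap_{z=-n}^{n}f^{-z}(\overset{o}{R_{w_z}})\subseteq\bigcap_{z=0}^{n}f^{-z}(\overset{o}{R_{i_z}})$, taking closures gives $x\in\bigcap_m H^{(m)}=:\ell$, so the proposition reduces to the identity $\ell=\partial^s_\delta R_i$.

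The core of the argument is the claim: for every $(i,\delta)\in\cS(T)$ and every $m\ge 0$, the sub-rectangle $H^{(m)}$ is attached to the $\delta$-stable side of $R_i$, that is $\partial^s_\delta H^{(m)}=\partial^s_\delta R_i$. I would prove this by induction on $m$. For $m=1$: by binariness the unique horizontal sub-rectangle of $R_i$ whose $f$-image lies in $\overset{o}{R_{i_1}}$ is $H^i_{\theta(i,\delta)}$ — this is exactly the bookkeeping defining $i_1=\bp_1\circ\rho(i,\theta(i,\delta))$ in $\Gamma(T)$ — and since $\theta(i,1)=h_i$ and $\theta(i,-1)=1$, this sub-rectangle carries $\partial^s_\delta R_i$ on its $\delta$-side. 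For the step, put $j_0:=\theta(i,\delta)$, so $f(H^i_{j_0})=V^{i_1}_{l_1}$ is a vertical sub-rectangle of $R_{i_1}$ and $\delta_1=\delta\cdot\epsilon(i,j_0)$; writing $\widehat H:=H^{(m-1)}$ for the label $(i_1,\delta_1)$, the induction hypothesis says $\widehat H$ is attached to $\partial^s_{\delta_1}R_{i_1}$. Then $H^{(m)}=f^{-1}\!\bigl(\overline{\overset{o}{V^{i_1}_{l_1}}\cap\overset{o}{\widehat H}}\,\bigr)$, and $\overline{\overset{o}{V^{i_1}_{l_1}}\cap\overset{o}{\widehat H}}$ is the horizontal sub-rectangle of $V^{i_1}_{l_1}$ sitting on its $\delta_1$-side; pulling it back through $f\colon H^i_{j_0}\to V^{i_1}_{l_1}$, which reverses the vertical direction precisely when $\epsilon(i,j_0)=-1$, lands it on the $\delta_1\cdot\epsilon(i,j_0)=\delta$-side of $H^i_{j_0}$, which coincides with $\partial^s_\delta R_i$ because $j_0$ is the $\delta$-extreme sub-rectangle of $R_i$. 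This closes the induction. I expect this orientation accounting — checking that the sign rule $\delta_1=\delta\cdot\epsilon(i,\theta(i,\delta))$ built into $\Gamma(T)$ is exactly what keeps $H^{(m)}$ glued to the correct stable side — to be the one genuinely delicate point.

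To finish, I would observe that $f^m(H^{(m)})\subseteq R_{i_m}$ and that $f$ dilates the transverse measure $\mu^s$ of unstable cross-sections by $\lambda$, so the $\mu^s$-height of $H^{(m)}$ is at most $\lambda^{-m}\max_i\mu^s(R_i)\to 0$; since $\mu^s$ has full support and no atoms, $\ell=\bigcap_m H^{(m)}$ is a single stable leaf segment inside $R_i$. By the claim it contains $\partial^s_\delta R_i$, which already spans the full width of $R_i$, so $\ell=\partial^s_\delta R_i$ and hence $x\in\partial^s_\delta R_i$, as required. The $u$-boundary statement is the exact mirror image: run the same argument for $f^{-1}$, replacing horizontal by vertical sub-rectangles, $\Gamma(T)$ by the $u$-generating function $\Upsilon(T)$, and $\theta$ by $\eta$ (with $\eta(k,1)=v_k$, $\eta(k,-1)=1$), so that $\bw_-=\underline{J}^{-}(i,\delta)$ forces the unstable interval through $\pi_{(f,\cR)}(\bw)$ to be $\partial^u_\delta R_i$.
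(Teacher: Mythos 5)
Your argument is correct and follows essentially the same route as the paper's proof: both reduce the statement to showing that every finite one-sided intersection rectangle $H^{(m)}$ contains $\partial^s_\delta R_i$, establish this by induction using the sign rule $\delta_{m+1}=\delta_m\cdot\epsilon(i_m,\theta(i_m,\delta_m))$ built into $\Gamma(T)$, and conclude by letting the $\mu^s$-height of the nested rectangles collapse to the single stable segment $\partial^s_\delta R_i$. The only (cosmetic) difference is that the paper runs the induction forward, pushing $\partial^s_{\delta_0}R_{i_0}$ through $f^{s+1}$, while you pull the level-$(m-1)$ rectangle for the shifted label $(i_1,\delta_1)$ back through $f$.
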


\begin{proof}
	Set $(i,\delta) = (i_0,\delta_0)$ to make the notation consistent, and let $\underline{I}^+(i,\delta) = \{i_m\}_{m\in \NN}$. For every $s \in \NN$, define the rectangles $\overset{o}{H_s} := \bigcap_{m=0}^s f^{-m}(\overset{o}{R_{i_m}})$. The limit of the closures of these rectangles, as $s \to \infty$, is a unique stable segment of $R_{i_0}$. The proof will be complete if $\partial^s_{\delta_0} R_{i_0} \subset H_s := \overline{\overset{o}{H_s}}$ for all $s \in \NN$. Thus, it suffices to show that for all $s \in \NN$,
	$$
	f^s(\partial^s_{\delta_0} R_{i_0}) \subset \partial_{\delta_s} R_{i_s}.
	$$
	We will prove it by induction on $s$:

	\emph{Base case}: When $s = 0$, this is immediate since $f^0(\partial^s_{\delta_0} R_{i_0}) = \partial^s_{\delta_0} R_{i_0}$.\\

	\emph{Inductive hypothesis}: Assume that $f^s(\partial_{\delta_0} R_{i_0}) \subset \partial_{\delta_s} R_{i_s}$.\\
	
	\emph{Inductive step}: We want to show that
	$$
	f^{s+1}(\partial_{\delta_0} R_{i_0}) \subset \partial_{\delta_{s+1}} R_{i_{s+1}},
	$$
Consider two cases:
	
	\begin{itemize}
		\item[1)] $\delta_s = 1$: In this case, $f^s(\partial^s_{\delta_0} R_{i_0}) \subset \partial_{+1}^s H^{i_s}_{h_{i_s}}$ and then:
		$$
		f^{s+1}(\partial^s_{\delta_0} R_{i_0}) \subset f( \partial_{+1}^s H^{i_s}_{h_{i_s}}) \subset \partial^s_{\delta'_{s+1}} R_{i'_{s+1}},
		$$
		where $\delta'_{s+1} = \delta_s \cdot \epsilon(i_s, h_{i_s})$.
		
		\item $\delta_s = -1$: In this case, $f^s(\partial^s_{\delta_0} R_{i_0}) \subset \partial_{-1}^s H^{i_s}_1$, then
		$$
		f^{s+1}(\partial^s_{\delta_0} R_{i_0}) \subset \partial^s_{\delta'_{s+1}} R_{i'_{s+1}},
		$$
		where $\delta'_{s+1} = \delta_s \cdot \epsilon(i_s, 1)$.
	\end{itemize}
	
	In both cases, we conclude:
	$$
	f^{s+1}(\partial^s_{\delta_0} R_{i_0}) \subset \partial^s_{\delta'_{s+1}} R_{i'_{s+1}},
	$$
	and moreover,
	$$
	(i'_{s+1}, \delta'_{s+1}) = \left(\bp_1 \circ \rho(i_s, \theta_T(i_s, \delta_s)), \delta_s \cdot \epsilon(i_s, \theta_T(i_s, \delta_s))\right) = \Gamma(T)^{s+1}(i_0, \delta_0) = (i_{s+1}, \delta_{s+1}).
	$$
	Therefore, $f^{s+1}(\partial_{\delta_0} R_{i_0}) \subset \partial_{\delta_{s+1}} R_{i_{s+1}}$, as claimed, and the result follows.
	
	The unstable case is completely analogous.
\end{proof}

\begin{prop}\label{Prop: boundary points have boundary codes}
	Let $T$ be a symbolically presentable geometric type, and let $(f,\cR)$ be a pair realizing $T$. If a code $\bw \in \Sigma_{A(T)}$ projects under $\pi_{(f,\cR)}$ to the stable boundary of the Markov partition, i.e., $\pi_{(f,\cR)}(\bw) \in \partial^s \cR$, then $\bw \in \underline{\cS}(T)$. \\
	Similarly, if $\pi_{(f,\cR)}(\bw) \in \partial^u \cR$, then $\bw \in \underline{\cU}(T)$.
\end{prop}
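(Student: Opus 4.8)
The plan is to prove the statement as the converse of Proposition~\ref{Prop: positive codes are boundary}. Take $\bw \in \Sigma_{A(T)}$ with $x := \pi_{(f,\cR)}(\bw) \in \partial^s\cR$; by Definition~\ref{Defi: s,u-boundary codes}, what must be produced is an $s$-boundary label $(i_0,\delta_0) \in \cS(T)$ with $\bw_+ = \underline{I}^+(i_0,\delta_0)$. By Lemma~\ref{Lemm: every code is sector code } the code $\bw$ is a sector code, $\bw = \be_{j_0}(x)$ for some sector $e := e_{j_0}(x)$ of $x$, and by Lemma~\ref{Lemm: sector contined unique rectangle} this sector lies in a unique rectangle $R_{w_0}$; since $\overset{o}{R_{w_0}}$ is disjoint from $\partial^s\cR$ and from every $\partial R_k$ with $k\neq w_0$, the point $x$ lies on $\partial R_{w_0}$.

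\textbf{The initialization claim and the forward induction.} Everything rests on the claim that one may pick $\delta_0 \in \{-1,+1\}$ so that $x \in \partial^s_{\delta_0}R_{w_0}$ and the sector $e$ is contained in the extreme horizontal subrectangle $H^{w_0}_{\theta(w_0,\delta_0)}$ of $R_{w_0}$ (the bottom-most one if $\delta_0=-1$, the top-most if $\delta_0=+1$, via $\theta$ as in Equation~\ref{Equa: theta T relabel}). Set $(i_0,\delta_0):=(w_0,\delta_0)$. Granting the claim, the argument is the induction of Proposition~\ref{Prop: positive codes are boundary} read forward along the orbit: from $e \subseteq H^{i_0}_{\theta(i_0,\delta_0)}$ we get $f(e) \subseteq f(H^{i_0}_{\theta(i_0,\delta_0)}) = V^{i_1}_{l_1}$ where $(i_1,l_1)=\rho(i_0,\theta(i_0,\delta_0))$, so $w_1 = i_1$; and $f(x) \in f(\partial^s_{\delta_0}R_{i_0}) \subseteq \partial^s_{\delta_1}R_{i_1}$ with $\delta_1 = \delta_0\cdot\epsilon(i_0,\theta(i_0,\delta_0))$, exactly the inclusion verified there. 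Since $f(e)$ is again a sector (Lemma~\ref{Lemm: image secto is a sector}), lies in $V^{i_1}_{l_1}\subseteq R_{i_1}$, and touches $f(\partial^s_{\delta_0}R_{i_0}) \subseteq \partial^s_{\delta_1}R_{i_1}$, it is a sector of $f(x)$ inside $R_{i_1}$ touching $\partial^s_{\delta_1}R_{i_1}$ — whether $f(x)$ is in the relative interior of that segment or at an endpoint of it — hence sits in $H^{i_1}_{\theta(i_1,\delta_1)}$; note that because $f(e)$ is already confined to $V^{i_1}_{l_1}$, no ambiguity reappears at later steps. Iterating, $f^m(e) \subseteq H^{i_m}_{\theta(i_m,\delta_m)}$ with $(i_m,\delta_m) = \Gamma(T)^m(i_0,\delta_0)$, so $w_m = i_m$ for all $m\geq 0$; that is, $\bw_+ = \{i_m\}_{m\in\NN} = \underline{I}^+(i_0,\delta_0)$. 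The statement for $\partial^u\cR$ follows verbatim with $\Upsilon(T)$, $\eta$ and $\bw_-$ in place of $\Gamma(T)$, $\theta$ and $\bw_+$.

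\textbf{The obstacle.} The one place this can fail, and the step I expect to be the main obstacle, is the initialization claim. If $x$ lies in the relative interior of a stable boundary segment $\partial^s_{\delta_0}R_{w_0}$ (with $R_{w_0}$ one of the two rectangles flanking it) the claim is immediate, since each of the two sectors of $x$ inside $R_{w_0}$ touches $\partial^s_{\delta_0}R_{w_0}$ and hence lies in its adjacent horizontal subrectangle; if $x$ is a corner of $R_{w_0}$ the single sector of $x$ inside $R_{w_0}$ is the corner sector and the claim holds as well. The delicate case is a \emph{junction}: $x$ lies in the relative interior of an \emph{unstable} side of $R_{w_0}$ and belongs to $\partial^s\cR$ only through a stable boundary segment of a \emph{different} rectangle. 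To treat it I would use that the stable leaves cutting $R_{w_0}$ into its horizontal subrectangles are precisely $\overset{o}{R_{w_0}} \cap f^{-1}(\partial^s\cR)$ (because $\overset{o}{R_{w_0}}\cap\partial^u\cR = \emptyset$ and $\partial^u\cR$ is $f^{-1}$-invariant), together with the $f$-invariance of $\partial^s\cR$ and Lemma~\ref{Lemm: Boundary of Markov partition is periodic} (which controls how $\partial^s\cR$ and $\partial^u\cR$ can meet), to show that even at a junction the sector $e$ is forced into the top-most or bottom-most horizontal subrectangle of $R_{w_0}$ — concretely, that a junction of this kind cannot sit along an interior horizontal cut of a rectangle with three or more horizontal subrectangles. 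A counting alternative is available (there are exactly $2n$ distinct $s$-boundary positive codes, one projecting into each of the $2n$ stable boundary components, by Proposition~\ref{Prop: positive codes are boundary} and Corollary~\ref{Coro: preperiodic finite s,u boundary codes}), but converting this into $\pi_{(f,\cR)}^{-1}(\partial^s\cR) \subseteq \underline{\cS}(T)$ runs into the same point about sectors at junctions. Pinning down the geometric feature of Markov partitions that excludes the bad configurations — a statement about the actual geometry, not the symbolic shadow — is the crux.
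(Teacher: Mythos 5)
Your strategy is the same as the paper's: read off, at each forward time $m$, the horizontal subrectangle $H^{w_m}_{j_m}$ containing the sector, argue that $j_m=\theta(w_m,\delta_m)$, and let the $\Gamma(T)$-recursion do the rest. The forward induction you describe is correct \emph{once initialized}, and your observation that the ambiguity cannot reappear after a clean step (because $f(e)\subset V^{i_1}_{l_1}$ pins down the rectangle and $f(e)$ stays glued to $\partial^s_{\delta_1}R_{i_1}$) is right. But the proposal is not a proof: you state the initialization claim, observe that it fails to be obvious exactly at the junctions ($x\in\partial^s\cR$ lying in the relative interior of an unstable side of the rectangle $R_{w_0}$ carrying the sector, hence $x\notin\partial^s R_{w_0}$), and then leave that case open, calling it ``the crux.'' That assessment is accurate, and it means the essential mathematical content of the proposition is missing from your argument: everything else is the bookkeeping already done in Proposition~\ref{Prop: positive codes are boundary}.

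Two remarks to calibrate the gap. First, the issue is not confined to $m=0$: a junction can persist for a whole initial block of times (by $f^{-1}$-invariance of $\partial^u\cR$ the set of $m\ge 0$ with $f^m(x)\in\partial^u\cR$ is an initial segment, finite unless $x$ is periodic by Lemma~\ref{Lemm: no periodic boundary points}), and since $\underline{\cS}(T)$ demands the recursion from index $0$ exactly, you cannot shift past the junction block. At each such $m$ one must show that the horizontal leaf of $R_{w_m}$ through $f^m(x)$ lies in $\partial^s H^{w_m}_{\theta(w_m,\delta_m)}$; the useful observation is that otherwise $f^{m+1}(x)$ would sit in the relative interior of an unstable edge of the vertical subrectangle $f(H^{w_m}_{j_m})$, which forces that edge to lie in $\partial^u R_{w_{m+1}}$ on pain of putting $f^{m+1}(x)$ into $\overset{o}{R_{w_{m+1}}}$ and contradicting $f^{m+1}(x)\in\partial^s\cR$ --- but this alone constrains rather than excludes the bad configurations, so a further geometric input is indeed needed, as you suspect. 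Second, for comparison: the paper's own proof disposes of this point in one line, asserting that since $f^m(x)\in\partial^s\cR$ ``there are only two possibilities,'' $w_{m+1}=\bp_1\circ\rho(w_m,1)$ or $w_{m+1}=\bp_1\circ\rho(w_m,h_{w_m})$; that assertion is precisely your unproved initialization claim applied at every $m$. So you have correctly located the load-bearing step of the argument, but neither stating it nor showing why it is delicate discharges the obligation to prove it.
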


\begin{proof}
	Since $A(T)$ has entries in $\{0,1\}$, the sequence $\bw_+$and the geometric type,  determines  for all $m \in \NN$, an horizontal label $(w_m, j_m) \in \mathcal{H}(T)$ such that $\rho(w_m, j_m) = (w_{m+1},l_{l_{m+1}})\in \cV(T)$ and then let $\epsilon(w_m, j_m) = \delta_{m+1} \in \{1, -1\}$. \\
	Let $x:=\pi_f(\bw)$, by $f$-invariance of the $s$-boundary of $\cR$, $f^m(x) \in \partial^s \cR$ and there are only two possibilities (except when $h_{w_m} = 1$, in which case both cases coincide):
	$$
	w_{m+1} = \bp_1\circ \rho(w_m, 1) \quad \text{or} \quad w_{m+1} =\bp_1 \circ \rho (w_m, h_{w_m}).
	$$
	
	This allows us to define $\delta_m \in \{-1, +1\}$ as the unique number such that:
	\begin{equation}\label{Equa: determine epsilon m}
		w_{m+1} = \bp_1\circ \rho(w_m, \theta_T(w_m, \delta_m)).
	\end{equation}
	Moreover, $\delta_m$ determines $\delta_{m+1}$ via the formula:
	$$
	\delta_{m+1} = \delta_m \cdot \epsilon(w_m, \theta(w_m, \delta_m)).
	$$
	
	In summary:
	\begin{equation}\label{Equa: w determine by gamma}
		\Gamma(T)(w_m, \delta_m) = (w_{m+1}, \delta_{m+1}),
	\end{equation}
	which follows the rule dictated by the $s$-generating function. Thus, if we know $\delta_M$ for some $M \in \NN$, we can reconstruct $\sigma^M(\bw)_+ = \underline{I}^+(w_M, \delta_M)$, and it remains only to determine at least one such $\delta_M$.
	
	An adaptation of Lemma \ref{Lemm: positive h-i} yields the existence of a minimal $M \in \NN$ such that $h_{w_M} > 1$. Then, equation \eqref{Equa: determine epsilon m} uniquely determines $\delta_M$. To recover $\delta_0$, we proceed backwards. Since $h_{w_m} = 1$ for all $m < M$, we have:
	$$
	\delta_M = \delta_{M-1} \cdot \epsilon(w_{M-1}, 1),
	$$
	so that $\delta_{M-1} = \delta_M \cdot \epsilon(w_{M-1}, 1)$. Repeating this argument inductively, we obtain $\delta_0$, and then use equation \eqref{Equa: w determine by gamma} to obtain:
	$$
	\Gamma(T)^m(w_0, \delta_0) = (w_m, \delta_m).
	$$
	
	Hence, $\bw_+ = \underline{I}^+(w_0, \delta_0)$.
	
	The unstable case is entirely analogous.
\end{proof}

 It is important to note that the cardinality of each of the sets that we are introduced below is less than or equal to $2n$.

\begin{defi}\label{Defi: s,u-boundary periodic codes}
	The set of $s$-\emph{boundary periodic codes} of $T$ is defined as
	\begin{equation}
		\text{Per}(\underline{\cS}(T)) := \{\bw \in \underline{\cS}(T) : \bw \text{ is periodic} \}.
	\end{equation}
	Similarly, the set of $u$-\emph{boundary periodic codes} of $T$ is defined as
	\begin{equation}
		\text{Per}(\underline{\cU}(T)) := \{\bw \in \underline{\cU}(T) : \bw \text{ is periodic} \}.
	\end{equation}
\end{defi}

\subsection{Boundary leaf codes} Now we can describe the subsets of $\Sigma_{A(T)}$ that must project onto the stable and unstable leaves of the periodic boundary points of any realization $(f, \cR)$ of $T$ via the projection $\pi_{(f, \cR)}$.

\begin{defi}\label{Defi: stratification Sigma A}
	We define the set of $s$-\emph{boundary leaf codes} of $T$ as
	\begin{equation}
		\Sigma_{\cS(T)} := \{ \bw \in \Sigma_{A(T)} : \exists\, k \in \NN \text{ such that } \sigma_{A(T)}^k(\bw) \in \underline{\cS}(T) \}.
	\end{equation}
	We define the set of $u$-\emph{boundary leaf codes} of $T$ as
	\begin{equation}
		\Sigma_{\cU(T)} := \{ \bw \in \Sigma_{A(T)} : \exists\, k \in \NN \text{ such that } \sigma_{A(T)}^{-k}(\bw) \in \underline{\cU}(T) \}.
	\end{equation}
	Finally, the set of \emph{totally interior codes} of $T$ is defined by
	\begin{equation*}
		\Sigma_{\cI(T)} := \Sigma_{A(T)} \setminus \left( \Sigma_{\cS(T)} \cup \Sigma_{\cU(T)} \right).
	\end{equation*}
\end{defi}

The importance of this partition of $\Sigma_A$ lies in the fact that their projections are well defined in the seance of the following lemma.

\begin{lemm}\label{Lemm: Projection Sigma S,U,I}
	Let $T \in \cG\cT(\textbf{p-A})^{sp}$ be a symbolically presentable geometric type, and let $(f,\cR)$ be a realization of $T$. Then:
	\begin{itemize}
		\item A code $\bw \in \Sigma_{A(T)}$ belongs to $\Sigma_{\cS(T)}$ if and only if its projection $\pi_{(f,\cR)}(\bw)$ lies within the stable leaf of a $s$-boundary periodic point of $\cR$.
		
		\item A code $\bw \in \Sigma_{A(T)}$ belongs to $\Sigma_{\cU(T)}$ if and only if its projection $\pi_f(\bw)$ lies within the unstable leaf of a $u$-boundary periodic point of $\cR$.
		
		\item A code $\bw \in \Sigma_A$ belongs to $\Sigma_{\cI(T)}$ if and only if its projection $\pi_{(f,\cR)}(\bw)$ is contained in $\operatorname{Int}(f,\cR)$, i.e. is a totally interior point.
	\end{itemize}
\end{lemm}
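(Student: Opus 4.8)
The plan is to prove the three equivalences in turn, deducing the third from the first two, and to carry out the argument only in the stable case since the unstable one follows by reversing time (replacing $\Gamma(T)$, $\underline{\cS}(T)$, $\Sigma_{\cS(T)}$ and forward iteration by $\Upsilon(T)$, $\underline{\cU}(T)$, $\Sigma_{\cU(T)}$ and backward iteration). Throughout I would write $\pi_f := \pi_{(f,\cR)}$ and use that it semiconjugates $f$ with $\sigma_{A(T)}$, so $\pi_f(\sigma^k \bw) = f^k(\pi_f(\bw))$ for every $k \in \ZZ$.

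For the forward implication of the first bullet I would argue as follows. Suppose $\bw \in \Sigma_{\cS(T)}$. By Definition~\ref{Defi: stratification Sigma A} there is $k \in \NN$ with $\sigma^k(\bw) \in \underline{\cS}(T)$, so $(\sigma^k\bw)_+ = \underline{I}^+(i,\delta)$ for some $(i,\delta) \in \cS(T)$, and Proposition~\ref{Prop: positive codes are boundary} gives $\pi_f(\sigma^k\bw) \in \partial^s_\delta R_i \subset \partial^s\cR$. By Lemma~\ref{Lemm: Boundary of Markov partition is periodic} the arc $\partial^s_\delta R_i$ lies on the stable leaf of a periodic point $p$; writing $\tau$ for its period and $g := f^\tau$, the arcs $g^m(\partial^s_\delta R_i)$ lie in the closed, $f$-invariant set $\partial^s\cR$ and shrink to $p$ as $m \to \infty$, so $p \in \partial^s\cR$, i.e. $p$ is an $s$-boundary periodic point. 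Hence $\pi_f(\sigma^k\bw) \in F^s(p)$, and applying $f^{-k}$ shows $\pi_f(\bw) = f^{-k}(\pi_f(\sigma^k\bw)) \in F^s(f^{-k}(p))$; since $p$ is periodic, $f^{-k}(p)$ lies on the forward orbit of $p$ and is therefore again an $s$-boundary periodic point.

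For the reverse implication I would start from $x := \pi_f(\bw)$ on the stable leaf of an $s$-boundary periodic point $p \in \partial^s_\delta R_i$. Arguing as in the proof of Lemma~\ref{Lemm: Characterization unique codes}, the contraction of $\mathcal{F}^s$ under $f$ produces some $z \in \NN$ with $f^z(x) \in \partial^s_\delta R_i \subset \partial^s\cR$: one has $f^{m\tau}(x) \to p$, and a relative neighbourhood of $p$ inside $F^s(p)$ is covered by stable boundary arcs. Then $\pi_f(\sigma^z\bw) = f^z(x) \in \partial^s\cR$, so Proposition~\ref{Prop: boundary points have boundary codes} yields $\sigma^z\bw \in \underline{\cS}(T)$, and since $z \geq 0$ this means precisely $\bw \in \Sigma_{\cS(T)}$. (Alternatively one may invoke that $\underline{\cS}(T)$ is $\sigma$-invariant, as $\sigma(\underline{I}^+(i',\delta')) = \underline{I}^+(\Gamma(T)(i',\delta'))$, to absorb any need for a negative exponent.)

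Finally, the third bullet would follow formally: by Lemma~\ref{Lemm: Characterization unique codes}, $\pi_f(\bw) \in \operatorname{Int}(f,\cR)$ if and only if $\pi_f(\bw)$ lies on neither the stable leaf of an $s$-boundary periodic point nor the unstable leaf of a $u$-boundary periodic point, which by the two equivalences just proved is equivalent to $\bw \notin \Sigma_{\cS(T)} \cup \Sigma_{\cU(T)}$, i.e. $\bw \in \Sigma_{\cI(T)}$, and Definition~\ref{Defi: stratification Sigma A} then closes the argument. The step I expect to be the main obstacle is the geometric claim in the reverse direction — that the $f$-orbit of a point on the stable leaf of a periodic boundary point actually meets $\partial^s\cR$ — which rests on the local structure of the Markov partition near periodic boundary points (especially at singularities) and parallels the argument already used for Lemma~\ref{Lemm: Characterization unique codes}; the remaining content is bookkeeping with the $s$- and $u$-generating functions, Corollary~\ref{Coro: preperiodic finite s,u boundary codes}, and the semiconjugacy.
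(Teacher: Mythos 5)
Your proposal is correct and follows essentially the same route as the paper: the forward direction via Proposition~\ref{Prop: positive codes are boundary} (the paper uses Proposition~\ref{Prop: Projection foliations} together with Corollary~\ref{Coro: preperiodic finite s,u boundary codes}, to the same effect), the reverse direction via contraction along the stable leaf followed by Proposition~\ref{Prop: boundary points have boundary codes}, and the third bullet deduced from the first two together with Lemma~\ref{Lemm: Characterization unique codes}. The geometric step you flag as the main obstacle is exactly the one the paper also relies on (there and in Lemma~\ref{Lemm: Characterization unique codes}), so no new gap is introduced.
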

\begin{proof}
	The $s$-boundary leaf codes satisfy Definition~\ref{Defi: s,u-leafs}, and according to Proposition~\ref{Prop: Projection foliations}, if $\bw \in \Sigma_{\cS(T)}$, then its projection lies on the same stable manifold than a $s$-boundary component of $\cR$. The stable boundary components of a Markov partition is contained into the stable manifolds of its $s$-boundary periodic points. Moreover, for each $\bw \in \Sigma_{\cS(T)}$, there exists $k = k(\bw) \in \mathbb{N}$ such that $\sigma^k(\bw)_+$ is a periodic positive code (Corollary~\ref{Coro: preperiodic finite s,u boundary codes}). This positive code corresponds to a periodic point on the boundary of $\cR$, within whose stable manifold $\pi_f(\bw)$ lies. This proves one direction of the first assertion in the lemma.
	
	Now suppose that $\bv$ is a periodic $s$-boundary code such that, $\pi_f(\bv) \in \partial^s \cR$,  by definition $\bv \in \underline{\cS}(T)$. If $\pi_f(\bw)$ lies on the same stable leaf than $\bv$, then there exists $k \in \NN$ such that $\pi_f(\sigma^k(\bw))$ lies on the same stable boundary component of $\cR$ than $\pi_f(\bv)$. Proposition~\ref{Prop: boundary points have boundary codes} then implies that $\sigma^k(\bw) \in \underline{\cS}(T)$, and hence $\bw \in \Sigma_{\cS(T)}$ by definition. This completes the proof of the first item. A similar argument applies to the unstable case.

	As shown in Lemma~\ref{Lemm: Characterization unique codes}, totally interior points are disjoint from the stable and unstable laminations generated by boundary periodic points. If $\bw \in \Sigma_{\cI(T)}$ and $\pi_f(\bw)$ lies on the stable or unstable leaf of an $s$- or $u$-boundary periodic point, then we would have $\bw \in \Sigma_{\cS(T)} \cup \Sigma_{\cU(T)}$, which is a contradiction. Therefore, $\pi_f(\bw) \in \operatorname{Int}(f,\cR)$. Conversely, if $\pi_f(\bw)$ does not lie on the stable or unstable leaf of any boundary periodic point, then $\bw \notin \Sigma_{\cS(T)} \cup \Sigma_{\cU(T)}$, and so $\bw \in \Sigma_{\cI(T)}$. This concludes the proof.
\end{proof}

We have obtained the decomposition
$$
\Sigma_{A(T)} = \Sigma_{\cI(T)} \cup \Sigma_{\cS(T)} \cup \Sigma_{\cU(T)},
$$
and have characterized the image of each of these sets under the projection $\pi_{(f,\cR)}$. In the following section, we use this decomposition to define relations on each of these subsets, and then extend these to an equivalence relation on the entire space $\Sigma_{A(T)}$.

\section{The equivalent relation}

We must construct an equivalence relation $\sim_s$ on $\Sigma_{\cS(T)}$ and state some of its properties. An analogous equivalence relation $\sim_u$ can be defined on $\Sigma_{\cU(T)}$, but we will simply extrapolate our argument to the unstable case while providing full details for the construction of $\sim_s$. As usual, given a pair $(f,\cR)$, the corresponding projection $\pi_{(f,\cR)}$ will be denoted simply by $\pi_f$ whenever there is no risk of confusion.

\subsection{The $s$-boundary equivalence relation
	\texorpdfstring{}{ (s-boundary equivalence relation)}}

Let $\bw \in \Sigma_{\cS(T)} \setminus \text{Per}(\sigma_A)$ be a non-periodic $s$-boundary leaf code of $T$. Since it is not periodic, there exists a non-positive integer $k := k(\bw) \in \ZZ_{-}$ with the following properties: 
\begin{itemize}
	\item $\sigma^k(\bw) \notin \underline{\cS(T)}$, i.e., $\pi_f(\sigma^k(\bw)) \notin \partial^s \cR$;
	\item but, $\sigma^{k+1}(\bw) \in \underline{\cS(T)}$, i.e., $\pi_f(\sigma^{k+1}(\bw)) \in \partial^s \cR$.
\end{itemize}

\begin{lemm}\label{Lemm: minumun k}
	The number $k := k(\bw)$ is the unique integer such that $f^{k}(\pi_f(\bw)) \in \cR \setminus \partial^s \cR$, and for all $k' > k(\bw)$, we have $f^{k'}(\pi_f(\bw)) \in \partial^s \cR \setminus \text{Per}(f)$.
\end{lemm}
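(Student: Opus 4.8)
The plan is to push the definition of $k := k(\bw)$ through the projection $\pi_f := \pi_{(f,\cR)}$ and then read the conclusion off from the forward invariance of $\partial^s\cR$. Write $x := \pi_f(\bw)$. First I would record two facts on the surface. By the semi-conjugacy $f\circ\pi_f = \pi_f\circ\sigma_{A(T)}$ of Proposition~\ref{Prop:proyecion semiconjugacion}, $\pi_f(\sigma^m(\bw)) = f^m(x)$ for every $m\in\ZZ$. By the definition of $k(\bw)$ one has $\sigma^{k+1}(\bw)\in\underline{\cS}(T)$ and $\sigma^{k}(\bw)\notin\underline{\cS}(T)$; so Proposition~\ref{Prop: positive codes are boundary} applied to $\sigma^{k+1}(\bw)$ gives $f^{k+1}(x)\in\partial^s\cR$, and the contrapositive of Proposition~\ref{Prop: boundary points have boundary codes} applied to $\sigma^{k}(\bw)$ gives $f^{k}(x)\notin\partial^s\cR$, i.e. $f^{k}(x)\in\cR\setminus\partial^s\cR$ (recall $\bigcup_i R_i = S$).

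Next I would use that $\partial^s\cR$ is $f$-invariant, $f(\partial^s\cR)\subseteq\partial^s\cR$, by Lemma~\ref{Prop: Markov criterion boundary}. Then the set $E := \{\,m\in\ZZ : f^m(x)\in\partial^s\cR\,\}$ is upward closed: $m\in E$ implies $f^{m+1}(x)=f(f^m(x))\in f(\partial^s\cR)\subseteq\partial^s\cR$, so $m+1\in E$. Since $k+1\in E$ by the first step, induction gives $f^{k'}(x)\in\partial^s\cR$ for every integer $k'>k$. On the other hand $k\notin E$, and upward closure then forbids any $m\le k$ from lying in $E$ (otherwise $k\in E$); hence $E = \{\,m\in\ZZ : m>k\,\}$ exactly. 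This is precisely the uniqueness clause: if an integer $m$ satisfies $f^m(x)\notin\partial^s\cR$ and $f^{m'}(x)\in\partial^s\cR$ for all $m'>m$, then $m\notin E$ while $\{m':m'>m\}\subseteq E=\{m':m'>k\}$, which forces $m=k$.

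It remains to rule out periodicity along the forward orbit. Since $\bw$ is not $\sigma_{A(T)}$-periodic, Lemma~\ref{Lemm: Periodic to periodic} (in contrapositive form: every code projecting to a periodic point is periodic) shows $x$ is not $f$-periodic, and because $f$ is a homeomorphism no iterate $f^m(x)$ is $f$-periodic either, since $f^q(f^m(x))=f^m(x)$ would give $f^q(x)=x$. Combining with the second step, for every $k'>k$ we get $f^{k'}(x)\in\partial^s\cR\setminus\mathrm{Per}(f)$, while $f^{k}(x)\in\cR\setminus\partial^s\cR$; together with the uniqueness just established, this is the assertion of the lemma. (The $u$-version, needed for $\sim_u$, follows by the symmetric argument using backward invariance $f^{-1}(\partial^u\cR)\subseteq\partial^u\cR$ and the $u$-analogues of Propositions~\ref{Prop: positive codes are boundary} and \ref{Prop: boundary points have boundary codes}.)

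I do not expect a real obstacle here; the only point requiring care is to invoke the code--boundary dictionary in \emph{both} directions — Proposition~\ref{Prop: positive codes are boundary} to pass from an $s$-boundary code to a stable-boundary point, and the contrapositive of Proposition~\ref{Prop: boundary points have boundary codes} to pass from a non-$s$-boundary code to a point off $\partial^s\cR$ — because the definition of $k(\bw)$ pairs a positive membership ($\sigma^{k+1}\bw\in\underline{\cS}(T)$) with a negative one ($\sigma^{k}\bw\notin\underline{\cS}(T)$). Once that is in place, the lemma is a one-line consequence of $f(\partial^s\cR)\subseteq\partial^s\cR$ together with "a non-periodic code projects to a non-periodic point."
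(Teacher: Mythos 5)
Your proof is correct and follows essentially the same route as the paper's: translate the defining conditions on $\sigma^{k}(\bw)$ and $\sigma^{k+1}(\bw)$ into statements about $f^{k}(x)$ and $f^{k+1}(x)$, propagate forward using the $f$-invariance of $\partial^s\cR$, and invoke Lemma~\ref{Lemm: Periodic to periodic} for non-periodicity. Your version is somewhat more explicit than the paper's (citing Propositions~\ref{Prop: positive codes are boundary} and~\ref{Prop: boundary points have boundary codes} for the code--boundary dictionary and spelling out the upward-closure argument for uniqueness), but the substance is identical.
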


\begin{proof}
	Since $f^{k+1}(\pi_f(\bw)) \in \partial^s \cR$ and $\partial^s \cR$ is $f$-invariant, it follows that $f^{k+m}(\pi_f(\bw)) \in \partial^s \cR$ for all $m \geq 1$. Therefore, $k$ is the maximum integer such that $f^k(\pi_f(\bw)) \in \cR \setminus \partial^s \cR$, making it unique.
	
	Finally, since $\bw$ is not periodic, Lemma~\ref{Lemm: Periodic to periodic} implies that its projection cannot be a periodic point of the homeomorphism $f$.
\end{proof}

\begin{lemm}\label{Lemm: Diferen horizonl sub}
Let $\bw \in \Sigma_{\cS(T)} \setminus \text{Per}(\sigma_A)$ be a non-periodic code, $k=k(\bw)\in \ZZ_{-}$ as in Lemma \ref{Lemm: minumun k} and  $x := \pi_f(\bw)$ it projection. Then there are  indices $i \in \{1, \dots, n\}$ and $j \in \{1, \dots, h_i - 1\}$ such that:
\begin{itemize}
	\item  $f^k(x) \in R_i \setminus \partial^s R_i$;
	\item $f^k(x)$ lies in two adjacent horizontal subrectangles of $R_i$, i.e., $f^k(x) \in \partial^s_{+1} H^i_j$ and $x \in \partial^s_{-1} H^i_{j+1}$.
\end{itemize}
\end{lemm}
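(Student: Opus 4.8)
The plan is to push the statement to the symbolic side via the semi‑conjugacy $f\circ\pi_f=\pi_f\circ\sigma_A$ of Proposition~\ref{Prop:proyecion semiconjugacion} and then reduce everything to a single inclusion of boundary sets. Set $x:=\pi_f(\bw)$ and $y:=f^{k}(x)$, so that $y=\pi_f(\sigma_A^{k}(\bw))$ and $f(y)=\pi_f(\sigma_A^{k+1}(\bw))$. By the defining property of $k=k(\bw)$ (equivalently, by Lemma~\ref{Lemm: minumun k}) we have $y\in\cR\setminus\partial^s\cR$, and since $\sigma_A^{k+1}(\bw)\in\underline{\cS}(T)$, Proposition~\ref{Prop: positive codes are boundary} gives $f(y)\in\partial^s\cR$. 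So the whole problem is: knowing $y\notin\partial^s\cR$ while $f(y)\in\partial^s\cR$, one must locate $y$ on the common stable side of two consecutive horizontal subrectangles of the rectangle containing it.

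The key step I would establish is the inclusion
$$
f^{-1}\bigl(\partial^s\cR\bigr)\ \subseteq\ \bigcup_{(i,j)\in\cH(T)}\partial^s H^i_j ,
$$
whose right‑hand side is the stable boundary of the horizontal refinement $\bB(f,\cR)$. To prove it, first recall that $f$ maps each horizontal subrectangle $H^i_j$ homeomorphically onto the vertical subrectangle $V^{k'}_l=f(H^i_j)$ (with $(k',l)=\rho(i,j)$) and, being foliation‑preserving, carries $\partial^s H^i_j$ \emph{exactly} onto $\partial^s V^{k'}_l$; as $(i,j)$ runs over $\cH(T)$ these images run over the stable boundaries of all vertical subrectangles. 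Next, for each $a$ the vertical subrectangles $V^a_1,\dots,V^a_{v_a}$ cover $R_a$ (the density argument dual to $\bigcup_j H^a_j=R_a$) and each spans $R_a$ in the unstable direction, so their lower and upper sides cover $\partial^s_{-1}R_a$ and $\partial^s_{+1}R_a$ respectively; hence $\partial^s R_a\subseteq\bigcup_l\partial^s V^a_l$. Therefore $\partial^s\cR=\bigcup_a\partial^s R_a$ is contained in the union of the stable boundaries of all vertical subrectangles, which is exactly $f\bigl(\bigcup_{(i,j)}\partial^s H^i_j\bigr)$; since $f$ is a bijection, the displayed inclusion follows.

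Granting this, the proof finishes at once. From $f(y)\in\partial^s\cR$ the inclusion yields $y\in\partial^s H^i_b$ for some $(i,b)\in\cH(T)$, so $y\in R_i$, and together with $y\notin\partial^s\cR$ this is the first bullet. Since $y\notin\partial^s\cR\supseteq\partial^s R_i=\partial^s_{-1}H^i_1\cup\partial^s_{+1}H^i_{h_i}$, the side of $H^i_b$ through $y$ cannot be an outer one: either $y\in\partial^s_{-1}H^i_b$ with $b\ge 2$, in which case $\partial^s_{-1}H^i_b=\partial^s_{+1}H^i_{b-1}$ and one sets $j:=b-1$; or $y\in\partial^s_{+1}H^i_b$ with $b\le h_i-1$, in which case $\partial^s_{+1}H^i_b=\partial^s_{-1}H^i_{b+1}$ and one sets $j:=b$. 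In both cases $j\in\{1,\dots,h_i-1\}$ and $f^{k}(x)=y\in\partial^s_{+1}H^i_j\cap\partial^s_{-1}H^i_{j+1}$, as required (and in particular $h_i\ge 2$).

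I expect the only real content to be the key inclusion, and within it the two ``soft topology'' facts — that the vertical subrectangles of a rectangle cover it, and that $f$ sends $\partial^s H^i_j$ \emph{precisely} onto $\partial^s$ of a vertical subrectangle and not merely into $\partial^s\cR$ — which is exactly where the structure of a geometric Markov partition (Definition~\ref{Defi: Geometric Markov partition}) and of the associated geometric type are used. A purely local argument, tracking the sectors of $y$ and of $f(y)$, is also possible but requires an awkward case analysis at corner points; the global boundary‑set argument above avoids it.
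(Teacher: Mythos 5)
Your proof is correct. The endgame is the same as the paper's: once $f^k(x)$ is known to lie on a stable boundary component of some horizontal subrectangle $H^i_b$ while avoiding $\partial^s R_i=\partial^s_{-1}H^i_1\cup\partial^s_{+1}H^i_{h_i}$, that component must be the shared side $\partial^s_{+1}H^i_j=\partial^s_{-1}H^i_{j+1}$ of two consecutive subrectangles. Where you genuinely differ is in how you place $f^k(x)$ on $\partial^s H^i_b$. The paper argues locally: it takes the horizontal leaf $I_{f^k(x)}$ of the subrectangle containing $f^k(x)$, notes that $f(I_{f^k(x)})\subset\partial^s R_{w_{k+1}}$, and concludes that $I_{f^k(x)}$ is a stable boundary leaf of $H^i_j$, since otherwise $f(\overset{o}{H^i_j})$ would meet $\partial^s\cR$. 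You instead prove the global inclusion $f^{-1}(\partial^s\cR)\subseteq\partial^s\bB(f,\cR)$ by a covering argument on the image side: the stable sides of the vertical subrectangles of $R_a$ cover $\partial^s R_a$, and each such side is the $f$-image of the stable side of exactly one horizontal subrectangle (bijectivity of $\rho_T$). Both routes rest on the same two structural facts --- the horizontal (resp.\ vertical) subrectangles tile each rectangle, and $f$ carries $\partial^s H^i_j$ onto $\partial^s V^{k'}_l$ --- so the difference is mainly one of organization; your version yields a reusable statement that complements the invariance $f(\partial^s\bB(f,\cR))\subseteq\partial^s\bB(f,\cR)$ established in Lemma~\ref{Lemm: Binary partition}, and it sidesteps the leaf-by-leaf discussion, at the price of the two covering facts you flag at the end (which the paper's local argument also uses implicitly when it asserts that adjacent horizontal subrectangles share a full $s$-boundary component).
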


\begin{proof}
	By Lemma \ref{Lemm: minumun k}, there exists an $i \in \{1, \cdots, n\}$ such that $f^k(x) \in R_i \setminus \partial^s R_i$; therefore, $f^k(x)$ is contained in certain horizontal sub-rectangle $H^i_j$ of $R_i$. But since $f(\overset{o}{H^i_j}) \subset \overset{o}{R_{w_{k+1}}}$, the point $f^k(x)$ cannot be contained in $\overset{o}{H^i_j}$; it can only lie in $\partial H^i_j$. 
	
	Let $I_{f^k(x)}$ be the horizontal leaf of $H^i_j$ that contains $f^k(x)$. By Lemma \ref{Lemm: minumun k}, $f^{k+1}(x) \in \partial^s R_{w_{k+1}}$, and in fact $f(I_{f^k(x)}) \subset \partial^s R_{w_{k+1}}$. This implies that $I_{f^k(x)}$ must be equal to a stable boundary component of $H^i_j$, otherwise the interior of $H^i_j$ would intersect the boundary of $R_{w_{k+1}}$ (which is not possible).
	
	Then $I_{f^k(x)}$ is the stable boundary of $H^i_j$, but it cannot be an $s$-boundary component of $R_i$. Therefore, there exists a different horizontal sub-rectangle $H^i_{j'}$ such that $I_{f^k(x)}$ is the common $s$-boundary component of $H^i_j$ and $H^i_{j'}$. Clearly, $j = j' - 1$ or $j = j' + 1$, and this concludes our proof.
\end{proof}

\subsubsection{The mechanism of stable identification}

We describe the conditions under which $s$-boundary leaf codes must be equivalent.

\begin{figure}[h]
	\centering
	\includegraphics[width=0.5\textwidth]{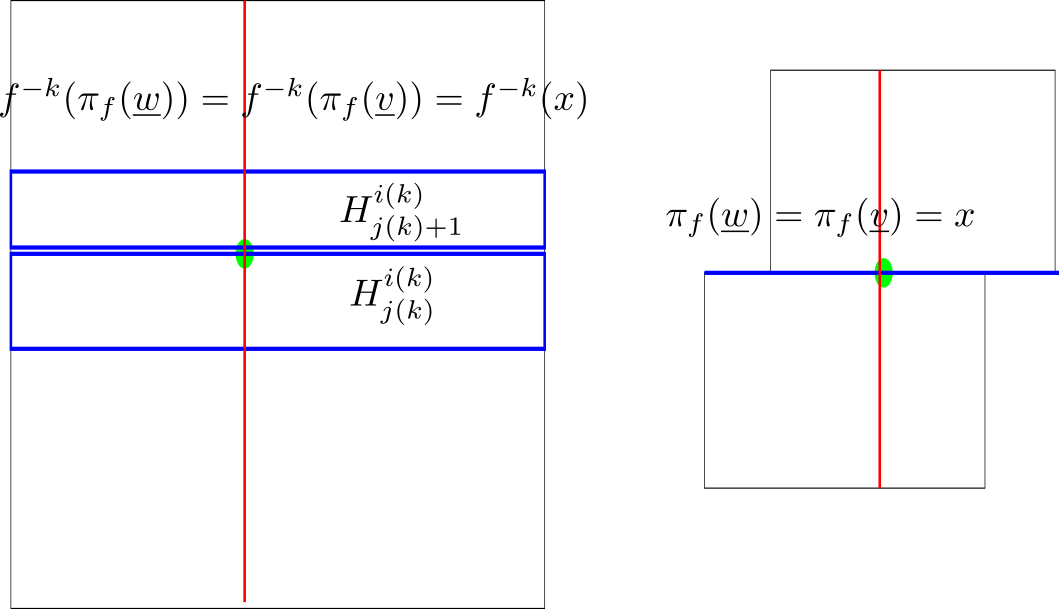}
	\caption{The stable identification mechanism}
	\label{Fig: stable identification}
\end{figure}

The mechanism is illustrated in Figure \ref{Fig: stable identification}. Let $\bw \in \Sigma_{\cS(T)} \setminus \text{Per}(\sigma_A)$, and let $k = k(\bw) \in \ZZ_{-}$ be as in Lemma \ref{Lemm: minumun k}, with $x := \pi_f(\bw)$ its projection. 

According to Lemma \ref{Lemm: Diferen horizonl sub}, we can assume that $f^k(x) \in \partial^s_{+1} H^i_j$ and $f^k(x) \in \partial^s_{-1} H^i_{j+1}$, and then:

\begin{itemize}
	\item $f^{k+1}(x) \in \partial^s_{\delta_0} R_{i_0}$, where:
	\begin{equation}
		i_0 = \bp_1 \circ \rho(i,j) \quad \text{and} \quad \delta_0 = \epsilon(i,j)
	\end{equation}
	
	\item $f^{k+1}(x) \in \partial^s_{\delta'_0} R_{i'_0}$, where:
	\begin{equation}
		i'_0 = \bp_1 \circ \rho(i,j+1) \quad \text{and} \quad \delta'_0 = -\epsilon(i,j+1)
	\end{equation}
\end{itemize}

This analysis suggests the following definition of the $\sim_s$ equivalence relation. Recall that $T = \big(n, \{v_i, h_i\}, \rho, \epsilon\big)$ is a symbolically presentable geometric type, and $(f, \cR)$ is any realization of $T$.

\begin{defi}\label{Defi: s realtion in Sigma S no per}
	Let $\bw, \bv \in \Sigma_{\cS(T)} \setminus \text{Per}(\sigma_A)$. We say they are $s$-related, and we write $\bw \sim_{s} \bv$, if they are equal or there exists $k \in \ZZ$ such that the following conditions hold:
	
	\begin{itemize}
		\item[i)] $\sigma^k(\bw), \sigma^k(\bv) \notin \underline{\cS(T)}$, but $\sigma^{k+1}(\bw), \sigma^{k+1}(\bv) \in \underline{\cS(T)}$.
		
		\item[ii)] $w_k = v_k$ and $h_{w_k} = h_{v_k} > 1$.
		
		\item[iii)] Let $i = w_k = v_k$. There exists $j \in \{1, \cdots, h_i - 1\}$ such that exactly one of the following options occurs:
		\begin{equation}\label{Equa: Sim s obtion 1}
			\bp_1 \circ \rho(i,j) = w_{k+1} \quad \text{and} \quad \bp_1 \circ \rho(i,j+1) = v_{k+1},
		\end{equation}
		or
		\begin{equation}\label{Equa: Sim s obtion 2}
			\bp_1 \circ \rho(i,j) = v_{k+1} \quad \text{and} \quad \bp_1 \circ \rho(i,j+1) = w_{k+1}.
		\end{equation}
		
		\item[iv)] Suppose the positive part of $\sigma^{k+1}(\bw)$ is equal to the $s$-boundary code $\underline{I}^{+}(w_{k+1}, \delta_w)$ and the positive part of $\sigma^{k+1}(\bv)$ is equal to the $s$-boundary code $\underline{I}^{+}(v_{k+1}, \delta_v)$. Then:
		
		If equation \eqref{Equa: Sim s obtion 1} holds, then
		\begin{equation}\label{Equa: sim s epsilon 1}
			\delta_w = \epsilon(w_k, j) \quad \text{and} \quad \delta_v = -\epsilon(v_k, j+1),
		\end{equation}
		
		but if equation \eqref{Equa: Sim s obtion 2} holds, then
		\begin{equation}\label{Equa: sim s epsilon 2}
			\delta_v = \epsilon(v_k, j) \quad \text{and} \quad \delta_w = -\epsilon(w_k, j+1).
		\end{equation}
		
		\item[v)] The negative codes $\sigma^{k}(\bw)_{-}$ and $\sigma^{k}(\bv)_{-}$ are equal.
	\end{itemize}
\end{defi}

\begin{prop}\label{Prop: sim s equiv in Sigma non per}
	The relation $\sim_{s}$ is an equivalence relation on $\Sigma_{\cS(T)} \setminus \text{Per}(\sigma_{A(T)})$.
\end{prop}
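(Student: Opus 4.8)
The plan is to verify the three axioms of an equivalence relation directly from Definition~\ref{Defi: s realtion in Sigma S no per}. Reflexivity is built into the definition (``they are equal or\dots''). For symmetry I would simply observe that the five clauses (i)--(v) are invariant under exchanging the roles of $\bw$ and $\bv$: clauses (i), (ii) and (v) are manifestly symmetric, and in (iii)--(iv) the two alternatives \ref{Equa: Sim s obtion 1} and \ref{Equa: Sim s obtion 2}, together with their attached sign prescriptions, are simply interchanged by the swap while ``exactly one occurs'' is preserved; hence the witness $(k,j)$ for $\bw\sim_s\bv$ is also a witness for $\bv\sim_s\bw$. This is a routine inspection.

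The substance is transitivity. I would fix once and for all a realization $(f,\cR)$ of $T$ (it exists since $T$ lies in the pseudo-Anosov class); because $\sim_s$ is defined purely in terms of $T$, any conclusion drawn with the help of $(f,\cR)$ is legitimate. Suppose $\bw\sim_s\bv$ with witnessing integer $k$ and $\bv\sim_s\bu$ with witnessing integer $k'$, and assume neither relation is an equality. The first step is $k=k'$: by Propositions~\ref{Prop: positive codes are boundary} and~\ref{Prop: boundary points have boundary codes} a code belongs to $\underline{\cS}(T)$ exactly when its projection lies in $\partial^s\cR$, so using the $f$-invariance of $\partial^s\cR$ together with Lemma~\ref{Lemm: minumun k}, the integer appearing in clause (i) for a non-periodic code $\bz\in\Sigma_{\cS(T)}$ is uniquely determined by $\bz$ --- it is the last time the projection of $\bz$ is off the stable boundary. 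Hence $k=k(\bw)=k(\bv)=k(\bu)=k'$, and moreover $w_k=v_k=u_k=:i$ and, by (v), $\sigma^{k}(\bw)_-=\sigma^{k}(\bv)_-=\sigma^{k}(\bu)_-$.

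The key observation is that each non-periodic code $\bz\in\Sigma_{\cS(T)}$ carries, through $T$ alone, a well-defined \emph{transition datum} at time $k(\bz)$: writing $i=z_{k(\bz)}$, it consists of an index $j\in\{1,\dots,h_i-1\}$ together with a choice of one of the two horizontal subrectangles $H^i_j, H^i_{j+1}$ adjacent along the internal stable boundary $\partial^s_{+1}H^i_j=\partial^s_{-1}H^i_{j+1}$, such that $z_{k+1}$ and the sign $\delta_z$ determined by $\sigma^{k+1}(\bz)_+=\underline{I}^+(z_{k+1},\delta_z)$ are exactly those produced by that subrectangle via the $s$-generating function (uniqueness of $\delta_z$ is Proposition~\ref{Prop: s code Inyective}). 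Existence of this datum comes from Lemma~\ref{Lemm: Diferen horizonl sub}, and its uniqueness from the binariness of $A(T)$, which makes $\bp_1\circ\rho(i,\cdot)$ injective, so that $z_{k+1}$ together with $\delta_z$ distinguishes whether $\bz$ ``arrives from'' $H^i_j$ (then $z_{k+1}=\bp_1\rho(i,j)$, $\delta_z=\epsilon(i,j)$) or from $H^i_{j+1}$ (then $z_{k+1}=\bp_1\rho(i,j+1)$, $\delta_z=-\epsilon(i,j+1)$), and hence pins down $j$. Clauses (iii)--(iv) then say precisely that $\bw$ and $\bv$ have the same $j$ but the opposite adjacent subrectangle; likewise for $\bv$ and $\bu$. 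Since there are only two choices, $\bw$ and $\bu$ have the same $j$ and the same adjacent subrectangle, whence $w_{k+1}=u_{k+1}$ and $\delta_w=\delta_u$, so $\sigma^{k+1}(\bw)_+=\underline{I}^+(w_{k+1},\delta_w)=\underline{I}^+(u_{k+1},\delta_u)=\sigma^{k+1}(\bu)_+$; combined with $\sigma^{k}(\bw)_-=\sigma^{k}(\bu)_-$ and $w_k=u_k$ this forces $\bw=\bu$. (In fact this shows the $\sim_s$-classes in $\Sigma_{\cS(T)}\setminus\mathrm{Per}(\sigma_{A(T)})$ have at most two elements.)

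Consequently, whenever $\bw\sim_s\bv$ and $\bv\sim_s\bu$: if one of the two relations is an equality, then $\bw\sim_s\bu$ follows from reflexivity and the symmetry just established; otherwise the previous paragraph gives $\bw=\bu$, so again $\bw\sim_s\bu$. Hence $\sim_s$ is an equivalence relation on $\Sigma_{\cS(T)}\setminus\mathrm{Per}(\sigma_{A(T)})$, and the relation $\sim_u$ on $\Sigma_{\cU(T)}\setminus\mathrm{Per}(\sigma_{A(T)})$ is handled by the mirror argument, replacing $\Gamma(T),\theta$ by $\Upsilon(T),\eta$ and using the injectivity of the $u$-boundary code map $J$. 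The main obstacle I anticipate is the third paragraph: showing that the transition datum is genuinely well-defined and that clauses (iii)--(iv) translate verbatim into ``same $j$, opposite subrectangle'' requires a careful dictionary between the orientation entries $\epsilon$ of $T$ and the sign labels $\delta$ of the $s$-boundary codes; once that bookkeeping is in place, everything else is formal.
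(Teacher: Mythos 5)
Your proof is correct and follows essentially the same route as the paper's: uniqueness of the integer $k$ in clause~(i), injectivity of $\bp_1\circ\rho(i,\cdot)$ coming from the binary incidence matrix, and the sign constraint on $\delta_v$ (via Proposition~\ref{Prop: s code Inyective}) ruling out the ``wrong'' adjacent subrectangle, so that the two non-equal partners of a code must coincide. Your ``transition datum'' is just a cleaner packaging of the paper's explicit case analysis, and your side remark that $\sim_s$-classes have at most two elements is consistent with the paper's conclusion $\bu=\bw$.
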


\begin{proof}
Reflexivity and symmetry are straightforward from the definition, so we focus on transitivity.

Assume $\bw \sim_{s} \bv$ and $\bv \sim_s \bu$. The number $k \in \ZZ$ in item (i) is unique, as it is given by:
$$
k(\bw) = \min\{ z \in \ZZ : \sigma^z(\bw) \in \underline{\cS(T)} \} - 1,
$$
therefore $k := k(\bw) = k(\bv) = k(\bu)$ is the same for all three codes. Since $w_k = v_k$ and $v_k = u_k$, it follows that $w_k = u_k$, and $h_{w_k}, h_{v_k}, h_{u_k} > 1$. Thus, we set $i = w_k = v_k = u_k$, and without loss of generality, there exists $j \in \{1, \cdots, h_i - 1\}$ such that:
$$
\bp_1 \circ \rho(i, j+1) = w_{k+1} \quad \text{and} \quad \bp_1 \circ \rho(i, j) = v_{k+1}.
$$

Moreover, since $\sigma^{k+1}(\bw)_+ = \underline{I}^+(w_{k+1}, \delta_w)$ and $\sigma^{k+1}(\bv)_+ = \underline{I}^+(v_{k+1}, \delta_v)$, we have:
$$
\delta_w = -\epsilon(i, j+1) \quad \text{and} \quad \delta_v = \epsilon(i, j).
$$

Because $\bv \sim_s \bu$, there exists a unique $j' \in \{1, \cdots, h_i - 1\}$ such that:
$$
\bp_1 \circ \rho(i, j') = u_{k+1}.
$$
However, the relation $\bv \sim_s \bu$ implies that $j' = j + 1$ or $j' = j - 1$. If we prove that $j' = j + 1$, then necessarily $\bu = \bw$, and the proof is complete. Therefore, let us analyze the case $j' = j - 1$.

Suppose $\sigma^{k+1}(\bu)_+ = \underline{I}^+(u_{k+1}, \delta_u)$. Since $j = (j - 1) + 1$, we are in the situation of Equation~\ref{Equa: Sim s obtion 1}, and applying Equation~\ref{Equa: sim s epsilon 1} we obtain:
$$
\delta_v = -\epsilon(i, j) \quad \text{and} \quad \delta_u = \epsilon(i, j - 1).
$$
Thus,
$$
\epsilon(i, j) = -\epsilon(i, j),
$$
which is a contradiction.

Hence, $j' = j + 1$, and the positive part of $\sigma^{k+1}(\bw)$ coincides with the positive part of $\sigma^{k+1}(\bu)$.

Item (v) implies that $\sigma^k(\bw)_- = \sigma^k(\bv)_-$ and $\sigma^k(\bv)_- = \sigma^k(\bu)_-$. Therefore, the negative part of $\sigma^k(\bw)$ equals the negative part of $\sigma^k(\bu)$, which implies $\bw \sim_s \bu$.
\end{proof}

It remains to extend the relation $\sim_s$ to the $s$-boundary periodic codes, that is, to the set $\text{Per}(\underline{\cS(T)}) := \text{Per}(\sigma_{A(T)}) \cap \Sigma_{\underline{\cS(T)}}$.

\begin{defi}\label{Defi: sim s in per}
	Let $\alpha, \beta \in \text{Per}(\underline{\cS(T)})$ be $s$-boundary periodic codes. We say they are $s$-related, written $\alpha \sim_s \beta$, if and only if they are equal or there exist $\bw, \bv \in \Sigma_{\cS(T)} \setminus \text{Per}(\sigma)$ such that:
	\begin{itemize}
			\item There exists $k \in \ZZ$ such that $\sigma^k(\bw)_+ = \alpha_+$ and $\sigma^k(\bv)_+ = \beta_+$.
			\item $\bw \sim_s \bv$,
	\end{itemize}
\end{defi}

\begin{prop}\label{Prop: sim s equiv in Sigma S }
	The relation $\sim_s$, as defined for non-periodic codes in Definition~\ref{Defi: s realtion in Sigma S no per} and extended to periodic codes in Definition~\ref{Defi: sim s in per}, is an equivalence relation on $\Sigma_{\cS(T)}$.
\end{prop}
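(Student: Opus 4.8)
The plan is to split $\Sigma_{\cS(T)}$ along the periodic/non-periodic dichotomy and reduce the whole statement to one point. First note that the periodic codes of $\Sigma_{\cS(T)}$ are exactly $\mathrm{Per}(\underline{\cS(T)})$: by Corollary~\ref{Coro: preperiodic finite s,u boundary codes} the $\Gamma(T)$-orbits, hence the $\sigma$-action on the $2n$ $s$-boundary positive codes, are eventually bijective, so a periodic code whose forward shift eventually enters $\underline{\cS(T)}$ already lies in $\underline{\cS(T)}$. Hence $\Sigma_{\cS(T)} = \bigl(\Sigma_{\cS(T)}\setminus\mathrm{Per}(\sigma_{A(T)})\bigr)\sqcup\mathrm{Per}(\underline{\cS(T)})$, and since $\sim_s$, as given by Definitions~\ref{Defi: s realtion in Sigma S no per} and~\ref{Defi: sim s in per}, never relates a non-periodic code to a periodic one, the relation $\sim_s$ on $\Sigma_{\cS(T)}$ is a disjoint union of a relation on each block. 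On the non-periodic block it is an equivalence relation by Proposition~\ref{Prop: sim s equiv in Sigma non per}, so it suffices to show $\sim_s$ restricted to $\mathrm{Per}(\underline{\cS(T)})$, in the sense of Definition~\ref{Defi: sim s in per}, is an equivalence relation. Reflexivity is built into that definition, and symmetry follows by exchanging the two bridge codes $\bw,\bv$ in Definition~\ref{Defi: sim s in per} and invoking the symmetry of $\sim_s$ on non-periodic codes. The entire weight therefore falls on transitivity for periodic $s$-boundary codes.

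For transitivity I would fix a realization $(f,\cR)$ of $T$, which exists because $T\in\cG\cT(\mathbf{p\text{-}A})$, and establish the geometric content of the relation: if $\alpha,\beta\in\mathrm{Per}(\underline{\cS(T)})$ satisfy $\alpha\sim_s\beta$ and $\alpha\neq\beta$, then $\pi_f(\alpha)=\pi_f(\beta)=:p$ is a periodic point on $\partial^s\cR$, and $\alpha,\beta$ are the sector codes of the two sectors of $p$ flanking one and the same stable separatrix of $p$. Granting this, transitivity is purely formal: if $\alpha\sim_s\beta$ and $\beta\sim_s\gamma$ with $\alpha,\beta,\gamma$ pairwise distinct, then $\pi_f(\alpha)=\pi_f(\beta)=\pi_f(\gamma)=p$; the sector coded by $\beta$ is bounded by exactly one stable separatrix of $p$, and that separatrix flanks exactly two sectors, so $\alpha$ and $\gamma$ both code the sector of $p$ opposite to $\beta$'s across that separatrix, whence $\alpha=\gamma$ and $\alpha\sim_s\gamma$ by reflexivity. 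The cases where two of $\alpha,\beta,\gamma$ coincide are immediate.

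To prove the geometric characterization, unwind Definition~\ref{Defi: sim s in per}: a bridge $\bw\sim_s\bv$ in $\Sigma_{\cS(T)}\setminus\mathrm{Per}(\sigma_{A(T)})$ with $\sigma^k(\bw)_+=\alpha_+$ and $\sigma^k(\bv)_+=\beta_+$ for some $k$. By Lemmas~\ref{Lemm: minumun k} and~\ref{Lemm: Diferen horizonl sub} applied to $\bw$ at its splitting time $m=k(\bw)$, together with item~(v) of Definition~\ref{Defi: s realtion in Sigma S no per} (which forces $\bw$ and $\bv$ to share the same negative part at time $m$), the two bridge codes project to a single non-periodic point $x:=\pi_f(\bw)=\pi_f(\bv)$ lying on the common stable boundary of two vertically adjacent horizontal sub-rectangles; this is exactly the stable identification mechanism described before Definition~\ref{Defi: s realtion in Sigma S no per}, and items (iii)–(v) record on which sides of that boundary the two forward itineraries run. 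Shifting by $k$ and using Definition~\ref{Defi: s,u-leafs} and Proposition~\ref{Prop: Projection foliations}, $\pi_f(\alpha)$ and $\pi_f(\beta)$ both lie on the stable leaf of $f^k(x)$, hence on a common stable leaf; being periodic points (Lemma~\ref{Lemm: Periodic to periodic}) they must coincide, since a stable leaf carries at most one periodic point by the $\lambda^{-1}$-contraction of $f$ along stable segments (compare Lemma~\ref{Lemm: Boundary of Markov partition is periodic}). Finally, iterating $x$ forward so that its orbit converges to $p$ along the stable separatrix of $p$ that carries it, transporting sectors with Lemma~\ref{Lemm: image secto is a sector}, and using that $\pi_f^{-1}(p)$ consists exactly of the sector codes of $p$ (Corollary~\ref{Coro: Caracterisation fibers} and Lemma~\ref{Lemm: sector contined unique rectangle}), one identifies $\alpha$ and $\beta$ as the codes of the two sectors of $p$ flanking that separatrix.

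The main obstacle is this last step: matching the abstract $\delta$-signs of items (iii)–(v) of Definition~\ref{Defi: s realtion in Sigma S no per} with the geometry, so as to pin down \emph{which} two sectors of $p$ the codes $\alpha$ and $\beta$ occupy, and in particular verifying that this pair of sectors is intrinsic to $p$, i.e.\ independent of the bridge $(\bw,\bv,k)$ witnessing $\alpha\sim_s\beta$. Once this is secured, the combinatorics of transitivity collapses to the fact that stable-separatrix adjacency is a perfect matching on the $2k$ sectors of $p$. (A purely combinatorial alternative is to glue the two bridges directly, taking the backward part of the $\alpha$-bridge and the forward $\gamma$-itinerary of the $\gamma$-bridge, checking admissibility via the binary matrix $A(T)$ and re-deriving the sign identities of items (i)–(v) internally to $T$; this reproves the same bookkeeping in a less transparent form, so I would favor the route through a realization.)
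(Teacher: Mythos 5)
Your overall architecture — splitting $\Sigma_{\cS(T)}$ into the non-periodic and periodic blocks, observing that $\sim_s$ never relates across them, disposing of the non-periodic block by Proposition~\ref{Prop: sim s equiv in Sigma non per}, and handling reflexivity and symmetry on $\mathrm{Per}(\underline{\cS}(T))$ by exchanging the bridge codes — coincides with the paper's proof. Where you genuinely diverge is transitivity on the periodic block. The paper dispatches it in one sentence, asserting that transitivity is ``inherited'' via the first item of Definition~\ref{Defi: sim s in per}; you correctly point out that two witnesses $(\bw_1,\bv_1,k_1)$ and $(\bw_2,\bv_2,k_2)$ for $\alpha\sim_s\beta$ and $\beta\sim_s\gamma$ need not be composable, and you propose to close the gap geometrically, by passing to a realization $(f,\cR)$ and showing that a nontrivial $\sim_s$-pair of periodic codes consists of the codes of two stable-adjacent sectors of a single periodic point $p$. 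That is legitimate (since $T$ is assumed to be in the pseudo-Anosov class) and is essentially the picture the paper itself invokes later, in the periodic case of Lemma~\ref{Lemm: sector codes}; your route makes explicit a difficulty that the paper's one-liner glosses over.

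The problem is that your proof stops exactly at the step you yourself call ``the main obstacle'': you never carry out the identification of $\alpha$ and $\beta$ with the codes of the two sectors flanking a specific stable separatrix of $p$, nor the independence of that pair from the chosen bridge — and transitivity \emph{is} precisely the assertion that the partner of $\beta$ is bridge-independent, so the argument is not closed without it. There is also a flaw in the final matching step as stated: a periodic code $\beta$ with $\pi_f(\beta)\in\partial^s\cR$ is in general the common code of \emph{two} sectors of $p$ (for a regular periodic point interior to a stable boundary segment, the two sectors above the segment share one code and the two below share another), so ``the sector coded by $\beta$'' is not well defined. You must additionally verify that every sector carrying the code $\beta$ has its stable-adjacent partner carrying one and the same code $\alpha$; this is true (the stable leaf through $p$ is preserved by the return map, so the above/below dichotomy is orbit-invariant), but it is an extra check your perfect-matching argument silently assumes. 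In short: the plan is sound and more candid than the paper's, but as written it leaves the decisive step unproved.
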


\begin{proof}
	We have already established that $\sim_s$ is an equivalence relation in the non-periodic case. It remains to consider the periodic setting. Reflexivity holds trivially, while symmetry and transitivity are inherited from the relation on $\Sigma_{\cS(T)} \setminus \text{Per}(\sigma)$ to the periodic codes, by virtue of the first item in Definition~\ref{Defi: sim s in per}.
\end{proof}

\begin{prop}\label{Prop: s-relaten implies same projection}
	Let $T$ be a geometrically presentable geometric type, and let $(f,\cR)$ be any realization of $T$. Let $\bw, \bv \in \Sigma_{\cS(T)}$ be two $s$-boundary leaf codes. If $\bw \sim_{s} \bv$, then
	$$
	\pi_{(f,\cR)}(\bw) \;=\; \pi_{(f,\cR)}(\bv).
	$$
\end{prop}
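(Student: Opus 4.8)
The plan is to reduce the equality of projections to the equality of a single $\sigma$-iterate, via the semiconjugacy $f\circ\pi_{(f,\cR)}=\pi_{(f,\cR)}\circ\sigma_{A(T)}$ of Proposition~\ref{Prop:proyecion semiconjugacion}, and then to read off that iterate from the rectangle structure. I would first treat the non-periodic case, $\bw,\bv\in\Sigma_{\cS(T)}\setminus\mathrm{Per}(\sigma_A)$ with $\bw\ne\bv$. Let $k$ be the integer of item~(i) of Definition~\ref{Defi: s realtion in Sigma S no per}; by Lemma~\ref{Lemm: minumun k} it is the same for $\bw$ and $\bv$ (it is the largest $z$ with $\sigma^z(\cdot)\notin\underline{\cS}(T)$), so setting $x:=\pi_f(\sigma^k\bw)=f^k(\pi_f(\bw))$ and $y:=\pi_f(\sigma^k\bv)$ it suffices to prove $x=y$. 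With $i:=w_k=v_k$ (item~(ii)) one has $x,y\in R_i$, and since $\sigma^k(\bw),\sigma^k(\bv)\notin\underline{\cS}(T)$, the contrapositive of Proposition~\ref{Prop: boundary points have boundary codes} gives $x,y\in R_i\setminus\partial^sR_i$. After possibly swapping $\bw$ and $\bv$ I may assume option~\eqref{Equa: Sim s obtion 1} of item~(iii) holds, with $j$ as there, so $w_{k+1}=\bp_1\circ\rho(i,j)$, $v_{k+1}=\bp_1\circ\rho(i,j+1)$, and by~\eqref{Equa: sim s epsilon 1} the positive parts are $\sigma^{k+1}(\bw)_+=\underline{I}^+(w_{k+1},\epsilon(i,j))$ and $\sigma^{k+1}(\bv)_+=\underline{I}^+(v_{k+1},-\epsilon(i,j+1))$.

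The heart of the proof, and the step I expect to be the main obstacle, is to show that $x$ and $y$ lie on the \emph{same} horizontal division leaf $I:=\partial^s_{+1}H^i_j=\partial^s_{-1}H^i_{j+1}$ of $R_i$. By Lemma~\ref{Lemm: Diferen horizonl sub}, $x$ lies on the common $s$-boundary of two adjacent subrectangles $H^i_{j_w}$ and $H^i_{j_w+1}$, and $f$ sends that leaf into $\partial^sR_{\bp_1\rho(i,j_w)}\cup\partial^sR_{\bp_1\rho(i,j_w+1)}$; since $f(x)=\pi_f(\sigma^{k+1}\bw)\in R_{w_{k+1}}$ and binarity of $A(T)$ makes the index of $R_i$ mapping onto $R_{w_{k+1}}$ unique, necessarily $j_w\in\{j,j-1\}$. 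I would then exclude $j_w=j-1$ using the orientation data: in that case $x\in\partial^s_{-1}H^i_j$, hence $f(x)\in f(\partial^s_{-1}H^i_j)\subseteq\partial^s_{-\epsilon(i,j)}R_{w_{k+1}}$, contradicting Proposition~\ref{Prop: positive codes are boundary}, which (because $\sigma^{k+1}(\bw)_+$ carries the sign $\epsilon(i,j)$) forces $f(x)\in\partial^s_{+\epsilon(i,j)}R_{w_{k+1}}$. So $j_w=j$ and $x\in I$; the mirror computation for $\bv$, using $v_{k+1}=\bp_1\rho(i,j+1)$ and the sign $-\epsilon(i,j+1)$ to exclude $j_v=j+1$, gives $j_v=j$ and $y\in I$ as well.

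To finish the non-periodic case I would invoke item~(v): $(\sigma^k\bw)_z=(\sigma^k\bv)_z$ for all $z\le 0$, so $x$ and $y$ both lie in
$$
J^*:=\bigcap_{m\in\NN}\ \overline{\bigcap_{z=-m}^{0}f^{-z}\bigl(\overset{\circ}{R_{w_{k+z}}}\bigr)},
$$
which, by the standard nested‑rectangle estimate (the $\mu^s$-width is multiplied by $\lambda^{-1}$ at each backward step), is a vertical interval of $R_i$ contained in a single unstable leaf. Then $I$ is a stable interval and $J^*$ a vertical interval of the trivially bi‑foliated rectangle $R_i$ (Definition~\ref{Defi: Rectangle}), so $I\cap J^*$ is a single point; since $x,y\in I\cap J^*$ we conclude $x=y$, hence $\pi_f(\bw)=\pi_f(\bv)$.

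For the periodic case $\alpha,\beta\in\mathrm{Per}(\underline{\cS}(T))$ with $\alpha\ne\beta$, Definition~\ref{Defi: sim s in per} supplies non‑periodic $\bw,\bv$ with $\sigma^k(\bw)_+=\alpha_+$, $\sigma^k(\bv)_+=\beta_+$ and $\bw\sim_s\bv$; the case just proved gives $z:=\pi_f(\sigma^k\bw)=\pi_f(\sigma^k\bv)$, and $z$ is non‑periodic by Lemma~\ref{Lemm: Periodic to periodic}. Since $\alpha\in\underline{F}^s(\sigma^k\bw)$ (Definition~\ref{Defi: s,u-leafs} with $Z=0$), Proposition~\ref{Prop: Projection foliations} puts $\pi_f(\alpha)$ on $F^s(z)$, and likewise $\pi_f(\beta)\in F^s(z)$; both are periodic points of $f$ by Lemma~\ref{Lemm: Periodic to periodic}. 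As a stable separatrix of a periodic point contains no other periodic point (a suitable forward power of $f$ contracts it to that point), the two periodic points lying on the common leaf $F^s(z)=F^s(\pi_f(\alpha))$ must coincide, i.e.\ $\pi_f(\alpha)=\pi_f(\beta)$. The unstable‑side statement is entirely symmetric — it uses the $u$-generating function, $\Sigma_{\cU(T)}$, and the unstable half of Proposition~\ref{Prop: Projection foliations} — so I would only record the points where the two arguments differ.
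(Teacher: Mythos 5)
Your proof is correct and follows essentially the same route as the paper's: you pin $f^k(\pi_{(f,\cR)}(\bw))$ and $f^k(\pi_{(f,\cR)}(\bv))$ to the common stable boundary $\partial^s_{+1}H^i_j=\partial^s_{-1}H^i_{j+1}$ by playing the orientation data of item~(iv) of Definition~\ref{Defi: s realtion in Sigma S no per} against Proposition~\ref{Prop: positive codes are boundary}, and then use item~(v) to place both points on a single vertical segment, so that the two points coincide by the trivial bi-foliation of $R_i$. The only substantive difference is that you also spell out the periodic case of Definition~\ref{Defi: sim s in per} (reducing to the non-periodic case and then using that a stable leaf carries at most one periodic point), which the paper's proof leaves implicit; this is a completeness improvement rather than a divergence.
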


\begin{proof}
	Assume $\bw$ and $\bv$ are $s$-related. For simplicity, take the integer $k = 0$ and let $w_0 = v_0 = i$. Set
	$$
	x_w := \pi_f(\bw), 
	\quad \text{and} \quad
	x_v := \pi_f(\bv).
	$$
	Since their negative codes agree, $x_w$ and $x_v$ lie on the same unstable segment of $R_i$. We now show that they lie on the same horizontal segment of $R_i$.
	
	By Item~(iii) of Definition~\ref{Defi: s realtion in Sigma S no per}, and by Equation~\ref{Equa: Sim s obtion 2}, we may assume, without loss of generality, that
	$$
	x_w \in H^i_{j+1} \quad \text{and} \quad x_v \in H^i_j.
	$$
	Since $f(x_w)$ lies on the stable boundary of $\cR$, we have $x_w \in \partial^s H^i_{j+1}$ and $x_v \in \partial^s H^i_j$. We must show that they lie on the shared $s$-boundary component of these horizontal subrectangles. Assume:
	$$
	\bw_+ = \underline{I}^+(w_1, \delta_w) \quad \text{and} \quad \bv_+ = \underline{I}^+(v_1, \delta_v).
	$$
	
	If $x_w \in \partial^s_{+1} H^i_{j+1}$, then $f(x_w)$ would lie on the boundary corresponding to $\delta_w = \epsilon(i, j+1)$, contradicting the relation $\delta_w = -\epsilon(i, j+1)$ from Equation~\ref{Equa: sim s epsilon 2}. Thus, $\delta_w = -1$, and $x_w$ lies on the lower boundary of $H^i_{j+1}$.
	
	Similarly, if $\delta_v = -1$, then $f(x_v)$ would lie on the boundary corresponding to $\epsilon_v = -\epsilon(i, j)$, which contradicts Equation~\ref{Equa: sim s epsilon 2}, since $\epsilon_v = \epsilon(i, j)$. Therefore, $\delta_v = +1$, and $x_v$ lies on the upper boundary of $H^i_j$.
	
	But the lower boundary of $H^i_{j+1}$ coincides with the upper boundary of $H^i_j$, so $x_w$ and $x_v$ lie on the same stable segment of $R_i$. Hence, their projections coincide:
	$$
	\pi_f(\bw) = x_w \;=\; x_v = \pi_f(\bv),
	$$
	as required.
\end{proof}

\subsection{The $u$-boundary equivalence relation
	\texorpdfstring{}{ (u-boundary equivalence relation)}}

In the same spirit as $\sim_s$, there is an equivalence relation $\sim_u$ for the elements in $\Sigma_{\cU(T)}$. We introduce it here for completeness, but we omit the proof that it is an equivalence relation, as it is entirely analogous to Proposition~\ref{Prop: sim s equiv in Sigma S }.

\begin{defi}\label{Defi: u relation in Sigma U no per}
	Let $\bw, \bv \in \Sigma_{\cU(T)} \setminus \text{Per}(\sigma)$. We say they are $u$-related, and we write $\bw \sim_u \bv$, if and only if they are equal or there exists $z \in \ZZ$ such that:
	\begin{itemize}
		\item[i)] $\sigma^z(\bw), \sigma^z(\bv) \notin \underline{\cU(T)}$, but $\sigma^{z-1}(\bw), \sigma^{z-1}(\bv) \in \underline{\cU(T)}$.
		
		\item[ii)] $w_z = v_z := k \in \{1, \cdots, n\}$ and $v_k > 1$.
		
		\item[iii)] There exists $l \in \{1, \cdots, v_k - 1\}$ such that exactly one of the following holds:
		\begin{equation}\label{Equa: Sim u 1 option}
			\bp_1 \circ \rho^{-1}(k, l) = w_{z-1} \quad \text{and} \quad \bp_1 \circ \rho^{-1}(k, l+1) = v_{z-1},
		\end{equation}
		or
		\begin{equation}\label{Equa: Sim u 2 option}
			\bp_1 \circ \rho^{-1}(k, l) = v_{z-1} \quad \text{and} \quad \bp_1 \circ \rho^{-1}(k, l+1) = w_{z-1}.
		\end{equation}
		
		\item[iv)] Suppose the negative code of $\sigma^{z-1}(\bw)$ is equal to the negative $u$-boundary code $\underline{J}^-(w_{z-1}, \delta_w)$, and the negative code of $\sigma^{z-1}(\bv)$ is equal to the negative $u$-boundary code $\underline{J}^-(v_{z-1}, \delta_v)$. Then:
		
		If Equation~\eqref{Equa: Sim u 1 option} holds:
		\begin{equation}\label{Equa: Sim u 1 epsilon}
			\delta_w = \epsilon \circ \rho^{-1}(k, l) \quad \text{and} \quad \delta_v = -\epsilon \circ \rho^{-1}(k, l+1),
		\end{equation}
		and if Equation~\eqref{Equa: Sim u 2 option} holds:
		\begin{equation}\label{Equa: Sim u 2 epsilon}
			\delta_v = \epsilon \circ \rho^{-1}(k, l) \quad \text{and} \quad \delta_w = -\epsilon \circ \rho^{-1}(k, l+1).
		\end{equation}
		
		\item[v)] The positive codes of $\sigma^z(\bw)$ and $\sigma^z(\bv)$ are equal.
	\end{itemize}
\end{defi}

\begin{defi}\label{Defi: sim u in per}
	Let $\alpha, \beta \in \text{Per}(\Sigma_{\cU(T)})$ be periodic $u$-boundary codes. They are $u$-related, and we write $\alpha \sim_u \beta$, if and only if there exist $\bw, \bv \in \Sigma_{\cU(T)} \setminus \text{Per}(\sigma)$ such that:
	\begin{itemize}
			\item There exists $p \in \ZZ$ such that $\sigma^p(\bw)_-=\alpha_-$, and 
			 $\sigma^p(\bv)_- = \beta_-$.
			\item $\bw \sim_u \bv$,
	\end{itemize}
\end{defi}

Using the same techniques as in Proposition~\ref{Prop: sim s equiv in Sigma S }, we can prove the following:

\begin{prop}\label{Prop: sim u equiv in Sigma U }
	The relation $\sim_u$ in $\Sigma_{\cU(T)}$ is an equivalence relation.
\end{prop}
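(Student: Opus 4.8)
The plan is to transcribe the proof of Proposition~\ref{Prop: sim s equiv in Sigma S } under the duality that exchanges the stable and unstable structures. Concretely, one replaces throughout: horizontal subrectangles by vertical subrectangles; the permutation $\rho$ by its inverse $\rho^{-1}$; the $s$-generating function $\Gamma(T)$ by the $u$-generating function $\Upsilon(T)$; the positive part of a code by its negative part (and hence the codes $\underline{I}^{+}$ by the codes $\underline{J}^{-}$); and forward shifts $\sigma^{k+1}$ by backward shifts $\sigma^{z-1}$. With this dictionary, Definition~\ref{Defi: u relation in Sigma U no per} is term-for-term the mirror of Definition~\ref{Defi: s realtion in Sigma S no per} and Definition~\ref{Defi: sim u in per} the mirror of Definition~\ref{Defi: sim s in per}, so the arguments of Propositions~\ref{Prop: sim s equiv in Sigma non per} and~\ref{Prop: sim s equiv in Sigma S } should carry over essentially verbatim; what follows is the shape of that transcription.

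First I would record the two ingredients that drive transitivity, each with an exact analogue in the stable case. The integer $z$ in item~(i) of Definition~\ref{Defi: u relation in Sigma U no per} is uniquely determined by the code, via
$$
z(\bw) = \max\{\, m \in \ZZ : \sigma^{m}(\bw) \in \underline{\cU(T)} \,\} + 1,
$$
so that if $\bw \sim_u \bv$ and $\bv \sim_u \bu$ then $z := z(\bw) = z(\bv) = z(\bu)$, with $w_z = v_z = u_z =: k$ and $v_k > 1$. Secondly, because $A(T)$ is binary, the map $l \mapsto \bp_1 \circ \rho^{-1}(k,l)$ is injective on $\{1,\dots,v_k\}$: two distinct vertical subrectangles of $R_k$ are connected components of $f(\overset{o}{R_i}) \cap \overset{o}{R_k}$ for distinct $R_i$, since the entry $a_{ik}$ of the incidence matrix is at most $1$. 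This is the $u$-side counterpart of the injectivity of $j \mapsto \bp_1 \circ \rho(i,j)$ that is used implicitly in the proof of Proposition~\ref{Prop: sim s equiv in Sigma non per}.

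Given these, transitivity proceeds exactly as in the stable case: the relations $\bw \sim_u \bv$ and $\bv \sim_u \bu$ supply adjacent indices $l$ and $l''$ in $\{1,\dots,v_k-1\}$, and the injectivity above forces $l'' \in \{l,\, l+1\}$. If $l'' = l+1$, combining the sign prescriptions of item~(iv) — Equations~\eqref{Equa: Sim u 1 epsilon} and~\eqref{Equa: Sim u 2 epsilon}, now with $\epsilon \circ \rho^{-1}$ in place of $\epsilon$ — at the common vertical subrectangle labelled $(k, l+1)$ forces $\epsilon \circ \rho^{-1}(k, l+1) = -\epsilon \circ \rho^{-1}(k, l+1)$, a contradiction; hence $l'' = l$. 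In that case $u_{z-1} = w_{z-1}$, the sign conditions give $\delta_u = \delta_w$, so $\sigma^{z-1}(\bw)_{-} = \underline{J}^{-}(w_{z-1}, \delta_w) = \underline{J}^{-}(u_{z-1}, \delta_u) = \sigma^{z-1}(\bu)_{-}$, while item~(v) applied to both relations gives $\sigma^{z}(\bw)_{+} = \sigma^{z}(\bu)_{+}$; since the negative tail (positions $\le z-1$) and the positive tail (positions $\ge z$) together determine the whole code, $\bw = \bu$, so trivially $\bw \sim_u \bu$. The extension to periodic codes is then handled exactly as at the end of Proposition~\ref{Prop: sim s equiv in Sigma S }: reflexivity on $\mathrm{Per}(\underline{\cU(T)})$ is immediate, and symmetry and transitivity descend from the non-periodic relation through the two conditions of Definition~\ref{Defi: sim u in per}. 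The only real care needed — and the closest thing to an obstacle — is keeping the time-reversal bookkeeping straight: for $\sim_u$ it is the positive tails of the two codes (from time $z$ onward) that must agree while the perturbed datum lives in the negative tails at time $z-1$, the mirror image of the layout for $\sim_s$.
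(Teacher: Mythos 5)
Your proposal is correct and is exactly the argument the paper intends: the paper omits the proof of this proposition, declaring it ``entirely analogous'' to Proposition~\ref{Prop: sim s equiv in Sigma S }, and your transcription under the stable/unstable duality (with $\rho^{-1}$, $\Upsilon(T)$, negative tails, and the uniqueness of $z$ replacing their stable counterparts) is the faithful mirror of the proofs of Propositions~\ref{Prop: sim s equiv in Sigma non per} and~\ref{Prop: sim s equiv in Sigma S }. You even make explicit the one ingredient the stable proof uses only implicitly --- the injectivity of $l \mapsto \bp_1 \circ \rho^{-1}(k,l)$ coming from the binarity of $A(T)$ --- and your sign-contradiction at the shared label $(k,l+1)$ and the reconstruction of the full code from $\sigma^{z-1}(\,\cdot\,)_-$ and $\sigma^{z}(\,\cdot\,)_+$ are both sound.
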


Similarly to Proposition \ref{Prop: s-relaten implies same projection} we have following result.

\begin{prop}\label{Prop: u-relaten implies same projection}
	Let $\bw, \bv \in \Sigma_{\cU(T)}$ be two $u$-boundary leaf codes. If $\bw \sim_u \bv$, then $\pi_f(\bw) = \pi_f(\bv)$.
\end{prop}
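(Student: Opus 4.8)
The plan is to transpose the proof of Proposition~\ref{Prop: s-relaten implies same projection} under the duality that interchanges stable with unstable, horizontal sub-rectangles $H^i_j$ with vertical sub-rectangles $V^k_l$, the parameters $h_i$ with $v_i$, the permutation $\rho$ with $\rho^{-1}$, the relabelling $\theta$ with $\eta$, the $s$-generating function $\Gamma(T)$ with the $u$-generating function $\Upsilon(T)$, positive parts of codes with negative parts, and $f$ with $f^{-1}$. Explicitly: assume $\bw\sim_u\bv$; if $\bw=\bv$ there is nothing to prove, so suppose not. Applying a power of the shift — admissible since $\pi_f$ semiconjugates $f$ with $\sigma_{A(T)}$ by Proposition~\ref{Prop:proyecion semiconjugacion}, so that $\pi_f(\sigma^z(\bw))=f^z(\pi_f(\bw))$ and equality of projections is shift-invariant — we may take the integer $z$ of Definition~\ref{Defi: u relation in Sigma U no per} to be $0$. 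Set $k:=w_0=v_0$ (so $v_k>1$ by item~(ii)) and $x_w:=\pi_f(\bw)$, $x_v:=\pi_f(\bv)$. By item~(v) the positive parts $\bw_+$ and $\bv_+$ agree, so $x_w$ and $x_v$ determine the same nested family of horizontal sub-rectangles $\bigcap_{m=0}^{s}f^{-m}(R_{w_m})$ of $R_k$, hence lie on a common \emph{stable} segment of $R_k$ (this is the first half of the proof of Proposition~\ref{Prop: Projection foliations}). It remains to show that $x_w$ and $x_v$ lie on a common vertical segment of $R_k$, since the intersection of a stable segment with a vertical segment inside a rectangle is a single point.

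For the vertical coordinate I would argue as in the $\sim_s$ case. By item~(iii), after possibly swapping $\bw$ and $\bv$ we are in the situation of Equation~\ref{Equa: Sim u 2 option}: there is $l\in\{1,\dots,v_k-1\}$ with $\bp_1\circ\rho^{-1}(k,l)=v_{-1}$ and $\bp_1\circ\rho^{-1}(k,l+1)=w_{-1}$, so $x_w$ lies in the vertical sub-rectangle $V^k_{l+1}$ and $x_v$ in the adjacent $V^k_l$. Since $\sigma^{-1}(\bw),\sigma^{-1}(\bv)\in\underline{\cU}(T)$, the points $f^{-1}(x_w)$ and $f^{-1}(x_v)$ lie in $\partial^u\cR$; because $f^{-1}(\overset{o}{V^k_{l+1}})$ and $f^{-1}(\overset{o}{V^k_{l}})$ are interiors of horizontal sub-rectangles of $R_{w_{-1}}$ and $R_{v_{-1}}$ respectively — hence contained in $\overset{o}{\cR}$, which is disjoint from $\partial^u\cR$ — this forces $x_w\in\partial^u V^k_{l+1}$ and $x_v\in\partial^u V^k_l$. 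Writing $\bw_-=\underline{J}^-(w_{-1},\delta_w)$ and $\bv_-=\underline{J}^-(v_{-1},\delta_v)$, the unstable analogue of Proposition~\ref{Prop: positive codes are boundary} gives $f^{-1}(x_w)\in\partial^u_{\delta_w}R_{w_{-1}}$ and $f^{-1}(x_v)\in\partial^u_{\delta_v}R_{v_{-1}}$. Pushing these forward by $f$ and tracking the $u$-boundary components — the $\pm1$ label being preserved or swapped according as the orientation entry $\epsilon$ of $T$ attached to the relevant horizontal sub-rectangle is $+1$ or $-1$, since $f$ preserves the orientation of $S$ — the sign identities of Equation~\ref{Equa: Sim u 2 epsilon}, namely $\delta_w=-\epsilon\circ\rho^{-1}(k,l+1)$ and $\delta_v=\epsilon\circ\rho^{-1}(k,l)$, say exactly that $x_w$ lies on the $u$-boundary component of $V^k_{l+1}$ shared with $V^k_l$ and $x_v$ on the $u$-boundary component of $V^k_l$ shared with $V^k_{l+1}$. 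These two components coincide, being the common vertical segment of $R_k$ that separates the adjacent sub-rectangles, so $x_w=x_v$, that is, $\pi_f(\bw)=\pi_f(\bv)$. The periodic case $\alpha\sim_u\beta$ with $\alpha,\beta\in\text{Per}(\underline{\cU}(T))$ reduces to the one just treated via Definition~\ref{Defi: sim u in per}, just as Definition~\ref{Defi: sim s in per} reduces the periodic $\sim_s$ case to the non-periodic one.

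The conceptual content is identical to that of Proposition~\ref{Prop: s-relaten implies same projection}; the step I expect to demand the most care is the orientation bookkeeping of the previous paragraph. Passing from $f$ to $f^{-1}$ reverses the direction in which boundary orbits are generated, so the forward iterates of $\Gamma(T)$ used in the $\sim_s$ proof become the backward iterates of $\Upsilon(T)$, and $\rho$ must be replaced by $\rho^{-1}$ (and $\theta$ by $\eta$) in every index computation; one must check that, after these substitutions, the sign conditions in item~(iv) of Definition~\ref{Defi: u relation in Sigma U no per} still encode the geometric constraint ``$f^{-1}(x_w),f^{-1}(x_v)\in\partial^u\cR$'' and pin down the correct $u$-boundary components. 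Once this dictionary is fixed, the three moves above — common stable segment from item~(v), adjacent vertical sub-rectangles from item~(iii), and identification of the shared $u$-boundary side from item~(iv) — are the literal transposes of the corresponding steps in the proof of Proposition~\ref{Prop: s-relaten implies same projection}.
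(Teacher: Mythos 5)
Your proposal is correct and is exactly the argument the paper intends: the paper states this proposition without proof, deferring to the symmetry with Proposition~\ref{Prop: s-relaten implies same projection}, and your transposition (stable $\leftrightarrow$ unstable, $\rho \leftrightarrow \rho^{-1}$, positive $\leftrightarrow$ negative parts, $f \leftrightarrow f^{-1}$) carries that out faithfully, with the three steps — common stable segment from item~(v), adjacent vertical sub-rectangles from item~(iii), shared $u$-boundary side from item~(iv) — matching the stable proof step for step. The only nitpick is notational: what you call $\bw_-=\underline{J}^-(w_{-1},\delta_w)$ should strictly be the negative part of $\sigma^{-1}(\bw)$, but you use it correctly when locating $f^{-1}(x_w)$ in $\partial^u_{\delta_w}R_{w_{-1}}$, so nothing breaks.
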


\subsection{The interior equivalence relation}

As proved in Lemma~\ref{Lemm: Projection Sigma S,U,I}, totally interior codes are the only ones that project to totally interior points of any realization $(f,\cR)$ of $T$, and we use that property to introduce the following definition.

\begin{defi}\label{Defi: I sim relation}
	Let $\bw, \bv \in \Sigma_{\cI(T)}$ be two totally interior codes. They are $I$-related, and we write $\bw \sim_I \bv$ if and only if $\bw = \bv$.
\end{defi}

The following is a direct consequence of Proposition~\ref{Prop: Characterization injectivity of pi_f}, where we characterized totally interior points as having a unique code projecting to them.

\begin{prop}\label{Prop: totally interior points projection sim I}
	The relation $\sim_I$ is an equivalence relation on $\Sigma_{\cI nt(T)}$, and two codes $\bw, \bv \in \Sigma_{\cI nt(T)}$ are $\sim_I$-related if and only if $\pi_f(\bw) = \pi_f(\bv)$, i.e., if and only if they project to the same point.
\end{prop}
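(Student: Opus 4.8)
The plan is to deduce this proposition almost immediately from two facts already established in the excerpt: the characterization of totally interior codes in Lemma~\ref{Lemm: Projection Sigma S,U,I}, and the injectivity criterion for $\pi_f$ in Proposition~\ref{Prop: Characterization injectivity of pi_f}. Since $\sim_I$ is, by Definition~\ref{Defi: I sim relation}, nothing but the equality relation restricted to $\Sigma_{\cI(T)}$, reflexivity, symmetry and transitivity are automatic, so the only content to verify is the equivalence $\bw \sim_I \bv \iff \pi_f(\bw) = \pi_f(\bv)$.

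The forward implication is trivial: if $\bw = \bv$ then $\pi_f(\bw) = \pi_f(\bv)$. For the converse I would take $\bw, \bv \in \Sigma_{\cI(T)}$ with $\pi_f(\bw) = \pi_f(\bv) =: x$. By the third item of Lemma~\ref{Lemm: Projection Sigma S,U,I}, membership of $\bw$ in $\Sigma_{\cI(T)}$ forces $x = \pi_f(\bw) \in \operatorname{Int}(f,\cR)$, i.e.\ $x$ is a totally interior point of $\cR$. Proposition~\ref{Prop: Characterization injectivity of pi_f} then yields $\vert \pi_f^{-1}(x)\vert = 1$; since both $\bw$ and $\bv$ belong to $\pi_f^{-1}(x)$, we conclude $\bw = \bv$, that is $\bw \sim_I \bv$.

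There is essentially no obstacle here: the substantive work was done in establishing Lemma~\ref{Lemm: Projection Sigma S,U,I} (through the sector-code machinery of Lemma~\ref{Lemm: every code is sector code } and Lemma~\ref{Lemm: Characterization unique codes}) and Proposition~\ref{Prop: Characterization injectivity of pi_f}. The one point worth recording explicitly is that the argument does not depend on the chosen realization $(f,\cR)$ of $T$: both cited results hold for any such realization, so the equivalence $\bw \sim_I \bv \iff \pi_{(f,\cR)}(\bw) = \pi_{(f,\cR)}(\bv)$ holds simultaneously for every pair realizing $T$, which is precisely the form needed when the three relations $\sim_I, \sim_S, \sim_U$ are assembled into $\sim_T$ in Proposition~\ref{Prop: The relation determines projections}.
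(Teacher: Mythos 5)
Your proposal is correct and matches the paper's argument: the paper dispenses with this proposition by declaring it a direct consequence of Proposition~\ref{Prop: Characterization injectivity of pi_f}, which is exactly the route you take (with the additional, welcome explicitness of invoking Lemma~\ref{Lemm: Projection Sigma S,U,I} to see that a totally interior code projects to a totally interior point before applying the injectivity criterion).
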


\subsection{The equivalence relation $\sim_T$ on $\Sigma_{A(T)}$\texorpdfstring{ }{ -sim_T on Sigma_{A(T)}}}

 Finally, we are ready to define the relation $\sim_T$ on $\Sigma_A$ as claimed in Proposition~\ref{Prop: The relation determines projections}. It consists essentially of the equivalence relation generated by $\sim_s$, $\sim_u$, and $\sim_I$.

\begin{defi}\label{Defi: Sim-T equivalent relation}
	Let $\bw, \bv \in \Sigma_{A(T)}$, they are $T$-related, and write $\bw \sim_T \bv$, if and only if one of the following disjoint situations occurs:
	\begin{itemize}
		\item[i)] $\bw, \bv \in \Sigma_{\cI(T)}$ and $\bw \sim_I \bv$, i.e., they are equal.
		
		\item[ii)] $\bw, \bv \in \Sigma_{\cS(T)} \cup \Sigma_{\cU(T)}$ and there exists a finite sequence of codes $\{ \bx_i \}_{i=1}^m \subset \Sigma_{\cS(T)} \cup \Sigma_{\cU(T)}$ such that either
		\begin{equation}
			\bw \sim_s \bx_1 \sim_u \bx_2 \sim_s \cdots \sim_s \bx_m \sim_u \bv,
		\end{equation}
		or
		\begin{equation}
			\bw \sim_u \bx_1 \sim_s \bx_2 \sim_u \cdots \sim_u \bx_m \sim_s \bv.
		\end{equation}
	\end{itemize}
\end{defi}

\begin{prop}
	The relation $\sim_T$ is an equivalence relation on $\Sigma_A$.
\end{prop}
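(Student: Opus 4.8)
The plan is to observe that $\sim_T$ is patched together from the equivalence relations $\sim_I$, $\sim_s$, $\sim_u$ over the decomposition
$$
\Sigma_{A(T)} = \Sigma_{\cI(T)} \sqcup \bigl(\Sigma_{\cS(T)} \cup \Sigma_{\cU(T)}\bigr),
$$
which is a \emph{disjoint} union since $\Sigma_{\cI(T)} = \Sigma_{A(T)} \setminus (\Sigma_{\cS(T)} \cup \Sigma_{\cU(T)})$ by Definition~\ref{Defi: stratification Sigma A}. The two alternatives (i) and (ii) of Definition~\ref{Defi: Sim-T equivalent relation} are mutually exclusive, and because $\sim_s$ relates only codes in $\Sigma_{\cS(T)}$ and $\sim_u$ only codes in $\Sigma_{\cU(T)}$, no code of $\Sigma_{\cI(T)}$ is $\sim_T$-related to anything outside $\Sigma_{\cI(T)}$. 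Hence $\sim_T$ never connects the two pieces of the decomposition, and it suffices to verify the equivalence axioms on each piece separately.

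On $\Sigma_{\cI(T)}$ the relation $\sim_T$ coincides, by alternative (i), with $\sim_I$, which is an equivalence relation by Proposition~\ref{Prop: totally interior points projection sim I}. On $\Sigma_{\cS(T)} \cup \Sigma_{\cU(T)}$, alternative (ii) describes $\sim_T$ as being joined by a finite alternating chain of $\sim_s$- and $\sim_u$-steps; this is precisely the equivalence relation generated by $\sim_s \cup \sim_u$, because $\sim_s$ and $\sim_u$ are themselves reflexive and symmetric (Propositions~\ref{Prop: sim s equiv in Sigma S } and \ref{Prop: sim u equiv in Sigma U }) and any finite chain of such steps in arbitrary order can be rewritten as an alternating one by collapsing two consecutive $\sim_s$-steps $\bx \sim_s \by \sim_s \bz$ to the single step $\bx \sim_s \bz$ via transitivity of $\sim_s$, and similarly for consecutive $\sim_u$-steps. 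Concretely: reflexivity holds via the degenerate one-step chain $\bw \sim_s \bw$ for $\bw \in \Sigma_{\cS(T)}$ and $\bw \sim_u \bw$ for $\bw \in \Sigma_{\cU(T)}$; symmetry holds by reversing a chain, each $\sim_s$- and $\sim_u$-step being symmetric and the reversed chain matching the second alternating pattern allowed in Definition~\ref{Defi: Sim-T equivalent relation}; transitivity holds by concatenating two chains at their shared endpoint and, if this produces a pair of consecutive same-type steps at the junction, collapsing that pair as above.

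Assembling the two pieces yields that $\sim_T$ is reflexive, symmetric and transitive on all of $\Sigma_{A(T)}$. The only step needing any care is the bookkeeping of the alternation in the chains during the transitivity argument; I expect this to be purely formal, since consecutive steps of the same type always merge by transitivity of $\sim_s$ and of $\sim_u$, so there should be no real obstacle.
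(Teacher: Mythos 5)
Your proof is correct and follows essentially the same route as the paper's: handle $\Sigma_{\cI(T)}$ as the equality relation, and treat alternative (ii) as the equivalence relation generated by $\sim_s$ and $\sim_u$, with transitivity obtained by concatenating chains. You are in fact slightly more careful than the paper, which concatenates the two chains without noting that the junction may produce two consecutive same-type steps violating the alternating pattern of Definition~\ref{Defi: Sim-T equivalent relation}; your observation that such a pair collapses by transitivity of $\sim_s$ (resp.\ $\sim_u$) fills that small gap.
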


\begin{proof}
	If $\bw \in \Sigma_{\cI(T)}$, then $\sim_T$ coincides with the equality relation, which is clearly reflexive, symmetric, and transitive.
	
	Now assume $\bw, \bv \in \Sigma_{\cS(T)} \cup \Sigma_{\cU(T)}$. In this case, $\sim_T$ is defined via s finite sequence of relations involving $\sim_s$ and $\sim_u$ and the properties of reflexivity and symmetry of $\sim_T$  follow directly from the corresponding properties of $\sim_s$ and $\sim_u$.
	
	To verify transitivity, suppose $\bw \sim_T \bv$ and $\bv \sim_T \bu$. Then, by definition, there exist finite sequences connecting $\bw$ to $\bv$ and $\bv$ to $\bu$ through compositions of $\sim_s$ and $\sim_u$. Concatenating these sequences yields a finite chain from $\bw$ to $\bu$, showing that $\bw \sim_T \bu$. Hence, $\sim_T$ is transitive and a equivalence relation on $\Sigma_{A(T)}$.
\end{proof}

\begin{lemm}\label{lemma: simT related iterations related}
	If two codes $\bw, \bv \in \Sigma_A$ are $\sim_T$-related, then for all $n \in \ZZ$, 
	$$
	\sigma^n(\bw) \sim_T \sigma^n(\bv).
	$$
\end{lemm}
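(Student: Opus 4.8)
The plan is to reduce the statement to the single-step assertion that $\sigma := \sigma_{A(T)}$ and $\sigma^{-1}$ each preserve $\sim_T$, and then finish by induction on $|n|$. Since $\sim_T$ is, by Definition~\ref{Defi: Sim-T equivalent relation}, built by chaining $\sim_I$, $\sim_s$ and $\sim_u$ — with $\sim_I$ living on $\Sigma_{\cI(T)}$ and $\sim_s,\sim_u$ on $\Sigma_{\cS(T)}\cup\Sigma_{\cU(T)}$ — it will be enough to check: (a) $\sigma^{\pm1}$ preserve the decomposition $\Sigma_{A(T)}=\Sigma_{\cI(T)}\sqcup\Sigma_{\cS(T)}\sqcup\Sigma_{\cU(T)}$; (b) $\sigma^{\pm1}$ preserve each of $\sim_I$, $\sim_s$, $\sim_u$ individually; and then (c) apply $\sigma^{\pm1}$ termwise to a connecting chain.

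For (a) I would use the relabelling identities $\sigma(\underline{I}^+(i,\delta))=\underline{I}^+(\Gamma(T)(i,\delta))$ and $\sigma^{-1}(\underline{J}^-(k,\delta))=\underline{J}^-(\Upsilon(T)(k,\delta))$ recorded just before Corollary~\ref{Coro: preperiodic finite s,u boundary codes}: they say that $\underline{\cS}(T)$ is forward $\sigma$-invariant and $\underline{\cU}(T)$ is backward $\sigma$-invariant. Since $\bw\in\Sigma_{\cS(T)}$ (resp. $\Sigma_{\cU(T)}$) precisely when some forward (resp. backward) iterate of $\bw$ lies in $\underline{\cS}(T)$ (resp. $\underline{\cU}(T)$), a short index chase shows $\Sigma_{\cS(T)}$ and $\Sigma_{\cU(T)}$ are $\sigma$- and $\sigma^{-1}$-invariant, hence so is their complement $\Sigma_{\cI(T)}$; the shift also preserves periodicity, so $\Sigma_{\cS(T)}\setminus\text{Per}(\sigma)$, $\text{Per}(\underline{\cS}(T))$ and their $u$-analogues are invariant as well (for $\alpha\in\text{Per}(\underline{\cS}(T))$ of period $p$ one writes $\sigma^{-1}\alpha=\sigma^{p-1}\alpha$ and uses forward invariance).

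Step (b) is the heart of the matter. For $\sim_I$ there is nothing to prove, as it is equality. For $\sim_s$ on non-periodic codes the witness integer $k$ of Definition~\ref{Defi: s realtion in Sigma S no per} is unique and, as noted in the proof of Proposition~\ref{Prop: sim s equiv in Sigma non per}, equals $\min\{z\in\ZZ:\sigma^z(\bw)\in\underline{\cS}(T)\}-1$; hence $k(\sigma^{\pm1}\bw)=k\mp1$, and every one of the conditions (i)–(v) is phrased purely in terms of $\sigma^k(\bullet)$, $w_k$, $w_{k+1}$, $\sigma^{k+1}(\bullet)_+$ and $\sigma^k(\bullet)_-$, so each translates verbatim once $k$ is replaced by $k\mp1$, giving $\sigma^{\pm1}(\bw)\sim_s\sigma^{\pm1}(\bv)$. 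For periodic $s$-boundary codes $\alpha\sim_s\beta$, take non-periodic witnesses $\bw\sim_s\bv$ and $k$ with $\sigma^k(\bw)_+=\alpha_+$, $\sigma^k(\bv)_+=\beta_+$ as in Definition~\ref{Defi: sim s in per}: for $\sigma$ one has at once $\sigma^{k+1}(\bw)_+=(\sigma\alpha)_+$ and $\sigma^{k+1}(\bv)_+=(\sigma\beta)_+$, while for $\sigma^{-1}$ one first replaces $k$ by $k+\ell$ with $\ell$ any common multiple of the periods of $\alpha$ and $\beta$ (legitimate since $\sigma^k(\bw)_+=\alpha_+$ forces $\sigma^{k+\ell}(\bw)_+=\alpha_+$), and then checks $\sigma^{k+\ell-1}(\bw)_+=(\sigma^{-1}\alpha)_+$ and $\sigma^{k+\ell-1}(\bv)_+=(\sigma^{-1}\beta)_+$; in both cases the same $\bw\sim_s\bv$ serve as witnesses. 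The relation $\sim_u$ is entirely symmetric, the witness index $z$ of Definition~\ref{Defi: u relation in Sigma U no per} shifting to $z\mp1$ under $\sigma^{\pm1}$ and the periodic case being dual.

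For (c): if $\bw,\bv\in\Sigma_{\cI(T)}$ with $\bw\sim_T\bv$, then $\bw=\bv$, so $\sigma^n(\bw)=\sigma^n(\bv)\in\Sigma_{\cI(T)}$; if instead $\bw\sim_s\bx_1\sim_u\bx_2\sim_s\cdots\bv$ inside $\Sigma_{\cS(T)}\cup\Sigma_{\cU(T)}$, then applying $\sigma^{\pm1}$ to every term and invoking (a) and (b) yields a chain of the same shape from $\sigma^{\pm1}(\bw)$ to $\sigma^{\pm1}(\bv)$; iterating gives $\sigma^n(\bw)\sim_T\sigma^n(\bv)$ for all $n\in\ZZ$. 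The one genuinely delicate point — where I expect most of the effort to go — is the periodic-code bookkeeping for $\sigma^{-1}$ in step (b): the witness $k$ in Definition~\ref{Defi: sim s in per} is not unique and $\sigma^k(\bw)_+=\alpha_+$ controls nothing about $\bw_{k-1}$, so one must enlarge $k$ by a common period before shifting backwards (alternatively, one may exploit that $\text{Per}(\underline{\cS}(T))$ is finite and that $\sigma$ permutes it, so its $\sim_s$-classes are permuted by $\sigma$ and hence also preserved by $\sigma^{-1}$).
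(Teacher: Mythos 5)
Your proposal is correct and rests on the same core mechanism as the paper's proof: the witness integer in Definitions~\ref{Defi: s realtion in Sigma S no per} and~\ref{Defi: u relation in Sigma U no per} shifts by one under $\sigma^{\pm 1}$, each condition (i)--(v) translates verbatim, and the chain defining $\sim_T$ is transported termwise. You are in fact more careful than the paper's one-paragraph argument, which silently omits the periodic case of Definition~\ref{Defi: sim s in per} (where, as you rightly note, the $\sigma^{-1}$ direction needs the common-period adjustment because $\sigma^k(\bw)_+=\alpha_+$ says nothing about $w_{k-1}$) as well as the $\sigma^{\pm1}$-invariance of the decomposition $\Sigma_{\cI(T)}\sqcup\Sigma_{\cS(T)}\sqcup\Sigma_{\cU(T)}$.
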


\begin{proof}
	If $\bw \sim_s \bv$, the integer $k \in \ZZ$ in Item~(i) of Definition~\ref{Defi: s realtion in Sigma S no per} is replaced by $k-1$ for $\sigma(\bw)$ and $\sigma(\bv)$, and all remaining conditions still hold then $	\sigma(\bw) \sim_s \sigma(\bv)$. The same applies to the relation $\sim_u$ using $z$ instead of $z-1$ in Definition~\ref{Defi: u relation in Sigma U no per}. By induction, this property extends to all $n \in \ZZ$.
	
	The relation $\sim_I$ is equality, so the claim is immediate in that case. Thus, the statement holds for all types of $\sim_T$-related codes.
\end{proof}

The final property we need to verify in order to complete the proof of Proposition~\ref{Prop: The relation determines projections} is that the relation $\sim_T$ precisely characterizes when two codes project to the same point, independently of the realization. This is established in the following result.

\begin{prop}\label{Prop: T related iff same projection}
	Let $T \in \cG\cT(\textbf{p-A})^{sp}$ be a symbolically presentable geometric type, and let $(f, \cR)$ be any realization of $T$. Let $\bw, \bv \in \Sigma_{A(T)}$ be any admissible codes. Then $\bw \sim_T \bv$ if and only if
$$
	\pi_{(f, \cR)}(\bw) = \pi_{(f, \cR)}(\bv).
$$
\end{prop}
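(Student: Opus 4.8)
The plan is to fix a realization $(f,\cR)$ of $T$, abbreviate $\pi_f:=\pi_{(f,\cR)}$, and prove the two implications separately. The implication $\bw\sim_T\bv\Rightarrow\pi_f(\bw)=\pi_f(\bv)$ is pure bookkeeping: unwinding Definition~\ref{Defi: Sim-T equivalent relation}, either $\bw=\bv$ (the $\sim_I$ alternative, where there is nothing to prove), or $\bw,\bv\in\Sigma_{\cS(T)}\cup\Sigma_{\cU(T)}$ are joined by a finite chain alternating $\sim_s$ and $\sim_u$ through codes that, by construction, lie exactly in the domains where Propositions~\ref{Prop: s-relaten implies same projection} and~\ref{Prop: u-relaten implies same projection} apply; hence $\pi_f$ is constant along the chain.

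For the converse, put $x:=\pi_f(\bw)=\pi_f(\bv)$. If $x$ is a totally interior point then $|\pi_f^{-1}(x)|=1$ by Proposition~\ref{Prop: Characterization injectivity of pi_f}, so $\bw=\bv$, and $\bw\in\Sigma_{\cI(T)}$ by Lemma~\ref{Lemm: Projection Sigma S,U,I}, whence $\bw\sim_I\bv$ and $\bw\sim_T\bv$. Otherwise, by Lemma~\ref{Lemm: Characterization unique codes} the point $x$ lies on the stable leaf of an $s$-boundary periodic point or on the unstable leaf of a $u$-boundary periodic point; I treat the former, the latter being symmetric under $s\leftrightarrow u$, $\Gamma(T)\leftrightarrow\Upsilon(T)$, $\theta\leftrightarrow\eta$, and horizontal $\leftrightarrow$ vertical subrectangles. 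Then every code in $\pi_f^{-1}(x)$ lies in $\Sigma_{\cS(T)}$ (Lemma~\ref{Lemm: Projection Sigma S,U,I}), and by Corollary~\ref{Coro: Caracterisation fibers} the fiber $\pi_f^{-1}(x)$ is exactly the set $\{\be_1(x),\dots,\be_{2k}(x)\}$ of sector codes of $x$ (where $k$ is the number of its stable separatrices). So it suffices to show $\be_m(x)\sim_T\be_{m+1}(x)$ for every $m$ modulo $2k$; chaining this around the cyclic order of the sectors and discarding the trivial links yields a finite alternating $\sim_s$/$\sim_u$ chain from $\bw$ to $\bv$ (using transitivity on each piece, Propositions~\ref{Prop: sim s equiv in Sigma S } and~\ref{Prop: sim u equiv in Sigma U }), i.e. $\bw\sim_T\bv$.

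To prove $\be_m(x)\sim_T\be_{m+1}(x)$, note that $e_m(x)$ and $e_{m+1}(x)$ share exactly one separatrix $\delta$. If $\delta$ is unstable and $x$ does not lie on the unstable leaf of a $u$-boundary periodic point, then no iterate of $\delta$ meets $\partial^u\cR$ (Lemma~\ref{Lemm: Boundary of Markov partition is periodic}), so $f^z(e_m(x))$ and $f^z(e_{m+1}(x))$ always share a rectangle and $\be_m(x)=\be_{m+1}(x)$; if $x$ does lie on such a leaf, all sector codes of $x$ also belong to $\Sigma_{\cU(T)}$ and the stable argument below runs verbatim with $\sim_u$. If $\delta$ is stable, I may assume $\be_m(x)\neq\be_{m+1}(x)$, and first that $x$ is not periodic, so both codes lie in $\Sigma_{\cS(T)}\setminus\text{Per}(\sigma_A)$ (Lemma~\ref{Lemm: Periodic to periodic}). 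Take $k:=k(\be_m(x))$; since $\pi_f\circ\sigma^z=f^z\circ\pi_f$, this integer depends only on $x$, hence also equals $k(\be_{m+1}(x))$, and by Lemmas~\ref{Lemm: minumun k} and~\ref{Lemm: Diferen horizonl sub} the point $f^k(x)$ sits on an interface $\partial^s_{+1}H^i_j=\partial^s_{-1}H^i_{j+1}$ interior to some $R_i$ with $h_i\geq 2$ (so $f^k(x)\in\overset{o}{R_i}$ is a regular point, the interior of a Markov rectangle carrying no singularity; the borderline case $f^k(x)\in\partial^u R_i$ falls under the symmetric unstable analysis, as then $x$ also lies on a $u$-boundary periodic leaf), and after relabelling $f^k(e_m(x))\subset H^i_j$, $f^k(e_{m+1}(x))\subset H^i_{j+1}$. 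One then checks the five conditions of Definition~\ref{Defi: s realtion in Sigma S no per} at this $k$: (i) from the choice of $k$; (ii) and (v) because $f^z(x)\notin\partial^s\cR$ for all $z\leq k$ forces $f^z(e_m(x))$ and $f^z(e_{m+1}(x))$ into a common rectangle; (iii) with $w_{k+1}=\bp_1\rho(i,j)$ and $v_{k+1}=\bp_1\rho(i,j+1)$, which differ since $A(T)$ is binary; and (iv) by matching the signs $\delta_w,\delta_v$ attached to $\underline{I}^+(w_{k+1},\cdot)$ and $\underline{I}^+(v_{k+1},\cdot)$ by Proposition~\ref{Prop: boundary points have boundary codes} with $\epsilon(i,j)$ and $-\epsilon(i,j+1)$ through the $s$-generating function of Definition~\ref{Defi: s-boundary generating funtion}. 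This gives $\be_m(x)\sim_s\be_{m+1}(x)$. If $x$ is periodic, it must coincide with the boundary periodic point on whose stable leaf it lies, so $x\in\partial^s\cR$ and $k(\bw)$ is undefined; instead I pick a non-periodic $y$ on the stable boundary component of $\cR$ through $x$, apply the non-periodic case to the corresponding sector codes of $y$, and shift (using pre-periodicity of the positive parts, Corollary~\ref{Coro: preperiodic finite s,u boundary codes}) to reach the hypotheses of Definition~\ref{Defi: sim s in per}.

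The hard part will be item (iv): matching the geometric sign by which $f$ carries the interface separatrix at $f^k(x)$ — whether $f$ flips the vertical direction between the adjacent horizontal subrectangles $H^i_j$ and $H^i_{j+1}$ — against the combinatorial orientation datum $\epsilon$ built into the $s$-generating function, so that Equations~\eqref{Equa: sim s epsilon 1} and~\eqref{Equa: sim s epsilon 2} hold on the nose; this is exactly the place where the definition of $\sim_s$ must faithfully reflect the dynamics encoded by $T$. Secondary obstacles are the periodic case (manufacturing non-periodic witnesses on the same stable leaf and tracking their pre-periodic positive codes) and the verification that the cyclic chain through the $2k$ sectors compresses to an alternating chain as required by Definition~\ref{Defi: Sim-T equivalent relation}.
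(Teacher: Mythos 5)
Your proposal is correct and follows essentially the same route as the paper: the forward implication by chaining Propositions~\ref{Prop: totally interior points projection sim I}, \ref{Prop: s-relaten implies same projection} and~\ref{Prop: u-relaten implies same projection}, and the converse by reducing via Lemma~\ref{Lemm: every code is sector code } to showing that all sector codes of $x=\pi_f(\bw)$ are $\sim_T$-related, with the same case analysis on whether $x$ meets the stable and/or unstable boundary laminations and a separate treatment of the periodic case. Your organization by adjacent sector pairs and the type of their shared separatrix is only a cosmetic reshuffling of the paper's three-case analysis, and the point you flag as hard (matching the orientation data in item (iv) of Definition~\ref{Defi: s realtion in Sigma S no per}) is exactly the content of the stable identification mechanism the paper relies on.
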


\begin{proof}
If $\bw \sim_T \bv$, then we have two possibilities:

\begin{itemize}
	\item  $\bw, \bv \in \Sigma_{\cI(T)}$ and they are $\sim_I$-related. By Proposition~\ref{Prop: totally interior points projection sim I}, this occurs if and only if $\pi_f(\bw) = \pi_f(\bv)$ and this case is over.
	
	\item $\bw, \bv \in \Sigma_{\cS(T)} \cup \Sigma_{\cU(T)}$. Using Propositions~\ref{Prop: s-relaten implies same projection} and~\ref{Prop: u-relaten implies same projection} alternately, we deduce that
	$$
	\pi_f(\bw) = \pi_f(\bx_1) = \cdots = \pi_f(\bx_m) = \pi_f(\bv),
	$$
	completing one direction of the proposition.
\end{itemize}

Now suppose that $x = \pi_f(\bw) = \pi_f(\bv)$. We need to prove that $\bw$ and $\bv$ are $\sim_T$-related. By Lemma~\ref{Lemm: every code is sector code }, the only codes that project to the same point are the sector codes of such point. Therefore, $\bw$ and $\bv$ are sector codes of $x$, and the following lemma implies our proposition.

\begin{lemm}\label{Lemm: sector codes}
	Let $\{\be_i\}_{i=1}^{2k}$ be the sector codes of the point $x = \pi_f(\be_i)$. Then $\be_i \sim_T \be_j$ for all $i,j \in \{1, \ldots, 2k\}$.
\end{lemm}

\begin{proof}
	If $x$ is a totally interior point (Figure~\ref{Fig: Sim T items i y ii}   item $b)$ ), Corollary~\ref{Coro: interior periodic points unique code} implies that all sector codes of $x$ are equal, and then $\underline{e_j} \sim_T \underline{e_j}$. 
	
	The remaining situation is when  $x$ is in the stable or unstable lamination generated by $s,u$-boundary periodic points. Numbering the sectors of $x$ in cyclic order, we consider three situations depending on where $x$ is located:
	\begin{itemize}
		\item[i)] $x \in \cF^s(\mathrm{Per}^s(\cR))$ but $x \notin \cF^u(\mathrm{Per}^u(\cR))$ (Item $c)$ in Figure~\ref{Fig: Sim T items i y ii}).
		\item[ii)] $x \in \cF^u(\mathrm{Per}^u(\cR))$ but $x \notin \cF^s(\mathrm{Per}^s(\cR))$ (Item $a)$ in Figure~\ref{Fig: Sim T items i y ii}).
		\item[iii)] $x \in \cF^s(\mathrm{Per}^s(\cR)) \cap \cF^u(\mathrm{Per}^u(\cR))$ (Item $d)$ in Figure~\ref{Fig: Sim T items i y ii}).
	\end{itemize}
	
	In either case, we first consider that $x$ is not periodic as this implies no sector code of $x$ is periodic and the point $x$ has $4$ sectors because it is not a periodic point of $f$ and therefore not a singularity.
	
	\hfill
	
\textbf{Item} $i)$  According to Lemma~\ref{Lemm: minumun k}, there exists a unique integer $k$ such that $f^{k+1}(x) \in \partial^s \cR$ but $f^{k}(x) \notin \partial^s \cR$. Since $x \notin \cF^u(\mathrm{Per}^u(\cR))$, it follows that $f^z(x) \notin \partial^u \cR$ for all $z \in \mathbb{Z}$. In particular, $f^z(x) \in \overset{o}{\cR}$ for all $z \leq k$, and $f^k(x)$ has only four quadrants, like $x$. Then:

\begin{itemize}
	\item For all $z < k$, since all the quadrants of $f^{z}(x)$ are in the same rectangle, we have
	$$
	(\be_1)_{z} = (\be_2)_{z} = (\be_3)_{z} = (\be_4)_{z}.
	$$
	
	\item Since $f^{k+n}(x) \in \partial^s \cR$, its quadrants are arranged as in Item~$c)$ of Figure~\ref{Fig: Sim T items i y ii}, so
	$$
	(\be_1)_{k+n} = (\be_2)_{k+n} \quad \text{and} \quad (\be_3)_{k+n} = (\be_4)_{k+n}.
	$$
	for all $n\geq 1$ and therefore  $\be_1 \sim_s \be_4$ and $\be_2 \sim_s \be_3$.
	
	\item Since $f^z(x) \notin \partial^u \cR$ for all $z \in \mathbb{Z}$, the sector codes of $x$ satisfy $\be_2 = \be_1$ and $\be_3 = \be_4$ then they are $\sim_{s}$ and $\sim_{u}$ relate.
\end{itemize}

	In conclusion, $(\be_2) \sim_u \be_1$ and $\be_3 \sim_u \be_4$, and then:
	$$
\be_1 \sim_s \be_4 \sim_u \be_3 \sim_s \be_2 \sim_u \be_1.
	$$
	So $\be_i \sim_T \be_j$ for $i,j = 1,2,3,4$.
	
	\hfill
	
\textbf{Item} $ii)$ is similarly proved. In this case, for some iteration, $f^k(x)$ lies in a rectangle as in Item~$a)$ of Figure~\ref{Fig: Sim T items i y ii}, and the negative iterates of $f^k(x)$ maintain this configuration. Therefore, for all $n \in \mathbb{N}$, the sector codes satisfy
$$
(\be_1)_{k-n} = (\be_4)_{k-n} \quad \text{and} \quad (\be_2)_{k-n} = (\be_3)_{k-n},
$$
and since $f^{k+n}(x) \in \overset{o}{\cR}$, for all $n \in \mathbb{N}_+$, the terms $(\be_{\sigma})_{k+n}$ are equal for $\sigma = 1,2,3,4$. This implies that:
\begin{itemize}
	\item $\be_1 \sim_u \be_4$ and $\be_2 \sim_u \be_3$,
	\item $\be_1 \sim_s \be_2$ and $\be_3 \sim_s \be_4$.
\end{itemize}

Hence, $\be_i \sim_T \be_j$ for all $i,j = 1,2,3,4$.

	\begin{figure}[h]
		\centering
		\includegraphics[width=0.8\textwidth]{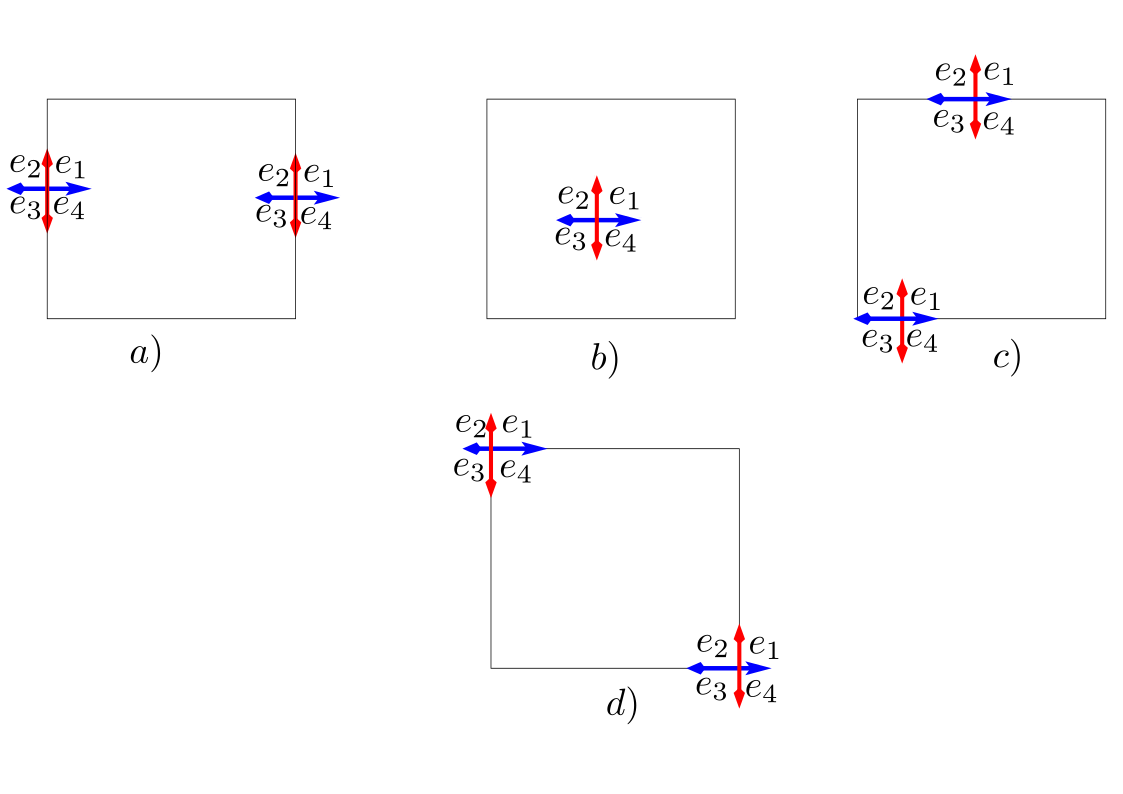}
		\caption{The sector codes that are identified.}
		\label{Fig: Sim T items i y ii}
	\end{figure}
	
	\textbf{Item} $iii)$ is the most technical situation. There are unique integer numbers $k(s), k(u) \in \mathbb{Z}$ defined as:
	$$
	k(s) := \max \{ k \in \mathbb{Z} : f^k(x) \notin \partial^s \cR \text{ but } f^{k+1}(x) \in \partial^s \cR \},
	$$
	and
	$$
	k(u) := \min \{ k \in \mathbb{Z} : f^k(x) \notin \partial^u \cR \text{ but } f^{k-1}(x) \in \partial^u \cR \}.
	$$
	
The $f$-invariance of $\partial^s \cR$ implies that for all $k > k(s)$, we have $f^k(x) \in \partial^s \cR$, while for all $k \leq k(s)$, $f^k(x) \notin \partial^s \cR$. Similarly, the $f^{-1}$-invariance of $\partial^u \cR$ implies that for all $k < k(u)$, we have $f^k(x) \in \partial^u \cR$, whereas for all $k \geq k(u)$, $f^k(x) \notin \partial^u \cR$. There are three possible cases to consider: $k(u) < k(s)$, $k(u) = k(s)$, and $k(u) > k(s)$. We divide the proof accordingly.

	\textbf{First case:} $k(u) < k(s)$. This implies that for $k\in\{k(u),\cdots,k(s)\}$ $f^{k}(x) \in \overset{o}{\cR}$ and then:
	\begin{itemize}
		\item The sector codes of $f^(x)$ take the same value at $k$.
		\item For all $k \geq k(s)$, the configuration of the sectors of $f^k(x)$ is like in Item $c)$  in Figure~\ref{Fig: Sim T items i y ii}.
		\item For all $k < k(u)$, the configuration of the sectors of $f^k(x)$ is like in Item $a)$  in Figure~\ref{Fig: Sim T items i y ii}.
	\end{itemize}
	
	In view of Lemma~\ref{lemma: simT related iterations related}, we deduce:
	$$
	\be_1 \sim_s \be_4 \sim_u \be_3 \sim_s \be_2 \sim_u \be_1.
	$$
	Hence, $\be_i \sim_T \be_j$ for all $i,j=1,\ldots,4$.
	
	\textbf{Second case:} $k(u) = k(s)$. Analogously, $f^{k(u)}(x) = f^{k(s)}(x) \in \overset{o}{\cR}$, and we repeat the previous analysis to deduce that $x$ has $4$ sector codes, all $\sim_T$ related. Figure~\ref{Fig: k(u) less than k(s)} illustrates this.
	
	\begin{figure}[h]
		\centering
		\includegraphics[width=0.7\textwidth]{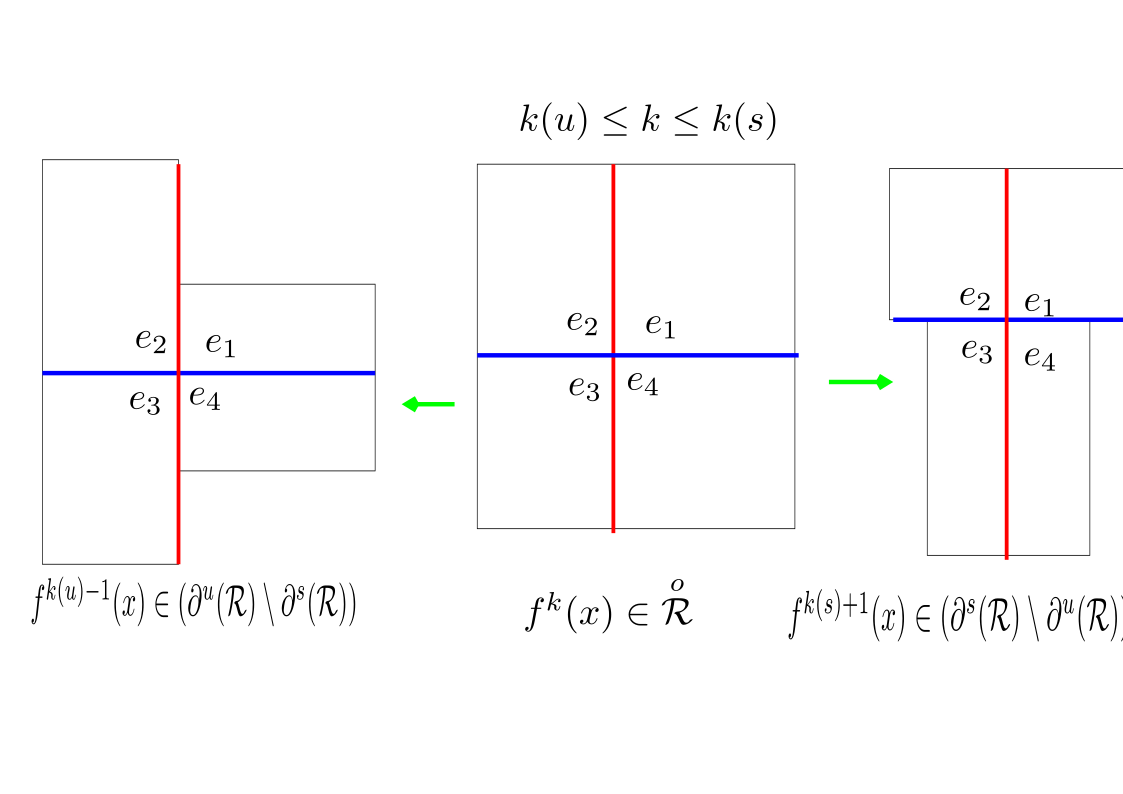}
		\caption{Situation $k(u) \leq k(s)$.}
		\label{Fig: k(u) less than k(s)}
	\end{figure}
	
	\textbf{Third case:} $k(s) < k(u)$. Here, $f^{k(s)}(x) \notin \partial^s \cR$ but $f^{k(s)}(x) \in \partial^u \cR$, and $f^{k(u)}(x) \in \partial^s \cR$ but $f^{k(u)}(x) \notin \partial^u \cR$. For all $k$ between $k(s)$ and $k(u)$, the point $f^k(x)$ is a corner point (see Item $d)$ in Figure~\ref{Fig: Sim T items i y ii}). This leads to the following consequences:
	\begin{itemize}
		\item For all $n \in \mathbb{N}$, the sectors $f^{k(s)-n}(e_1)$ and $f^{k(s)-n}(e_4)$ lie in the same rectangle of $\cR$, and similarly, the sectors $f^{k(s)}(e_2)$ and $f^{k(s)}(e_3)$ lie in the same rectangle (see Figure~\ref{Fig: k(s) less than k(s)}).
		\item For all $n \in \mathbb{N}$, the sectors $f^{k(u)+n}(e_1)$ and $f^{k(u)+n}(e_2)$ lie in the same rectangle, and similarly, the sectors $f^{k(u)+n}(e_3)$ and $f^{k(u)+n}(e_4)$ lie in the same rectangle.
	\end{itemize}
	
Then the negative parts of the sector codes $\sigma^{k(s)-n}(\be_1)$ and $\sigma^{k(s)-n}(\be_4)$ are equal, and similarly, the negative parts of the sector codes $\sigma^{k(u)+n}(\be_2)$ and $\sigma^{k(u)+n}(\be_3)$ are also equal.
Using Lemma~\ref{lemma: simT related iterations related} then can we deduce:
	$$
	\be_1 \sim_s \be_4 \quad \text{and} \quad \be_2 \sim_s \be_3.
	$$
	
Similarly the sector codes $\sigma^{k(u)}(\underline{e_1})$ and $\sigma^{k(u)}(\underline{e_2})$ have equal negative part and then $\be_1 \sim_u \be_2$. Applying this process to the other pair of sectors and usingLemma~\ref{lemma: simT related iterations related} again, we get:
	$$
	\be_1 \sim_u \be_2 \quad \text{and} \quad \be_3 \sim_u \be_4
	$$
	
	Putting all together:
	$$
	\be_1 \sim_s \be_4 \sim_u \be_3 \sim_s \be_2 \sim_u \be_1.
	$$
	This proves $\be_i \sim_T \be_j$ for all $i,j=1,2,3,4$. 
	\begin{figure}[h]
		\centering
		\includegraphics[width=0.6\textwidth]{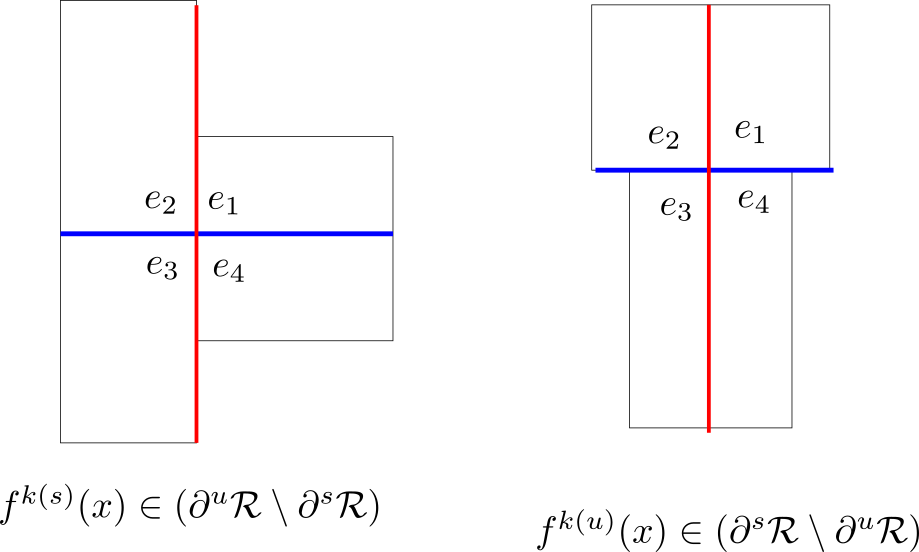}
		\caption{Situation $k(s) \leq k(u)$.}
		\label{Fig: k(s) less than k(s)}
	\end{figure}
	
It remains to address the periodic coding case. Let $x$ be a periodic point with $2k$ sectors, labeled cyclically with respect to the surface orientation. Two adjacent sectors $\be_i, \be_{i+1}$ are $\sim_s$-related if they share a small stable local separatrix; indeed, the stable separatrix lies in (at most) a unique pair of rectangles, and the boundary codes of such sides are $\sim_s$-related. This is precisely the mechanism of stable boundary identification illustrated in Figure~\ref{Fig: stable identification}, which motivated the definition of $\sim_s$. If $\be_i$ and $\be_{i+1}$ share the same local unstable separatrix, then they are $\sim_u$-related for the same reason. Thus, one can move from one sector of $x$ to another through a finite number of intermediate sectors, alternating between $\sim_s$ and $\sim_u$ relations. Hence, $\be_i \sim_T \be_j$ for all $i,j = 1, \ldots, 2k$. This concludes the proof.

\end{proof}

\end{proof}

\section{The geometric type is a total conjugacy invariant.}

We must to prove Theorem \ref{Theo: Total invariant}  in this  section and our first step is the following proposition.

\begin{prop}\label{Prop: cociente T}
	Let $T$ be a symbolically modelable geometric type, and let $(f, \cR)$ be a realization of $T$, where $f: S \rightarrow S$. Let $(\Sigma_{A(T)}, \sigma_{A(T)})$ be the subshift of finite type associated to $T$, and let $\pi_{(f,\cR)}: \Sigma_{A(T)} \rightarrow S$ be the projection induced by the realization.
	
	Then the quotient space $\Sigma_T = \Sigma_{A(T)} / \sim_T$ coincides with $\Sigma_f := \Sigma_{A(T)} / \sim_f$ and is homeomorphic to $S$. The subshift  $\sigma_{A(T)}$ descends to the quotient as a homeomorphism $\sigma_T: \Sigma_T \rightarrow \Sigma_T$, topologically conjugate to $f$ via the quotient homeomorphism:
	$$
	[\pi_{(f,\cR)}] : \Sigma_T  \rightarrow S.
	$$
\end{prop}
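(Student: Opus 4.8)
The plan is to assemble this proposition from two results already in hand: Proposition~\ref{Prop: T related iff same projection}, which identifies the combinatorial relation $\sim_T$ with the fiber relation of the projection, and Proposition~\ref{Prop: quotien by f}, which handles the quotient of a subshift by the fiber relation of a pseudo-Anosov realization. So the proof is essentially a matter of bookkeeping; the substance has all been proved earlier.

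First I would fix a realization $(f,\cR)$ of $T$, with $f\colon S\to S$. Since $T$ is symbolically presentable, its incidence matrix $A(T)$ is binary, and because $(f,\cR)$ realizes $T$ we have $A(f,\cR)=A(T)$ by Remark~\ref{Rema: Notation incidece matriz Tipo}; hence the subshift $(\Sigma_{A(T)},\sigma_{A(T)})$ and the projection $\pi_{(f,\cR)}\colon\Sigma_{A(T)}\to S$ of Definition~\ref{Defi: proyection (f,R)} are both available, and Proposition~\ref{Prop:proyecion semiconjugacion} applies. Recall that the relation $\sim_f:=\sim_{(f,\cR)}$ on $\Sigma_{A(T)}$ is \emph{by definition} the fiber relation of the projection: $\bw\sim_f\bv$ if and only if $\pi_{(f,\cR)}(\bw)=\pi_{(f,\cR)}(\bv)$.

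The key identification is $\sim_T=\sim_f$ as relations on $\Sigma_{A(T)}$, which is exactly the content of Proposition~\ref{Prop: T related iff same projection}. Consequently the two quotient sets $\Sigma_T=\Sigma_{A(T)}/\sim_T$ and $\Sigma_f=\Sigma_{A(T)}/\sim_f$ have literally the same equivalence classes, so they are the same set carrying the same quotient topology, and the induced map $[\pi_{(f,\cR)}]$ is the same in both descriptions. Now Proposition~\ref{Prop: quotien by f} states precisely that $\Sigma_f$ is homeomorphic to $S$ via $[\pi_{(f,\cR)}]$ and that $\sigma_{A(T)}$ descends to a homeomorphism of $\Sigma_f$ topologically conjugate to $f$ through $[\pi_{(f,\cR)}]$; transporting these statements across the equality $\Sigma_T=\Sigma_f$ gives the proposition. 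The descent of $\sigma_{A(T)}$ to a well-defined $\sigma_T$ on $\Sigma_T$ can also be seen directly from Lemma~\ref{lemma: simT related iterations related}, which gives the $\sigma$-invariance of $\sim_T$, with continuity of $\sigma_T$ and of its inverse again supplied by Proposition~\ref{Prop: quotien by f}.

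There is no genuine obstacle in this statement: the work lives in Proposition~\ref{Prop: T related iff same projection} (the sector-code analysis together with the construction of $\sim_s,\sim_u,\sim_I$) and in Proposition~\ref{Prop: quotien by f}. The only point deserving a word of care is the elementary observation that a quotient of a fixed space by a fixed equivalence relation is intrinsic, so that once $\sim_T$ and $\sim_f$ are shown equal, the spaces, their topologies, the induced projection, and the induced shift all coincide with no further argument. The payoff, and the reason this reformulation is worth isolating, is that the target object $[\pi_{(f,\cR)}]\colon\Sigma_T\to S$ now has a source side depending only on $T$, which is what permits the comparison of two different realizations in the proof of Theorem~\ref{Theo: Total invariant}.
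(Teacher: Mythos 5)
Your proposal is correct and follows essentially the same route as the paper: identify $\sim_T$ with the fiber relation $\sim_f$ via Proposition~\ref{Prop: T related iff same projection} (the paper cites its packaged form, Proposition~\ref{Prop: The relation determines projections}), then transport the conclusions of Proposition~\ref{Prop: quotien by f} across the resulting equality of quotients. The extra remarks on $A(f,\cR)=A(T)$ and on the $\sigma$-invariance of $\sim_T$ via Lemma~\ref{lemma: simT related iterations related} are sound and, if anything, slightly more careful than the paper's own wording.
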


\begin{proof}
	As Proposition~\ref{Prop: The relation determines projections} indicates, the relation $\sim_f$, defined by $\bw \sim_f \bv$ if and only if $\pi_f(\bw) = \pi_f(\bw)$, have same equivalent classes than $\sim_T$, therefore, the quotient spaces $\Sigma_T$ and $\Sigma_f$ are the same.
	
	Proposition~\ref{Prop: quotien by f} implies that $S$ and $\Sigma_f$ are homeomorphic. Hence, $\Sigma_T$ is homeomorphic to $S$, which is a closed surface. Moreover, the shift map $\sigma_{A(T)}$ descends to the quotient under $\sim_f$ to a homeomorphism $[\sigma_{A(T)}]$, which is topologically conjugate to $f$ via the quotient map $[\pi_f]$. Therefore the shift $\sigma_{A(T)}$ also descends to the quotient under $\sim_T$, to obtain the homeomorphism $\sigma_T : \Sigma_T \rightarrow \Sigma_T$, which coincides with $[\sigma_{A(T)}]$. We conclude, as in Proposition~\ref{Prop: quotien by f}, that
	$$
	[\pi_f]^{-1} \circ f \circ [\pi_f] = \sigma_T.
	$$
\end{proof}

\subsection{The induced geometric Markov partition}

Let $\cR=\{R_i\}_{i=1}^n$ be a geometric Markov partition of a \textbf{p-A} homeomorphism $f:S \rightarrow S$, and let $g:S' \rightarrow S'$ be another \textbf{p-A} homeomorphism topologically conjugate to $f$ via an orientation-preserving homeomorphism $h:S \rightarrow S'$. Since $h(\partial^s \cR) = \partial^s h(\cR)$, we have:
$$
h \circ f \circ h^{-1}(h(\partial^s \cR)) = h \circ f(\partial^s \cR) \subset h(\partial^s \cR),
$$
so $h(\cR)$ has a $g$-invariant horizontal boundary, and clearly, it has a $g^{-1}$-invariant vertical boundary. Therefore, the family of rectangles $h(\cR) = \{h(R_i)\}_{i=1}^n$ is a Markov partition for $g$.

The function $h$ maps the foliations and singularities of $f$ to those of $g$, while preserving their joint orientation. Thus, if $r:[0,1] \times [0,1] \rightarrow R \subset S$ is a parametrized rectangle for $f$, then $h \circ r: [0,1] \times [0,1] \rightarrow h(R) \subset S'$ is a parametrized rectangle for $g$, and we endow $h(R)$ with the geometrization induced by $h \circ r$ and the vertical orientation of the unit square used to geometrize $R$.

\begin{defi}\label{Defi: induced geometric Markov partition}
	Let $f:S \rightarrow S$ and $g:S' \rightarrow S'$ be pseudo-Anosov homeomorphisms topologically conjugate via an orientation-preserving homeomorphism $h:S \rightarrow S'$. If $\cR = \{R_i\}_{i=1}^n$ is a geometric Markov partition of $f$, the \emph{geometric Markov partition} of $g$ \emph{induced} by $h$ is the Markov partition $h(\mathcal{R}) = \{h(R_i)\}_{i=1}^n$, where each rectangle $h(R_i)$ inherits its vertical direction as the direct image under $h$ of the vertical direction of $R_i$.
\end{defi}

\begin{lemm}\label{Lemm: Conjugated partitions same type.}
	Let $f$ and $g$ be two pseudo-Anosov homeomorphisms topologically conjugated via an orientation-preserving homeomorphism $h$. Let $\cR = \{R_i\}_{i=1}^n$ be a geometric Markov partition for $f$, and let $h(\cR) = \{h(R_i)\}_{i=1}^n$ be the geometric Markov partition of $g$ induced by $h$. In this situation, $H$ is a horizontal sub-rectangle of $(f, \mathcal{R})$ if and only if $h(H)$ is a horizontal sub-rectangle of $(g, h(\mathcal{R}))$. Similarly, $V$ is a vertical sub-rectangle of $(f, \mathcal{R})$ if and only if $h(V)$ is a vertical sub-rectangle of $(g, h(\mathcal{R}))$.
\end{lemm}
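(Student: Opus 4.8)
The plan is to observe that $h$ is a homeomorphism which conjugates $f$ to $g$, hence also conjugates $f^{-1}$ to $g^{-1}$ and --- as already recorded in the discussion preceding Definition~\ref{Defi: induced geometric Markov partition} --- carries the stable and unstable foliations of $f$ onto those of $g$ while preserving their joint orientation; in particular $h(\cR)$ really is a geometric Markov partition of $g$, so the statement makes sense. With that in hand, the lemma is a transport-of-structure argument, and I would carry it out at the level of the data defining the distinguished sub-rectangles.

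First I would use the definitional description from Definition~\ref{Defi: Geometric Markov partition}: $H$ is a horizontal sub-rectangle of $(f,\cR)$ precisely when $H=\overline{C}$ for some connected component $C$ of $\overset{\circ}{R_i}\cap f^{-1}(\overset{\circ}{R_j})$, with $R_i,R_j\in\cR$. Since $h$ is a homeomorphism of $S$ onto $S'$ with $h\circ f^{-1}\circ h^{-1}=g^{-1}$, it maps $\overset{\circ}{R_i}\cap f^{-1}(\overset{\circ}{R_j})$ homeomorphically onto $\overset{\circ}{h(R_i)}\cap g^{-1}(\overset{\circ}{h(R_j)})$; therefore it sends connected components to connected components and commutes with taking closures, so $h(H)=\overline{h(C)}$ is the closure of a connected component of $\overset{\circ}{h(R_i)}\cap g^{-1}(\overset{\circ}{h(R_j)})$, i.e. a horizontal sub-rectangle of $(g,h(\cR))$. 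That such a closure is automatically a horizontal sub-rectangle of $h(R_i)$ in the pointwise sense of Definition~\ref{Defi: Vertical/horizontal subrec} is just the Markov condition, which $(g,h(\cR))$ satisfies. Applying the same reasoning to the conjugacy $h^{-1}$, which is again an orientation-preserving homeomorphism and sends $h(\cR)$ back to $\cR$, gives the converse implication. The vertical statement is identical: one uses $h\circ f\circ h^{-1}=g$ to transport $f(\overset{\circ}{R_i})\cap\overset{\circ}{R_j}$ to $g(\overset{\circ}{h(R_i)})\cap\overset{\circ}{h(R_j)}$ and repeats the connected-component bookkeeping.

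If one instead reads ``horizontal sub-rectangle'' in the purely geometric sense of Definition~\ref{Defi: Vertical/horizontal subrec}, the argument is equally short but routed through the foliations: since $h\circ\rho$ parametrizes $h(R)$ whenever $\rho$ parametrizes $R$, the horizontal foliation of $h(R)$ is $\{h(I):I\in\cI(R)\}$ and $\overset{\circ}{h(R)}=h(\overset{\circ}{R})$, so for $y=h(x)$ with $x\in\overset{\circ}{H}$ the horizontal leaf of $h(H)$ through $y$ is the $h$-image of the horizontal leaf of $H$ through $x$, and likewise for $h(R)$; the two images coincide because the two leaves in $S$ coincide, by hypothesis on $H$. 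I do not expect any genuine obstacle here: the only points requiring care are that $h$ must be a \emph{conjugacy} (so that $f^{-1}\mapsto g^{-1}$ and $\cF^s\mapsto\cF^s$, not $\cF^u$) and orientation-preserving (so that $h(\cR)$ is the geometric Markov partition of Definition~\ref{Defi: induced geometric Markov partition} and the vertical orientations match), after which everything is a mechanical verification.
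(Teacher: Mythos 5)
Your argument is correct and is essentially the paper's own proof: the paper likewise observes that $h(\overset{o}{R_i})=\overset{o}{h(R_i)}$ and that $C$ is a connected component of $\overset{o}{R_i}\cap f^{\pm 1}(\overset{o}{R_j})$ if and only if $h(C)$ is a connected component of $\overset{o}{h(R_i)}\cap g^{\pm 1}(\overset{o}{h(R_j)})$. Your version simply spells out the transport-of-structure details more explicitly.
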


\begin{proof}
	Observe that $h(\overset{o}{R_i}) = \overset{o}{h(R_i)}$. Therefore, $C$ is a connected component of $\overset{o}{R_i} \cap f^{\pm}(\overset{o}{R_j})$ if and only if $h(C)$ is a connected component of $\overset{o}{h(R_i)} \cap g^{\pm}(\overset{o}{h(R_j)})$.
\end{proof}

\begin{prop}\label{Prop: Conjugated partitions same types}
	Let $f$ and $g$ be generalized pseudo-Anosov homeomorphisms conjugated through a homeomorphism $h$ that preserves the orientation, i.e., $g = h \circ f \circ h^{-1}$. Let $\cR=\{R_i\}_{i=1}^n$ be a geometric Markov partition for $f$, and let $h(\cR)=\{h(R_i)\}_{i=1}^n$ be the geometric Markov partition of $g$ induced by $h$. In this situation, the geometric types of $(g,h(\cR))$ and $(f,\cR)$ are the same.
\end{prop}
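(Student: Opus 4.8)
The plan is to verify that the four components of $\cT(g,h(\cR))=\bigl(n',\{(h_i',v_i')\}_{i=1}^{n'},\rho',\epsilon'\bigr)$ coincide, one by one, with those of $\cT(f,\cR)=\bigl(n,\{(h_i,v_i)\}_{i=1}^{n},\rho,\epsilon\bigr)$. The equality $n'=n$ is immediate, since $h(\cR)$ is the family of the $n$ rectangles $h(R_1),\dots,h(R_n)$. For the counts, I would invoke Lemma~\ref{Lemm: Conjugated partitions same type.}: because $h(\overset{o}{R_i})=\overset{o}{h(R_i)}$, a rectangle $H\subset R_i$ is a horizontal sub-rectangle of $(f,\cR)$ if and only if $h(H)\subset h(R_i)$ is a horizontal sub-rectangle of $(g,h(\cR))$, and symmetrically for vertical sub-rectangles. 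Hence $h_i'=h_i$ and $v_i'=v_i$ for every $i$, and the labeling systems agree: $\cH(g,h(\cR))=\cH(f,\cR)$ and $\cV(g,h(\cR))=\cV(f,\cR)$.

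Next I would check that the homeomorphism $h$ respects the orderings used in Definition~\ref{Defi: Label and geometrization of subrectangles} to index the sub-rectangles. By Definition~\ref{Defi: induced geometric Markov partition}, a parametrization of $h(R_i)$ is obtained by post-composing a parametrization of $R_i$ with $h$; therefore $h$ carries the oriented vertical foliation of $R_i$ onto the oriented vertical foliation of $h(R_i)$, and likewise for the oriented horizontal foliations. Consequently, enumerating the horizontal sub-rectangles of $R_i$ from bottom to top and applying $h$ yields the horizontal sub-rectangles of $h(R_i)$ enumerated from bottom to top; similarly $h$ preserves the left-to-right order of the vertical sub-rectangles. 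Thus $h(H^i_j)$ is the $j$-th horizontal sub-rectangle of $h(R_i)$ and $h(V^k_\ell)$ is the $\ell$-th vertical sub-rectangle of $h(R_k)$, so the bijections of Definition~\ref{Defi: Permutation oritation (f,R) } for the two pairs are defined on the same indexed families.

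The permutation part then matches by pushing the defining relation through the conjugacy. If $f(H^i_j)=V^k_\ell$, then applying $h$ and using $h\circ f=g\circ h$ gives $g\bigl(h(H^i_j)\bigr)=h(V^k_\ell)$, that is, $g$ sends the $j$-th horizontal sub-rectangle of $h(R_i)$ to the $\ell$-th vertical sub-rectangle of $h(R_k)$; hence $\rho'(i,j)=(k,\ell)=\rho(i,j)$. For the orientation part, the set $f(H^i_j)=V^k_\ell$ carries two a priori distinct vertical orientations --- the push-forward $f_*$ of the vertical orientation of $R_i$, and the one it inherits as a vertical sub-rectangle of $R_k$ --- and $\epsilon(i,j)=1$ records their coincidence. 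Applying $h$ and using $g_*h_*=h_*f_*$ (a consequence of $g\circ h=h\circ f$), the two corresponding vertical orientations on $h(V^k_\ell)=g\bigl(h(H^i_j)\bigr)$ are exactly the $h_*$-images of those two; since $h$ is an orientation-preserving homeomorphism, $h_*$ is injective on orientations of a given unstable leaf, so they agree on $h(V^k_\ell)$ if and only if the original pair agreed on $V^k_\ell$. Therefore $\epsilon'(i,j)=\epsilon(i,j)$, and all four components of the geometric type coincide.

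The only subtle point --- and the real content of the argument --- is the orientation bookkeeping of the last paragraph: one must be certain that ``the vertical direction'' is a structure that $h$ genuinely transports, which is exactly what Definition~\ref{Defi: induced geometric Markov partition} arranges, and that this transport is compatible with the dynamics in the precise form $g_*h_*=h_*f_*$ on oriented unstable leaves. Once this is pinned down, the remaining verifications are a mechanical translation, through $h$, of the definitions of $h_i$, $v_i$, $\rho$ and $\epsilon$.
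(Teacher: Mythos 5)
Your proposal is correct and follows essentially the same route as the paper: it deduces $n'=n$, $h_i'=h_i$, $v_i'=v_i$ from Lemma~\ref{Lemm: Conjugated partitions same type.}, checks that $h$ preserves the bottom-to-top and left-to-right labelings so that $h(H^i_j)=\bH^i_j$ and $h(V^k_\ell)=\bV^k_\ell$, and then pushes the defining relations for $\rho$ and $\epsilon$ through the conjugacy $g\circ h=h\circ f$. Your closing remark correctly identifies the orientation transport as the only delicate point, which is exactly where the paper's argument also concentrates.
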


\begin{proof}
	Let $T(f, \cR) = (n, \{h_i, v_i\}, \rho, \epsilon)$ and $T(g, h(\cR)) = (n', \{h'_i, v'_i\}, \rho', \epsilon')$ be the geometric types of the corresponding geometric Markov partitions. A direct consequence of Lemma~\ref{Lemm: Conjugated partitions same type.} is that: 
	$$
	n = n', \quad h_i = h'_i, \quad \text{and} \quad v_i = v'_i.
	$$
	
	Let $\{\bH^i_j\}_{j=1}^{h_i}$ be the set of horizontal sub-rectangles of $h(R_i)$, labeled with respect to the induced vertical orientation in $h(R_i)$. Similarly, define $\{\bV^k_l\}_{l=1}^{v_k}$ as the set of vertical sub-rectangles of $h(R_k)$, labeled with respect to the induced horizontal orientation in $h(R_k)$. Since $h$ preserves both the vertical and horizontal labeling of the respective sub-rectangles, it is clear that:
	$$
	h(H^i_j) = \bH^i_j \quad \text{and} \quad h(V^k_l) = \bV^k_l.
	$$
	
	Using the conjugacy by $h$, if $f(H^i_j) = V^k_l$, then:
	$$
	g(\bH^i_j) = g(h(H^i_j)) = h \circ f \circ h^{-1}(h(H^i_j)) = h(f(H^i_j)) = h(V^k_l) = \bV^k_l.
	$$
	
	This implies that, in the geometric types, $\rho = \rho'$.
	
	Now suppose that $g(\bH^i_j) = \bV^k_l$. The homeomorphism $h$ preserves the vertical orientations between $R_i$ and $h(R_i)$, as well as between $R_k$ and $h(R_k)$. Therefore, $f$ sends the positive vertical orientation of $H^i_j$ with respect to $R_i$ to the positive vertical orientation of $V^k_l$ with respect to $R_k$ if and only if $g$ sends the positive vertical orientation of $\bH^i_j$ with respect to $h(R_i)$ to the positive vertical orientation of $\bV^k_l$ with respect to $h(R_k)$. It follows that $\epsilon(i,j) = \epsilon'(i,j)$. This concludes the proof.
\end{proof}

One direction of Theorem~\ref{Theo: Total invariant} is a consequence of the following corollary of Proposition~\ref{Prop: Conjugated partitions same types}.

\begin{lemm}\label{Lemm: conjugates then same partition}
	If $f$ and $g$ are topologically conjugate via an orientation-preserving homeomorphism, then they admit geometric Markov partitions of the same geometric type.
\end{lemm}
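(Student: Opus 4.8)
The plan is to deduce the statement immediately from Proposition~\ref{Prop: Conjugated partitions same types}, which already contains all of the substance. First I would invoke the existence of a Markov partition for any pseudo-Anosov homeomorphism (as quoted from \cite{fathi2021thurston}) to fix an arbitrary geometric Markov partition $\cR = \{R_i\}_{i=1}^n$ of $f$; choosing a vertical orientation on each rectangle geometrizes it, so such an $\cR$ is available.

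Next, writing $g = h \circ f \circ h^{-1}$ with $h : S \to S'$ orientation-preserving, I would recall the discussion preceding Definition~\ref{Defi: induced geometric Markov partition}: since $h(\partial^s\cR) = \partial^s h(\cR)$ and $h(\partial^u\cR) = \partial^u h(\cR)$, the family $h(\cR) = \{h(R_i)\}_{i=1}^n$ has $g$-invariant stable boundary and $g^{-1}$-invariant unstable boundary, hence is a Markov partition for $g$ by Lemma~\ref{Prop: Markov criterion boundary}. Because $h$ carries the measured foliations and singularities of $f$ to those of $g$ while preserving their joint orientation, composing $h$ with the parametrizations of the $R_i$ and keeping the same vertical orientations of $\II^2$ endows $h(\cR)$ with a geometrization; this is exactly the induced geometric Markov partition of Definition~\ref{Defi: induced geometric Markov partition}.

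Finally, Proposition~\ref{Prop: Conjugated partitions same types} applied to the pair $(f,\cR)$ and the conjugating homeomorphism $h$ yields $\cT(f,\cR) = \cT(g, h(\cR))$. Thus $\cR$ and $h(\cR)$ are geometric Markov partitions of $f$ and $g$ respectively with the same geometric type, which is the assertion; together with the reverse implication (handled later via the symbolic machinery of Proposition~\ref{Prop: cociente T}) this establishes one direction of Theorem~\ref{Theo: Total invariant}. I do not expect any obstacle: all the work — the verification that $h(\cR)$ is a geometrized Markov partition, and that the labels, the permutation part $\rho$, and the orientation part $\epsilon$ are transported verbatim by $h$ — is already carried out in Proposition~\ref{Prop: Conjugated partitions same types}, and this lemma merely records its consequence in the form needed for the main theorem.
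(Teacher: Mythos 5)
Your proposal matches the paper exactly: the paper presents this lemma as an immediate corollary of Proposition~\ref{Prop: Conjugated partitions same types}, obtained by fixing a geometric Markov partition $\cR$ of $f$ (whose existence is assumed from \cite{fathi2021thurston}) and passing to the induced partition $h(\cR)$ of Definition~\ref{Defi: induced geometric Markov partition}. Your write-up is correct and, if anything, spells out the routine steps more explicitly than the paper does.
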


\subsection{The conjugacy preserve the orientation}

We have proved one direction of our main theorem now we must to proceed to prove the other direction we must to this toward a few lemmas.

\begin{prop}\label{Prop: Same tipe then conjugated}
	If a pair $f$ and $g$ of pseudo-Anosov homeomorphisms have geometric Markov partitions with the same geometric type then they are topologically conjugated.
\end{prop}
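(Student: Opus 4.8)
The plan is to move the whole question into the symbolic model built in the previous sections and then transport the resulting conjugacy back. First I would normalize the data: let $T_0 := \cT(f,\cR_f) = \cT(g,\cR_g)$ be the common geometric type and apply Proposition~\ref{Prop: Refinamiento binario} to replace $\cR_f$ and $\cR_g$ by their horizontal refinements $\bB(\cR_f)$ and $\bB(\cR_g)$. Both of these realize the single symbolically presentable geometric type $T := \cB(T_0) \in \cG\cT(\textbf{p-A})^{sp}$, whose incidence matrix $A := A(T)$ is binary. So without loss of generality I may assume from the start that $T$ is symbolically presentable and that both pairs realize $T$.

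Next I would invoke the machinery developed in the preceding sections. Proposition~\ref{Prop: The relation determines projections} gives an equivalence relation $\sim_T$ on $\Sigma_A$ depending only on $T$, and Proposition~\ref{Prop: T related iff same projection} shows that $\sim_T$ is simultaneously the fiber relation of $\pi_{(f,\bB(\cR_f))}$ and the fiber relation of $\pi_{(g,\bB(\cR_g))}$. Hence the quotient $\Sigma_T := \Sigma_A/\sim_T$ is intrinsic to $T$, and by Proposition~\ref{Prop: cociente T} the quotient projections descend to homeomorphisms $[\pi_{(f,\bB(\cR_f))}]\colon \Sigma_T \to S$ and $[\pi_{(g,\bB(\cR_g))}]\colon \Sigma_T \to S'$ that conjugate the common quotient shift $\sigma_T$ to $f$ and to $g$, respectively. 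Setting $h := [\pi_{(g,\bB(\cR_g))}] \circ [\pi_{(f,\bB(\cR_f))}]^{-1}\colon S \to S'$, I get a homeomorphism with $h\circ f\circ h^{-1} = [\pi_{(g,\bB(\cR_g))}]\circ \sigma_T \circ [\pi_{(g,\bB(\cR_g))}]^{-1} = g$, which establishes the topological conjugacy. In this sense the present proposition is mostly a bookkeeping assembly of results already proved; the one point that still requires attention — and the point the section title alludes to — is the orientation.

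To see that $h$ can be taken orientation preserving, I would argue that $\Sigma_T$ inherits a canonical orientation depending only on $T$, and that both quotient maps respect it. Indeed, the rectangles of any realization are parametrized by orientation-preserving charts, and the codes glued by $\sim_T$ were manufactured from the counterclockwise cyclic order of the sectors of a point (the order used throughout Lemma~\ref{Lemm: sector codes}), which is dictated by the ambient orientation of the surface; consequently $\pi_{(f,\bB(\cR_f))}$ is locally orientation preserving on totally interior codes and respects the cyclic order of sectors at boundary-leaf points, corners, and singularities as well, so $[\pi_{(f,\bB(\cR_f))}]$ pushes the orientation of $S$ forward to exactly this combinatorial orientation of $\Sigma_T$ — and the same description applies to $[\pi_{(g,\bB(\cR_g))}]$ and the orientation of $S'$. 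Therefore $h$ carries the orientation of $S$ to that of $S'$. The main obstacle is precisely this verification: checking that the abstract identifications $\sim_T$ reproduce the oriented local picture around every type of point, so that the two realizations induce one and the same orientation on $\Sigma_T$.
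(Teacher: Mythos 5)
Your proof of the stated proposition is correct and follows essentially the same route as the paper: reduce to the binary refinement so that $T$ is symbolically presentable, identify the two quotient spaces $\Sigma_{A(T)}/\sim_f$ and $\Sigma_{A(T)}/\sim_g$ with the intrinsic $\Sigma_T$ via Propositions~\ref{Prop: The relation determines projections} and~\ref{Prop: cociente T}, and set $h := [\pi_{(g,\cR_g)}]\circ[\pi_{(f,\cR_f)}]^{-1}$. Note that the proposition only asserts topological conjugacy, so your final paragraph on orientation is not needed here; the paper establishes that $h$ preserves orientation in the separate Lemmas~\ref{Lemm: Image Hi is Hi}--\ref{Lemm: h preserving orientation } by tracking the ordering of horizontal and vertical sub-rectangles, rather than by the (as stated, rather informal) claim that $\Sigma_T$ carries a canonical orientation.
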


\begin{proof}
	Let $T := T(f, \cR_f) = T(g, \cR_g)$ be the geometric type of the Markov partitions of $f$ and $g$. If $T$ does not have a binary incidence matrix, then by Lemma~\ref{Prop: Refinamiento binario}, we can consider their binary refinements to obtain a symbolically presentable geometric type. Therefore, we assume from the beginning that $T$ is symbolically presentable.
	
	The quotient spaces of $\Sigma_{A(T)}$ by $\sim_f$ and $\sim_g$ are equal to $\Sigma_T$, as proved in Proposition~\ref{Prop: cociente T}; that is, $\Sigma_g = \Sigma_T = \Sigma_f$. Moreover, the quotient shift $\sigma_T$ is topologically conjugate to $f$ via $[\pi_f] : \Sigma_T \rightarrow S_f$, and to $g$ via $[\pi_g] : \Sigma_T \rightarrow S_g$. Therefore, $f$ is topologically conjugate to $g$ by the homeomorphism $h := [\pi_g] \circ [\pi_f]^{-1} : S_f \rightarrow S_g$, as claimed.
\end{proof}

Now must to prove our the homeomorphism $	[\pi_g] \circ [\pi_f]^{-1}$ preserve the orientation.

\begin{lemm}\label{Lemm: Image Hi is Hi}
	Let $H^i_j$ and $\bH^i_j$ be the respective horizontal sub-rectangles of $\cR_f$ and $\cR_g$. Then, if $h := [\pi_g] \circ [\pi_f]^{-1}$, we have:
	$$
	h(H^i_j) = \bH^i_j.
	$$
	Similarly, for the vertical sub-rectangles $V^k_l$ and $\bV^k_l$, we have:
	$$
	h(V^k_l) = \bV^k_l.
	$$
\end{lemm}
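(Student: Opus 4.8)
The plan is to characterise the horizontal sub-rectangles in purely symbolic terms and then transport that characterisation across the common subshift. We work, as justified in the proof of Proposition~\ref{Prop: Same tipe then conjugated}, with a symbolically presentable $T$, so $A(T)$ is binary, $\Sigma_{A(T)}$ is the subshift on which both $\pi_f:=\pi_{(f,\cR_f)}$ and $\pi_g:=\pi_{(g,\cR_g)}$ are defined, and $h=[\pi_g]\circ[\pi_f]^{-1}$; write $R_i,H^i_j$ for the rectangles and horizontal sub-rectangles of $\cR_f$ and $R_i',\bH^i_j$ for those of $\cR_g$. Fix $(i,j)\in\cH(T)$ and set $(k,l):=\rho(i,j)$. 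Since $A(T)$ is binary and $a_{ik}\ge 1$, the rectangle $R_i$ has a unique horizontal sub-rectangle carried by $f$ into $R_k$, namely $H^i_j$, and it is the closure of the unique connected component $C$ of $\overset{o}{R_i}\cap f^{-1}(\overset{o}{R_k})$; because $(g,\cR_g)$ realises the \emph{same} type $T$ (hence the same $\rho$ and the same bottom-to-top ordering of horizontal sub-rectangles), $\bH^i_j=\overline{C'}$ with $C'$ the unique component of $\overset{o}{R_i'}\cap g^{-1}(\overset{o}{R_k'})$.

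First I would establish the \emph{symbolic description}: for every $\bw\in\Sigma_{A(T)}$,
$$
w_0=i\ \text{and}\ w_1=k \quad\Longleftrightarrow\quad \pi_f(\bw)\in H^i_j \quad\Longleftrightarrow\quad \pi_g(\bw)\in\bH^i_j.
$$
For the forward direction, recall (Lemma~\ref{Lemm: every code is sector code }) that $\bw$ is a sector code $\underline{e}$ of its own projection; if $\pi_g(\bw)=y$, then $w_0=i$ and $w_1=k$ mean exactly that the sector $e$ of $y$ sits in $R_i'$ and that $g(e)$ sits in $R_k'$, so any representative sequence $\{y_n\}\to y$ of $e$ has $y_n\in\overset{o}{R_i'}$ and $g(y_n)\in\overset{o}{R_k'}$ for all large $n$ (Lemmas~\ref{Lemm: sector contined unique rectangle} and~\ref{Lemm: image secto is a sector}, together with the interior-versus-boundary argument used in the proof of Lemma~\ref{Lemm: sector code is admisible}); hence $y_n\in C'$ eventually and $y=\lim y_n\in\overline{C'}=\bH^i_j$, and the same computation gives the analogue for $\pi_f$. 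For the reverse direction, given $x\in H^i_j=\overline{C}$ I would pick $\{x_n\}\subset C$ converging to $x$ with each $x_n$ off the local stable and unstable manifolds of $x$; a subsequence converges within some sector $e$ of $x$, and $\{x_n\}\subset C\subset\overset{o}{R_i}$ and $\{f(x_n)\}\subset f(C)\subset\overset{o}{R_k}$ force $e_0=i$ and $e_1=k$, so $\bw:=\underline{e}$ projects to $x$ (Corollary~\ref{Coro: Caracterisation fibers}) and is of the required form. The same argument applied to $\cR_g$ yields the statement for $\pi_g$, and the vertical case is identical after replacing $(w_0,w_1)$ by $(w_{-1},w_0)$: for $(k,l)\in\cV(T)$ with $(i,j):=\rho^{-1}(k,l)$ one has $V^k_l=f(H^i_j)=\overline{D}$ where $D$ is the unique component of $\overset{o}{R_k}\cap f(\overset{o}{R_i})$, and $w_{-1}=i,\ w_0=k$ holds iff $\pi_f(\bw)\in V^k_l$ (resp.\ $\pi_g(\bw)\in\bV^k_l$).

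Granting the symbolic description, the lemma follows at once. For $x\in H^i_j$ choose $\bw$ with $w_0=i$, $w_1=k$ and $\pi_f(\bw)=x$; then $h(x)=[\pi_g]([\bw])=\pi_g(\bw)\in\bH^i_j$, so $h(H^i_j)\subseteq\bH^i_j$, and running the same argument with $h^{-1}=[\pi_f]\circ[\pi_g]^{-1}$ on $\bH^i_j$ gives the opposite inclusion; hence $h(H^i_j)=\bH^i_j$, and in the same way $h(V^k_l)=\bV^k_l$. The one delicate point — and the reason to route everything through sector codes and the open rectangles rather than the closed ones — is that one cannot simply write $R_i\cap f^{-1}(R_k)=H^i_j$: boundary points of $R_i$ may be carried into $R_k$ without lying in $H^i_j$, so the symbolic description is only clean at the level of sectors, where it reduces to the question of which rectangle a convergent sequence eventually enters.
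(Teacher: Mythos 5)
Your proposal is correct and follows essentially the same route as the paper: both identify $H^i_j$ and $\bH^i_j$ as the images under $\pi_f$ and $\pi_g$ of the common cylinder set $\{\bw : w_0=i,\ w_1=\bp_1\circ\rho(i,j)\}$ (using that $A(T)$ is binary, so this intersection has a single component) and then transport through $h=[\pi_g]\circ[\pi_f]^{-1}$. You merely supply, via sector codes, the details of the set equality $\pi_f(R(i,j))=H^i_j$ that the paper asserts in one line, so there is no substantive difference in approach.
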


\begin{proof}
	Consider the set 
	$$
	R(i,j) = \{\bw \in \Sigma_{A(T)} : w_0 = i \text{ and } \rho(i,j) = (w_1, l_0) \}.
	$$
Since the incidence matrix of $T$ is binary, this set y satisfy that:  $\pi_f(R(i,j)) = H^i_j$ and $\pi_g(R(i,j)) = \bH^i_j$. 	Let $R(i,j)_T$ denote the equivalence classes in $\Sigma_T$ that contain an elements of $R(i,j)$. In this manner:
	$$
	[\pi_g] \circ [\pi_f]^{-1}(H^i_j) = [\pi_g](R(i,j)_T) = \bH^i_j.
	$$
	an our proof is over for horizontal sub-rectangles. The proof for vertical sub-rectangles is totally symmetric.
\end{proof}

We must to prove a more general statement that will be useful to prove that  $h := [\pi_g] \circ [\pi_f]^{-1}$ is orientation preserving.

\begin{lemm}\label{Lemm. refinamiento h>2}
	Let $T = \big(n, \{h_i, v_i\}, \rho, \epsilon \big)$ be a symbolically presentable geometric type, and let $(f, \cR_f)$ and $(g, \cR_g)$ be pairs that realize $T$. Let $h := [\pi_g] \circ [\pi_f]^{-1}$ be the conjugacy between the homeomorphisms.
	
	Consider the horizontal sub-rectangles of $R_i$ and $\bR_i$ given by the closures of the connected components of the intersections:
	$$
	f^{-n}(\overset{\circ}{R_j}) \cap \overset{\circ}{R_i} \quad \text{and} \quad g^{-n}(\overset{\circ}{\bR_j}) \cap \overset{\circ}{\bR_i}.
	$$
	Label them with respect to the vertical direction of $R_i$ and $\bR_i$, respectively, as $\{H_j\}_{j=1}^{J}$ and $\{\bH_j\}_{j=1}^{J'}$.
	
	Similarly, define the vertical sub-rectangles of $R_i$ and $\bR_i$ as the closures of the connected components of the intersections:
	$$
	f^{n}(\overset{\circ}{R_j}) \cap \overset{\circ}{R_i} \quad \text{and} \quad g^{n}(\overset{\circ}{\bR_j}) \cap \overset{\circ}{\bR_i}.
	$$
	Label them with respect to the horizontal direction of $R_i$ and $\bR_i$, respectively, as $\{V_k\}_{k=1}^{L}$ and $\{\bV_k\}_{k=1}^{L'}$.
	
	In this setting, we have $L = L' \geq 2$, $J= J' \geq 2$  and:
	$$
	h(H_j) = \bH_j \quad \text{and} \quad h(V_k) = \bV_k.
	$$
\end{lemm}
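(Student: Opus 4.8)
The plan is to identify each of the refinement sub‑rectangles with the image, under the relevant projection, of a cylinder set of $\Sigma_{A(T)}$, and then to transport everything through the common quotient $\Sigma_T$. Once the combinatorics are set up correctly, the conjugacy $h=[\pi_{(g,\cR_g)}]\circ[\pi_{(f,\cR_f)}]^{-1}$ will carry the $f$‑refinement rectangles onto the $g$‑refinement rectangles almost for free.

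First I would set up the combinatorial dictionary. Since $A:=A(T)$ is binary, an induction from the Markov conditions in Definition~\ref{Defi: Geometric Markov partition} shows that the connected components of $\overset{\circ}{R_i}\cap f^{-n}(\overset{\circ}{R_j})$ are exactly the open sets $\overset{\circ}{H_a}:=\bigcap_{t=0}^{n} f^{-t}\big(\overset{\circ}{R_{a_t}}\big)$, where $a=(a_0,\dots,a_n)$ ranges over the admissible words with $a_0=i$ and $a_n=j$; each $H_a:=\overline{\overset{\circ}{H_a}}$ is a horizontal sub‑rectangle of $R_i$, and distinct words give disjoint interiors. The identical description holds for $(g,\cR_g)$, with rectangles $\bH_a$. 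Because $A$ is the incidence matrix of both pairs, the set of admissible words from $i$ to $j$ of length $n$ is the same, so $J=J'$; and by iterating the algorithm from the proof of Lemma~\ref{Prop: Unique horizontal type} (equivalently, by comparing the geometric types of the $n$‑fold horizontal refinements, which are computed from $T$ alone) the vertical order of $\{H_a\}$ inside $R_i$ depends only on $T$ and agrees with the order of $\{\bH_a\}$ inside $\bR_i$; I would fix that common ordering $a\mapsto j(a)$, so that $H_{j(a)}=H_a$ and $\bH_{j(a)}=\bH_a$. The equality $J=J'$ holds unconditionally; the inequality $\geq 2$ refers to the setting in which the lemma is applied, and it is available for suitable $n$ since $A(T)$ is mixing. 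The vertical sub‑rectangles are handled in exactly the same way, using admissible words $b=(b_{-n},\dots,b_0)$ with $b_{-n}=j$, $b_0=i$ and $V_b:=\overline{\bigcap_{t=-n}^{0} f^{-t}(\overset{\circ}{R_{b_t}})}$, whose horizontal order inside $R_i$ is again read off from $T$.

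Next I would prove the key identity $\pi_{(f,\cR_f)}([a])=H_a$, where $[a]\subset\Sigma_{A(T)}$ is the cylinder fixing coordinates $0,\dots,n$. The inclusion $\pi_{(f,\cR_f)}([a])\subseteq H_a$ is immediate from Definition~\ref{Defi: proyection (f,R)}. For the reverse inclusion, fix $y\in\overset{\circ}{H_a}$; then $f^t(y)\in\overset{\circ}{R_{a_t}}$ for $0\le t\le n$, any sector $e$ of $y$ lies, near $y$, inside the open set $\overset{\circ}{H_a}$, so the sector $f^t(e)$ is contained in $R_{a_t}$ in the sense of Lemma~\ref{Lemm: sector contined unique rectangle}; hence the sector code $\be$ of $e$ satisfies $e_t=a_t$ for $0\le t\le n$, i.e.\ $\be\in[a]$, while $\pi_{(f,\cR_f)}(\be)=y$ by Lemma~\ref{Lemm: sector code is admisible} and Corollary~\ref{Coro: Caracterisation fibers}. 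Thus $\pi_{(f,\cR_f)}([a])\supseteq\overset{\circ}{H_a}$, and since this image is compact, hence closed, while $\overset{\circ}{H_a}$ is dense in $H_a$, we obtain $\pi_{(f,\cR_f)}([a])=H_a$. The same computation gives $\pi_{(g,\cR_g)}([a])=\bH_a$, and likewise $\pi_{(f,\cR_f)}([b])=V_b$ and $\pi_{(g,\cR_g)}([b])=\bV_b$ for the vertical words $b$.

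Finally I would conclude by transport through $\Sigma_T$. By Proposition~\ref{Prop: The relation determines projections} the relations $\sim_f$ and $\sim_g$ both coincide with $\sim_T$, so there is a single quotient map $q:\Sigma_{A(T)}\to\Sigma_T$ with $[\pi_{(f,\cR_f)}]\circ q=\pi_{(f,\cR_f)}$ and $[\pi_{(g,\cR_g)}]\circ q=\pi_{(g,\cR_g)}$ (Proposition~\ref{Prop: cociente T}); moreover $h\circ[\pi_{(f,\cR_f)}]=[\pi_{(g,\cR_g)}]$ by the definition of $h$. Hence, for every admissible word $a$ from $i$ to $j$ of length $n$,
$$
h(H_a)=h\big(\pi_{(f,\cR_f)}([a])\big)=h\big([\pi_{(f,\cR_f)}](q([a]))\big)=[\pi_{(g,\cR_g)}](q([a]))=\pi_{(g,\cR_g)}([a])=\bH_a,
$$
which by Step~1 reads $h(H_j)=\bH_j$ for all $j\in\{1,\dots,J\}=\{1,\dots,J'\}$, and $h(V_k)=\bV_k$ follows word‑for‑word with $[b]$ in place of $[a]$. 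I expect the main obstacle to be Step~1: verifying rigorously that the connected components of $\overset{\circ}{R_i}\cap f^{-n}(\overset{\circ}{R_j})$ are precisely the cylinder‑rectangles $H_a$ (the iterated Markov property — routine, but requiring care about which intermediate rectangles a component may visit), and that the vertical order of these rectangles is intrinsic to $T$ (which amounts to iterating the one‑step bookkeeping of Lemma~\ref{Prop: Unique horizontal type}). Step~2 is the only genuinely symbolic‑dynamical input, and Step~3 is then purely formal.
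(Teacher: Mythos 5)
Your proposal is correct and follows essentially the same route as the paper: identify each refinement sub-rectangle with the projection of a length-$(n+1)$ cylinder set, note that the admissible words and their ordering are determined by $T$ alone, and transport through the common quotient to get $h(H_j)=\bH_j$. You supply more detail than the paper on the key identity $\pi_{(f,\cR_f)}([a])=H_a$ (via sector codes) and on the intrinsic ordering, both of which the paper essentially asserts, so this is a faithful and somewhat more careful version of the same argument.
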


\begin{proof}
Since $A(T)$ is the incidence matrix of the Markov partition of a pseudo-Anosov homeomorphism, it is a mixing matrix. The coefficient $a^{(n)}_{ij}$ of $A^n$ (where $n$ is the number of rectangles in the partition) counts the number of times $f^n(\overset{\circ}{R_i})$ intersects $\overset{\circ}{R_j}$, and it is at least $1$. Also, since $\sum_{i=1}^n a_{ij} = L$, we conclude that $L \geq 2$ (unless the Markov partition consists of a single rectangle with a single horizontal sub-rectangle, which is not possible due to the uniform expansion along unstable leaves). Hence, $L \geq 2$.

Let $\bw$ and $\bv$ be elements in $\Sigma_{A(T)}$. We define the relation $\sim_n$ by:
\begin{itemize}
	\item For all $m \in \{0, \dots, n\}$, we have $w_m = v_m$, and
	\item $w_0 = v_0 = i$.
\end{itemize}

This defines an equivalence relation on the closed set $\{\bw \in \Sigma_{A(T)} : w_0 = i\}$. Let $[i, w_1, w_2, \dots, w_n]$ denote an equivalence class under this relation. Then $\pi_f([i, w_1, w_2, \dots, w_n])$ is the unique sub-rectangle $H_j$ given by:
$$
H_j = \bigcap_{j=0}^{n} f^j(\overset{\circ}{R_{w_j}}).
$$

Since the incidence matrix is binary, if $n$ is even, we can rewrite the expression as:
$$
H_j = \bigcap_{j \in 2\mathbb{Z},\ j \leq n} f^{-j} \left( \overset{\circ}{R_{w_j}} \cap f^{-1}(\overset{\circ}{R_{w_{j+1}}}) \right) = \bigcap_{j \in 2\mathbb{Z},\ j \leq n} f^{-j}(H^{w_j}_{j'}).
$$

Here, $H^{w_j}_{j'}$ is uniquely determined since the incidence matrix is fixed. As $\cR_g$ has the same geometric type as $\cR_f$, it follows that the corresponding $\bH_j$ must be written as:
$$
\bH_j = \bigcap_{j \in 2\mathbb{Z},\ j \leq n} g^{-j} \left( \overset{\circ}{\bR_{w_j}} \cap g^{-1}(\overset{\circ}{\bR_{w_{j+1}}}) \right) = \bigcap_{j \in 2\mathbb{Z},\ j \leq n} g^{-j}(\bH^{w_j}_{j'}).
$$

Therefore,
$$
H_j = \pi_{(f, \cR_f)}([i, w_1, w_2, \dots, w_n]) \quad \text{and} \quad \bH_j = \pi_{(g, \cR_g)}([i, w_1, w_2, \dots, w_n]),
$$
so
$$
\bH_j = \pi_{(g, \cR_g)} \circ \pi_{(f, \cR_f)}^{-1}(H_j),
$$
as claimed.

If $n$ is not even, we can simply write
$$
H_j = \bigcap_{j=0}^{n} f^j(\overset{\circ}{R_{w_j}}) = R_i \cap \bigcap_{j=1}^{n} f^j(\overset{\circ}{R_{w_j}}),
$$
and repeat the previous argument.

The situation for vertical sub-rectangles is treated similarly.

\end{proof}

\begin{lemm}\label{Lemm: Orintacion cada rectangulo}
	The homeomorphism $h:=	[\pi_g] \circ [\pi_f]^{-1}$, when restricted to each rectangle $R_i$, preserves the  orientation of its vertical and horizontal foliations. In particular, $h$ preserves the orientation when is restricted to $R_i$.
\end{lemm}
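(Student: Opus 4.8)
The plan is to compute $h:=[\pi_g]\circ[\pi_f]^{-1}$ in geometrizing parametrizations of each rectangle and to read the orientation off from the combinatorics. First I would record that $h(R_i)=\bR_i$, the rectangle of $\cR_g$ carrying the same index: by Lemma~\ref{Lemm: Image Hi is Hi} we have $h(H^i_j)=\bH^i_j$ for every Markov horizontal subrectangle, and $R_i=\bigcup_{j=1}^{h_i}H^i_j$ while $\bR_i=\bigcup_{j=1}^{h_i}\bH^i_j$. Next I would show that $h$ carries the stable foliation of $f$ onto that of $g$, and likewise unstable onto unstable. For this, note that $h\circ\pi_f=\pi_g$ (an immediate consequence of the construction of $h$). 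By Proposition~\ref{Prop: Projection foliations} the set $\pi_f(\underline{F}^s(\bw))$ lies in the stable leaf $F^s(\pi_f(\bw))$ and is dense in it, with a harmless restriction when $\pi_f(\bw)$ lies on the $u$-boundary of $\cR$; the same holds for $\pi_g$. Applying $h$ therefore gives $h(F^s(x))\subset F^s(h(x))$ for all $x$ off the $u$-boundary lamination, hence for all $x$ by continuity, and equality by running the argument for $h^{-1}$; the unstable case is symmetric. Consequently $h$ maps $\cI(R_i)$ onto $\cI(\bR_i)$ and $\cJ(R_i)$ onto $\cJ(\bR_i)$. Choosing geometrizing parametrizations $\rho:\II^2\to R_i$ and $\bar\rho:\II^2\to\bR_i$, the homeomorphism $\varphi:=\bar\rho^{-1}\circ h\circ\rho$ of $\II^2$ preserves both product foliations, so $\varphi(s,t)=(\Phi(s),\Psi(t))$ for homeomorphisms $\Phi,\Psi$ of $[0,1]$; it then remains only to show that $\Phi$ and $\Psi$ are increasing.

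This is where I would invoke Lemma~\ref{Lemm. refinamiento h>2}: it provides a subdivision of $R_i$ into at least two horizontal subrectangles $H_1,\dots,H_J$, numbered bottom to top with respect to the vertical orientation of $R_i$, with $h(H_a)=\bH_a$, where $\bH_1,\dots,\bH_J$ are the corresponding horizontal subrectangles of $\bR_i$ numbered bottom to top with respect to its vertical orientation. Since $J\ge 2$, the upper $s$-boundary of $H_1$ is a stable leaf interior to $R_i$; because $h$ maps $\overset{o}{R_i}$ onto $\overset{o}{\bR_i}$, it must be sent to the upper $s$-boundary of $\bH_1$, which is interior to $\bR_i$. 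Hence $h$ sends $\partial^s_{-1}H_1=\partial^s_{-1}R_i$ onto $\partial^s_{-1}\bH_1=\partial^s_{-1}\bR_i$, i.e.\ $\Psi(0)=0$, so $\Psi$ is increasing. The symmetric argument, using the decomposition of $R_i$ into its vertical subrectangles supplied by Lemma~\ref{Lemm. refinamiento h>2}, gives $h(\partial^u_{-1}R_i)=\partial^u_{-1}\bR_i$, i.e.\ $\Phi(0)=0$, so $\Phi$ is increasing. Therefore $h|_{R_i}$ preserves both the vertical and the horizontal orientation; and since $\rho$ and $\bar\rho$ are orientation-preserving onto their images (Definition~\ref{Defi: Rectangle}), $\varphi$ is orientation-preserving on $\II^2$, so $h|_{R_i}$ preserves the orientation of the surface as well.

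The step I expect to be the main obstacle is the foliation-preservation claim: upgrading the fact that $h$ matches stable leaf codes correctly to the statement that $h$ maps each stable leaf onto a stable leaf requires the density assertion in Proposition~\ref{Prop: Projection foliations} together with a continuity/closure argument, and some care is needed at the $u$-boundary leaves and at periodic singular points, where several sector codes collapse. Everything after that — the triangular form of $\varphi$ and the pinning down of which boundary side goes to which via a single application of Lemma~\ref{Lemm. refinamiento h>2} — is routine bookkeeping once the geometrizing parametrizations are fixed.
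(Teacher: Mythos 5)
Your proposal is correct and follows essentially the same route as the paper: the decisive input in both is Lemma~\ref{Lemm. refinamiento h>2}, which gives $h(H_a)=\bH_a$ for a bottom-to-top labelled subdivision with at least two pieces, and orientation preservation is then read off from the preserved ordering (the paper tracks a vertical curve from $H_1$ to $H_L$, you pin down $\Psi(0)=0$ — the same mechanism). The only difference is that you make explicit, via Proposition~\ref{Prop: Projection foliations} and the identity $h\circ\pi_f=\pi_g$, the fact that $h$ carries the invariant foliations of $f$ to those of $g$, a point the paper's proof uses implicitly when it asserts that $h(J)$ is a vertical curve of $\bR_i$.
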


\begin{proof}
	Let $R_i$ be a rectangle in the Markov partition $\cR_f$ of $f$, and let $\bR_i$ be the corresponding rectangle in the Markov partition $\cR_g$ of $g$. 
	
	Consider the horizontal sub-rectangles $\{H_j\}$ and $\{\bH_j\}$ constructed in Lemma~\ref{Lemm. refinamiento h>2}, and similarly the vertical sub-rectangles $\{V_l\}$ and $\{\bV_l\}$. By the same lemma, the map $h$ preserves this labeling: 
	$$
	h(H_j) = \bH_j \quad \text{and} \quad h(V_l) = \bV_l.
	$$
	
	Now, consider a positively parametrized unstable (vertical) curve $J$ with starting point $x \in \overset{\circ}{H_1}$ and endpoint $y \in \overset{\circ}{H^i_{L}}$. Then $h(J)$ is a curve with starting point $h(x) \in \overset{\circ}{\bH_1}$ and endpoint $h(y) \in \overset{\circ}{\bH_{L}}$. These are different rectangles, one above the other, so $h(J)$ is positively oriented with respect to the vertical direction of $\bR_i$.
	
	Similarly, consider a positively parametrized stable (horizontal) curve $I$ with starting point $x \in \overset{\circ}{V_1}$ and endpoint $y \in \overset{\circ}{V_{J}}$. Then $h(I)$ is a curve with starting point $h(x) \in \overset{\circ}{\bV_1}$ and endpoint $h(y) \in \overset{\circ}{\bV_{J}}$. These are different rectangles, one to the left of the other, so $h(I)$ is positively oriented with respect to the horizontal direction of $\bR_i$.
	
	This implies that $h$ preserves the orientation of both the vertical and horizontal foliations of $R_i$. As these foliations are coherent with the orientation of $S'$, the homeomorphism $h$ preserves the orientation when restricted to $R_i$.
\end{proof}

\begin{lemm}\label{Lemm: h preserving orientation }
The homeomorphism $h:=	[\pi_g] \circ [\pi_f]^{-1}:S \to S'$ is orientation preserving.
\end{lemm}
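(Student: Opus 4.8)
The plan is to deduce the global statement from the local one already established in Lemma~\ref{Lemm: Orintacion cada rectangulo}, using nothing more than the connectedness of $S$. Recall that $h := [\pi_g]\circ[\pi_f]^{-1}$ is a homeomorphism between the closed, connected, oriented surfaces $S$ and $S'$, as furnished by Proposition~\ref{Prop: Same tipe then conjugated}. First I would record the general principle that a homeomorphism between oriented surfaces is orientation preserving near a point $x$, or orientation reversing near $x$, but never both, and that the set of points of the first kind is open; equivalently, the local degree $\deg_x h \in \{-1,+1\}$ is a locally constant function of $x \in S$. Writing $U^{+}$ and $U^{-}$ for the two corresponding open subsets of $S$, we have $S = U^{+}\sqcup U^{-}$, so the connectedness of $S$ forces one of them to be empty.

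Next I would exhibit a single point of $U^{+}$. Take any rectangle $R_i \in \cR_f$; its interior $\overset{o}{R_i}$ is a nonempty open subset of $S$, and $h$ restricts to a homeomorphism from $\overset{o}{R_i}$ onto the open set $\overset{o}{\bR_i} = \overset{o}{h(R_i)}$. By Lemma~\ref{Lemm: Orintacion cada rectangulo}, this restriction preserves the orientations of both the horizontal and vertical foliations of $R_i$, and since those joint orientations are coherent with the ambient orientations of $S$ and $S'$, the restriction $h|_{\overset{o}{R_i}}$ is orientation preserving. Hence $\overset{o}{R_i} \subset U^{+}$, and in particular $U^{+} \neq \emptyset$.

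Combining the two steps, the connectedness dichotomy forces $U^{-} = \emptyset$, so $U^{+} = S$ and $h$ is orientation preserving at every point of $S$, which is exactly the assertion of the lemma. I do not expect a genuine obstacle here: the only mildly delicate point is the ``locally constant'' principle for the local degree of a homeomorphism between oriented surfaces, which is classical (it reflects the fact that $h_{*}$ on the top homology of a small disc is multiplication by a sign that cannot jump) and may be invoked rather than reproved. Thus the proof is complete once Lemma~\ref{Lemm: Orintacion cada rectangulo} is in hand.
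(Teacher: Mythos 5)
Your proof is correct, but it takes a different (and in fact cleaner) route than the paper. The paper's own argument also starts from Lemma~\ref{Lemm: Orintacion cada rectangulo}, but then propagates the conclusion combinatorially: it considers two rectangles $R_i$, $R_j$ meeting along a stable boundary component, argues that $h$ preserves the coherent orientation on their union, and iterates this adjacency argument until all rectangles of the partition are covered. Your argument replaces that rectangle-by-rectangle gluing with the standard topological fact that the local degree $\deg_x h\in\{-1,+1\}$ of a homeomorphism between oriented surfaces is locally constant, so that $S=U^+\sqcup U^-$ with both sets open; connectedness of $S$ plus the single witness $\overset{o}{R_i}\subset U^+$ supplied by Lemma~\ref{Lemm: Orintacion cada rectangulo} then finishes the proof. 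What your approach buys is that it avoids the somewhat delicate bookkeeping of the paper's version (``we can assume their horizontal directions are the same and maybe change the vertical orientation of one of them''), at the cost of invoking the classical locally-constant-degree principle, which you correctly note can be cited rather than reproved. The one point worth making explicit is the passage from ``$h$ preserves the orientations of both foliations of $R_i$'' to ``$h|_{\overset{o}{R_i}}$ preserves the ambient orientation''; this is exactly what the coherence condition in Definition~\ref{Defi: Geometric rectangle} guarantees, and indeed the final sentence of Lemma~\ref{Lemm: Orintacion cada rectangulo} already records it, so your proof is complete as written.
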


\begin{proof}
If the rectangles $R_i$ and $R_j$ intersect at a stable boundary point $x$, we can assume their horizontal direction are the same and maybe change the vertical orientation of one of them, in order to keep a orientation coherent with the surface.  Since $h$ preserves the orientation of the pair of  foliations in  $R_i$ and $R_j$, it also preserves the horizontal orientations in the union of these two adjacent rectangles. 
We can continued this analysis until cover all the other rectangles in the partitions.
\end{proof}

\begin{theo*}[\ref{Theo: Total invariant}]
	A pair of pseudo-Anosov homeomorphisms admits geometric Markov partitions with the same geometric type if and only if they are topologically conjugate via an orientation-preserving homeomorphism.
\end{theo*}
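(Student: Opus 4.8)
The plan is to prove the biconditional by treating its two implications separately, assembling the apparatus developed above. For the \enquote{only if} direction I would start from an orientation-preserving conjugacy $h$ with $g = h\circ f\circ h^{-1}$ together with an arbitrary geometric Markov partition $\mathcal{R}$ of $f$. Then $h(\mathcal{R})$ has a $g$-invariant stable boundary and a $g^{-1}$-invariant unstable boundary, so it is a Markov partition of $g$, and endowing each $h(R_i)$ with the vertical orientation pushed forward by $h$ makes it a geometric Markov partition. Proposition~\ref{Prop: Conjugated partitions same types}, restated as Lemma~\ref{Lemm: conjugates then same partition}, then gives $\mathcal{T}(g, h(\mathcal{R})) = \mathcal{T}(f,\mathcal{R})$, since $h$ carries the foliations, their singularities and their joint orientation to those of $g$ and hence preserves every piece of combinatorial data recorded in the geometric type. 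This direction is therefore immediate once that lemma is in hand.

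For the \enquote{if} direction, suppose $\mathcal{T}(f,\mathcal{R}_f) = \mathcal{T}(g,\mathcal{R}_g) =: T$. The first step is to reduce to the symbolically presentable case: if the incidence matrix $A(T)$ is not binary, I would replace $\mathcal{R}_f$ and $\mathcal{R}_g$ by their horizontal refinements $\mathbf{B}(\mathcal{R}_f)$ and $\mathbf{B}(\mathcal{R}_g)$, which by Proposition~\ref{Prop: Refinamiento binario} realize the same symbolically presentable geometric type $\mathcal{B}(T)$, so without loss of generality $T \in \cG\cT(\textbf{p-A})^{sp}$. Next I would form the subshift of finite type $(\Sigma_{A(T)}, \sigma_{A(T)})$ and invoke Proposition~\ref{Prop: cociente T}: by Proposition~\ref{Prop: The relation determines projections} the relation $\sim_T$ has the same equivalence classes as $\sim_f$ for the realization $(f,\mathcal{R}_f)$ and as $\sim_g$ for $(g,\mathcal{R}_g)$, so the quotient $\Sigma_T := \Sigma_{A(T)}/\sim_T$ equals both $\Sigma_f$ and $\Sigma_g$, is homeomorphic to $S_f$ and to $S_g$ via the quotient projections $[\pi_f]$ and $[\pi_g]$, and both projections conjugate the quotient shift $\sigma_T$ to $f$ and to $g$. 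Setting $h := [\pi_g]\circ[\pi_f]^{-1}\colon S_f \to S_g$ then yields a homeomorphism with $h\circ f = g\circ h$, which is exactly Proposition~\ref{Prop: Same tipe then conjugated}.

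The remaining and genuinely substantial step is to check that this $h$ preserves orientation, since the conjugacy produced by the quotient construction is purely symbolic and carries no a priori geometric information; this is where I expect the main difficulty. Here I would use Lemma~\ref{Lemm. refinamiento h>2}: for each rectangle $R_i$ of $\mathcal{R}_f$ and the matching $\bar R_i$ of $\mathcal{R}_g$, pass to a sufficiently high iterate so that $R_i$ and $\bar R_i$ acquire at least two horizontal and at least two vertical sub-rectangles, and observe that $h$ sends the bottom-to-top ordered horizontal sub-rectangles and the left-to-right ordered vertical sub-rectangles of $R_i$ onto the correspondingly ordered sub-rectangles of $\bar R_i$. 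Consequently a positively oriented vertical arc of $R_i$ maps to a positively oriented vertical arc of $\bar R_i$, and likewise for horizontal arcs, so $h|_{R_i}$ preserves the orientation induced by the coherently oriented foliations (Lemma~\ref{Lemm: Orintacion cada rectangulo}). Finally, because the rectangles cover $S$ with pairwise disjoint interiors and glue along boundary arcs compatibly with the surface orientation, orientation-preservation on each rectangle propagates to all of $S$, which is Lemma~\ref{Lemm: h preserving orientation }. Combining the two directions completes the proof; the crux throughout is precisely this last transition from an abstract symbolic conjugacy to one that is orientation-preserving, resolved by the fact that the geometric type encodes the vertical orientations and that $h$ respects the combinatorial labelling of all iterated sub-rectangles.
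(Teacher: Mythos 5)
Your proposal is correct and follows essentially the same route as the paper: the forward direction via the induced partition $h(\mathcal{R})$ and Proposition~\ref{Prop: Conjugated partitions same types}, and the converse via binary refinement, the quotient $\Sigma_T$ from Propositions~\ref{Prop: The relation determines projections} and~\ref{Prop: cociente T}, and the orientation check through Lemmas~\ref{Lemm. refinamiento h>2}, \ref{Lemm: Orintacion cada rectangulo} and~\ref{Lemm: h preserving orientation }. You also correctly identify the orientation-preservation of $[\pi_g]\circ[\pi_f]^{-1}$ as the genuinely substantial step, which is exactly where the paper concentrates its effort.
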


\begin{proof}
Is a direct consequence of \ref{Prop: cociente T} and Lemma \ref{Lemm: h preserving orientation }
\end{proof}

We finish with beautiful definition.

\begin{defi}\label{Defi: symbolic model}
	Let $T \in \cG\cT(\textbf{p-A})^{sp}$ be a symbolically presentable geometric type. The \emph{symbolic model} of the geometric type $T$ is the subshift of finite type $(\Sigma_T, \sigma_T)$. Similarly, if $(f, \cR)$ is a realization of $T$, then $(\Sigma_T, \sigma_T)$ is a symbolic model of $f$.
\end{defi}

\subsection*{Acknowledgements}

This work is part of the author's PhD thesis at the \emph{Université de Bourgogne}, supervised by Christian Bonatti, whose guidance and support are deeply appreciated. The research was funded by the \emph{Becas al Extranjero Convenios GOBIERNO FRANCES 2019-1} program (CONACYT). The initial version of this preprint was prepared at the Institute of Mathematics, UNAM, Oaxaca, with support from Lara Bossinger via her \emph{DGAPA-PAPIIT} grant \emph{IA100724}.

\end{document}